\theoremstyle{plain}
\newtheorem{theorem}{Theorem}[section]
\newtheorem{lemma}[theorem]{Lemma}
\newtheorem{corollary}[theorem]{Corollary}
\newtheorem{proposition}[theorem]{Proposition}
\newtheorem{remark}[theorem]{Remark}
\theoremstyle{definition}
\newtheorem{definition}[theorem]{Definition}
\newtheorem{conjecture}[theorem]{Conjecture}
\newtheorem*{observation}{Observation}
\newcommand{\kah}{K\"{a}hler }
\newcommand{\idd}{i\partial\overline{\partial}}
\newcommand{\dbar}{\overline{\partial}}
\newcommand{\cal}[1]{\mathcal{#1}}
\newcommand{\bb}[1]{\mathbb{#1}}
\newcommand{\scr}[1]{\mathscr{#1}}
\newcommand{\rom}[1]{\mathrm{#1}}
\newcommand{\fra}[1]{\mathfrak{#1}}
\newcommand{\wit}[1]{\widetilde{#1}}
\newcommand{\lla}[0]{{\langle\!\langle}}
\newcommand{\rra}[0]{{\rangle\!\rangle}}
\newcommand{\lara}[2]{\langle{#1},{#2}\rangle}
\newcommand{\iO}[1]{i\Theta_{#1}}
\subjclass[2020]{14F18, 32F32, 32Q40, 32L10, 32L20}
\keywords{ %key words and phrases 
positivity, embedding, weakly pseudoconvex, $L^2$-estimates, singular Hermitian metrics, cohomology vanishing.}
\begin{document}
\title
[Bigness and approximation theorems] 
{Bigness of adjoint linear subsystem and approximation theorems with ideal sheaves on weakly pseudoconvex manifolds}
\author{Yuta Watanabe}
%\date{}
\address{Department of Mathematics, Faculty of Science and Engineering, Chuo University.
1-13-27 Kasuga, Bunkyo-ku, Tokyo 112-8551, Japan}
\email{{\tt wyuta.math@gamil.com}, {\tt wyuta@math.chuo-u.ac.jp}}
%\email{{\tt wyuta.math@gamil.com}}

\begin{abstract}
    Let $X$ be a weakly pseudoconvex manifold and $L\longrightarrow X$ be a holomorphic line bundle with a singular positive Hermitian metric $h$.
    In this article, %for a singular positive line bundle $(L,h)$ on weakly pseudoconvex manifolds, 
    we provide a points separation theorem and an embedding for the adjoint linear subsystem including the multiplier ideal sheaf $\mathscr{I}(h^m)$, with respect to an appropriate set excluding a singular locus of $h$. 
    We also show that the adjoint bundle of $L$ is big, which constitutes a generalization to weakly pseudoconvex manifolds of Demailly's characterization of positivity in complex and algebraic geometry. 
    To handle analytical methods, an approximation of singular Hermitian metrics is first constructed based on Demailly's approximation, using the strong openness property, preserving the ideal sheaves and compatible with blow-ups.
    %To handle analytical methods, we first establish an approximation of singular Hermitian metrics on relatively compact subsets from Demailly's approximation that preserves ideal sheaves and is compatible with blow-ups.
    Using the blow-ups obtained from this approximation, the singular holomorphic Morse inequalities and the approximation theorem for holomorphic sections, each twisted by the ideal sheaves, are established. 
    This approximation theorem for sections provides the key to globalization, leading to global embeddings.
\end{abstract}

%\vspace{-5mm}

\maketitle

\vspace{-5mm}

\tableofcontents

%\vspace*{-5mm}
\vspace*{-8mm}

\section{Introduction}

In complex geometry, the concepts of positivity and vanishing and embedding theorems play a crucial role. 
This is exemplified by Kodaira's vanishing and embedding theorems \cite{Kod53,Kod54}, which characterize compact Hodge manifolds, and as is well known, \textit{positive} and \textit{ample} coincide. %on compact complex manifolds.
Moreover, the important positivity concept of \textit{big} in algebraic geometry was shown to be equivalent to \textit{singular} \textit{positive} by Demailly \cite{Dem90}. 
As an extension of this result on projective manifolds, the equivalence of possessing a singular positive line bundle and being \textit{Moishezon} has been established for compact manifolds (see \cite{JS93,Tak94,Bon98}, \cite[Chapter\,2]{MM07}).
% %

Whether a manifold is compact or not is clearly an important distinction that can be seen from the existence of global holomorphic functions.
%We consider embeddings using singular positive line bundles on weakly pseudoconvex manifolds, which is the broadest possible class including compactness and Steinness.
We consider embeddings on weakly pseudoconvex manifolds, which form a class of manifolds as broad as possible, including both compact and Stein manifolds, and are of great importance. %, even when non-compact manifolds are included.
In fact, every complex Lie group is always weakly pseudoconvex (see \cite{Kaz73}), and weakly pseudoconvexity is preserved under inverse images of proper holomorphic maps such as modifications and blow-ups, which ensures that such manifolds exist in abundance.
% %
Here, a complex manifold $X$ is said to be \textit{weakly} \textit{pseudoconvex} if there exists a smooth function $\varPsi:X\longrightarrow\bb{R}\cup\{-\infty\}$ which is plurisubharmonic and exhaustive.
We often say that $(X,\varPsi)$ is weakly pseudoconvex, and define the sublevel set as $X_c:=\{x\in X\mid\varPsi(x)<c\}$ for any $c\in\bb{R}$, which is then relatively compact.
% %
Regarding positive line bundles, Kodaira's embedding theorem has been effectively formulated by Takayama for weakly pseudoconvex manifolds (see \cite[Theorem\,1.2]{Tak98}). 
Unlike in the compact case, a positive line bundle $L$ on $X$ does not directly become ample (see \cite{Ohs79}); instead, it is necessary to employ the theory of adjoint bundles. 
As a conclusion, the adjoint bundle \( K_X \otimes L^{\otimes m} \) is ample for any integer \( m\geq n(n+1)/2 \), where $n=\rom{dim}\,X$.

In this paper, for a singular positive line bundle $(L,h)$ on weakly pseudoconvex manifolds, 
we provide a points separation theorem and an embedding for the adjoint linear subsystem including the multiplier ideal sheaf $\scr{I}(h^m)$, with respect to an appropriate set excluding a singular locus of $h$. 
We also show that the adjoint bundle of $L$ is big.
However, since merely having a singular positive Hermitian metric makes it difficult to apply analytic techniques, we adopt an effective approach by using Demailly's approximation technique on relatively compact subsets (see Theorem \ref{Dem appro preserves ideal sheaves}, \cite{DPS01,Mat22}). 
%This technique provides an approximation of $h$ (see Theorem \ref{Dem appro with log poles and ideal sheaves}) that preserves the multiplier ideal sheaf, resulting in a blow-up along the singular locus of each approximation that is compatible with the multiplier ideal sheaf. 
Furthermore, to handle the singularities of $h$, Demailly's approximation can be effectively refined by using the strong openness property (see \cite{GZ15}) to construct an approximation of $h$ that preserves the multiplier ideal sheaf and for which the blow-ups along the singular sets of each approximation are compatible with the multiplier ideal sheaves (see Theorem \ref{Dem appro with log poles and ideal sheaves}).
We then use this Demailly-type approximation Theorem \ref{Dem appro with log poles and ideal sheaves} to establish the corresponding blow-ups (see Theorem \ref{Blow ups of Dem appro with log poles and ideal sheaves}).
% %

%\subsection*{Observation}
%As an observation, 
\begin{observation}
By using this blow-ups Theorem \ref{Blow ups of Dem appro with log poles and ideal sheaves} and Takayama's embedding theorem (see \cite[Theorem\,1.2]{Tak98}), we establish the following for each weakly pseudoconvex sublevel set (see Theorem \ref{Thm bigness with ideal sheaves on each sublevel sets}): 
For any \( c<\sup_X\varPsi \), there exist a positive integer \( p_c \) and a singular positive Hermitian metric \( h_c \) on \( L|_{X_c} \) such that \( K_{X_c} \otimes L^{\otimes p_c} \otimes \mathscr{I}(h_c^{p_c}) \) is \textit{big} as in Definition \ref{Def of bigness}, and that \( X_c\setminus Z_c \) can be embedded into a projective space $\bb{P}^{2n+1}$, where $Z_c$ is an analytic subset of $X_c$ obtained as the singular locus of $h_c$.
\end{observation}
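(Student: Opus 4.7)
My plan is to transfer the problem to a blow-up where everything is smooth, invoke Takayama's embedding theorem there, and then push the resulting data back down to $X_c$. Fix $c < \sup_X \varPsi$ and choose $c < c' < \sup_X \varPsi$ so that $\overline{X_c} \subset X_{c'}$. Applying Theorem \ref{Dem appro with log poles and ideal sheaves} to $h$ on the relatively compact set $X_{c'}$ produces, for every sufficiently divisible integer $m$, a singular positive Hermitian metric $h_c := h_m$ on $L|_{X_{c'}}$ with analytic singularities along an analytic subset $Z_c \subset X_{c'}$ and satisfying $\mathscr{I}(h_c^{m}) = \mathscr{I}(h^{m})$. Then Theorem \ref{Blow ups of Dem appro with log poles and ideal sheaves} yields a proper modification $\pi \colon \widetilde{X} \to X_{c'}$, with $\widetilde{X}$ weakly pseudoconvex (the pullback $\varPsi \circ \pi$ is a plurisubharmonic exhaustion), such that $\pi$ is biholomorphic over $X_{c'} \setminus Z_c$, the pullback ideal $\pi^{-1}\mathscr{I}(h_c^{m}) \cdot \mathcal{O}_{\widetilde{X}}$ equals $\mathcal{O}_{\widetilde{X}}(-E)$ for an effective simple-normal-crossings divisor $E$, and $(\pi^{*}L)^{\otimes m} \otimes \mathcal{O}_{\widetilde{X}}(-E)$ carries a smooth Hermitian metric of strictly positive curvature.

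Next, on the weakly pseudoconvex sublevel set $\pi^{-1}(X_c) \subset \widetilde{X}$, I apply Takayama's embedding theorem \cite[Theorem 1.2]{Tak98} to the smooth positive line bundle $(\pi^{*}L)^{\otimes m} \otimes \mathcal{O}_{\widetilde{X}}(-E)$. This produces an integer $k > n(n+1)/2$ such that, setting $p_c := km$, the adjoint bundle $K_{\widetilde{X}} \otimes (\pi^{*}L)^{\otimes p_c} \otimes \mathcal{O}_{\widetilde{X}}(-kE)$ is very ample on $\pi^{-1}(X_c)$ and gives a holomorphic embedding into some $\mathbb{P}^{N}$. A standard generic linear projection argument, applied to the image of the relatively compact set $\pi^{-1}(X_c)$, reduces the ambient dimension to $2n+1$. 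Combining the birational transformation of the canonical sheaf $K_{\widetilde{X}} = \pi^{*}K_{X_{c'}} + K_{\widetilde{X}/X_{c'}}$ with the principalization guaranteed by Theorem \ref{Blow ups of Dem appro with log poles and ideal sheaves} gives $\pi_{*}\bigl(K_{\widetilde{X}} \otimes (\pi^{*}L)^{\otimes p_c} \otimes \mathcal{O}_{\widetilde{X}}(-kE)\bigr) \simeq K_{X_c} \otimes L^{\otimes p_c} \otimes \mathscr{I}(h_c^{p_c})$. Since $\pi$ is a biholomorphism over $X_c \setminus Z_c$, the restriction of the embedding to that locus descends to an embedding $X_c \setminus Z_c \hookrightarrow \mathbb{P}^{2n+1}$. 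Bigness in the sense of Definition \ref{Def of bigness} follows because global sections match under pushforward and the very ample upstairs bundle already produces the maximal-growth family needed on each relatively compact subset.

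I expect the principal obstacle to be the precise sheaf-theoretic identification between $K_{\widetilde{X}} \otimes (\pi^{*}L)^{\otimes p_c} \otimes \mathcal{O}_{\widetilde{X}}(-kE)$ upstairs and $K_{X_c} \otimes L^{\otimes p_c} \otimes \mathscr{I}(h_c^{p_c})$ downstairs: one must combine the relative canonical bundle formula with the principalization of $\mathscr{I}(h_c^{m})$ on $\widetilde{X}$, and verify that the discrepancy divisors of $\pi$ interact with $kE$ so that no spurious correction divisor appears in the pushforward. A secondary technicality is confirming that Takayama's threshold $n(n+1)/2$ can be attained uniformly on $\pi^{-1}(X_c)$ using the positive line bundle produced by the approximation, which reduces to comparing the curvature lower bound of $h$ with the uniform loss built into Theorem \ref{Dem appro with log poles and ideal sheaves}. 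Once these bookkeeping issues are handled, the application of Takayama's theorem on $\widetilde{X}$ and the generic projection reducing the target to $\mathbb{P}^{2n+1}$ are essentially formal.
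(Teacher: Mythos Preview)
Your overall strategy---blow up, apply Takayama upstairs, push sections back down---is exactly the paper's approach, but the execution has a real gap at the positivity step. You assert that Theorem \ref{Blow ups of Dem appro with log poles and ideal sheaves} produces a smooth Hermitian metric of \emph{strictly positive} curvature on $(\pi^{*}L)^{\otimes m}\otimes\mathcal{O}_{\widetilde{X}}(-E)$. It does not. Condition $(g)$ of that theorem gives only the lower bound $i\Theta_{\widetilde{L}_{m_\nu}}\geq 2^\nu m_\nu\,\pi_\nu^{*}(\gamma-\varepsilon_\nu\omega)$, and a pullback of a positive form under a modification is merely semi-positive: it degenerates precisely along the exceptional divisor $\pi^{-1}(Z_c)$. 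Takayama's theorem \cite[Theorem\,1.2]{Tak98} requires a genuinely positive line bundle, so it cannot be invoked on $\widetilde{L}_{m_\nu}$ directly.

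The correction is exactly condition $(\alpha)$ of Theorem \ref{Blow ups of Dem appro with log poles and ideal sheaves}: one must twist further by $\mathcal{O}_{\widetilde{X}}(-D_b)$ for an effective divisor $D_b$ supported on the exceptional locus, and take a high tensor power, to obtain a strictly positive bundle $\scr{L}$. But once you do this, the pushforward identification you wrote down fails: the extra factor $\mathcal{O}_{\widetilde{X}}(-D_b)$ does not correspond to any multiplier ideal on $X_c$, so a ``spurious correction divisor'' does appear---the very phenomenon you flagged as a worry but then assumed away. The paper's proof of Theorem \ref{Thm bigness with ideal sheaves on each sublevel sets} resolves this by choosing the twisting divisor as $D_\eta=\sum\eta_jD_j$ with $\eta_j=\max\{1,\zeta_j\}$ (so it dominates the discrepancy divisor $D_\zeta$), passing to the $(n+2)$-th power of the adjoint bundle, and arranging that the leftover divisor $D_q=q(n+2)D_\eta-(n+1)D_\zeta$ is effective. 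Sections of the very ample bundle upstairs are then multiplied by the canonical section of $\mathcal{O}_{\widetilde{X}_c}(D_q)$ before descending via the cohomology isomorphism $(\beta)$ of Theorem \ref{Blow ups of Dem appro with log poles and ideal sheaves}. Your sketch skips this bookkeeping, and without it the argument does not close. (Incidentally, Takayama's theorem already delivers the embedding into $\bb{P}^{2n+1}$ via the $(n+2)$-th power, so your generic-projection step is unnecessary.)
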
 
% %

The essential reason for employing the theory of adjoint bundles is to ensure that \( p_c \) can be bounded as \( c\to\sup_X\varPsi \) and to extend holomorphic sections defined on each sublevel set \( X_c \) to the entire space \( X \) using the approximation theorem for holomorphic sections of the adjoint bundle.
The approximation theorem regarding positive line bundles is established by Nakano, Kazama and Ohsawa, and is one of most important conclusion of the cohomology theory on weakly pseudoconvex manifolds (see \cite{Ohs82,Ohs83}). 
We first present the following approximation theorem %for holomorphic sections of adjoint bundles 
including the multiplier ideal sheaf of the singular positive Hermitian metric, as a key main theorem. %, by using blow-ups Theorem \ref{Blow ups of Dem appro with log poles and ideal sheaves}.

\begin{theorem}$(=\,$\textnormal{Theorem\,\ref{Approximation thm of hol section with ideal sheaves}}$)$\label{Approximation thm in Introduction}
    Let $(X,\varPsi)$ be a weakly pseudoconvex manifold and $L$ be a holomorphic line bundle with a singular Hermitian metric $h$. 
    If $h$ is singular positive, then for any sublevel set $X_c$, the natural restriction map
    \begin{align*}
        \rho_c:H^0(X,K_X\otimes L\otimes\scr{I}(h))\longrightarrow H^0(X_c,K_X\otimes L\otimes\scr{I}(h))%\rho_c:H^q(X,\Omega^p_X\otimes L)&\longrightarrow H^q(X_c,K_X\otimes L)
    \end{align*}
    has dense images with respect to the topology induced by the $L^2$-norm $||\bullet||_h$ and the topology of uniform convergence on all compact subsets in $X_c$. %for every $c\in\bb{R}$.
\end{theorem}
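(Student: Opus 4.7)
My plan is to adapt the classical Nakano-Kazama-Ohsawa $L^2$ approximation argument to the singular setting via the standard cut-off plus $\dbar$-correction scheme.

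Given $\sigma \in H^0(X_c, K_X \otimes L \otimes \scr{I}(h))$, for each large $\nu$ I take a smooth $\lambda_\nu : \bb{R} \to [0,1]$ with $\lambda_\nu \equiv 1$ on $(-\infty, c - 2/\nu]$ and $\lambda_\nu \equiv 0$ on $[c - 1/\nu, \infty)$, and set $\eta_\nu := \lambda_\nu \circ \varPsi$. Extended by zero outside $X_c$, $\widetilde{\sigma}_\nu := \eta_\nu\,\sigma$ is a smooth $K_X \otimes L$-valued form on all of $X$ that is locally $L^2$ with respect to $h$; its $\dbar$-error $f_\nu := \dbar\widetilde{\sigma}_\nu = (\lambda_\nu'\circ\varPsi)\,\dbar\varPsi \wedge \sigma$ is supported in the compact annulus $K_\nu := \{c - 2/\nu \leq \varPsi \leq c - 1/\nu\} \Subset X_c$ with $|\lambda_\nu'| = O(\nu)$.

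Next I solve $\dbar u_\nu = f_\nu$ on $X$ with a twisted $L^2$ estimate engineered so that $u_\nu$ is quantitatively small on the inner sublevel $X_{c - 3/\nu}$. For this, choose a smooth convex non-decreasing function $\alpha_\nu : \bb{R} \to \bb{R}$ with $\alpha_\nu \equiv -N_\nu$ on $(-\infty, c - 3/\nu]$ and $\alpha_\nu \geq M_\nu$ on $[c - 2/\nu, +\infty)$, where $N_\nu, M_\nu \to +\infty$ are calibrated (e.g.\ $N_\nu = M_\nu = 3\log\nu$) so as to dominate the $O(\nu^2)$ Lipschitz cost of $\lambda_\nu'$, and set $\psi_\nu := \alpha_\nu \circ \varPsi$. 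Since $\varPsi$ is plurisubharmonic and $\alpha_\nu$ is convex non-decreasing, $\psi_\nu$ is plurisubharmonic, so the twisted metric $h\,e^{-\psi_\nu}$ on $L$ remains singular positive. Using Demailly's complete-K\"ahler device on each sublevel set $X_{c''}$, applying the Demailly-Nadel $L^2$ existence theorem for $(n,1)$-forms valued in $L$, and passing to the weak $L^2$-limit as $c''\to \sup_X \varPsi$, I obtain $u_\nu \in L^2_{\rom{loc}}(h)$ on $X$ with
\begin{equation*}
\int_X |u_\nu|^2_h\, e^{-\psi_\nu}\, dV \;\leq\; \int_{K_\nu} |f_\nu|^2_{h,\omega}\, e^{-\psi_\nu}\, dV \;\leq\; C\,\nu^{-1}\int_{K_\nu} |\sigma|^2_h\, dV.
\end{equation*}

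Setting $\sigma_\nu := \widetilde{\sigma}_\nu - u_\nu$, we have $\dbar\sigma_\nu = 0$, and the local $L^2(h)$-integrability of $u_\nu$ yields $\sigma_\nu \in H^0(X, K_X \otimes L \otimes \scr{I}(h))$. On $X_{c - 3/\nu}$ the weight satisfies $e^{-\psi_\nu} = e^{N_\nu}$ and $\widetilde{\sigma}_\nu = \sigma$, so $\int_{X_{c-3/\nu}}|\sigma - \sigma_\nu|^2_h\, dV \leq e^{-N_\nu} C\nu^{-1}\int_{K_\nu}|\sigma|^2_h\, dV \to 0$. Since $\sigma - \sigma_\nu$ is holomorphic on $X_{c - 3/\nu}$, the sub-mean-value inequality upgrades this $L^2$ smallness to uniform smallness on every compact $K \Subset X_c$, which gives density in both the $L^2$-topology and the compact-convergence topology. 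The main obstacle is the $\dbar$-correction step: producing a global $L^2$ solution on the weakly pseudoconvex (in general non-K\"ahler) manifold $X$ against the twisted singular weight $h\,e^{-\psi_\nu}$. This requires Demailly's complete-K\"ahler construction on each sublevel, an auxiliary strict-positivity perturbation that is removed in the limit, and a verification that the resulting $u_\nu$ stays in $L^2_{\rom{loc}}(h)$ so that $\sigma_\nu$ genuinely lies in $\scr{I}(h)$; a secondary but delicate point is the simultaneous calibration of $N_\nu$ and $M_\nu$ so that $e^{-\psi_\nu}$ is very large on $X_{c - 3/\nu}$ while on $K_\nu$ it suppresses both the $O(\nu^2)$ growth of $\lambda_\nu'$ and the $L^2$-mass of $\sigma$ on the shrinking annulus.
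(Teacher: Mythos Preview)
Your cut-off plus $\dbar$-correction scheme is a natural alternative to the paper's route, which instead runs an orthogonality/$\dbar^{*}$ argument between each pair $X_b\Subset X_c$ (Propositions~\ref{Prop of approximation thm with L2-norm} and~\ref{Prop of approximation thm with semi-norms}) and then telescopes over the chain $X_{c+k}$ (\S\ref{subsection: Proof of thm 4.1}). As written, however, your sketch has two genuine gaps.

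The decisive one is the $\dbar$-solving step. The theorem does \emph{not} assume $X$ is K\"ahler; the only positivity available is the singular current $i\Theta_{L,h}\geq\omega_X$. What you call ``Demailly's complete-K\"ahler device on each sublevel $X_{c''}$'' presupposes a smooth K\"ahler form to which one adds $\idd\chi(1/(c''-\varPsi))$; without one there is no complete K\"ahler metric on $X_{c''}$, and the H\"ormander/Nadel estimate has nothing to run on. The paper devotes Theorem~\ref{regularization with the equality of ideals and completeness} to exactly this obstacle: via the approximation with algebraic singularities (Theorem~\ref{Dem appro with log poles and ideal sheaves}), blow-ups, and a Poincar\'e-type twist $\prod_j(\log(\varepsilon\|\sigma_j\|^2))^2$ along the exceptional divisors, it manufactures a metric $\hbar_c$ on $L|_{X_c}$ with $\scr{I}(\hbar_c)=\scr{I}(h)$ whose curvature is itself a complete K\"ahler metric on $X_c\setminus Z_c$. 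Your phrase ``an auxiliary strict-positivity perturbation removed in the limit'' does not substitute for this construction. A secondary gap is the $L^2(X_c,h)$-density conclusion: your weighted estimate controls $u_\nu$ only on $X_{c-3/\nu}$, whereas on the collar $\{c-3/\nu<\varPsi<c\}$ the weight $e^{-\psi_\nu}$ may be as small as $e^{-M_\nu}$, so $\int_{X_c\setminus X_{c-3/\nu}}|u_\nu|^2_h$ can blow up like $e^{M_\nu}\nu^{-1}=\nu^{2}$. Uniform convergence on compacts does not imply $\|\sigma_\nu-\sigma\|_{h,X_c}\to 0$; this is precisely why the paper first proves density for a strict pair $X_b\Subset X_c$ and only afterwards telescopes.
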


Throughout this paper, we let $X$ be an $n$-dimensional weakly pseudoconvex manifold, $\omega_X$ be a Hermitian metric and $L$ be a holomorphic line bundle on $X$ with a singular Hermitian metric $h$.
Here, if we take an arbitrary smooth Hermitian metric $h_0$ on $L$, the singular Hermitian metric $h$ can be written as $h=h_0e^{-2\varphi}$ for some locally integrable function $\varphi\in L^1_{loc}(X,\bb{R})$.
Unless otherwise stated, \( h \) is assumed to be \textit{singular} \textit{positive}.

Let us formulate our main results.
To provide embeddings and points separation on appropriate subsets, it is necessary to understand the zero subspace \( V(\mathscr{I}(h^m)) \) as \( m \to +\infty \).
Thus, considering the Lelong number's non-zero locus $E_{+}(h):=\{x\in X\mid \nu(-\log h,x)>0\}$, we have $E_{+}(h)=\bigcup_{m\in\bb{N}}V(\scr{I}(h^m))$.
On the other hand, regarding \( h_c \) and \( Z_c \) in the above observation, when globalizing, we handle the union of the analytic subsets \( Z_c \) on $X_c$, i.e., $\bigcup_{c\in\bb{N}}Z_c$, which is well-defined on $X$ by $Z_c\subset Z_{c+1}|_{X_c}$.
This union \( \bigcup_{c\in\bb{N}} Z_c \) is defined independently of the construction method, leading to \( \bigcup_{c\in\bb{N}} Z_c = E_{+}(h) \) (see Proposition \ref{Prop E0(h)=cup Zj}).
Using this appropriate singular loci subset \( E_{+}(h)=\bigcup_{m\in\bb{N}}V(\scr{I}(h^m)) \), we obtain the following global embedding. %theorem.

\begin{theorem}$(=\,$\textnormal{Theorem\,\ref{Thm global points separation}\,and\,\ref{Thm global embedding}}$)$\label{Thm global embedding in Introduction}
    We assume that $X$ is non-compact. For a Hermitian metric $\omega_X$ satisfying $\iO{L,h}\geq\omega_X$ in the sense of currents on $X$,
    if there exist a constant $C>0$ and a smooth Hermitian metric $h_0$ on $L$, which does not necessarily carry positivity, such that $C\omega_X\geq\iO{L,h_0}$ on $X$, then 
    we have the following main results. 
    \begin{itemize}
        \item [] \hspace{-10mm} $($\textnormal{\textbf{Global points separation}}$)$ Let $x_1,\ldots,x_r$ be $r$ distinct points on $X\setminus\bigcup_{k\in\bb{N}}\!V(\scr{I}(h^k))$. 
        Then for any positive integer $m\geq C\big(n(n+2r-3)/2+2-r\big)$, the space $H^0(X,K_X\otimes L^{\otimes m}\otimes\scr{I}(h^m))$ separates $\{x_j\}^r_{j=1}$, 
        i.e., the restriction map $H^0(X,K_X$ $\otimes\, L^{\otimes m}\otimes\scr{I}(h^m))\longrightarrow \bigoplus^r_{j=1}\cal{O}_X/\fra{m}_{X,x_j}$ is surjective. 
        %\begin{align*}
        %    H^0(X,K_X\otimes L^{\otimes m}\otimes\scr{I}(h^m))\longrightarrow \bigoplus^r_{j=1}\cal{O}_X/\fra{m}_{X,x_j}.
        %\end{align*}

        In particular, if $m\geq C\big(n(n-1)/2+1\big)$ then the adjoint bundle $K_X\otimes L^{\otimes m}$ is generated by global sections on $X\setminus\bigcup_{k\in\bb{N}}V(\scr{I}(h^k))$. 
        \item [] \hspace{-10mm} $($\textnormal{\textbf{Global embedding}}$)$ Let $m$ be a positive integer with $m\geq C\cdot n(n+1)/2$. %$m_2(L,h_\natural)$.
        Then, any open subset of $X\setminus\bigcup_{k\in\bb{N}}V(\scr{I}(h^k))$ %which is countable at infinity, 
        is holomorphically embeddable in to $\bb{P}^{2n+1}$ by a linear subsystem of $|(K_X\otimes L^{\otimes m})^{\otimes n+1+\ell}\otimes L^{\otimes p}\otimes\scr{I}(h^{p+1})|$ for any $\ell\geq1$ and $p\geq0$.
        In particular, %for any integer $m\geq C\cdot n(n+1)/2$, 
        the adjoint bundle $K_X\otimes L^{\otimes m}$ is ample over any open subset of $X\setminus\bigcup_{k\in\bb{N}}V(\scr{I}(h^k))$, and 
        the tensor power of the adjoint bundle $(K_X\otimes L^{\otimes m})^{\otimes n+2}$ is very ample over any open subset of $X\setminus\bigcup_{k\in\bb{N}}V(\scr{I}(h^k))$.
    \end{itemize}
\end{theorem}

Note that Theorem \ref{Approximation thm in Introduction} and \ref{Thm global embedding in Introduction} hold without assuming K\"ahler-ness.
Here, if $h$ is smooth positive then we can take $C=1$ and $\omega_X=\iO{L,h}$. %and in this case the statement coincides with the result of \cite{Tak98}.
This and Remark \ref{Remark: in Introduction for add semi-positivity} below shows that Theorem \ref{Thm global embedding in Introduction} generalizes Takayama's result \cite[Theorem\,1.2]{Tak98}.

\begin{remark}$(=\,$\textnormal{Remark\,\ref{remark of only analytic singularities}}$)$
    In Theorem \ref{Thm global embedding in Introduction}, if \( h \) has only analytic singularities, then \( E_{+}(h) = \bigcup_{k\in\bb{N}}V(\scr{I}(h^k)) \) is an analytic subset, and under the same assumptions as Theorem \ref{Thm global embedding}, the complement  \( X\setminus E_{+}(h) \) is holomorphically embeddable in to \( \bb{P}^{2n+1} \).
\end{remark}

\begin{remark}\label{Remark: in Introduction for add semi-positivity}
    In Theorem \ref{Thm global embedding in Introduction}, furthermore, if $L\longrightarrow X$ is assumed to be semi-positive, 
    then the condition on $C$ is unnecessary and we may take $C = 1;$ see Theorem \ref{Thm global embedding with semi-positive}.
\end{remark}

Let us consider the points separation theorem necessary for Theorem \ref{Thm global embedding in Introduction}. 
It is sufficient to establish the following (see Theorem \ref{Ext [Tak98, Thm 4.2]}) from Approximation Theorem \ref{Approximation thm in Introduction}.
Here, a subvariety refers to a reduced and irreducible complex subspace, and for any point $x\in X_c$ we set 
\begin{align*}
    d_c(x):=\max\{\rom{dim}\,V\mid V \text{ is a } compact \text{ subvariety of } X_c \text{ passing through } x\}.
\end{align*}

\begin{theorem}\label{Thm points separation thm in Introduction}$($\textnormal{cf.\,Theorem\,\ref{Ext [Tak98, Thm 4.1]}\,and\,\ref{Ext [Tak98, Thm 4.2]}}$)$
    As in the above observation $($or $\S\ref{Setting of each sublevel set})$, we take the singular positive Hermitian metric $h_c$ on $L|_{X_c}$ and the analytic subset $Z_c$ obtained as the singular locus of $h_c$
    from Demailly-type approximation Theorem \ref{Dem appro with log poles and ideal sheaves} $($or blow-ups Theorem \ref{Blow ups of Dem appro with log poles and ideal sheaves}$)$.
    Let $x_1,\ldots,x_r$ be $r$ distinct points on a subset $X_c\setminus Z_c$ and let $d_r:=\max_{1\leq j\leq r} d_c(x_j)$.
    %Then 
    For any integer $m\geq d_r(d_r+2r-1)/2+1$, there exists a positive rational number $\vartheta_c$ smaller than $d_r(d_r+2r-1)/2+1$ such that 
    the space $H^0(X_c,K_X\otimes L^{\otimes m}\otimes\scr{I}(h_c^{m-\vartheta_c}))$ separates $\{x_j\}^r_{j=1}$, i.e., the following restriction map is surjective.
    \begin{align*}
        H^0(X_c,K_X\otimes L^{\otimes m}\otimes\scr{I}(h_c^{m-\vartheta_c})) \longrightarrow \bigoplus^r_{j=1}\cal{O}_X/\fra{m}_{X,x_j}.
    \end{align*}
\end{theorem}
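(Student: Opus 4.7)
The plan is to reduce to the smooth-metric case by passing to the resolution of $Z_c$ supplied by Theorem \ref{Blow ups of Dem appro with log poles and ideal sheaves}. That theorem provides a proper modification $\pi_c:\tilde{X}_c\longrightarrow X_c$ on which the singular data for $h_c$ is concentrated on an exceptional divisor with rational log-pole coefficients, and a splitting $\pi_c^*L\cong\tilde{L}\otimes\scr{O}_{\tilde{X}_c}(D)$ with $\tilde{L}$ carrying a smooth positive Hermitian metric $\tilde{h}$ compatible with the ideals $\scr{I}(h_c^{\bullet})$ under push-forward. Since $x_1,\ldots,x_r\in X_c\setminus Z_c$, each $x_j$ lifts uniquely to $\tilde{x}_j\in\tilde{X}_c$ outside the exceptional locus; compact subvarieties through $\tilde{x}_j$ are in dimension-preserving correspondence with those through $x_j$ via strict transform and direct image, so the invariant $d_r$ is preserved upstairs.

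Next I apply the extension of Takayama's theorem, Theorem \ref{Ext [Tak98, Thm 4.2]}, to the smoothly positive pair $(\tilde{L},\tilde{h})$ on the relatively compact weakly pseudoconvex manifold $(\tilde{X}_c,\pi_c^*\varPsi)$. For every integer $m\geq d_r(d_r+2r-1)/2+1$ this produces sections of $K_{\tilde{X}_c}\otimes\tilde{L}^{\otimes m}$ separating $\tilde{x}_1,\ldots,\tilde{x}_r$; because $\pi_c$ is a local biholomorphism at each $\tilde{x}_j$, such separation descends faithfully to evaluation modulo $\fra{m}_{X,x_j}$ after push-down.

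To push the sections down, I combine the relative canonical decomposition $K_{\tilde{X}_c}\cong\pi_c^*K_{X_c}\otimes\scr{O}(K_{\tilde{X}_c/X_c})$, the splitting of $\tilde{L}^{\otimes m}$, the projection formula, and the Hironaka--Riemenschneider-type identification
\[
\pi_{c*}\scr{O}_{\tilde{X}_c}\!\bigl(K_{\tilde{X}_c/X_c}-\lfloor tD\rfloor\bigr)\;\cong\;\scr{I}(h_c^{t})
\]
encoded in the log-pole form of $h_c$ and the blow-up compatibility of Theorem \ref{Blow ups of Dem appro with log poles and ideal sheaves}. This yields $\pi_{c*}(K_{\tilde{X}_c}\otimes\tilde{L}^{\otimes m})\cong K_{X_c}\otimes L^{\otimes m}\otimes\scr{I}(h_c^{m-\vartheta_c})$ for a positive rational $\vartheta_c$ determined by the balance between $K_{\tilde{X}_c/X_c}$ and $D$, hence intrinsic to the approximation $h_c$ and independent of $m$ and $r$.

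The main obstacle is controlling $\vartheta_c$ precisely. Ensuring that the push-forward equals the multiplier ideal at the prescribed rational exponent, rather than a proper sub-ideal, requires the sharp log-pole structure of $h_c$ and the $L^2$-interpretation of $\scr{I}$ built into Theorem \ref{Dem appro with log poles and ideal sheaves}. The numerical bound $\vartheta_c<d_r(d_r+2r-1)/2+1$ then follows because $\vartheta_c$ depends only on the discrepancy data of the finite resolution $\pi_c$; by selecting a sufficiently fine Demailly approximation parameter one arranges $\vartheta_c$ to lie under the threshold, completing the reduction.
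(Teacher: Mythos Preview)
Your approach has a structural gap at the very first step. Theorem \ref{Blow ups of Dem appro with log poles and ideal sheaves} does \emph{not} produce a splitting $\pi_c^*L\cong\tilde L\otimes\cal{O}_{\widetilde X_c}(D)$ with $\tilde L$ smoothly positive. What it gives is $\widetilde L_{m_c}=\pi_c^*L^{\otimes m_c}\otimes\cal{O}_{\widetilde X_c}(-D_c)$, which is only \emph{semi}-positive on $\widetilde X_c$ (positive off $D_c$); to get a genuinely positive line bundle one must pass to $\scr{L}=\widetilde L_{m_c}^{\otimes t}\otimes\cal{O}_{\widetilde X_c}(-D_b)$ for a large $t=t_c$. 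Applying Takayama's smooth theorem to $\scr{L}$ therefore produces sections of $K_{\widetilde X_c}\otimes\scr{L}^{\otimes m}$, which corresponds to $L^{\otimes m\cdot m_c t_c}$ downstairs, not $L^{\otimes m}$; the uniform exponent bound in the statement is lost.

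The push-forward identification you write down also fails in general. One has $K_{\widetilde X_c}\otimes\widetilde L_{m_c}^{\otimes m}=\pi_c^*(K_{X_c}\otimes L^{\otimes m\cdot m_c})\otimes\cal{O}_{\widetilde X_c}\bigl(\sum_j(\zeta_j-ma_j)D_j\bigr)$, whereas $\scr{I}(h_c^t)=\pi_{c*}\cal{O}_{\widetilde X_c}\bigl(\sum_j(\zeta_j-\lfloor ta_j/m_c\rfloor)D_j\bigr)$. There is no reason for a single rational $t=m-\vartheta_c$ to make these coefficient vectors match across all $j$, so the push-forward is typically only contained in, not equal to, a multiplier ideal of the claimed shape. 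Finally, even granting an inclusion, your argument for the bound $\vartheta_c<d_r(d_r+2r-1)/2+1$ is circular: the discrepancies $\zeta_j$ and multiplicities $a_j$ of the fixed resolution $\pi_c$ carry no information about $d_r$ or $r$, and ``refining the Demailly parameter'' changes $\pi_c$ itself, so you cannot simultaneously fix the resolution and tune $\vartheta_c$.

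The paper's proof takes a completely different route. It never pushes sections down from $\widetilde X_c$. Instead it runs the Angehrn--Siu / Kawamata--Shokurov inductive construction \emph{on $X_c$ itself}: starting from the non-vanishing Theorem \ref{Non-vanishing thm} one builds multivalued sections $\{s_{d,j}\}$ of $L^{\otimes(\eta_d+\varepsilon_d)}$ through the inductive statements $\mathbf{A}_d$ (Lemmas \ref{Lem basic construcion An-1}, \ref{Ext [Lemma-Definition 4.4, Tak98]: singular positive var}, \ref{Ext [Lemma 3.10, Tak98]: see [Lemma 4.1, AS95]}), cutting the dimension of the critical variety at each step. The output is a singular positive metric $H_c=h_c^{m-\vartheta_c}\cdot(\sum|s_{0,j}|^2)^{-1}$ with $\vartheta_c=\eta_0+\varepsilon_0$, and the bound $\vartheta_c<m_0\le d_r(d_r+2r-1)/2+1$ falls out because $\eta_d$ increases by at most $dP_d/Q_d$ at each dimension step $d\le d_r$. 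The surjectivity then follows from the Nadel-type vanishing Theorem \ref{Thm Nadel type vanishing without Kahler} applied to $H_c$, together with induction on $r$. The resolution $\pi_c$ enters only as a tool inside the concentration Lemma \ref{Ext [Lemma-Definition 4.4, Tak98]: singular positive var} and the volume Lemma \ref{Ext [Lemma 3.10, Tak98]: see [Lemma 4.1, AS95]}, not as a global device for transporting sections.
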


Here, by assuming global conditions similar to Theorem \ref{Thm global embedding in Introduction} for a smooth metric \( h_0 \), we can take \( \vartheta_c = 0 \) in Theorem \ref{Thm points separation thm in Introduction}. 
Within this and the Approximation Theorem \ref{Approximation thm in Introduction}, it becomes possible to obtain the global points separation theorem (= Theorem \ref{Thm global points separation}).

Theorem \ref{Thm points separation thm in Introduction} is based on Takayama's arguments \cite{Tak98}, which localize the discussions from \cite{AS95} and employ Kawamata-Shokurov's concentration method \cite{KMM87}. 
To overcome the difficulties arising from singular positivity, we apply the following. %results and techniques. 
\begin{itemize}
    \item Non-vanishing theorem (= Theorem \ref{Non-vanishing thm}) obtained from the singular holomorphic Morse inequality including multiplier ideal sheaves (= Theorem \ref{singular holomorphic Morse inequality on w.p.c}).
    \item Application of Kawamata-Shokurov's concentration method (see Lemma \ref{Ext [Lemma-Definition 4.4, Tak98]: singular positive var}), made possible through the appropriate use of blow-ups and vanishing theorems.
    \item Nadel-type vanishing theorem (= Theorem \ref{Thm Nadel type vanishing without Kahler}) without \kah metrics.
\end{itemize}
In this paper, unlike the case of positive line bundles, \( X \) and \( X_c \) generally admit a \kah current due to singular positivity but do not necessarily possess a \kah metric. 
Note that if a Moishezon manifold admits a \kah metric, it becomes projective (see \cite{Moi66}), %\cite[Theorem\,2.2.26]{MM07}, 
thus the existence of a \kah metric is an important condition.

In relation to the first condition above, we discuss the volume and big-ness. %of line bundles. 
In the compact case, as is well known from Siegel's Lemma, the Riemann-Roch theorem establishes that \( L \) being big is equivalent to \( \mathrm{Vol}_X(L) > 0 \). %(see \cite[Lemma\,6.13]{Dem12}).
A more precise characterization of Moishezon-ness is obtained through Bonavero's type singular holomorphic Morse inequality (see \cite{Tak94,Bon98}), which generalizes the Riemann-Roch theorem.
We obtain the following as an extension of this Morse inequality to the non-compact case.

\begin{theorem}$(=\,$\textnormal{Theorem\,\ref{singular holomorphic Morse inequality on w.p.c}}$)$
    Let $E$ be a holomorphic vector bundle on $X$.
    There exists a nowhere dence subset $\Sigma\subset(\inf_X\varPsi,\sup_X\varPsi)$ such that for any $c\in (\inf_X\varPsi,\sup_X\varPsi)\setminus\Sigma$, 
    there exist a singular positive Hermitian metric $h_c$ on $L|_{X_c}$ obtained from Theorem \ref{Dem appro with log poles and ideal sheaves}, satisfying $\scr{I}(h)=\scr{I}(h_c)$ and a positive integer $t_c\in\bb{N}$, 
    and we obtain the singular holomorphic Morse inequality
    \begin{align*}
        \rom{dim}\,H^0(X_c,E\otimes L^{\otimes p}\otimes\scr{I}(h_c^p))\geq\rom{rank}\,E\cdot\frac{p^n}{t_c^n n!}\int_{X_c(\leq1)}\Bigl(\frac{i}{2\pi}\Theta_{L,h_c}\Bigr)^n+o(p^n).
    \end{align*}
    %where $h_\natural$ is the canonical singular Hermitian metric associated with $h$.
    
    Furthermore, if there exists $c>\inf_X\varPsi$ such that $E_{+}(h)\subset X_c$, i.e., $E_{+}(h)$ is compact, or that $E_{+}(h)\bigcap X_c=\emptyset$, then we have a global singular holomorphic Morse inequality
    \begin{align*}
        \rom{dim}\,H^0(X,K_X\otimes L^{\otimes p}\otimes\scr{I}(\widetilde{h}^p))\geq\frac{p^n}{t_c^n n!}\int_{X_c(\leq1)}\Bigl(\frac{i}{2\pi}\Theta_{L,h_c}\Bigr)^n+o(p^n),
    \end{align*}
    where $\widetilde{h}$ is a singular positive Hermitian metric on $L$ and coincides with $h_c$ on $X_c$.
\end{theorem}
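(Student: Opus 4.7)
The plan is to reduce the Morse inequality to a smooth version on a proper bimeromorphic modification of $X_c$, and then to globalize via the Approximation Theorem \ref{Approximation thm in Introduction}. First, for each fixed $c<\sup_X\varPsi$, apply Theorem \ref{Dem appro with log poles and ideal sheaves} to $(L,h)$ on the relatively compact subset $X_c$. This produces the desired singular Hermitian metric $h_c$ on $L|_{X_c}$ with analytic singularities of the form $\tfrac{1}{2t_c}\log\sum_j|g_j|^2+(\text{smooth})$, with $\scr{I}(h^m)=\scr{I}(h_c^m)$ on $X_c$ for every $m\in\bb{N}$, and with $\iO{L,h_c}$ still a \kah current up to an arbitrarily small error $\varepsilon_c\omega_X$; the integer $t_c\in\bb{N}$ is precisely the denominator produced by the approximation.

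Next I would invoke the blow-up Theorem \ref{Blow ups of Dem appro with log poles and ideal sheaves} to obtain a proper modification $\pi_c:\wit{X}_c\longrightarrow X_c$ along the singular locus of $h_c$, so that $\pi_c^{-1}\scr{I}(h_c^p)$ becomes a principal sheaf $\cal{O}_{\wit{X}_c}(-D_{c,p})$ with $D_{c,p}$ an effective divisor linear in $p/t_c$, and the pullback $\pi_c^*h_c$ acquires only divisorial singularities. The projection formula then yields
\[
H^0\bigl(X_c,E\otimes L^{\otimes p}\otimes\scr{I}(h_c^p)\bigr)\;\cong\;H^0\bigl(\wit{X}_c,\pi_c^*(E\otimes L^{\otimes p})\otimes\cal{O}_{\wit{X}_c}(-D_{c,p})\bigr).
\]
On $\wit{X}_c$ the $\bb{Q}$-line bundle $\pi_c^*L-D_c/t_c$ carries the smooth (off $D_c$) metric induced by $\pi_c^*h_c$, and its Monge--Amp\`ere volume over the Morse sublevel recovers $t_c^{-n}\int_{X_c(\leq 1)}\bigl(\tfrac{i}{2\pi}\Theta_{L,h_c}\bigr)^n$ via push-forward, which is where the factor $1/t_c^n$ in the claimed lower bound will ultimately come from.

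The central analytic step is then the holomorphic Morse inequality on the weakly pseudoconvex sublevel set $\wit{X}_c$, which inherits the exhaustion $\varPsi\circ\pi_c$. Following Bonavero's strategy \cite{Bon98,MM07} but replacing spectral asymptotics on a compact manifold by Demailly's $L^2$-estimates on weakly pseudoconvex levels, I would cut off at a slightly smaller $c'<c$ with $\overline{\wit{X}_{c'}}$ compact, apply the classical Morse estimate there, and control the boundary error via the exhaustion as $c'\uparrow c$, using that $\pi_c^*L-D_c/t_c$ is $\varepsilon_c$-positive. The excluded set $\Sigma$ is then the countable union of the critical values of $\varPsi$ and of $\varPsi\circ\pi_c$ together with the scale transitions in the family $\{(h_c,t_c)\}_c$; by Sard's theorem combined with countability, it is nowhere dense in $(\inf_X\varPsi,\sup_X\varPsi)$.

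Finally, for the global statement, the hypothesis $E_{+}(h)\cap X_c=\emptyset$ forces $\scr{I}(h^p)=\cal{O}_X$ on a neighborhood of $\overline{X_c}$ for every $p$, so the sections counted are genuine sections of $K_X\otimes L^{\otimes p}$. Applying Theorem \ref{Approximation thm in Introduction} with $(L,h)$ replaced by the still singular positive pair $(L^{\otimes p},h^p)$ gives density of global sections of $K_X\otimes L^{\otimes p}\otimes\scr{I}(h^p)$ on $X$ in those on $X_c$; since both spaces are finite dimensional the density transfers the inequality, so that $\rom{dim}\,H^0(X,K_X\otimes L^{\otimes p}\otimes\scr{I}(h^p))\geq\rom{dim}\,H^0(X_c,K_X\otimes L^{\otimes p}\otimes\scr{I}(h_c^p))$. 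The principal obstacle I anticipate is the non-compact Morse inequality on $\wit{X}_c$: Bonavero's compact proof leans on the spectral theory of the $\dbar$-Laplacian, which must be replaced here by a careful cutoff/exhaustion analysis uniform in $p$, ensuring that both the boundary contributions near $\{\varPsi\circ\pi_c=c\}$ and the singular contributions from $D_{c,p}$ vanish in the limit at the correct order $o(p^n)$.
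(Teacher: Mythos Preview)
Your overall architecture matches the paper's: approximate $h$ by $h_c$ with algebraic singularities, blow up along the singular locus, apply a Morse inequality on $\wit{X}_c$, then globalize via the Approximation Theorem. But you misidentify $t_c$, and as a consequence the ``principal obstacle'' you anticipate is exactly what the paper sidesteps. The integer $t_c$ is \emph{not} the approximation denominator $m_c$. After pulling back, $\wit{L}_{m_c}=\pi_c^*L^{\otimes m_c}\otimes\cal{O}_{\wit{X}_c}(-D_c)$ carries a smooth metric that is only semi-positive (it degenerates along the exceptional divisor). The paper twists further by $\cal{O}_{\wit{X}_c}(-D_b)$, and $t_c$ is the power making $\scr{L}=\wit{L}_{m_c}^{\otimes t_c}\otimes\cal{O}_{\wit{X}_c}(-D_b)$ strictly positive with a \emph{smooth} metric everywhere on $\wit{X}_c$ (this is condition $(\alpha)$ of Theorem~\ref{Blow ups of Dem appro with log poles and ideal sheaves}). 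The set $\Sigma$ comes from Sard's theorem applied to $\varPsi\circ\pi_c$, ensuring $\partial\wit{X}_c$ is smooth for $c\notin\Sigma$. With a smooth positive $\scr{L}$ and smooth boundary, the paper simply invokes the ready-made Morse inequality for domains with boundary \cite[Theorem~3.2.16]{MM07}; writing $p=m_ct_cq+r$ and using the inclusion $\cal{O}_{\wit{X}_c}(-qD_b)\hookrightarrow\cal{O}_{\wit{X}_c}$ together with $(\beta)$ of Theorem~\ref{Blow ups of Dem appro with log poles and ideal sheaves} transports the estimate back to $H^0(X_c,E\otimes L^{\otimes p}\otimes\scr{I}(h_c^p))$, the factor $t_c^{-n}$ arising from $q\sim p/(m_ct_c)$ and the curvature comparison $\iO{\scr{L}}\geq\iO{\wit{L}_{m_c}}=m_c\,\pi_c^*\iO{L,h_c}$. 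No Bonavero-style spectral argument, and no cutoff analysis uniform in $p$, is needed.

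Two smaller points. Your claim $\scr{I}(h^m)=\scr{I}(h_c^m)$ for every $m$ overstates what Theorem~\ref{Dem appro with log poles and ideal sheaves} delivers from a single approximant; the statement of the theorem only requires and only asserts $\scr{I}(h)=\scr{I}(h_c)$. In the global part, your justification ``since both spaces are finite dimensional'' is unfounded (neither $H^0$ over $X$ nor over the non-compact $X_c$ need be), but the inequality $\dim H^0(X,\cdot)\geq\dim H^0(X_c,\cdot)$ still follows from density: if $\dim H^0(X,\cdot)=N<\infty$, the image of restriction is at most $N$-dimensional, hence closed, and density forces $\dim H^0(X_c,\cdot)\leq N$.
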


In the non-compact case, the equivalence between big-ness and the positivity of volume is not established. 
However, it follows from singular positivity that the adjoint bundle possesses both properties.

\begin{theorem}$(=\,$\textnormal{Theorem\,\ref{Thm global bigness theorem}\,and\,\ref{Thm global bigness and Vol if Zc is empty}}$)$
    There exists a positive integer $m_0$ such that the adjoint bundle $K_X\otimes L^{\otimes m}$ is singular semi-positive and big for any integer $m\geq m_0$.
    
    Furthermore, if there exists $c>\inf_X\varPsi$ such that $E_{+}(h)\bigcap X_c=\emptyset$, then  %$X_c$ is Stein and $K_X\otimes L$ is singular semi-positive. 
    the adjoint bundle $K_X\otimes L^{\otimes m}$ is big for any integer $m\geq n(n+1)/2$. 
    In addition, if $m\geq n(n-1)/2+1$, then the adjoint bundle $K_X\otimes L^{\otimes m}$ is singular semi-positive, and we have the inequality $\rom{Vol}_X(K_X\otimes L^{\otimes m+1})>0$.
\end{theorem}

This result concerning positivity generalizes Takayama's result \cite{Tak98} to singular Hermitian metrics and generalizes Demailly's characterization of singular positivity and bigness on compact manifolds (see \cite{Dem90}, \cite[Chapter\,6]{Dem12}) to weakly pseudoconvex manifolds, which may be non-compact.

\section{Singular Hermitian metrics on line bundles}\label{Section 2: introduce several notations for sHm}

\subsection{Singular Hermitian metrics and multiplier ideal sheaves}

\begin{definition}
    A real function $\varphi:X\longrightarrow[-\infty,+\infty)$ is said to have \textit{analytic} \textit{singularities} if $\varphi$ has locally the form 
    \begin{align*}
        \varphi=\frac{c}{2}\log\sum_{j\in J}|f_j|^2+\psi,
    \end{align*} 
    where $J$ is at most countable, $f_j$ are non-vanishing holomorphic functions, $\psi$ is a locally bounded function and $c\in\bb{R}_{>0}$.
    Here, the singular support of $\varphi$ and $\idd\varphi$ is an analytic subset.
    If $c\in\bb{Q}_{>0}$, then we furthermore say that $\varphi$ has \textit{algebraic} \textit{singularities}.
\end{definition}

\begin{definition}$($\textnormal{cf.\,\cite{Dem12},\,\cite[Definition\,3.2]{Wat23}}$)$
    A \textit{singular} \textit{Hermitian} \textit{metric} $h$ on a holomorphic line bundle $L\longrightarrow X$ is a metric which is given in any trivialization $\tau:L|_U\stackrel{\simeq}{\longrightarrow}U\times\bb{C}$ by 
    \begin{align*}
        ||\xi||_h=|\tau(\xi)|e^{-\varphi(x)} \quad\text{for any}\,\, x\in U,\,\,\xi\in L_x,
    \end{align*}
    where $\varphi\in L^1_{loc}(U)$ is an arbitrary function, called the weight of the metric with respect to the trivialization $\tau$.
    We say that a singular Hermitian metric $h$ is \textit{singular} \textit{positive} if the weight of $h$ with respect to any trivialization coincide with some strictly plurisubharmonic function almost everywhere. 
    In other words, the curvature current \( \iO{L,h} \) is strictly positive, meaning \( \iO{L,h}\geq\omega_X \) in the sense of currents for some Hermitian metric \( \omega_X>0 \). 
    Moreover, if this coinciding function is merely plurisubharmonic, then $h$ is called \textit{singular} \textit{semi}-\textit{positive}, which coincides with \( \iO{L,h}\geq0 \) in the sense of currents.
\end{definition}

In other words, for a fiexd smooth Hermitian metric $h_0$, the singular Hermitian metric $h$ can be written as $h=h_0 e^{-2\varphi}$ for some locally integrable function $\varphi\in L^1_{loc}(X,\bb{R})$.
Furthermore, it is known that if \( X \) is compact \kah\!, singular positivity (resp. singular semi-positivity) coincides with being big (resp. pseudo-effective) \cite{Dem90,Dem12}.

\begin{definition}
    A function $\varphi:X\longrightarrow[-\infty,+\infty)$ is said to be a \textit{quasi}-\textit{plurisubharmonic} if $\varphi$ is locally the sum of a plurisubharmonic function and a smooth function, or equivalently, if $\idd\varphi$ is bounded below by a continuous real $(1,1)$-form.
    The \textit{multiplier} \textit{ideal} \textit{sheaf} $\scr{I}(\varphi)\subset\cal{O}_X$ is defined by 
    \begin{align*}
        \scr{I}(\varphi)(U):=\{f\in\cal{O}_X(U)\mid |f|^2 e^{-2\varphi}\in L^1_{loc}(U)\}
    \end{align*}
    for any open subset $U\subset X$. For a singular Hermitian metric $h$ on $L$ with the local weight $\varphi$, i.e., $h=h_0e^{-2\varphi}$, we define the multiplier ideal sheaf of $h$ by $\scr{I}(h):=\scr{I}(\varphi)$.
\end{definition}

Similarly, the $L^2$-\textit{subsheaf} $\scr{L}^2(L,h)=:\scr{L}^2(h)$ is defined by $\scr{L}^2(L,h)=L\otimes\scr{I}(h)$.
The multiplier ideal sheaf $\scr{I}(\varphi)$ can sometimes be defined using $e^{-\varphi}$ instead of $e^{-2\varphi}$, but in this paper, due to its compatibility with blow-ups, we define it using $e^{-2\varphi}$.

\subsection{Multivalued holomorphic sections and natural singular Hermitian metrics}\label{subsection: multivalued section and natural sHm}

We introduce a natural singular Hermitian metric defined by sections as follows.
Let $\sigma_0,\ldots,\sigma_N$ be a finite number of holomorphic sections of $L$, then we can define a \textit{natural} \textit{singular} \textit{Hermitian} \textit{metric} of $L$ by 
\begin{align*}
    h_\sigma:=\frac{h_0}{\sum^N_{j=0}|\sigma_j|^2_{h_0}}\xlongequal[]{\!local\!}\frac{1}{\sum^N_{j=0}|\sigma_j|^2},
\end{align*}
where $|\sigma|^2_{h_0}$ is a semi-positive smooth function on $X$ and \( h_\sigma \) is determined independently of the choice of \( h_0 \), thus we simply write it as \( 1/\sum^N_{j=0} |\sigma_j|^2 \).
The associated weight function is given locally by $\varphi=\frac{1}{2}\log(\sum^N_{j=0}|\sigma_j|^2)$,
%\begin{align*}
%    \varphi=\frac{1}{2}\log\Bigl(\sum^N_{j=0}|\sigma_j|^2\Bigr),
%\end{align*}
and is plurisubharmonic. Thus, the metric $h_\sigma$ is singular semi-positive.
Let us denote by $V$ the linear system defined by $\sigma_0,\ldots,\sigma_N$ and by $\rom{Bs}_{|V|}=\bigcap\sigma_j^{-1}(0)$ its base locus.
We have the Kodaira map 
\begin{align*}
    \varPhi_V:X\dashrightarrow\bb{P}^N:x\longmapsto[\sigma_0(x):\cdots:\sigma_N(x)],
\end{align*}
which is holomorphic on $X\setminus \rom{Bs}_{|V|}$. Then the isomorphism $L\cong\varPhi^*_{V}\cal{O}_{\bb{P}^N}(1)$ is obtained on $X\setminus\rom{Bs}_{|V|}$, and 
the curvature current $\frac{i}{2\pi}\Theta_{L,h_\sigma}$ is equal to the pull-back over $X\setminus\rom{Bs}_{|V|}$ of the Fubini-Study metric $\omega_{FS}=\frac{i}{2\pi}\log\sum^N_{j=0}|z_j|^2$ of $\bb{P}^N$ by $\varPhi_V$.

Here, for an effective divisor $D=\sum^J_{j=1}a_jD_j$ with only simple normal crossing and $a_j\in\bb{N}$, let $s_D$ be a canonical section of $\cal{O}_X(D)$. 
Then the natural singular Hermitian metric $1/|s_D|^2$ on $\cal{O}_X(D)$ satisfies $\scr{I}(1/|s_D|^2)=\cal{O}_X(-D)$ and $\scr{L}^2(1/|s_D|^2)=\cal{O}_X$.

For a positive rational number $q$ and some locally finite open cover $\{U_\lambda\}_{\lambda\in\Lambda}$, the family $s=\{s_\lambda\mid s_\lambda^{r_\lambda}\in H^0(U,\cal{O}_X)\,\, \text{for a positive integer}\,\, r_\lambda \}_{\lambda\in\Lambda}$ is said to be a 
\textit{multivalued} \textit{holomorphic} \textit{section} of $L^{\otimes q}$ over $X$, if there exists a positive integer $p$ such that $pq$ begin an integer and that $s^p=\{s_\lambda^p\}_{\lambda\in\Lambda}$ defines an element of $H^0(X,L^{\otimes pq})$.
We denote
\begin{align*}
    |s|^2_{h_0^q}:=&\sqrt[p]{|s^p|^2_{h_0^{pq}}}\quad\text{the pointwise length},\\
    s^{-1}(0):=&\{x\in X\mid s_\lambda(x)=0\,\,\text{for some}\,\,\lambda\in\Lambda\}.
\end{align*}
Let $s_1,\ldots,s_k$ be a finite number of multivalued holomorphic sections of $L^{\otimes q}$ such that $s_j^p$ is holomorphic section of $L^{\otimes pq}$ for some $p\in\bb{N}$ with $pq$ being an integer. 
Then we can define a natural singular Hermitian metric of $L^{\otimes q}$ by 
\begin{align*}
    h_s:=\frac{h^q_0}{\sum^k_{j=1}|s_j|^2_{h^q_0}}%\xlongequal[]{\!local\!}
    =\frac{1}{\sum^k_{j=1}|s_j|^2}.
\end{align*}

Let $\scr{J}$ be the sheaf of ideal of $\cal{O}_X$ generated locally by $\{s_j^p\}^k_{j=1}$. 
Let $X_\sharp$ be a relatively compact open subset of $X$ and $\scr{J}^\sharp$ be the restriction of $\scr{J}$ on $X_\sharp$.
Then, there exists a proper modification $\pi:\widetilde{X}_\sharp\longrightarrow X_\sharp$ by a finite number of blow-ups with non-singular centers and 
a finite number of smooth exceptional divisors $E_\mu$ in $\widetilde{X}_\sharp$ with only simple normal crossing, satisfying the following conditions:
\begin{itemize}
    \item The sheaf $\pi^{-1}\scr{J}^\sharp\cdot\cal{O}_{\widetilde{X}_\sharp}=\rom{im}(\pi^*\scr{J}^\sharp\longrightarrow\cal{O}_{\widetilde{X}_\sharp})$ is equal to the ideal sheaf $\cal{O}_{\widetilde{X}_\sharp}(-\sum^J_{\mu=1}\xi^\sharp_\mu E_\mu)$ for some non-negative integers $\xi_\mu\in\bb{N}\cup\{0\}$.
    \item $K_{\widetilde{X}_\sharp}=\pi^*K_{X_\sharp}\otimes\cal{O}_{\widetilde{X}_\sharp}(\sum^J_{\mu=1}\zeta_\mu E_\mu)$ for some non-negative integers $\zeta_\mu\in\bb{N}\cup\{0\}$.
\end{itemize}
Let $\xi_\mu:=\xi_\mu^\sharp/p$. For any $t\geq0$, we set the multiplier ideal sheaf 
\begin{align*}
    \scr{I}(t):=\scr{I}(h_s^t)=\scr{I}\Bigl(\bigl(\sum^k_{j=1}|s_j|^2\bigr)^{-t}\Bigr).
\end{align*}
Then, we obtain $x\in V(\scr{I}(t))|_{X_\sharp}$ if and only if there exists an index $\mu$ such that $E_\mu$ intersects $\pi^{-1}(x)$ and that $t\xi_\mu-\zeta_\mu\geq1$.
It is known that
\begin{align*}
    \scr{I}(t)=\pi_*\cal{O}_{\widetilde{X}_\sharp}\Bigl(\sum^J_{\mu=1}(\zeta_\mu-\lfloor t\xi_\mu\rfloor)E_\mu\Bigr),
\end{align*}
where $\lfloor\bullet\rfloor$ denotes the integer part.
For any point $x\in V(\scr{I}(1))|_{X_\sharp}$, we see that 
\begin{align*}
    \sup&\{t\geq0\mid\scr{I}(t)_x=\cal{O}_{X,x}\}\\
    &=\min\{t\geq0\mid t\xi_\mu-\zeta_\mu\geq1\,\,\text{for}\,\,\mu\,\,\text{such that}\,\,E_\mu\,\,\text{intersects}\,\,\pi^{-1}(x)\}.
\end{align*}
This quantity, denoted by $\alpha(x)$, is always a rational number, and satisfies $0<\alpha(x)\leq 1$.

\subsection{Bigness and ampleness on general manifolds}

Let $X$ be a complex manifold which is not necessarily compact and $V$ be a finite dimensional linear subspace of $H^0(X,L)$.
We define the linear subsystem \(|V|\) corresponding to \(V\) by \(|V|=\bb{P}(V)\).
The base locus of the linear subsystem $|V|$ is given by 
\begin{align*}
    \rom{Bs}_{|V|}=\bigcap_{s\in V}\rom{div}\,s=\{x\in X\mid s(x)=0 \,\,\text{for all}\,\,s\in V\setminus\{0\}\}.
\end{align*}

If $X$ is compact, then the linear subsystem $|V|$ is naturally identified with $|V|=\{\rom{div}\,s\mid s\in V\setminus\{0\}\}$. 
In fact, due to the compactness of $X$, i.e., $H^0(X,\cal{O}^*_X)=\bb{C}^*$, it follows that $\rom{div}\,s=\rom{div}\,s'$ if and only if $s=\lambda s'$ for some $\lambda\in\bb{C}^*$. %, and thus $|V|$ is parameterized by the projective space $\bb{P}(V)$.
However, if $X$ is general, note that the projective space \(\bb{P}(V) = |V|\) is parametrized by $V^*/(H^0(X,\cal{O}^*_X)\setminus\bb{C}^*)$.
In fact, $\rom{div}\,s=\rom{div}\,s'$ if and only if $s=fs'$ for some $f\in H^0(X,\cal{O}_X^*)$.

Regarding ampleness, the following definition is well-known (see \cite{Tak98}).
A holomorphic line bundle $L$ on $X$ is \textit{very} \textit{ample}, if there exists a holomorphic embedding $\varPhi:X\longrightarrow\bb{P}^N$ such that $L\cong\varPhi^*\cal{O}_{\bb{P}^N}(1)$. 
$L$ is \textit{ample} if there exists a positive integer $m_X$ such that $L^{\otimes m_X}$ is very ample.

Regarding bigness, we introduce a definition involving the multiplier ideal sheaf.

\begin{definition}\label{Def of bigness}
    Let $X$ be a complex manifold (not necessarily compact) and $L$ be a holomorphic line bundle on $X$ equipped with a singular Hermitian metric $h$.
    Set 
    \begin{align*}
        \varrho_m^h:=\sup\{\rom{dim}\,\overline{\varPhi_V(X)}\mid V\subseteq H^0(X,L^{\otimes m}\otimes\scr{I}(h^m))\,\, \text{with}\,\, \rom{dim}\,V<+\infty\}
    \end{align*}
    if $H^0(X,L^{\otimes m}\otimes\scr{I}(h^m))\ne\{0\}$, and $\varrho_m^h=-\infty$ otherwise. 
    The \textit{Kodaira}-\textit{Iitaka} \textit{dimension} of $L\otimes\scr{I}(h)$ is defined by 
    \begin{align*}
        \kappa(L\otimes\scr{I}(h)):=\max\{\varrho_m^h\mid m\in\bb{N}\}=\max_{m\geq1}\Biggl\{\sup_{\substack{V\subseteq H^0(X,L^{\otimes m}\otimes\scr{I}(h^m)) \\ \rom{dim}\,V<+\infty}}\rom{dim}\,\overline{\varPhi_V(X)}\Biggr\}.
    \end{align*}
    The $L^2$-subsheaf $\scr{L}^2(h)=L\otimes\scr{I}(h)$ is called \textit{big} if $\kappa(L\otimes\scr{I}(h))=\rom{dim}\,X$. 

    Furthermore, considering $h$ to be smooth, \( \kappa(L) \) is also defined in a similar manner, and $L$ is called \textit{big} if $\kappa(L)=\rom{dim}\,X$.
\end{definition}

Clearly, if \( L\otimes\scr{I}(h) \) is big, then \( L \) is also big. 
This definition, as can be seen from the following observation, is a natural generalization.

\begin{proposition}\label{Prop of bigness on proj mfd}
    Let $X$ be a projective manifold and $L$ be a big line bundle on $X$. 
    There exist a singular positive Hermitian metric $h$ on $L$ and a large integer $p\in\bb{N}$ such that $L\otimes\scr{I}(h)$ is also big and $L^{\otimes p}\otimes\scr{I}(h^p)$ is very ample line bundle.
\end{proposition}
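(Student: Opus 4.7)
The plan is to apply Kodaira's lemma to split $L^{\otimes m_0}$ as an ample piece tensored with an effective piece, and then build $h$ explicitly from these two pieces. Since $L$ is big on the projective manifold $X$, Kodaira's lemma furnishes a positive integer $m_0$, a very ample line bundle $A$ on $X$, and an effective divisor $E$ on $X$ with $L^{\otimes m_0}\cong\cal{O}_X(E)\otimes A$; a Bertini/Hironaka refinement (after possibly enlarging $m_0$ and replacing $A$ by a suitable power) further ensures that $E$ may be taken to have simple normal crossing support. Fixing a smooth Hermitian metric $h_A$ on $A$ whose curvature $\omega:=\iO{A,h_A}$ is a K\"ahler form, and letting $s_E$ denote the canonical section of $\cal{O}_X(E)$, I define $h$ on $L$ by taking $m_0$-th roots of the product singular metric $h_A\otimes 1/|s_E|^2$ on $L^{\otimes m_0}$; the local weight is $\varphi=\tfrac{1}{m_0}(\varphi_A+\log|s_E|)$.

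Singular positivity follows at once from $\iO{L,h}=\tfrac{1}{m_0}(\omega+[E])\geq\tfrac{1}{m_0}\omega$. For the multiplier ideal at level $km_0$ with $k\geq 1$, the smoothness of $\varphi_A$ gives $\scr{I}(h^{km_0})=\scr{I}(k\log|s_E|)$, and the SNC formula $\scr{I}(1/|s_D|^2)=\cal{O}_X(-D)$ recalled in $\S$\ref{subsection: multivalued section and natural sHm}, applied with $D=kE$, yields the invertible identification $\scr{I}(h^{km_0})=\cal{O}_X(-kE)$. Therefore
\begin{equation*}
L^{\otimes km_0}\otimes\scr{I}(h^{km_0})\,\cong\,(A\otimes\cal{O}_X(E))^{\otimes k}\otimes\cal{O}_X(-kE)\,\cong\,A^{\otimes k}
\end{equation*}
as honest line bundles on $X$. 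Choosing any $k\geq 1$ (so that $A^{\otimes k}$ remains very ample since $A$ already is) and setting $p:=km_0$, which can be taken as large as desired, one obtains that $L^{\otimes p}\otimes\scr{I}(h^p)\cong A^{\otimes k}$ is a very ample line bundle. For the bigness of $L\otimes\scr{I}(h)$, the linear subspace $V:=H^0(X,L^{\otimes p}\otimes\scr{I}(h^p))=H^0(X,A^{\otimes k})$ realises, via the identification above, the Kodaira embedding $\varPhi_V:X\hookrightarrow\bb{P}^N$ of $A^{\otimes k}$, so $\rom{dim}\,\overline{\varPhi_V(X)}=\rom{dim}\,X=n$, and Definition \ref{Def of bigness} gives $\kappa(L\otimes\scr{I}(h))=n$.

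The main obstacle is the SNC arrangement of $E$: if $E$ is not SNC, then $\scr{I}(k\log|s_E|)$ need not be invertible and $L^{\otimes p}\otimes\scr{I}(h^p)$ would fail to be a line bundle, being only a coherent subsheaf of $L^{\otimes p}$. This will be overcome by Hironaka-style log resolution of the pair $(X,\rom{Supp}\,E)$ combined with a Bertini argument within $|L^{\otimes m_0}\otimes A^{-1}|$—which is non-empty for $m_0$ large and carries enough sections after further enlargement of $m_0$—so that a general member cuts out a divisor with simple normal crossing support; the resulting refined decomposition of $L^{\otimes m_0}$ plugs into the construction above without any further change.
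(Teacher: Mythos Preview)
Your approach is essentially the same as the paper's: both invoke Kodaira's lemma to write $L^{\otimes p}\cong A\otimes\cal{O}_X(E)$ with $A$ very ample and $E$ effective, define $h$ on $L$ as the $p$-th root of a smooth positive metric on $A$ tensored with $1/|s_E|^2$, and conclude $\scr{I}(h^p)=\cal{O}_X(-E)$ so that $L^{\otimes p}\otimes\scr{I}(h^p)\cong A$. The paper uses the pullback of the Fubini--Study metric via the embedding given by $A$ rather than an abstract K\"ahler form $h_A$, but this is cosmetic.

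Your SNC detour, however, is unnecessary and your proposed workaround is not clean: Bertini gives smoothness only off the base locus of $|L^{\otimes m_0}\otimes A^{-1}|$, and log resolution changes $X$. The point is that the identity $\scr{I}(1/|s_E|^{2k})=\cal{O}_X(-kE)$ holds for \emph{any} effective Cartier divisor $E$, not only SNC ones: locally $|g|^2/|s_E|^{2k}\in L^1_{loc}$ forces the meromorphic function $g/s_E^k$ to be locally $L^2$, hence holomorphic (a meromorphic function with a pole along a hypersurface is never locally $L^2$ near a generic point of that hypersurface). The paper uses this directly without any SNC hypothesis, and you can simply drop your last paragraph.
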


In the case of this proposition, it is appropriate to say that the \( L^2 \)-subsheaf \(\scr{L}^2(h) = L \otimes \scr{I}(h)\) is ample, here \(\scr{L}^2(h^p) = L^{\otimes p} \otimes \scr{I}(h^p)\).

\begin{proof}%[Proof of Proposition \ref{Prop of bigness on proj mfd}]
    By Kodaira's lemma, there exist an ample line bundle $A$, an effective divisor $E$ and a large integer $p$ such that $L^{\otimes p}=A\otimes\cal{O}_X(E)$.
    Here, by choosing $p$ sufficiently large, we can assume that $A$ is very ample.
    Let $s_E$ be a canonical section of $\cal{O}_X(E)$ and $h_0$ be a smooth Hermitian metric on $L$. 
    From very ampleness of $A$, there exist $\sigma_0,\ldots,\sigma_N\in H^0(X,A)$ such that the Kodaira map $\varPhi_\sigma:X\longrightarrow\bb{P}^N$ induced by sections $\sigma_j$ becomes an embedding.
    Here, each $\tau_j:=\sigma_j^{1/p}\otimes s_E^{1/p}$ is multivalued holomorphic section of $L$.
    We define a natural singular Hermitian metric of $L$ by
    \begin{align*}
        h:=\frac{h_0}{\sum^N_{j=0}|\tau_j|^2_{h_0}}=\frac{1}{\sum^N_{j=0}|\sigma_j^{1/p}|^2}\cdot\frac{1}{|s_E^{1/p}|^2}=:h_\sigma\otimes h_s,
    \end{align*} 
    where $h_\sigma$ and $h_s$ are also natural singular Hermitian metrics on $A^{1/p}$ and $\cal{O}_X(E)^{1/p}$. 
    The curvature $\frac{i}{2\pi}\Theta_{A,h_\sigma^p}=\varPhi_\sigma^*\omega_{FS}$ is Hodge.
    By the inequality 
    \begin{align*}
        \frac{i}{2\pi}\Theta_{L,h}=\frac{1}{p}\Bigl(\frac{i}{2\pi}\Theta_{A,h_\sigma^p}+[E]\Bigr)\geq\frac{1}{p}\omega, \qquad \omega=\frac{i}{2\pi}\Theta_{A,h_\sigma^p},
    \end{align*}
    where $[E]$ is the closed positive current induced by integration over $E$, it follows that $h$ is singular positive.
    Since $h_\sigma$ is smooth, we obtain $\scr{I}(h^p)=\scr{I}(1/|s_E|^2)=\cal{O}_X(-E)$ and $L^{\otimes p}\otimes\scr{I}(h^p)=A$, here $h_s^p=1/|s_E|^2$.
\end{proof}

\section{Approximation, blow-ups and construction of canonical metrics}\label{Section 3: Demailly's approximate and blow-ups}

In this section, we refine Demailly's approximation to provide an approximation that ideal sheaves are preserved and compatible with blow-ups. Using this blow-ups %Theorem \ref{Blow ups of Dem appro with log poles and ideal sheaves}, 
we construct a canonical singular Hermitian metric.

\subsection{Demailly-type approximation theorems}\label{subsection: Demailly-type approximation}

We know the following Demailly's approximation as an extremely effective method.

\begin{theorem}[{=\,\cite[Theorem\,2.3]{DPS01},\,cf.\,\cite[Theorem\,2.9]{Mat22}}]\label{Dem appro preserves ideal sheaves}
    Let $X$ be a complex manifold equipped with a Hermitian metric $\omega$ and $T=\alpha+\idd\varphi$ be a closed $(1,1)$-current on $X$ 
    where $\alpha$ is a smooth closed $(1,1)$-form and $\varphi$ is a quasi-plurisubharmonic function. Assume that $T=\alpha+\idd\varphi\geq \gamma$ holds for a continuous real $(1,1)$-from $\gamma$ on $X$. 
    Then, for a relatively compact set $K\Subset X$, there exists a sequence of quasi-plurisubharmonic functions $\{\varphi_\nu\}_{\nu\in\bb{N}}$ on $K$ with the following properties:
    \begin{itemize}
        \item [($a$)] $\varphi_\nu$ is smooth on $K\setminus Z_\nu$, where $Z_\nu$ is an analytic subset of $K$ with $Z_\nu\subset Z_{\nu+1}$,
        \item [($b$)] $\{\varphi_\nu\}_{\nu\in\bb{N}}$ is a decreasing sequence of functions converging to $\varphi$,
        \item [($c$)] $\scr{I}(\varphi)=\scr{I}(\varphi_\nu)$ on $K$ for all $\nu\in\bb{N}$,
        \item [($d$)] $T_\nu=\alpha+\idd\varphi_\nu$ satisfies $T_\nu\geq\gamma-\varepsilon_\nu\omega$, where $\lim_{\nu\to+\infty}\varepsilon_\nu=0$.
    \end{itemize}
\end{theorem}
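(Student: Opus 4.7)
The plan is to build each $\varphi_\nu$ locally by a Bergman-kernel construction driven by the Ohsawa-Takegoshi extension theorem, and then to glue the local pieces using Demailly's regularized maximum. Cover a neighborhood of $K$ by finitely many coordinate balls $B_j\Subset X$ on which the smooth form $\alpha$ admits a potential, so that $T$ is locally represented as $\idd\psi_j$ for a plurisubharmonic $\psi_j$ that differs from $\varphi|_{B_j}$ by a smooth function. On each $B_j$ form the Hilbert space $\mathcal{H}_{\nu,j}$ of holomorphic functions $f$ with $\int_{B_j}|f|^2 e^{-2\nu\psi_j}\,dV<+\infty$, pick an orthonormal basis $\{\sigma_{\nu,j,k}\}_k$, and set
\begin{equation*}
\psi_{\nu,j}:=\frac{1}{2\nu}\log\sum_k|\sigma_{\nu,j,k}|^2.
\end{equation*}
This $\psi_{\nu,j}$ is plurisubharmonic and smooth off the common zero locus of the $\sigma_{\nu,j,k}$, which is an analytic subset of $B_j$ and supplies the singular set $Z_\nu$ in $(a)$.

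Properties $(b)$--$(d)$ then follow from standard two-sided pointwise estimates. A submean-value inequality applied to extremal elements of $\mathcal{H}_{\nu,j}$ gives $\psi_{\nu,j}(z)\le\sup_{B(z,r)}\psi_j+C\log(1/r)/\nu$, so that $\limsup_{\nu\to+\infty}\psi_{\nu,j}\le\psi_j$; replacing $\psi_{\nu,j}$ by $\max_{k\ge\nu}\psi_{k,j}$ along a diagonal subsequence yields the monotone decreasing convergence required by $(b)$. Ohsawa-Takegoshi extension of the value $1$ from a point $z_0$ to $B_j$ with $L^2$ norm controlled by $e^{2\nu\psi_j(z_0)}$ produces the matching lower bound $\psi_{\nu,j}(z_0)\ge\psi_j(z_0)-C/\nu$, which immediately implies $\scr{I}(\psi_j)\subseteq\scr{I}(\psi_{\nu,j})$. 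The reverse inclusion in $(c)$ is a second application of Ohsawa-Takegoshi: any germ of $\scr{I}(\psi_{\nu,j})$ is representable by an element of $\mathcal{H}_{\nu,j}$ whose integrability against $e^{-2\nu\psi_j}$ localizes to membership in $\scr{I}(\psi_j)$. Since $\idd\log\sum_k|\sigma_{\nu,j,k}|^2\ge 0$, one obtains
\begin{equation*}
\alpha+\idd\psi_{\nu,j}\ge\gamma-\frac{C}{\nu}\omega,
\end{equation*}
the $1/\nu$ loss absorbing the smooth discrepancy between $\psi_j$ and a potential of $\alpha$, which is $(d)$ with $\varepsilon_\nu=O(1/\nu)$.

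To descend to a global $\varphi_\nu$ on $K$, shift the $\psi_{\nu,j}$ by constants so that on each overlap $B_j\cap B_k$ one of them dominates by a fixed margin, and assemble the pieces using Demailly's regularized maximum. This operation preserves quasi-plurisubharmonicity, lower curvature bounds, analyticity of the singular locus, and equalities of multiplier ideal sheaves, so properties $(a)$--$(d)$ all descend to the glued function $\varphi_\nu$.

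The main obstacle is the simultaneous preservation of $(c)$ and $(d)$. The raw Bergman construction gives most naturally only the weaker statement $\scr{I}(\nu\psi_{\nu,j})=\scr{I}(\nu\psi_j)$, from which the level-one equality $\scr{I}(\psi_{\nu,j})=\scr{I}(\psi_j)$ does not follow formally; at the same time the curvature error in $(d)$ must be kept uniform across all charts and all $\nu$, and the singular sets $Z_\nu$ must form an increasing family after shifting and gluing. Demailly's device of perturbing $\psi_{\nu,j}$ by a small multiple of a mollification of $\psi_j$ is what realizes $\scr{I}(\psi_{\nu,j})=\scr{I}(\psi_j)$ at the natural level without spoiling the curvature bound, and propagating this perturbation through the regularized-maximum gluing so that the constants remain uniform on the relatively compact set $K$ is the delicate bookkeeping that must be controlled.
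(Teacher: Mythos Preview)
Your overall architecture---local Bergman-kernel approximants built from Ohsawa--Takegoshi, then glued by a regularized maximum---is exactly the scheme of \cite{DPS01}, which the paper cites for this result and whose construction is recapitulated in the proof of Theorem~\ref{Dem appro with log poles and ideal sheaves}. The two-sided pointwise bounds and the curvature estimate are the right ingredients (though in \cite{DPS01} the curvature bound comes from the quadratic correction $c_j|z^{(j)}|^2$ in the weight, not directly from $\idd\log\sum|\sigma|^2\geq0$ as you write).

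The genuine gap is your mechanism for the reverse inclusion in $(c)$. Your ``second Ohsawa--Takegoshi'' step does not do what you claim: a germ $f\in\scr{I}(\psi_{\nu,j})$ satisfies $\int|f|^2\big(\sum_k|\sigma_{\nu,j,k}|^2\big)^{-1/\nu}<\infty$, and representing $f$ inside $\mathcal{H}_{\nu,j}$ gives integrability of $|f|^2e^{-2\nu\psi_j}$, which is information at level $\nu$, not at level $1$. You correctly flag this in your last paragraph, but your proposed fix---``perturbing $\psi_{\nu,j}$ by a small multiple of a mollification of $\psi_j$''---is not how \cite{DPS01} closes the argument, and I do not see how such a perturbation would force $\scr{I}(\psi_{\nu,j})\subseteq\scr{I}(\psi_j)$.

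The actual device, visible in the proof of Theorem~\ref{Dem appro with log poles and ideal sheaves}, is different: one first scales the local approximant by a factor $(1+2^{-k})>1$ and chooses the Bergman index $\nu=2^{p(k)}$ along a rapidly growing subsequence so that
\[
\int_K\Bigl(e^{-2\varphi}-e^{-2\max\{\varphi,\,(1+2^{-k})\psi_{2^{-k},2^{p(k)}}\}}\Bigr)\,dV_\omega\leq 2^{-k},
\]
and then sets $\widetilde\varphi_\nu:=\sup_{k\geq\nu}(1+2^{-k})\psi_{2^{-k},2^{p(k)}}$. Summing the displayed inequalities over $k\geq\nu$ yields $e^{-2\widetilde\varphi_\nu}-e^{-2\varphi}\in L^1(K)$, which immediately gives $\scr{I}(\widetilde\varphi_\nu)\subseteq\scr{I}(\varphi)$; the opposite inclusion is automatic from $\widetilde\varphi_\nu\geq\varphi$. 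The sup over infinitely many $k$ is also what makes the sequence $\{\widetilde\varphi_\nu\}$ genuinely decreasing. Your outline has the $\sup_{k\geq\nu}$ step (you invoke it for $(b)$), but it is precisely this $L^1$-comparison of $e^{-2\varphi}$ with $e^{-2\widetilde\varphi_\nu}$, enabled by the $(1+2^{-k})$ twist, that secures $(c)$---not a mollification trick.
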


There are two types of Demailly approximations: one involves approximations with algebraic singularities or whose singular loci are Lelong number upper-level sets (see \cite[Theorem\,1.1\,and\,Proposition\,3.7]{Dem92}, \cite[Theorem\,13.12]{Dem12}), 
while the other preserves ideal sheaves as described above. 
The former is suitable for using blow-ups, while the latter works effectively due to its decreasing property when dealing with $L^2$-estimates.
We provide the following approximation that ideal sheaves are preserved and ensures compatible with blow-ups, i.e., having algebraic singularities, sacrificing both singular loci begin Lelong number upper-level sets and the decreasing property.

\begin{theorem}\label{Dem appro with log poles and ideal sheaves}
    Let $X$ be a complex manifold equipped with a Hermitian metric $\omega$ and $T=\alpha+\idd\varphi$ be a closed $(1,1)$-current on $X$ 
    where $\alpha$ is a smooth closed $(1,1)$-form and $\varphi$ is a quasi-plurisubharmonic function. Assume that $T=\alpha+\idd\varphi\geq \gamma$ holds for a continuous real $(1,1)$-form $\gamma$ on $X$. 
    Then, for a relatively compact subset $K\Subset X$, there exist an increasing sequence of positive integers $\{m_\nu\}_{\nu\in\mathbb{N}}$ and a sequence of quasi-plurisubharmonic functions $\{\varphi_{m_\nu}\}_{\nu\in\bb{N}}$ on $K$ such that the following are satisfied.
    \begin{itemize}
        \item [$(a)$] $\varphi_{m_\nu}$ has algebraic singularities. %the same singularities as $1/2{m_\nu}$ times a logarithm of a sum of squares of holomorphic functions. 
        That is, locally $\varphi_{m_\nu}$ can be expressed as
            \begin{align*}
                \varphi_{m_\nu}=\frac{1+2^{-\nu}}{2m_\nu}\log\sum_j|\sigma_{m_\nu,j}|^2+\varPhi_{m_\nu},
            \end{align*}
            where $\{\sigma_{m_\nu,j}\}_{j\in\bb{N}}$ is an orthonormal basis of the Hilbert space $\cal{H}(m_\nu(\varphi+c|z|^2))$ for some real number $c$ and local coordinates $z$ and $\varPhi_{m_\nu}$ is smooth. %on $K$. 
        \item [$(b)$] $\varphi_{m_\nu}$ is smooth on $K\setminus Z_\nu$, where the set $Z_\nu$ of logarithmic poles of $\varphi_{m_\nu}$ is an analytic subset of $K$, satisfying $Z_\nu\subset Z_{\nu+1}$, and is locally obtained by $\bigcap_j \sigma_{m_\nu,j}^{-1}(0)$. 
        \item [$(c)$] there exists a large $\nu_0$ such that $\scr{I}(\varphi)=\scr{I}(\varphi_{m_\nu})$ on $K$ for any $\nu\geq \nu_0$. 
        \item [$(d)$] $T_{m_\nu}:=\alpha+\idd\varphi_{m_\nu}$ satisfies $T_{m_\nu}\geq\gamma-\varepsilon_\nu\omega$, where $\{\varepsilon_\nu\}_{\nu_0\leq\nu\in\bb{N}}$ is a decreasing sequence of positive numbers with $\lim_{\nu\to+\infty}\varepsilon_\nu=0$.
        \item [$(e)$] we obtain the following inequality related to Lelong numbers 
        \begin{align*}
            \nu(\varphi,x)-\frac{n}{m_\nu}\leq\nu\Big(\frac{\varphi_{m_\nu}}{1+2^{-\nu}},x\Big)\leq\nu(\varphi,x) \quad\text{ for any } x\in K.
        \end{align*}
    \end{itemize}

    In particular, for any $t>0$ there exists an enough large integer $\nu(t)\in\bb{N}$ such that $\scr{I}(t\varphi)=\scr{I}(t\varphi_{m_\nu})$ for any $\nu\geq\nu(t)$.
\end{theorem}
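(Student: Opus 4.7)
The plan is to combine the classical Bergman-kernel construction of Demailly with the Guan--Zhou strong openness theorem, using the dilation factor $1+2^{-\nu}$ as a bridge between the two. First, I would apply the standard construction (see \cite[Theorem 13.12]{Dem12}) on a slightly larger relatively compact open set $K' \Supset K$: after covering $K'$ by Stein charts on which $\varphi + c|z|^2$ is plurisubharmonic, form the Hilbert space $\cal{H}(m(\varphi+c|z|^2))$ with orthonormal basis $\{\sigma_{m,j}\}$ and set $\psi_m := \frac{1}{2m}\log\sum_j|\sigma_{m,j}|^2$ together with a smooth correction absorbing $-c|z|^2$ and the local-to-global gluing. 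Classical arguments then yield the algebraic-singularity form with singular locus $\bigcap_j \sigma_{m,j}^{-1}(0)$, the curvature bound $\alpha + \idd\psi_m \geq \gamma - \varepsilon_m^0\,\omega$ with $\varepsilon_m^0 \to 0$, the two-sided Lelong number comparison, and the two-sided pointwise comparison $\varphi - C/m \leq \psi_m$ (from the sub-mean inequality applied to $|\sigma_{m,j}|^2 e^{-2m\varphi}$) and $\psi_m \leq \varphi + C/m$ in a suitable averaged sense (from the Ohsawa--Takegoshi extension of $1$ at each point, whose $L^2$-norm controls the Bergman kernel).

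Next, I set $\varphi_{m_\nu} := (1+2^{-\nu})\psi_{m_\nu}$ for an increasing sequence $\{m_\nu\}$ to be chosen. Properties $(a)$, $(b)$, and $(e)$ transfer directly (Lelong numbers scale by the factor $1+2^{-\nu}$, which reproduces the statement in $(e)$ verbatim), while the curvature bound becomes $\alpha + \idd\varphi_{m_\nu} \geq \gamma - \varepsilon_\nu\,\omega$ with $\varepsilon_\nu = \varepsilon^0_{m_\nu} + 2^{-\nu}M$, where $M$ is a uniform bound for $|\alpha|_\omega$ on $K$; passing to a subsequence makes $\varepsilon_\nu$ strictly decreasing and still tending to zero, yielding $(d)$.

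The core of the argument is $(c)$. By the strong openness theorem of Guan--Zhou, applied uniformly on $K$ via coherence and the Noetherian property, there exists $\eta = \eta(K) > 0$ with $\scr{I}\big((1+\eta)\varphi\big) = \scr{I}(\varphi)$ on $K$. From the two-sided pointwise comparison above, for every $\lambda > 0$ and $m$ sufficiently large one has
\[
    \scr{I}(\lambda\varphi) \subseteq \scr{I}(\lambda\psi_m) \subseteq \scr{I}\bigl(\lambda(1+\tfrac{\eta}{2})\varphi\bigr),
\]
the extra $C/m$ errors in $\lambda \psi_m$ being absorbed into the factor $\eta/2$. Choosing $\nu_0$ with $2^{-\nu_0} \leq \eta/2$ and then $m_\nu$ growing fast enough that these inclusions apply at $\lambda = 1+2^{-\nu}$, I obtain the sandwich
\[
    \scr{I}(\varphi) \subseteq \scr{I}(\varphi_{m_\nu}) \subseteq \scr{I}\bigl((1+\eta)\varphi\bigr) = \scr{I}(\varphi) \quad \text{for all } \nu \geq \nu_0,
\]
forcing equality and establishing $(c)$. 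The concluding assertion for arbitrary $t>0$ follows by running the same argument with $t\varphi$ in place of $\varphi$ and its own strong openness threshold $\eta(t)$, possibly after extracting a diagonal subsequence of $\{m_\nu\}$.

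The main obstacle is the simultaneous preservation of the algebraic-singularity form in $(a)$, which precludes the $\max\{\psi_m, \varphi - C\}$ regularization of \cite{DPS01, Mat22} normally used to force ideal-sheaf equality, together with the exact equality $(c)$. The dilation factor $1+2^{-\nu}$ is introduced precisely to convert the merely approximate comparisons $\psi_{m_\nu} \approx \varphi$ into an exact equality of multiplier ideals through strong openness; matching the three small parameters $2^{-\nu}$, $n/m_\nu$, and $\eta$ in the correct order is delicate but essentially mechanical once the framework above is set up.
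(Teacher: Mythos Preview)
Your argument has a genuine gap in the inclusion $\scr{I}(\lambda\psi_m)\subseteq\scr{I}\bigl(\lambda(1+\tfrac{\eta}{2})\varphi\bigr)$, which is the hard direction of the sandwich. The two pointwise comparisons you quote do not yield this. First, you have the roles of the sub-mean-value inequality and Ohsawa--Takegoshi reversed: the sub-mean estimate gives the \emph{upper} bound $\psi_m(z)\leq\sup_{|\zeta-z|\leq r}\varphi(\zeta)+\frac{n}{m}\log\frac{C_1}{r}+C_2r^2$, while Ohsawa--Takegoshi gives the \emph{lower} bound $\psi_m\geq\varphi-C/m$. Second, and more seriously, neither bound produces an inequality of the form $\psi_m\leq(1+\tfrac{\eta}{2})\varphi+O(1)$, which is what would be needed for the pointwise route to your inclusion. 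Indeed, where $\varphi\to-\infty$ the right-hand side $(1+\tfrac{\eta}{2})\varphi$ is \emph{more} negative than $\varphi$, while $\psi_m\geq\varphi-C/m$ forces $\psi_m$ to be \emph{less} negative; so the desired pointwise inequality actually fails near the poles. The ``averaged'' inequality that \emph{is} available, namely $\int_K e^{2m(\psi_m-\varphi)}<\infty$, lives at scale $m$ and does not transfer to scale $1+2^{-\nu}$ by any H\"older-type argument. A concrete test case is $\varphi=n\log|z|$ on a ball in $\bb{C}^n$ with $n\geq2$: here $\scr{I}(\varphi)=\fra{m}_0$ but $\psi_m\approx\bigl(n-\tfrac{n-1}{m}\bigr)\log|z|$ has $\scr{I}(\psi_m)=\cal{O}$, so $\scr{I}(\psi_m)\not\subseteq\scr{I}(\varphi)$; the dilation by $1+2^{-\nu}$ only repairs this when $m_\nu$ grows at least like $2^{\nu}$, and for general $\varphi$ there is no such purely pointwise mechanism.

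The paper obtains the hard inclusion by an entirely different route, going through the full \cite{DPS01} construction rather than bypassing it. One selects indices $m_k=2^{p(k)}$ so that the integral inequality $\int_K\bigl(e^{-2\varphi}-e^{-2\max\{\varphi,\,(1+2^{-k})\psi_{2^{-k},m_k}\}}\bigr)dV_\omega\leq 2^{-k}$ holds, sets $\varphi_{m_k}:=(1+2^{-k})\psi_{2^{-k},m_k}$, and then forms the auxiliary decreasing sequence $\widetilde{\varphi}_\nu:=\sup_{k\geq\nu}\varphi_{m_k}$. The DPS01 argument (using that integral inequality, not any pointwise upper bound on $\psi_m$) gives $\scr{I}(\widetilde{\varphi}_\nu)=\scr{I}(\varphi)$ directly; since $\varphi_{m_\nu}\leq\widetilde{\varphi}_\nu$ by construction one gets $\scr{I}(\varphi_{m_\nu})\subseteq\scr{I}(\widetilde{\varphi}_\nu)=\scr{I}(\varphi)$. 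The other inclusion $\scr{I}((1+2^{-\nu})\varphi)\subseteq\scr{I}(\varphi_{m_\nu})$ does follow from $\psi_{\varepsilon,\nu}\geq\varphi$ as you indicate, and strong openness plus Noetherianity then closes the sandwich. In short, the auxiliary $\widetilde{\varphi}_\nu$ and the DPS01 integral argument are not avoidable here; your dilation trick handles the easy inclusion but not the hard one.
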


\begin{proof}
    Clearly, we may assume that $\alpha=0$ and $T=\idd\varphi\geq\gamma$ by replacing $T$ with $T-\alpha$ and $\gamma$ with $\gamma-\alpha$.
    The construction method of the sequence of functions is almost entirely derived from the proof of Theorem \ref{Dem appro preserves ideal sheaves}.
    (From the proof of \cite[Theorem\,2.9]{Mat22}, we can assume that \( K \) is compact.) 
    As in Step 1 of the proof of \cite[Theorem\,2.2.1]{DPS01}, for a given \( \varepsilon > 0 \), we fix an open covering of $K$ by open balls \( B_j=\{|z^{(j)}|<r_j\} \) with coordinates $z^{(j)}=(z^{(j)}_1,\ldots,z^{(j)}_n)$ such that 
    \begin{align*}
        0\leq\gamma+c_j\idd|z^{(j)}|^2\leq\varepsilon\omega \qquad\rom{on} \quad B_j,
    \end{align*}
    for some real number $c_j$.
    Then, using the local Demailly's approximation on each open subset \( B_j \), we get a sequence of functions \( \{\psi_{\varepsilon,\nu,j}\}_{\nu\in\bb{N}} \) with logarithmic poles defined by 
    \begin{align*}
        \psi_{\varepsilon,\nu,j}=\frac{1}{2\nu}\log\sum_{k\in\bb{N}}|\sigma_{\nu,j,k}|^2-c_j|z^{(j)}|^2 \qquad\rom{on}\quad B_j,
    \end{align*}
    where $\{\sigma_{\nu,j,k}\}_{k\in\bb{N}}$ is an orthonormal basis of the Hilbert space $\cal{H}_{\nu,j}:=\cal{H}_{B_j}(\nu(\varphi+c_j|z^{(j)}|^2))$ of holomorphic functions on $B_j$ with finite $L^2$-norm  
    \begin{align*}
        ||u||^2_{\nu,j}=\int_{B_j}|u|^2e^{-2\nu(\varphi+c_j|z^{(j)}|^2)}d\lambda_{z^{(j)}}.
    \end{align*}

    We already know that there exist $C_l,\,l=1,2,\ldots, $ depending only on $K$ and $\gamma$, such that the following uniform estimates hold: 
    \begin{align*}
        \idd\psi_{\varepsilon,\nu,j}&\geq-c_j\idd|z^{(j)}|^2\geq\gamma-\varepsilon\omega  &\rom{on}\quad B'_j (\Subset B_j),\\
        \varphi(z)\leq\psi_{\varepsilon,\nu,j}(z)&\leq\sup_{|\zeta-z|\leq r}\varphi(\zeta)+\frac{n}{\nu}\log\frac{C_1}{r}+C_2r^2  &\forall\,z\in B'_j,\quad r<r_j-r'_j,\\
        |\psi_{\varepsilon,\nu,j}-\psi_{\varepsilon,\nu,k}|&\leq\frac{C_3}{\nu}+C_4\varepsilon\bigl(\min\{r_j,r_k\}\bigr)^2  &\rom{on}\quad B'_j\cap B'_k, %\tag*{$(\spadesuit)$}
    \end{align*}
    here $B''_j=\{|z^{(j)}|<r''_j:=r_j/4\}\Subset B'_j=\{|z^{(j)}|<r'_j:=r_j/2\}$ and we may also assume that $K\subset\bigcup_jB''_j$. 
    From the final inequality, it follows that $\psi_{\varepsilon,\nu,j}$ and $\psi_{\varepsilon,\nu,k}$ have the same logarithmic poles.
    
    Construct the globally defined function \( \psi_{\varepsilon, \nu} \) on \( K \) by gluing together all functions \( \psi_{\varepsilon, \nu, j} \) with respect to \( j \), as in the proof of \cite[Theorem\,2.2.1]{DPS01}.
    \begin{align*}
        \psi_{\varepsilon,\nu}(z)=\sup_{j,\,z\in B'_j}\Bigl(\psi_{\varepsilon,\nu,j}(z)+12 C_4\varepsilon(r'^2_j-|z^{(j)}|^2)\Bigr) \qquad\rom{on} \quad K.
    \end{align*}
    We already know that $\psi_{\varepsilon,\nu}$ is well-defined and continuous, 
    \begin{align*}
        \idd\psi_{\varepsilon,\nu}\geq\gamma-C_5\varepsilon\omega
    \end{align*}
    for $\nu\geq\nu'(\varepsilon)$ large enough and that $\lim_{\nu\to+\infty}\psi_{\varepsilon,\nu}(z)=\varphi(z)$. 
    In particular, the quasi-plurisubharmonic function \( \psi_{\varepsilon,\nu} \) locally possesses logarithmic poles that can be expressed similarly to \( \psi_{\varepsilon,\nu,j} \).
    Furthermore, from the above inequality related to the Lelong number of \( \psi_{\varepsilon,\nu,j} \), we obtain the Lelong numbers inequality
    \begin{align*}
        \nu(\varphi,x)-\frac{n}{\nu}\leq\nu(\psi_{\varepsilon,\nu},x)\leq\nu(\varphi,x) \quad\text{ for any }\quad x\in \bigcup_j B'_j.
    \end{align*}
    
    Here, $\psi_{\varepsilon,\nu}(z)\geq\psi_{\varepsilon,\nu,j}(z)$ for some $j$ dependent on $z$, by $12 C_4\varepsilon(r'^2_j-|z^{(j)}|^2)>0$.
    From this and $\varphi(z)\leq\psi_{\varepsilon,\nu,j}(z)$ on $B'_j$, we have $\varphi\leq\psi_{\varepsilon,\nu}$ on $K$ and
    \begin{align*}
        \scr{I}(\varphi)\subseteq\scr{I}(\psi_{\varepsilon,\nu}) \qquad\rom{for\,\,any}\quad \varepsilon>0\quad\rom{and}\quad \nu\in\bb{N}.
    \end{align*}

    From Steps 2 and 3 of the proof of \cite[Theorem\,2.2.1]{DPS01}, we can choose an index \( \nu=p(k) \) such that $\lim_{k\to+\infty}p(k)=+\infty$ and 
    \begin{align*}
        \int_K\bigl(e^{-2\varphi}-e^{-2\max\{\varphi,\,(1+2^{-k})\psi_{2^{-k},2^{p(k)}}\}}\bigr)dV_\omega\leq 2^{-k}.
    \end{align*}
    By setting $\widetilde{\varphi}_\nu(z)=\sup_{k\geq\nu}(1+2^{-k})\psi_{2^{-k},2^{p(k)}}(z)$, we obtain an approximate sequence as given in Theorem \ref{Dem appro preserves ideal sheaves}, and from the construction, \( \{\widetilde{\varphi}_\nu\}_{\nu\in\bb{N}} \) is a decreasing sequence and satisfies the estimates
    \begin{align*}
        \widetilde{\varphi}_\nu\geq\max\{\varphi, (1+2^{-\nu})\psi_{2^{-\nu},2^{p(\nu)}}\}, \qquad \idd\widetilde{\varphi}_\nu\geq\gamma-C_52^{-\nu}\omega.
    \end{align*}
    Let $Z_\nu$ be the set of logarithmic poles of $\psi_{2^{-\nu},2^{p(\nu)}}$, then $Z_\nu\subset Z_{\nu+1}$ and $\widetilde{\varphi}_\nu$ is continuous on $K\setminus Z_\nu$. 
    Here, $\scr{I}(\varphi)=\scr{I}(\widetilde{\varphi}_\nu)$ on $K$ for all $\nu\in\bb{N}$ by \cite[Theorem\,2.2.1]{DPS01}.

    Setting $m_k:=2^{p(k)}$, we define the desired quasi-plurisubharmonic function by 
    \begin{align*}
        \varphi_{m_k}:=(1+2^{-k})\psi_{2^{-k},2^{p(k)}}=(1+2^{-k})\psi_{2^{-k},m_k}.
    \end{align*}
    From the inequality \( \widetilde{\varphi}_\nu \geq(1+2^{-\nu})\psi_{2^{-\nu},m_\nu}=\varphi_{m_\nu} \), we obtain the inclusion relation
    \begin{align*}
        \scr{I}((1+2^{-\nu})\varphi)\subseteq\scr{I}((1+2^{-\nu})\psi_{2^{-\nu},m_\nu})=\scr{I}(\varphi_{m_\nu})\subseteq\scr{I}(\widetilde{\varphi}_\nu)=\scr{I}(\varphi)
    \end{align*}
    on $K$ concerning multiplier ideal sheaves. 

    By the strong openness property $\scr{I}(\varphi)=\bigcup_{\delta>0}\scr{I}((1+\delta)\varphi)$ (see \cite{GZ15}) and the strong Noetherian property of coherent sheaves (see \cite[ChapterII,\,(3.22)]{Dem-book}), there exists an enough large $\nu_0\gg1$ such that $\scr{I}((1+2^{-\nu_0})\varphi)=\scr{I}(\varphi)$ on $K$, then we have $\scr{I}(\varphi_{m_\nu})=\scr{I}(\varphi)$ on $K$ if $\nu\geq \nu_0$. 
    In a similar manner by using Remark \ref{Remark of ideal for Demailly approximation} below, for any $t>0$ there exists an enough large $\nu(t)\in\bb{N}$ such that the equation \( \scr{I}(t\varphi)=\scr{I}(t\varphi_{m_\nu}) \) holds for any $\nu\geq\nu(t)$.
    This theorem is proved, except that $\varphi_{m_\nu}$ is possibly just continuous instead of being smooth. This can be arranged by the following Lemma \ref{Lemma of Richberg's regularization theorem} which is the Richberg's regularization theorem (see \cite{Ric68}), at the expense of an arbitrary small loss in the Hessian form. 

    By the definition of $\psi_{\varepsilon,\nu}$, for any $z\in K$, there exists $j$ such that
    \begin{align*}
        \psi_{\varepsilon,\nu}(z)=\frac{1}{2\nu}\log\sum_{k\in\bb{N}}|\sigma_{\nu,j,k}|^2-c_j|z^{(j)}|^2+12C_4\varepsilon(r'^2_j-|z^{(j)}|^2).
    \end{align*}
    Therefore, there exist continuous functions \( G_{\varepsilon,\nu} \) and \( \varPsi_{\varepsilon,\nu} \) on $K$ such that \( \psi_{\varepsilon,\nu} \) can be written as \( \psi_{\varepsilon,\nu}=\frac{1}{2\nu}\log G_{\varepsilon,\nu}+\varPsi_{\varepsilon,\nu} \) on $K$.
    Clearly, \( \varPsi_{\varepsilon,\nu} \) is a quasi-plurisubharmonic function, and by Richberg's regularization theorem (= Lemma\,\ref{Lemma of Richberg's regularization theorem}), for a sufficiently small neighborhood \( U_{\varepsilon,\nu} \) of \( Z_{\varepsilon,\nu} \), which is the set of logarithmic poles of $\psi_{\varepsilon,\nu}$, and any continuous function $\lambda>0$ on $U_{\varepsilon,\nu}$, 
    there exists a smooth function $\widetilde{\varPsi}_{\varepsilon,\nu}$ on $U_{\varepsilon,\nu}$ that satisfies $\varPsi_{\varepsilon,\nu}<\widetilde{\varPsi}_{\varepsilon,\nu}<\varPsi_{\varepsilon,\nu}+\lambda$ with an arbitrarily small loss in the Hessian form.
    Finally, applying Richberg's regularization theorem to $\psi_{\varepsilon,\nu}$ on $K\setminus Z_{\varepsilon,\nu}$ suffices. 
    Since $\lambda$ can be chosen arbitrarily, by the proof of Richberg's regularization theorem (see \cite[Sketch of proof of Lemma\,2.15]{Dem92}), it is possible to regularize $\psi_{\varepsilon,\nu}$ on $K\setminus Z_{\varepsilon,\nu}$ including $\widetilde{\varPsi}_{\varepsilon,\nu}$, such that there exists a regularization $\widetilde{\psi}_{\varepsilon,\nu}$ which locally can be written as 
    \begin{align*}
        \widetilde{\psi}_{\varepsilon,\nu}=\frac{1}{2\nu}\log\sum_k|\sigma_{\nu,k}|^2+\varPhi_{\varepsilon,\nu},
    \end{align*}
    where $\varPhi_{\varepsilon,\nu}$ is smooth even on $Z_{\varepsilon,\nu}$. Here, we have $\widetilde{\psi}_{\varepsilon,\nu}\geq\psi_{\varepsilon,\nu}$ on $K$.
    
    We can again choose an index $\nu=p(k)$ that satisfies
    \begin{align*}
        \int_K\bigl(e^{-2\varphi}-e^{-2\max\{\varphi,\,(1+2^{-k})\widetilde{\psi}_{2^{-k},2^{p(k)}}\}}\bigr)dV_\omega\leq 2^{-k},
    \end{align*}
    and by redefining $\varphi_{m_\nu}:=(1+2^{-\nu})\widetilde{\psi}_{2^{-\nu},m_\nu}$ and $\varPhi_{m_\nu}:=\varPhi_{2^{-\nu},m_\nu}$, the proof is complete. 
    In particular, by similarly defining $\widetilde{\varphi}_\nu$ as the regularization of $\varphi_\nu:=\sup_{k\geq\nu}(1+2^{-k})\widetilde{\psi}_{2^{-k},m_k}=\sup_{k\geq\nu}\varphi_{m_k}$, 
    we obtain $\widetilde{\varphi}_\nu\geq\varphi_\nu\geq\varphi_{m_\nu}\geq(1+2^{-\nu})\psi_{2^{-\nu},m_\nu}\geq(1+2^{-\nu})\varphi$ on $K$ and the same inclusion relationship for multiplier ideal sheaves as above.
\end{proof}

\begin{lemma}[{cf.\,\cite{Ric68},\,\cite[Lemma\,2.15]{Dem92}}]\label{Lemma of Richberg's regularization theorem}
    Let $\psi$ be a quasi-plurisubharmonic function on a complex manifold $M$ such that $\idd\psi\geq\gamma$ where $\gamma$ is a continuous $(1,1)$-form. 
    For any Hermitian metric $\omega$ and any continuous function $\lambda>0$ on $M$, there is a smooth function $\widetilde{\psi}$ such that $\psi<\widetilde{\psi}<\psi+\lambda$ and $\idd\widetilde{\psi}\geq\gamma-\lambda\omega$ on $M$.
\end{lemma}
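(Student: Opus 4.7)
The plan is to combine local mollification with Demailly's regularized maximum operator to glue smooth approximations coherently. First I would fix a locally finite open cover of $M$ by relatively compact coordinate charts $U_j \Subset V_j$ with $\{U_j''\}\Subset\{U_j'\}\Subset\{U_j\}$ each still covering $M$. On each chart $V_j$, standard convolution $\psi_{j,\varepsilon}:=\psi*\rho_\varepsilon$ against a radial smoothing kernel yields a decreasing family of smooth functions with $\psi_{j,\varepsilon}\searrow\psi$ and $\idd\psi_{j,\varepsilon}\geq\gamma_{j,\varepsilon}$, where the smooth forms $\gamma_{j,\varepsilon}$ converge uniformly on $\overline{U_j}$ to $\gamma$ as $\varepsilon\to0^+$. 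Using continuity of $\gamma,\lambda,\omega$, one can select $\varepsilon_j>0$ small enough that both
\begin{align*}
\idd\psi_{j,\varepsilon_j}\geq\gamma-\tfrac{1}{2}\lambda\omega \quad\text{and}\quad \psi<\psi_{j,\varepsilon_j}<\psi+\tfrac{1}{4}\lambda \quad\text{on}\ \overline{U_j}.
\end{align*}

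Next I would invoke Demailly's regularized max $M_\eta:\bb{R}^k\longrightarrow\bb{R}$, a smooth symmetric function, non-decreasing and convex in each argument, satisfying $M_\eta(t_1,\ldots,t_k)=\max_i t_i$ whenever $|t_i-t_j|\geq\eta$ for $i\ne j$, and $\max_i t_i\leq M_\eta\leq\max_i t_i+\eta$. Because $M_\eta$ is non-decreasing in each slot, $\idd M_\eta(u_1,\ldots,u_k)$ is a convex combination of the $\idd u_i$, so any uniform curvature lower bound on the $u_i$ is inherited by the composition. To ensure smoothness of the glued function, I would introduce auxiliary smooth non-positive correction functions $\chi_j$ with $\chi_j\equiv 0$ on $U_j''$ and $\chi_j(x)\to-\infty$ (or just sufficiently negative) as $x\to\partial U_j$, scaled so that on a neighborhood of each $x\in M$ exactly one index $j(x)$ makes the value $\psi_{j,\varepsilon_j}(x)+\chi_j(x)$ strictly larger than all others by at least $\eta$. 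Then the candidate
\begin{align*}
\widetilde{\psi}:=M_\eta\bigl(\psi_{j,\varepsilon_j}+\chi_j\bigr)_{j\in J}
\end{align*}
is smooth on $M$ and inherits $\idd\widetilde{\psi}\geq\gamma-\lambda\omega$ from the convex-combination property together with a small error controlled by the $\chi_j$ and $\eta$.

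Finally, the sandwich $\psi<\widetilde{\psi}<\psi+\lambda$ follows from $\psi<\psi_{j,\varepsilon_j}$ on $U_j$ together with the $\eta$-slack of $M_\eta$, provided $\varepsilon_j$, $\chi_j$ and $\eta$ are chosen small relative to the continuous positive function $\lambda$ on each compact piece of the cover. The main obstacle will be the parameter bookkeeping: the local mollification scales $\varepsilon_j$, the cutoffs $\chi_j$, and the regularization width $\eta$ must be coordinated on overlaps $U_i\cap U_j$ so that the separation hypothesis of $M_\eta$ holds and so that the accumulated slack does not exceed $\lambda(x)$ at any point. Since the cover is locally finite and $\lambda$ is strictly positive and continuous, a standard (but delicate) inductive choice of parameters yields the required estimates, completing the construction.
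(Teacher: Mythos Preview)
The paper does not supply its own proof of this lemma: it is stated with the citation ``cf.\ \cite{Ric68}, \cite[Lemma\,2.15]{Dem92}'' and invoked as a known result inside the proof of Theorem~\ref{Dem appro with log poles and ideal sheaves}, with an explicit pointer to the ``Sketch of proof of Lemma~2.15'' in \cite{Dem92}. Your proposal follows exactly that standard Demailly--Richberg argument (local mollification plus gluing via the regularized maximum), so the approaches coincide.

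One minor imprecision worth flagging: the condition you impose on the cutoffs $\chi_j$---that near each point \emph{exactly one} index strictly dominates all others by at least $\eta$---is stronger than what is actually used and harder to arrange. The standard requirement is only that near $\partial U_j$ the $j$-th term $\psi_{j,\varepsilon_j}+\chi_j$ is dominated by the remaining terms by at least $\eta$, so that $M_\eta$ becomes independent of the $j$-th slot there and extends smoothly across $\partial U_j$. Also, your justification ``$\idd M_\eta(u_1,\ldots,u_k)$ is a convex combination of the $\idd u_i$'' is slightly off: the correct statement is that convexity of $M_\eta$ makes the Hessian contribution from second derivatives non-negative, while $\sum_j \partial M_\eta/\partial t_j = 1$ with each partial non-negative, so the curvature lower bound is inherited. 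These are cosmetic fixes; the strategy is sound.
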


\begin{remark}$($\textnormal{cf.\,\cite[Remark\,2.2.13]{DPS01}}$)$\label{Remark of ideal for Demailly approximation}
    Regarding the function \( \widetilde{\varphi}_\nu \) within the proof, for any \( t > 0 \), there exists an enough large integer \( \nu(t)\in\bb{N} \) such that \( \scr{I}(t\varphi)=\scr{I}(t\widetilde{\varphi}_\nu) \) holds for any $\nu\geq\nu(t)$.
\end{remark}

\subsection{Blow-ups for the approximation}\label{subsection: blow-ups thm}

In this subsection, we provide the appropriate blow-ups and related properties for the approximation in Theorem \ref{Dem appro with log poles and ideal sheaves}.

\begin{theorem}\label{Blow ups of Dem appro with log poles and ideal sheaves}
    Let $X$ be a complex manifold equipped with a Hermitian metric $\omega$ and $L$ be a holomorphic line bundle with a singular Hermitian metric $h$. 
    If $\iO{L,h}\geq\gamma$ for a continuous real $(1,1)$-form $\gamma$ on $X$, then for a relatively compact open subset $V\Subset X$, there exist %an enough large integer $\nu_0\in\bb{N}$,
    a sequence of singular Hermitian metrics $\{h_\nu\}_{\nu\in\bb{N}}$ on $L|_V$ with algebraic singularities and proper modifications $\pi_\nu:\widetilde{V}\longrightarrow V$, given by a composition of finitely many blow-ups with smooth center, 
    such that the following conditions are satisfied.
    \begin{itemize}
        \item [($a$)] there exists a large $\nu_0$ such that $\scr{I}(h)=\scr{I}(h_\nu)$ on $V$ for any $\nu\geq\nu_0$.
        \item [($b$)] $\iO{L,h_\nu}\geq\gamma-\varepsilon_\nu\omega$ on $V$, where $\{\varepsilon_\nu\}_{\nu\in\bb{N}}$ is a decreasing sequence of positive numbers with $\lim_{\nu\to+\infty}\varepsilon_\nu=0$.
        \item [($c$)] there exists an increasing sequence of positive integers $\{m_\nu\}_{\nu\in\bb{N}}$ such that the local weight of $h_{\nu}$ has algebraic singularities. That is, locally the weight can be expressed in the following form
        \begin{align*}
            \varphi_\nu=\frac{1+2^{-\nu}}{2m_\nu}\log\sum_{k\in\bb{N}}|\sigma_{\nu,k}|^2+\varPhi_\nu,
        \end{align*}
        where $\{\sigma_{\nu,k}\}_{k\in\bb{N}}$ is an orthonormal basis of a Hilbert space %with respect to $m_\nu\varphi$ 
        and $\varPhi_\nu$ is smooth. %on $V$. 
        \item [($d$)] $h_\nu$ is smooth on $V\setminus Z_\nu$, where the set $Z_\nu$ of logarithmic poles of $\varphi_\nu$ is an analytic subset of $V$ satisfying $Z_\nu\subset Z_{\nu+1}$. % locally obtained by $\bigcap_j\sigma_{\nu,j}^{-1}(0)$.
        \item [($e$)] $\pi_\nu:\widetilde{V}\setminus\pi_\nu^{-1}(Z_\nu)\longrightarrow V\setminus Z_\nu$ is biholomorphic. 
        \item [($f$)] there exists a simple normal crossing divisor $D_\nu=\sum^J_{j=1}a_jD_j$ with positive integers $a_j\in\bb{N}$ and $\rom{supp}\,D_\nu=\pi^{-1}_\nu(Z_\nu)$ such that 
        \begin{align*}
            \qquad\pi_\nu^*\varphi_\nu=\frac{1+2^\nu}{2^\nu m_\nu}\sum^J_{j=1}a_j\log|g_j|+\widetilde{\varPhi}_\nu \quad \rom{and} \quad \scr{I}(\pi^*_\nu h_\nu^p)=\cal{O}_{\widetilde{V}}\Bigl(-\sum^J_{j=1}\lfloor\frac{(1+2^\nu)a_jp}{2^\nu m_\nu}\rfloor D_j\Bigr)
        \end{align*}
        for any $p\in\bb{N}$, where $g_j$ are local generators of $\cal{O}_{\widetilde{V}}(D_j)$ and $\lfloor\bullet\rfloor$ denotes the integer part. Moreover, $\widetilde{\varPhi}_\nu$ is smooth and can be expressed as 
        \begin{align*}
            \widetilde{\varPhi}_\nu=\frac{1+2^\nu}{2^{\nu+1} m_\nu}\log\sum_{k\in\bb{N}}|\tau_{\nu,k}|^2+\pi_\nu^*\varPhi_\nu,
        \end{align*}
        where $\{\tau_{\nu,k}\}_{k\in\bb{N}}$ has no common zeros.
        \item [($g$)] the holomorphic line bundle $\widetilde{L}_{m_\nu}\!:=\!\pi_\nu^*L^{\otimes2^\nu m_\nu}\otimes\cal{O}_{\widetilde{V}}(-(1+2^\nu)D_\nu)\!=\!\scr{L}^2(\pi_\nu^*h_\nu^{2^\nu m_\nu})$ has a smooth Hermitian metric $\widetilde{h}_{m_\nu}$ %with locally weight $m_\nu\pi^*_\nu\psi_\nu$, 
        satisfying $\iO{\widetilde{L}_{m_\nu},\widetilde{h}_{m_\nu}}\!\geq2^\nu m_\nu\pi^*_\nu(\gamma-\varepsilon_\nu\omega)$. %on $\widetilde{V}$.
    \end{itemize}
    In particular, for any $k\in\bb{N}$, there exists an enough large integer $\nu(k)\in\bb{N}$ such that $\scr{I}(h^k)=\scr{I}(h_\nu^k)$ for any $\nu\geq\nu(k)$.
    Furthermore, we have the following
    \begin{itemize}
        \item [($\alpha$)] assume that $h$ is singular positive, i.e., $\gamma>0$, and fix $\nu$ with $\gamma-\varepsilon_\nu\omega>0$. 
        Then there exist an effective divisor $D_b=\sum^J_{j=1}b_jD_j$ with $b_j\in\bb{N}$, a smooth Hermitian metric $h^*_b$ on the corresponding line bundle $\cal{O}_{\widetilde{V}}(-D_b)$  %with simple normal crossing 
        and a positive integer $t_V\gg0$ such that for any integer $t\geq t_V$, the smooth Hermitian metric $\widetilde{h}_{m_\nu}^{\otimes t}\otimes h^*_b$ on $\scr{L}:=\widetilde{L}_{m_\nu}^{\otimes t}\otimes\cal{O}_{\widetilde{V}}(-D_b)$ is positive on $\widetilde{V}$. 
        \item [($\beta$)] for any holomorphic vector bundle $E$ on $V$ and for any integers $\displaystyle p\geq \frac{2^\nu m_\nu}{1+2^\nu}$ and $q\geq0$, we have the following cohomology isomorphism
        \begin{align*}
            H^q(V,K_V\otimes E\otimes L^{\otimes p}\otimes\scr{I}(h_\nu^p))\cong H^q(\widetilde{V},K_{\widetilde{V}}\otimes\pi^*_\nu (E\otimes L^{\otimes p})\otimes\scr{I}(\pi^*_\nu h_\nu^p)).
        \end{align*}
    \end{itemize}
\end{theorem}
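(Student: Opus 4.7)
The plan is to derive parts $(a)$--$(d)$ directly from the Demailly-type Approximation Theorem \ref{Dem appro with log poles and ideal sheaves} applied to a local weight $\varphi$ of $h$ on a neighborhood of $V$: setting $h_\nu := h_0 e^{-2\varphi_{m_\nu}}$ for a fixed smooth reference metric $h_0$ yields singular Hermitian metrics with algebraic singularities whose multiplier ideal sheaves, curvature estimates, local-weight form, and increasing singular loci $Z_\nu$ match the requirements. To construct the blow-up $\pi_\nu:\widetilde{V}\to V$, I would apply Hironaka's principalization of ideals to the coherent ideal sheaf locally generated by the orthonormal basis $\{\sigma_{m_\nu, k}\}_k$ in $(a)$ of Theorem 3.2. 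This produces a composition of finitely many blow-ups with smooth centers, biholomorphic away from $\pi_\nu^{-1}(Z_\nu)$ (establishing $(e)$), together with a simple normal crossing divisor $D_\nu=\sum_j a_j D_j$ with $\rom{supp}\,D_\nu = \pi_\nu^{-1}(Z_\nu)$. Pulling back $\varphi_\nu$ and factoring out the greatest common divisorial part of the pulled-back sections yields the expression in $(f)$, where the residual cofactors $\tau_{\nu,k}$ have no common zero so that $\widetilde{\varPhi}_\nu$ is smooth; the $\scr{I}(\pi_\nu^*h_\nu^p)$ formula is the standard integer-part computation of multiplier ideals along a simple normal crossing divisor.

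For $(g)$, I would define $\widetilde{h}_{m_\nu}$ by tensoring $\pi_\nu^*h_\nu^{2^\nu m_\nu}$ with the squared norm of the canonical section of $\cal{O}_{\widetilde{V}}((1+2^\nu)D_\nu)$; this cancels the divisorial logarithmic part and yields a smooth Hermitian metric on $\widetilde{L}_{m_\nu}$ with curvature equal to $2^\nu m_\nu\idd\widetilde{\varPhi}_\nu$. Since each $\log|g_j|$ is pluriharmonic off its zero locus, on $\widetilde{V}\setminus\pi_\nu^{-1}(Z_\nu)$ we have $\idd\widetilde{\varPhi}_\nu = \idd\pi_\nu^*\varphi_\nu\geq\pi_\nu^*(\gamma-\varepsilon_\nu\omega)$, and the inequality extends to all of $\widetilde{V}$ by continuity of $\widetilde{\varPhi}_\nu$. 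The equality $\widetilde{L}_{m_\nu}=\scr{L}^2(\pi_\nu^*h_\nu^{2^\nu m_\nu})$ is immediate from $(f)$ applied with $p=2^\nu m_\nu$.

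For $(\alpha)$, the pullback form $\pi_\nu^*(\gamma-\varepsilon_\nu\omega)\geq 0$ is strictly positive only transverse to the exceptional locus of $\pi_\nu$. I would use a Kodaira-lemma-type construction to produce an exceptional-supported effective divisor $D_b=\sum_j b_jD_j$ and a smooth Hermitian metric $h_b^*$ on $\cal{O}_{\widetilde{V}}(-D_b)$ whose curvature supplies strict positivity in the exceptional-fiber directions, at the cost of controlled negativity along base directions. Taking a sufficiently large tensor power $t$ of $\widetilde{h}_{m_\nu}$ makes the base-direction positive contribution dominate that negativity on the relatively compact $\widetilde{V}$, producing a smooth positive Hermitian metric on $\scr{L}=\widetilde{L}_{m_\nu}^{\otimes t}\otimes\cal{O}_{\widetilde{V}}(-D_b)$.

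For $(\beta)$, I would apply the Leray spectral sequence to $\pi_\nu$; the isomorphism reduces to the vanishing $R^q\pi_{\nu *}\bigl(K_{\widetilde{V}}\otimes\pi_\nu^*(E\otimes L^{\otimes p})\otimes\scr{I}(\pi_\nu^*h_\nu^p)\bigr)=0$ for $q>0$ together with the degree-zero identification with $K_V\otimes E\otimes L^{\otimes p}\otimes\scr{I}(h_\nu^p)$. The latter follows from the projection formula combined with the standard identity $\pi_{\nu *}(K_{\widetilde{V}/V}\otimes\scr{I}(\pi_\nu^*h_\nu^p))=\scr{I}(h_\nu^p)$ for log resolutions, while the former is a relative Grauert--Riemenschneider / Nadel-type vanishing whose hypothesis $p\geq 2^\nu m_\nu/(1+2^\nu)$ is exactly what forces $(1+2^\nu)a_jp/(2^\nu m_\nu)\geq a_j$, so that the integer-part coefficients of $(f)$ absorb the relative canonical divisor and the vanishing applies. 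The main obstacle will be executing this local vanishing and the Kodaira-lemma construction in $(\alpha)$ rigorously, since both demand delicate bookkeeping of the exceptional-divisor combinatorics against the pulled-back curvature form.
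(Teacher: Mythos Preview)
Your proposal is correct and follows essentially the same route as the paper: derive $(a)$--$(d)$ from Theorem~\ref{Dem appro with log poles and ideal sheaves}, principalize via Hironaka to get $(e)$--$(f)$, cancel the divisorial part with the canonical section for $(g)$, correct the degeneracy along the exceptional locus by a small negative twist for $(\alpha)$, and invoke Leray plus a local Nadel-type vanishing for $(\beta)$. One technical point you glossed over: the orthonormal bases $\{\sigma_{m_\nu,k}\}$ are local (tied to the balls $B_j$ in the proof of Theorem~\ref{Dem appro with log poles and ideal sheaves}), so the ideal they generate is a priori only defined patch by patch; the paper handles this by introducing the globally defined ideal $\scr{J}_\nu=\{f:|f|\le C\exp(m_\nu\varphi_\nu/(1+2^{-\nu}))\}$ and showing via Brian\c{c}on--Skoda that it equals the integral closure of the locally generated ideal, hence is coherent and can be blown up---you should insert this step.
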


\begin{proof}
    For a smooth Hermitian metric $h_0$ on $L$, there exists a locally integrable function $\varphi\in L^1_{loc}(X,\bb{R})$ such that $h$ can be written as $h=h_0e^{-2\varphi}$ on $X$ and $\varphi$ is quasi-plurisubharmonic by $\iO{L,h}\geq\gamma$.
    For this $\varphi$, from Theorem \ref{Dem appro with log poles and ideal sheaves}, there exist an increasing sequence of positive integers $\{m_\nu\}_{\nu\in\bb{N}}$ and a sequence of quasi-plurisubharmonic functions $\{\varphi_{m_\nu}\}_{\nu\in\bb{N}}$ that satisfy conditions $(a)$-$(d)$ of Theorem \ref{Dem appro with log poles and ideal sheaves}. 
    Setting $\varphi_\nu:=\varphi_{m_\nu}$ and constructing a sequence of singular Hermitian metrics $\{h_\nu\}_{\nu\in\bb{N}}$ by $h_\nu:=h_0e^{-2\varphi_\nu}$, it satisfies the conditions $(a)$-$(d)$ of this theorem.

    We introduce the ideal $\scr{J}_\nu$ of germs of holomorphic function $f$ such that $|f|\leq C \exp{\frac{m_\nu\varphi_\nu}{1+2^{-\nu}}}$ for some constant $C$. 
    This is a globally defined ideal sheaf. Under the notation of the proof of Theorem \ref{Dem appro with log poles and ideal sheaves},
    from the strong Noetherian property of coherent ideal sheaves (see \cite[ChapterII,\,(3.22)]{Dem-book}), the sequence of ideal sheaves generated by the holomorphic functions $\{\sigma_{\nu,j,k}(z)\overline{\sigma_{\nu,j,k}(\overline{w})}\}_{k\leq K_j}$ on $B_j\times B_j$ is locally stationary as $K_j$ increases, hence independant of $K_j$ on $B'_j\times B'_j\Subset B_j\times B_j$ for $K_j$ large enough.
    By uniform convergence of $\sum^{\infty}_{k=1}\sigma_{\nu,j,k}(z)\overline{\sigma_{\nu,j,k}(\overline{w})}$ on compact sets, this sum of the series is a section of the coherent ideal sheaf generated by $\{\sigma_{\nu,j,k}(z)\overline{\sigma_{\nu,j,k}(\overline{w})}\}_{k\leq K_j}$ over $B'_j\times B'_j$.
    Hence, for some $C_j>0$ we obtain (see the proof of \cite[Theorem\,2.2.1]{DPS01}) 
    \begin{align*}
        \exp\frac{m_\nu(\varphi_\nu-\varPhi_\nu)}{1+2^{-\nu}}=\sum^{+\infty}_{k=1}|\sigma_{\nu,j,k}(z)|^2\leq C_j\sum^{K_j}_{k=1}|\sigma_{\nu,j,k}(z)|^2 \quad\text{on}\quad B'_j,
    \end{align*}
    where $e^{-\varPhi_\nu}$ is bounded. 
    Then, $\scr{J}_\nu$ locally equal to the integral closure $\overline{\cal{J}_j}$ of the ideal sheaf $\cal{J}_j=(\sigma_{\nu,j,1},\ldots,\sigma_{\nu,j,K_j})$ by Brian\c{c}on-Skoda's theorem (see \cite[Theorem\,(11.17)]{Dem12}), 
    thus $\scr{J}_\nu$ is coherent, and we have $\mathrm{supp}\,\mathcal{O}_V/\scr{J}_\nu=Z_\nu$.

    Here, $\scr{J}_\nu$ is defined on an open cover $\bigcup_j B_j$ of $V$, and since $V\Subset\bigcup_j B_j$, restricting $\scr{J}_\nu$ to $V$ ensures that the irreducible components of $\mathrm{supp}\,\mathcal{O}_V/\scr{J}_\nu$ are finite. 
    By Hironaka's desingularization theorem \cite{Hir64}, there exists a proper modification $\pi_\nu:\widetilde{V}\longrightarrow V$ obtained by a finite sequence of blow-ups with smooth centers, satisfying %the following: 
    \begin{itemize}
        \item the sheaf $\pi^{-1}_\nu\!\!\scr{J}_\nu\cdot\cal{O}_{\widetilde{V}}=\rom{im}(\pi^*_\nu\scr{J}_\nu\rightarrow\cal{O}_{\widetilde{V}})$ %which is the image of $\pi^*_\nu\scr{J}_\nu$ under the natural map $\pi^*_\nu\scr{J}_\nu\longrightarrow\cal{O}_{\widetilde{V}}$ 
        is equal to the ideal sheaf $\cal{O}_{\widetilde{V}}(-\sum^J_{j=1}\xi_jD_j)$ for some non-negative integers $\xi_j\in\bb{N}\cup\{0\}$.
%        %\item $\pi_\nu:\widetilde{V}\setminus\pi^{-1}(Z_\nu)\longrightarrow V\setminus Z_\nu$ is biholomorphic.
        \item $K_{\widetilde{V}}=\pi^*_\nu K_V\otimes\cal{O}_{\widetilde{V}}(\sum^J_{j=1}\zeta_jD_j)$ for some non-negative integers $\zeta_j\in\bb{N}\cup\{0\}$. 
%        %In other words, the holomorphic Jacobian determinant of the map $\pi_\nu:\widetilde{V}\longrightarrow V$ vanishes precisely of order $b_j$ along $D_j$ and vanishes nowhere on $\widetilde{V}\setminus\bigcup_jD_j$.
    \end{itemize} 
    For simplicity, we omit \( j \) locally and denote the local orthonormal basis by $\{\sigma_{\nu,k}\}_{k\in\bb{N}}$.
    Let $g$ be the local generator of the ideal generated by $\{\pi_\nu^*\sigma_{\nu,k}\}_{k\in\bb{N}}$, then
    %Since $\{\pi^*_\nu\sigma_{\nu,j}\}$ are generators of $\pi^{-1}_\nu\scr{J}_\nu$ near $x$,
    there exists holomorphic functions $\tau_{\nu,k}$ such that $\pi^*_\nu\sigma_{\nu,k}=g\cdot \tau_{\nu,k}$, where $\tau_{\nu,k}$ have no common zeros (see \cite[Lemma\,2.3.19]{MM07}). 
    We consider the decomposition $g=\prod g_j^{a_j}$ of $g$ in irreducible factors, 
    then the local weight of $\pi^*_\nu h_\nu$ has the form 
    \begin{align*}
        \pi^*_\nu\varphi_\nu%&=\frac{1}{2m_\nu}\log\sum_j|\pi^*_\nu\sigma_{\nu,j}|^2+\pi^*_\nu\psi_\nu\\
        %&=\frac{1}{m_\nu}\log|g|+\frac{1}{2m_\nu}\log\sum_j|\tau_{\nu,j}|^2+\pi^*_\nu\psi_\nu\\
        &=\frac{1+2^\nu}{2^\nu m_\nu}\sum_j a_j\log|g_j|+\frac{1+2^\nu}{2^{\nu+1}m_\nu}\log\sum_{k\in\bb{N}}|\tau_{\nu,k}|^2+\pi^*_\nu\varPhi_\nu,
    \end{align*}
    where $\pi^*_\nu h_\nu=\pi^*_\nu h_0\cdot e^{-2\pi^*_\nu\varphi_\nu}$ on $\widetilde{V}$. 
    We introduce global divisors $D_j$ given locally by generators $g_j$. 
    Thus, the divisor $D_\nu=\sum^J_{j=1}a_jD_j$ is simple normal crossing with $\rom{supp}\,D_\nu=\pi^{-1}_\nu(Z_\nu)$ and $\pi_\nu:\widetilde{V}\setminus\pi^{-1}(Z_\nu)\longrightarrow V\setminus Z_\nu$ is biholomorphic. 
    We already know (see \cite[Remark\,5.9]{Dem12})
    \begin{align*}
        \scr{I}(\pi^*_\nu h^p_\nu)=\cal{O}_{\widetilde{V}}\Bigl(-\sum^J_{j=1}\lfloor\frac{(1+2^\nu)a_jp}{2^\nu m_\nu}\rfloor D_j\Bigr).
    \end{align*}

    For a canonical section $s_{D_\nu}$ of $\cal{O}_{\widetilde{V}}(D_\nu)$, we define the natural singular Hermitian metric $h_{D_\nu}$ by $h_{D_\nu}:=1/|s_{D_\nu}|^2$, then we obtain $\scr{I}(h_{D_\nu})=\cal{O}_{\widetilde{V}}(-D_\nu)=\scr{I}(\pi_\nu^*h_\nu^{m_\nu})$.
    Since the local weight of \( h_{D_\nu} \) is \( \sum^J_{j=1} a_j\log|g_j| \), the Hermitian metric \( \widetilde{h}_{m_\nu} \) on the line bundle $\widetilde{L}_{m_\nu}\!:=\pi_\nu^*L^{\otimes 2^\nu m_\nu}\otimes\cal{O}_{\widetilde{V}}(-(1+2^\nu)D_\nu)=\scr{L}^2(\pi_\nu^*h_\nu^{2^\nu m_\nu})$, defined by $\widetilde{h}_{m_\nu}:=\pi^*_\nu h_\nu^{2^\nu m_\nu}\otimes h_{D_\nu}^{*\otimes 1+2^\nu}$, is smooth on $\widetilde{V}$.
    Here, $\iO{\cal{O}_{\widetilde{V}}(D_\nu),h_{D_\nu}}=0$ on $\widetilde{V}\setminus\pi_\nu^{-1}(Z_\nu)$. Clearly, we have 
    \begin{align*}
        \iO{\widetilde{L}_{m_\nu},\widetilde{h}_{m_\nu}}%=\iO{\pi^*_\nu L^{\otimes m_\nu},\pi^*_\nu h_\nu^{m_\nu}}
        =2^\nu m_\nu\pi^*_\nu\iO{L,h_\nu}\geq 2^\nu m_\nu\pi^*_\nu(\gamma-\varepsilon_\nu\omega)
    \end{align*}
    on \( \widetilde{V}\setminus\pi_\nu^{-1}(Z_\nu) \), and since \( \widetilde{h}_{m_\nu} \) is smooth on $\widetilde{V}$, this inequality also holds on \( \widetilde{V} \).

    By using the fact that the dual of the line bundle associated with the exceptional divisor has a smooth Hermitian metric that is positive on the exceptional divisor, we obtain the following lemma.

    \begin{lemma}\label{lemma positivity of exc div to blow ups}
        Under this blow-up setting, holomorphic line bundles $L^*_{D_j}:=\cal{O}_{\widetilde{V}}(-D_j)$ corresponding to each exceptional divisor $D_j$ have smooth Hermitian metrics $h^*_j$ satisfying the following conditions.
        \begin{itemize}
            \item $\iO{L_{D_j}^*,h^*_j}$ is positive on $D_j\setminus\bigcup_{j\ne l}D_l$ for any $1\leq j<J$,
            \item $\iO{L_{D_J}^*,h^*_J}$ is positive on $D_J$,
            \item due to the relative compactness of $\widetilde{V}$, 
            the sum of the Chern curvatures 
            \begin{align*}
                \sum^J_{j=1}\delta_j\iO{L_{D_j}^*,h^*_j}=-\sum^J_{j=1}\delta_j\iO{L_{D_j},h_j}  %\tag*{($\delta$)}
            \end{align*}
            becomes positive on $D=\sum^J_{j=1} D_j$ for small enough $\delta_j>0$.
        \end{itemize}
        Here, each $\delta_j$ has some degree of freedom and can also be made a rational number.
    \end{lemma}

    \begin{proof}
        We can assume that \( \widetilde{V} \) is obtained as a tower of blow-ups
        \begin{align*}
            \widetilde{V}=V_N\xrightarrow{\mu_N}V_{N-1}\xrightarrow{\mu_{N-1}}\cdots\xrightarrow{\mu_2}V_1\xrightarrow{\mu_1}V_0=V,
        \end{align*}
        where $\mu_{j+1}:V_{j+1}\longrightarrow V_j$ is a blow-up with smooth center $C_j\subset V_j$, $Z_0:=Z_\nu$, $Z_{j+1}$ is the strict transform of $Z_j$ by $\mu_{j+1}$ and $E_{j+1}$ is the exceptional divisor in $V_{j+1}$. 
        Therefore, let $D'_{j+1}:=\mu^{-1}_{j+1}(C_j)$ be a exceptional divisor in $V_j$, then $E_{j+1}=E'_j\bigcup D'_{j+1}$ where $E'_j$ denotes the set of strict transforms by $\mu_{j+1}$ of all divisors in $E_j$ and $E_1=D'_1$

        The line bundle $\cal{O}_{V_{j+1}}(-D'_{j+1})|_{D'_{j+1}}$ is equal to $\cal{O}_{\bb{P}(N_j)}(1)$, where $N_j:=N_{C_j/V_j}$ is the normal bundle to $C_j$ in $V_j$. 
        We choose any smooth Hermitian metric on \( N_j \), using this metric to induce the Fubini-Study metric on \( \mathcal{O}_{\bb{P}(N_j)}(1) \), 
        and extend this metric as a smooth Hermitian metric $\hbar^*_{j+1}$ on \( L^*_{j+1}:=\mathcal{O}_{V_{j+1}}(-D'_{j+1}) \). 
        This $\hbar^*_{j+1}$ has positive curvature along the tangent vectors of \( V_{j+1} \) which are tangent to the fibers of \( D'_{j+1} = \mathbb{P}(N_j) \longrightarrow C_j \). 
        We simply say that $\iO{L^*_{j+1},\hbar^*_{j+1}}$ \textit{is} \textit{positive} \textit{on} $D'_{j+1}$ in this situation. %for each $j=0,1,\ldots$.

        Since $\mu_2:V_2\setminus D'_2\longrightarrow V_1\setminus C_1$ is biholomorphic, we obtain $\mu_2^*\iO{L^*_1,\hbar^*_1}=\iO{\mu_2^*L^*_1,\mu_2^*\hbar^*_1}$ is positive on $\mu_2^{-1}(D'_1)\setminus D'_2=E_2\setminus D'_2$ and $\iO{L^*_2,\hbar^*_2}$ is positive on $D'_2$.
        Due to the compactness of $\mu_2^{-1}(D'_1)\bigcap D'_2=E'_1\bigcap D'_2$, there exists a sufficiently small $\delta_1>0$ such that $\delta_1\mu_2^*\iO{L^*_1,\hbar^*_1}+\iO{L^*_2,\hbar^*_2}>0$ on $D_2'\bigcup\mu_2^{-1}(D'_1)=E_2$.
        %Similarly, there exists a sufficiently small $\delta_2$ such that $\mu_3^*\mu_2^*\iO{L^*_1,\hbar^*_1}>0$ on $\mu_3^{-1}(E_2\setminus D'_2)\setminus D'_3$, $\mu^*_3\iO{L^*_2,\hbar^*_2}>0$ on $\mu^{-1}_3(D'_2)\setminus D'_3$, $\iO{L^*_3,\hbar^*_3}>0$ on $D'_3$ and $\delta_1\mu_3^*\mu_2^*\iO{L^*_1,\hbar^*_1}+\delta_2\mu^*_3\iO{L^*_2,\hbar^*_2}+\iO{L^*_3,\hbar^*_3}>0$ on $E_3$.
        Let $\widetilde{\mu}_q$ be the composition of blow-ups from $\mu_q$ to $\mu_N$, here $\widetilde{\mu}_N=\pi_\nu$.
        Then, letting $h^*_j:=\widetilde{\mu}_{j+1}^*\hbar^*_j$ and $D_j:=\widetilde{\mu}_j^*(C_{j-1})$, we obtain $\widetilde{\mu}_{j+1}^*L^*_j=\cal{O}_{\widetilde{V}}(-D_j)$ and the desired condition inductively.
    \end{proof}

    $(\alpha)$ We assume that $h$ is singular positive, i.e., $\gamma>0$. Fix an integer $\nu$ with $\gamma-\varepsilon_\nu\omega>0$, then there exists $\varepsilon_V>0$ such that $\gamma-\varepsilon_\nu\omega>\varepsilon_V\omega$ on $V$.
    Here, we know $\iO{\widetilde{L}_{m_\nu},\widetilde{h}_{m_\nu}}\geq 2^\nu m_\nu\varepsilon_V\pi^*_\nu\omega$ on $\widetilde{V}$, and $\pi^*_\nu\omega$ is semi-positive on $\widetilde{V}$ and positive on $\widetilde{V}\setminus D$.
    By Lemma \ref{lemma positivity of exc div to blow ups}, there exist positive integers $b_j$ and smooth Hermitian metrics $h^*_j$ on $L^*_{D_j}:=\cal{O}_{\widetilde{V}}(-D_j)$ such that $\Omega:=\sum^J_{j=1}b_j\iO{L^*_{D_j},h^*_j}>0$ on $D$.
    %\begin{align*}
    %    \sum^J_{j=1}b_j\iO{\cal{O}_{\widetilde{V}}(-D_j),h^*_j}>0 \quad \rom{on} \quad D=\sum^J_{j=1}D_j.
    %\end{align*}
    From the relatively compactness of $\widetilde{V}$ and smooth-ness of $h^*_j$, there exists a positive integer $t_V\gg0$ such that $t_V2^\nu m_\nu\varepsilon_V\pi^*_\nu\omega+\Omega$ is positive on $\widetilde{V}$.
    Thus, by setting the smooth Hermitian metric of $\cal{O}_{\widetilde{V}}(-D_b)$ as $h^*_b:=\bigotimes^J_{j=1}{h_j^*}^{\otimes b_j}$, for any $t\geq t_V$ we obtain
    \begin{align*}
        \iO{\widetilde{L}_{m_\nu}^{\otimes t},\widetilde{h}_{m_\nu}^{\otimes t}}+\iO{\cal{O}_{\widetilde{V}}(-D_b),h^*_b}=t\iO{\widetilde{L}_{m_\nu},\widetilde{h}_{m_\nu}}+\sum^J_{j=1}b_j\iO{L^*_{D_j},h^*_j}\geq t2^\nu m_\nu\varepsilon_V\pi^*_\nu\omega+\Omega>0 \quad\rom{on}\,\, \widetilde{V}.
    \end{align*}

    $(\beta)$ This follows from a similar local argument as in \cite[(2.3.45)]{MM07} by applying Leray theorem and Nadel vanishing theorem for weakly pseudoconvex manifolds. 
    Indeed, since the proper modification $\pi_\nu$ is obtained by a finite sequence of blow-ups with smooth centers, $(\beta)$ follows by applying Proposition \cite[Proposition\,2.3.25]{MM07} at each step of the blow-ups.
\end{proof}

\subsection{Construction of singular Hermitian metrics via approximation}\label{subsection: canonical sHm}

Let $X$ be a open complex manifold with a Hermitian metric $\omega$ and an exhaustive open covering $\{X_j\}_{j\in\bb{N}}$, i.e., $X_j\Subset X_{j+1}$ and $\bigcup_j X_j=X$.
Such an exhaustive open covering always exists due to the $\sigma$-compactness of complex manifolds.
%For example, it suffices if $X$ has a complete Hermitian metric $\omega$.
Let $L$ be a holomorphic line bundle on $X$ with a singular Hermitian metric $h=h_0e^{-2\varphi}$. 
In this subsection, for an arbitrarily chosen integer $\ell\in\bb{N}$, we construct a new singular Hermitian metric $h_\natural:=h_\natural(\ell)$ that preserves the multiplier ideal sheaf, i.e., $\scr{I}(h^\ell)=\scr{I}(h_\natural^\ell)$ on $X$.

%For a smooth Hermitian metric $h_0$ on $L$, there exists a locally integrable function $\varphi\in L^1_{loc}(X,\bb{R})$ satisfying $h=h_0e^{-2\varphi}$ on $X$. 
By applying Theorem \ref{Dem appro with log poles and ideal sheaves} and \ref{Blow ups of Dem appro with log poles and ideal sheaves} and its proof on each $X_j$, there exist increasing sequences of positive integers $\{\nu_j(\ell)\}_{j\in\bb{N}}$ and $\{m_{j,\nu}\}_{\nu\in\bb{N}}$, 
and sequences of quasi-plurisubharmonic functions $\{\varphi_{m_{j,\nu}}\}_{\nu\in\bb{N}}$ and $\{\widetilde{\varphi}_{j,\nu}\}_{\nu\in\bb{N}}$ on $X_j$ that satisfies the following. %such that the following are satisfied.
\begin{itemize}
    \item [$(\ref{subsection: canonical sHm}\,a)$] $\varphi_{m_{j,\nu}}$ has the same singularities as $1/2m_{j,\nu}$ times a logarithm of a sum of squares of holomorphic functions, i.e., algebraic singularities.
    \item [$(\ref{subsection: canonical sHm}\,b)$] $\varphi_{m_{j,\nu}}$ and $\widetilde{\varphi}_{j,\nu}$ are smooth on $X_j\setminus Z_{j,\nu}$, where $Z_{j,\nu}$ is an analytic subset of $X_j$ obtained as the singular locus of $\varphi_{m_{j,\nu}}$, satisfying $Z_{j,\nu}\subset Z_{j,\nu+1}$.
    \item [$(\ref{subsection: canonical sHm}\,c)$] $\widetilde{\varphi}_{j,\nu}$ is decreasing in $\nu$ and converges to $\varphi$.
    \item [$(\ref{subsection: canonical sHm}\,d)$] $\scr{I}(t\varphi)=\scr{I}(t\varphi_{m_{j,\nu}})=\scr{I}(t\widetilde{\varphi}_{j,\nu})$ for any $0<t\leq\ell$ and any integer $\nu\geq\nu_j(\ell)$.
    \item [$(\ref{subsection: canonical sHm}\,e)$] if $\iO{L,h}\geq\gamma$ for a continuous real $(1,1)$-form $\gamma$ on $X$, then there exists a decreasing sequence of positive numbers $\{\varepsilon_{j,\nu}\}_{\nu\in\bb{N}}$ with $\lim_{\nu\to+\infty}\varepsilon_{j,\nu}=0$ such that $\iO{L,h_{j,\nu}}\geq\gamma-\varepsilon_{j,\nu}\omega$ on $X_j$, where $h_{j,\nu}:=h_0e^{-2\varphi_{m_{j,\nu}}}$.
\end{itemize}
%where $\varphi_{j,m_\nu}$ and $\widetilde{\varphi}_{j,\nu}$ are obtained by applying Richberg's regularization theorem (see \cite{Ric68}) under $\widetilde{\varphi}_{j,\nu}=\sup_{k\geq\nu}\varphi_{m_{j,k}}$.

Here, if \( \gamma \) is positive, i.e., $h$ is singular positive, by retaking the sequence \( \{ \nu_j(\ell) \}_{j\in\bb{N}} \), we can have \( \iO{L,h_{j,\nu_j(\ell)}}\geq\gamma/2 \) and $\iO{L,\widetilde{h}_j(\ell)}\geq\gamma/2$ on each $X_j$, where \( \widetilde{h}_j(\ell) = h_0e^{-2\widetilde{\varphi}_{j,\nu_j(\ell)}} \).

\begin{proposition}\label{Prop of two quasi-psh by Dem-appro}
    Fix any integer $j\in\bb{N}$. We can take $\{m_{j+1,\nu}\}_{\nu\in\bb{N}}$ to be a subsequence of $\{m_{j,\nu}\}_{\nu\in\bb{N}}$. In other words, there exists an increasing sequence of positive integers $\{k_j(\nu)\}_{\nu\in\bb{N}}$ such that $m_{j+1,\nu}=m_{j,k_j(\nu)}$.
    For a given sequence of quasi-plurisubharmonic functions $\{\varphi_{m_{j,\nu}}\}_{\nu\in\bb{N}}$ on $X_j$, a sequence of quasi-plurisubharmonic functions $\{\varphi_{m_{j+1,\nu}}\}_{\nu\in\bb{N}}$ on $X_{j+1}$ can be constructed to satisfy $\varphi_{m_{j+1,\nu}}|_{X_j}=\varphi_{m_{j,k_j(\nu)}}$.
    Similarly, we obtain $\widetilde{\varphi}_{j+1,\nu}|_{X_j}\leq\widetilde{\varphi}_{j,k_j(\nu)}\leq\widetilde{\varphi}_{j,\nu}$ and $Z_{j+1,\nu}|_{X_j}=Z_{j,k_j(\nu)}$ for any $\nu\in\bb{N}$.
\end{proposition}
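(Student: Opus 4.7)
The plan is to execute the construction underlying Theorem~\ref{Dem appro with log poles and ideal sheaves} on $X_{j+1}$ in a way compatible with the already fixed construction on $X_j$, exploiting that every building block is local. Fix the finite open cover $\{B_l\}_{l\in L_j}$ of $\overline{X}_j$ by coordinate balls that was used to define $\psi^{(j)}_{\varepsilon,\nu}$, and extend it to a finite open cover $\{B_l\}_{l\in L_{j+1}}$ of $\overline{X}_{j+1}$ by adjoining balls $B_l$ with $l\in L_{j+1}\setminus L_j$ whose concentric halves $B'_l=\{|z^{(l)}|<r_l/2\}$ are disjoint from $\overline{X}_j$; this is possible since $X_j\Subset X_{j+1}$. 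The local functions $\psi_{\varepsilon,\nu,l}$ are built from orthonormal bases of $\mathcal{H}_{B_l}(\nu(\varphi+c_l|z^{(l)}|^2))$, which depend only on $B_l$ and $\varphi$, so for $l\in L_j$ they are the same whether computed for $X_j$ or for $X_{j+1}$. Because the global glueing $\psi^{(j+1)}_{\varepsilon,\nu}(z)=\sup_{l,\,z\in B'_l}\bigl(\psi_{\varepsilon,\nu,l}(z)+12 C_4\varepsilon(r'^2_l-|z^{(l)}|^2)\bigr)$ restricts the supremum to indices with $z\in B'_l$, only old indices contribute at points of $X_j$, so $\psi^{(j+1)}_{\varepsilon,\nu}|_{X_j}=\psi^{(j)}_{\varepsilon,\nu}$.

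Next I would choose the index sequence defining $m_{j+1,k}=2^{p_{j+1}(k)}$ as a strictly increasing subsequence of $\{p_j(k)\}$. Since $p_j(k)\to+\infty$ and the relevant integrands are dominated by integrable functions, for each $k$ one can pick $k_j(k)\in\mathbb{N}$, strictly increasing, such that $p_{j+1}(k):=p_j(k_j(k))$ makes the $L^1$ integrability estimate
\[
\int_{X_{j+1}}\Bigl(e^{-2\varphi}-e^{-2\max\{\varphi,\,(1+2^{-k})\widetilde{\psi}_{2^{-k},\,2^{p_{j+1}(k)}}\}}\Bigr)dV_\omega\leq 2^{-k}
\]
hold over the larger domain. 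This enforces $m_{j+1,\nu}=m_{j,k_j(\nu)}$. For the Richberg regularization, I would exploit its local character: the expression $\frac{1}{2\nu}\log\sum_k|\sigma_{\nu,l,k}|^2+\varPhi_{\varepsilon,\nu}$ has its holomorphic part already agreeing on overlap balls by the first step, and the smooth part $\varPhi_{\varepsilon,\nu}$ on $X_{j+1}$ can be patched by a partition of unity so as to extend the smooth part already chosen on $X_j$. This produces a regularization $\widetilde{\psi}^{(j+1)}_{\varepsilon,\nu}$ with $\widetilde{\psi}^{(j+1)}_{\varepsilon,\nu}|_{X_j}=\widetilde{\psi}^{(j)}_{\varepsilon,\nu}$, whence $\varphi_{m_{j+1,\nu}}|_{X_j}=\varphi_{m_{j,k_j(\nu)}}$ and the equality $Z_{j+1,\nu}|_{X_j}=Z_{j,k_j(\nu)}$ of singular loci is immediate.

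Finally, the statement on $\widetilde{\varphi}$ is purely formal from the definition $\widetilde{\varphi}_{j,\nu}=\sup_{k\geq\nu}\varphi_{m_{j,k}}$ together with the compatibility just established: since $k_j$ is strictly increasing, $\{k_j(k):k\geq\nu\}\subseteq\{l:l\geq k_j(\nu)\}$, so
\[
\widetilde{\varphi}_{j+1,\nu}|_{X_j}=\sup_{k\geq\nu}\varphi_{m_{j+1,k}}|_{X_j}=\sup_{k\geq\nu}\varphi_{m_{j,k_j(k)}}\leq\sup_{l\geq k_j(\nu)}\varphi_{m_{j,l}}=\widetilde{\varphi}_{j,k_j(\nu)},
\]
and $\widetilde{\varphi}_{j,k_j(\nu)}\leq\widetilde{\varphi}_{j,\nu}$ follows from the decreasing-in-$\nu$ property and $k_j(\nu)\geq\nu$. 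The main delicate point is the compatibility of the Richberg smoothing across $X_j\subset X_{j+1}$; the remaining steps are bookkeeping once the cover is chosen to isolate the new balls away from $\overline{X}_j$ and the index sequences are nested.
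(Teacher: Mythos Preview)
Your proposal is correct and follows essentially the same approach as the paper: both extend the open cover used on $X_j$ to one on $X_{j+1}$ so that the glued functions $\psi_{\varepsilon,\nu}$ agree on $X_j$, both invoke the local nature of Richberg's regularization to preserve this agreement after smoothing, both choose $p_{j+1}(\nu)$ from among the $p_j(k)$ to satisfy the integral inequality on the larger domain, and both derive the $\widetilde{\varphi}$ inequality formally from the definition $\widetilde{\varphi}_{j,\nu}=\sup_{k\geq\nu}\varphi_{m_{j,k}}$. You are somewhat more explicit than the paper in requiring the new half-balls $B'_l$ to avoid $\overline{X}_j$ and in flagging the Richberg step as the delicate point, but the argument is the same.
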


\begin{proof}
    For each $X_j$, taking functions $\psi_{j,\varepsilon,\nu}$ constructed similarly to the proof of Theorem \ref{Dem appro with log poles and ideal sheaves} (see \cite[Theorem\,2.3]{DPS01}), it depends on the choice of the open covering of $X_j$ due to its construction.  
    Therefore, when constructing $\psi_{j+1,\varepsilon,\nu}$ using a similar method on $X_{j+1}$, we obtain $\psi_{j+1,\varepsilon,\nu}|_{X_j}=\psi_{j,\varepsilon,\nu}$ by using the same cover as was used for constructing $\psi_{j,\varepsilon,\nu}$ on $X_j$.
    Since Richberg's regularization theorem (see Lemma\,\ref{Lemma of Richberg's regularization theorem}) also depends on the choice of open covering, $\psi_{j+1,\varepsilon,\nu}|_{X_j}=\psi_{j,\varepsilon,\nu}$ is preserved even if $\psi_{j,\varepsilon,\nu}$ is regularized. 
    Thus, we consider $\psi_{j,\varepsilon,\nu}$ to be regularized. For each $\nu$, there exists an integer $p_j(\nu)$ satisfying the integral inequality 
    \begin{align*}
        \int_{X_j}\Bigl(e^{-2\varphi}-e^{-2\max\{\varphi,(1+2^{-\nu})\psi_{j,2^{-\nu},2^{p_j(\nu)}}\}}\Bigr)dV_\omega\leq 2^{-\nu},
    \end{align*}
    and the quasi-plurisubharmonic function $\varphi_{m_{j,\nu}}$ is obtained as $\varphi_{m_{j,\nu}}:=(1+2^{-\nu})\psi_{j,2^{-\nu},2^{p_j(\nu)}}$ by setting $m_{j,\nu}:=2^{p_j(\nu)}$.
    From $\psi_{j+1,\varepsilon,\nu}|_{X_j}=\psi_{j,\varepsilon,\nu}$ and the integral inequality, for each $\nu$ there exists an integer $k_j(\nu)\geq\nu$ such that one can be choose $p_{j+1}(\nu)$ to satisfy $p_{j+1}(\nu)=p_j(k_j(\nu))$. 
    Then we obtain $m_{j+1,\nu}=m_{j,k_j(\nu)}$ and $\varphi_{m_{j+1,\nu}}|_{X_j}=\varphi_{m_{j,k_j(\nu)}}$.
    Here, by the definition %$\widetilde{\varphi}_{j,\nu}$ is defined by 
    $\widetilde{\varphi}_{j,\nu}:=\sup_{\mu\geq\nu}\varphi_{m_{j,\mu}}$, the sequence $\{\widetilde{\varphi}_{j,\nu}\}_{\nu\in\bb{N}}$ is decreasing in $\nu$, i.e., the condition $(\ref{subsection: canonical sHm}\,c)$. 
    Hence, the following inequality is obtained on each $X_j$.
    \begin{align*}
        \widetilde{\varphi}_{j+1,k_j(\nu)}\leq\widetilde{\varphi}_{j+1,\nu}=\sup_{\mu\geq\nu}\varphi_{m_{j+1,\mu}}=\sup_{\mu\geq\nu}\varphi_{m_{j,k_j(\mu)}}\leq\sup_{\mu\geq k_j(\nu)}\varphi_{m_{j,\mu}}=\widetilde{\varphi}_{j,k_j(\nu)}\leq\widetilde{\varphi}_{j,\nu}.
    \end{align*}
    %on $X_j$. %Finally, 
    Since $Z_{j,\nu}$ is the set of logarithmic poles of $\varphi_{m_{j,\nu}}$, it follows that $Z_{j+1,\nu}|_{X_j}=Z_{j,k_j(\nu)}$.
\end{proof}

Using the decreasing properties of Demailly's approximation, i.e., the condition $(\ref{subsection: canonical sHm}\,c)$ related to $\{\widetilde{\varphi}_{j,\nu}\}_{\nu\in\bb{N}}$, 
we define a new singlular Hermitian metric in a canonical way such that the singularity becomes stronger as it approaches the boundary of \( X \).

\begin{definition}\label{def of canonical sHm}
    Let $X$ be a complex manifold with a exhaustive open covering $\{X_j\}_{j\in\bb{N}}$. 
    In the above setting, we construct a new singular Hermitian metric $h_\natural:=h_\natural(\ell)$ by 
    \begin{align*}
        h_\natural(\ell):=h_0e^{-2\varphi_\natural}\quad \rom{and} \quad \varphi_\natural:=\widetilde{\varphi}_{j,\nu_j(\ell)} \quad  \rom{on} \quad X_j\setminus X_{j-1} \text{ for  any } j\in\bb{N}.  %\lim_{\varepsilon\to+0}\widetilde{\varphi}_\natural\ast\rho_\varepsilon(x) \quad \text{for any } x\in X, \\
        %\widetilde{\varphi}_\natural&:=\widetilde{\varphi}_{j,\nu_j} \quad  \rom{on} \quad X_j\setminus X_{j-1} \text{ for  any } j\in\bb{N},
    \end{align*}
    %where $\rho_\varepsilon$ is a standard approximate to the identity.
    We say that the constructed singular Hermitian metric $h_\natural$ is \textit{the} \textit{singular} \textit{Hermitian} \textit{metric} \textit{constructed} \textit{via} \textit{approximation} associated with $h$.
\end{definition}

By this construction and Proposition \ref{Prop of two quasi-psh by Dem-appro}, we obtain the following characterization.

\begin{theorem}\label{Thm characterizations of canonical sHm}
    Let $\{X_j\}_{j\in\bb{N}}$ be an exhaustive open covering of a complex manifold $X$, $\ell$ be a positive integer %$X$ be a complex manifold with an exhaustive open covering $\{X_j\}_{j\in\bb{N}}$.
    and $L$ be a holomorphic line bundle on $X$ with a singular Hermitian metric $h$. 
    Then, the singular Hermitian metric $h_\natural:=h_\natural(\ell)$ on $L$, constructed via approximation, satisfies $\scr{I}(h^t)=\scr{I}(h_\natural^t)$ on $X$ for any $0<t\leq\ell$,
    and there exist a singular Hermitian metric $h_j$ on $L|_{X_j}$ with $\scr{I}(h^t)=\scr{I}(h_j^t)$ for any $0<t\leq\ell$ and an analytic subset $Z_j$ of $X_j$ such that the following conditions are satisfied.
    \begin{itemize}
        \item [$(a)$] $h_j$ is smooth on $X_j\setminus Z_j$ and has algebraic singularities,
        \item [$(b)$] $Z_j\subseteq Z_{j+1}|_{X_j}$ for any $j\in\bb{N}$,
        \item [$(c)$] $\scr{I}(h_j^p)\subseteq\scr{I}(h_\natural^p)$ on $X_j$ for any $p\in\bb{N}$,
        \item [$(d)$] $\bigcup_{p\in\bb{N}}V(\scr{I}(h_\natural^p))\subset Z_j$ on $X_j$.
    \end{itemize}
\end{theorem}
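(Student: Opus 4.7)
The plan is to take $h_j := h_0 e^{-2\varphi_{m_{j,\nu_j(\ell)}}}$ and $Z_j := Z_{j,\nu_j(\ell)}$ directly from the construction of $h_\natural$ recalled in $\S\ref{subsection: canonical sHm}$; property $(a)$ and the equality $\scr{I}(h^t)=\scr{I}(h_j^t)$ on $X_j$ for $0<t\leq\ell$ will then follow immediately from conditions $(\ref{subsection: canonical sHm}\,a)$, $(\ref{subsection: canonical sHm}\,b)$ and $(\ref{subsection: canonical sHm}\,d)$. Before proceeding, I would refine the choice of the sequence $\{\nu_j(\ell)\}_{j\in\bb{N}}$ so that, simultaneously, $k_j(\nu_{j+1}(\ell))\geq\nu_j(\ell)$ and $\nu_j(\ell)\geq\nu_k(\ell)$ for all $k\leq j$; enlarging $\nu_j(\ell)$ preserves its defining property by strong openness, so this is harmless.

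With these choices, $(b)$ follows from the index correspondence $Z_{j+1,\nu}|_{X_j}=Z_{j,k_j(\nu)}$ of Proposition \ref{Prop of two quasi-psh by Dem-appro} and the monotonicity of $Z_{j,\nu}$ in $\nu$. For $(c)$, iterating the same proposition gives $\varphi_{m_{j,\nu_j(\ell)}}|_{X_k}=\varphi_{m_{k,\nu'}}$ for a composite index $\nu'\geq\nu_j(\ell)\geq\nu_k(\ell)$, so on $X_k\setminus X_{k-1}$
\begin{align*}
\varphi_{m_{j,\nu_j(\ell)}}\leq\sup_{\mu\geq\nu_k(\ell)}\varphi_{m_{k,\mu}}=\widetilde{\varphi}_{k,\nu_k(\ell)}=\varphi_\natural;
\end{align*}
patching over $k\leq j$ yields $\varphi_{m_{j,\nu_j(\ell)}}\leq\varphi_\natural$ on $X_j$, whence $\scr{I}(h_j^p)\subseteq\scr{I}(h_\natural^p)$ for every $p\in\bb{N}$. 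For $(d)$, given $x\in X_j\setminus Z_j$ and letting $k$ be minimal with $x\in X_k$, the iterated inclusion from $(b)$ forces $x\notin Z_m$ for $k\leq m\leq j$, and by strictness of the exhaustion a small enough neighborhood of $x$ avoids $\overline{X_{k-2}}$; hence $\varphi_\natural$ is locally bounded near $x$ and $\scr{I}(h_\natural^p)_x=\cal{O}_{X,x}$ for every $p$.

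The global equality $\scr{I}(h^t)=\scr{I}(h_\natural^t)$ on $X$ I would verify stalkwise. At interior points of a piece $X_j\setminus\overline{X_{j-1}}$ the germ of $\varphi_\natural$ coincides with that of $\widetilde{\varphi}_{j,\nu_j(\ell)}$, and $(\ref{subsection: canonical sHm}\,d)$ gives the equality directly. At boundary points $x\in\partial X_{j-1}\cap X_j$, the inequality $\widetilde{\varphi}_{j,\nu_j(\ell)}|_{X_k}\leq\widetilde{\varphi}_{k,\nu_k(\ell)}$ obtained by iterating Proposition \ref{Prop of two quasi-psh by Dem-appro} yields $\varphi_\natural\geq\widetilde{\varphi}_{j,\nu_j(\ell)}$ on $X_j$, giving the inclusion $\scr{I}(h^t)\subseteq\scr{I}(h_\natural^t)$ at $x$; for the reverse, I would assemble local $L^2$-integrability of $|f|^2e^{-2t\varphi}$ at each nearby point, known from $(\ref{subsection: canonical sHm}\,d)$ on the appropriate $X_m$, into integrability over a small compact neighborhood of $x$ via a finite open cover. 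I expect this boundary argument to be the main obstacle, since one must reconcile the piecewise definition of $\varphi_\natural$ with the pointwise (stalk) nature of multiplier ideal sheaves across partition boundaries.
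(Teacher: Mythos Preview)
Your proposal is correct and follows the same strategy as the paper: the same choice $h_j:=h_0e^{-2\varphi_{m_{j,\nu_j(\ell)}}}$, $Z_j:=Z_{j,\nu_j(\ell)}$, and the same use of Proposition~\ref{Prop of two quasi-psh by Dem-appro}. Two points are worth noting. First, your extra refinement of $\{\nu_j(\ell)\}$ is unnecessary: the sequence is already increasing by construction in \S\ref{subsection: canonical sHm}, and since $k_j(\nu)\geq\nu$ one has $k_j(\nu_{j+1}(\ell))\geq\nu_{j+1}(\ell)>\nu_j(\ell)$ automatically. Second, the paper reaches $(c)$ and $(d)$ more directly by first establishing the single inequality $\widetilde{\varphi}_{j,\nu_j(\ell)}\leq\varphi_\natural$ on all of $X_j$ (from $\widetilde{\varphi}_{j,\nu_j(\ell)}|_{X_k}\leq\widetilde{\varphi}_{k,\nu_k(\ell)}$ for $k<j$), and then using the chain
\[
(1+2^{-\nu_j(\ell)})\varphi\;\leq\;\varphi_{m_{j,\nu_j(\ell)}}\;\leq\;\widetilde{\varphi}_{j,\nu_j(\ell)}\;\leq\;\varphi_\natural \qquad\text{on } X_j.
\]
This gives $(c)$ immediately, and $(d)$ then follows in one line from $(a)$ and $(c)$: since $h_j$ is smooth on $X_j\setminus Z_j$ one has $\scr{I}(h_j^p)=\cal{O}_X$ there, hence $\scr{I}(h_\natural^p)=\cal{O}_X$ on $X_j\setminus Z_j$ by $(c)$. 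Your piecewise local-boundedness argument for $(d)$ is correct but more laborious. On the other hand, your explicit attention to the stalks at boundary points $x\in\partial X_{j-1}$ for the global equality $\scr{I}(h^t)=\scr{I}(h_\natural^t)$ is more careful than the paper, which simply cites $(\ref{subsection: canonical sHm}\,d)$; the displayed chain above (valid on all of $X_j$, including boundary points) is in fact what justifies the paper's terse claim.
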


%Furthermore, if \( h \) is singular positive, then the canonical singular Hermitian metric \( h_\natural \) is also singular positive by taking a sequence \( \{\nu_j\}_{j\in\bb{N}} \) that increases more rapidly and reconstructing \( h_\natural \).

\begin{proof}
    By Proposition \ref{Prop of two quasi-psh by Dem-appro} and $(\ref{subsection: canonical sHm}\,c)$, if $k<j$ then $\widetilde{\varphi}_{j,\nu_j(\ell)}|_{X_k}\leq\widetilde{\varphi}_{k,\nu_j(\ell)}\leq\widetilde{\varphi}_{k,\nu_k(\ell)}$, where $\nu_k(\ell)<\nu_j(\ell)$.
    As a result, this yields $\widetilde{\varphi}_{j,\nu_j(\ell)}\leq\varphi_\natural$ on $X_j$ for any $j\in\bb{N}$.
    We define $h_j:=h_0e^{-2\varphi_{m_{j,\nu_j(\ell)}}}$ on $X_j$ and define $Z_j:=Z_{j,\nu_j(\ell)}$, then $(a)$ follows by $(\ref{subsection: canonical sHm}\,b)$, and $(c)$ follows by the inequality 
    \begin{align*}
        (1+2^{-\nu_j(\ell)})\varphi\leq\varphi_{m_j,\nu_j(\ell)}\leq\widetilde{\varphi}_{j,\nu_j(\ell)}\leq\varphi_\natural\quad\rom{on}\quad X_j.
    \end{align*}    
    From Proposition \ref{Prop of two quasi-psh by Dem-appro} and $(\ref{subsection: canonical sHm}\,b)$, 
    it follows that $Z_{j+1}|_{X_j}=Z_{j,k_j(\nu_{j+1}(\ell))}\supseteq Z_j$, where $k_j(\nu_{j+1}(\ell))\geq\nu_{j+1}(\ell)>\nu_j(\ell)$.
    %From $(\ref{subsection: canonical sHm}\,b)$ and $(b)$, we have that $h_\natural$ is smooth on $\bigcup_{j\geq k}X_k\setminus(Z_k\cup \overline{X}_{k-1})$ and $X_j\setminus(Z_j\cup\bigcup_{j\geq k}\partial\overline{X}_k)=\bigcup_{j\geq k}X_k\setminus(Z_j\cup \overline{X}_{k-1})\subseteq\bigcup_{j\geq k}X_k\setminus(Z_k\cup \overline{X}_{k-1})$.  
    By the conditions $(a)$ and $(c)$, we obtain $\scr{I}(h_\natural^p)=\cal{O}_X$ on $X_j\setminus Z_j$ for any $p\in\bb{N}$, which implies $(d)$.
    It follows from $(\ref{subsection: canonical sHm}\,d)$ that $\scr{I}(h^t)=\scr{I}(h_\natural^t)=\scr{I}(h_j^t)$ for any $0<t\leq\ell$.
\end{proof}

\begin{conjecture}
    If \( h \) is singular positive, does the singular Hermitian metric \( h_\natural \), constructed via approximation, also become singular positive?
    Here, clearly $h_\natural$ is singular positive on $X\setminus\bigcup_j\partial X_j$.
\end{conjecture}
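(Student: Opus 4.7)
The obstruction to $h_\natural$ being singular positive lies entirely on $\bigcup_j \partial X_j$: on each open annulus $X_j \setminus \overline{X_{j-1}}$, the local weight $\varphi_\natural = \widetilde{\varphi}_{j,\nu_j(\ell)}$ is quasi-plurisubharmonic with $i\partial\bar\partial\varphi_\natural \geq \gamma/2$, but the inequality $\widetilde{\varphi}_{j+1,\nu_{j+1}(\ell)}|_{X_j} \leq \widetilde{\varphi}_{j,\nu_j(\ell)}$ from Proposition \ref{Prop of two quasi-psh by Dem-appro} produces a downward jump crossing $\partial X_j$ outward. The distributional $i\partial\bar\partial$ of such a jump is a dipole-type singular current of indefinite sign, and the naive upper semicontinuous regularization $\varphi_\natural^\ast$ still fails the submean inequality at $x \in \partial X_j$, since any ball around $x$ contains strictly smaller outer values from $\widetilde{\varphi}_{j+1,\nu_{j+1}(\ell)}$.

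My plan is to absorb the jump by a combined reindexing plus smoothing. The integers $p_j(\nu) = \log_2 m_{j,\nu}$ in Proposition \ref{Prop of two quasi-psh by Dem-appro} are constrained only by the integral estimate $\int_{X_j}(e^{-2\varphi} - e^{-2\max\{\varphi,(1+2^{-\nu})\psi_{j,2^{-\nu},2^{p_j(\nu)}}\}})\,dV_\omega \leq 2^{-\nu}$, so by enlarging $p_j(\nu)$ inductively in $j$ and correspondingly raising $\nu_j(\ell)$, one can drive $\widetilde{\varphi}_{j,\nu_j(\ell)} - \widetilde{\varphi}_{j+1,\nu_{j+1}(\ell)}|_{X_j}$ to zero in $L^1$ while preserving $\scr{I}(t\varphi) = \scr{I}(t\widetilde{\varphi}_{j,\nu_j(\ell)})$ for $0 < t \leq \ell$. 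Then, in a collar of width $\delta_j$ around each $\partial X_j$ with $\sum_j \delta_j < \infty$, I would apply Richberg's regularized maximum (Lemma \ref{Lemma of Richberg's regularization theorem}) to glue $\widetilde{\varphi}_{j,\nu_j(\ell)}$ and $\widetilde{\varphi}_{j+1,\nu_{j+1}(\ell)}$; the resulting modified weight is quasi-plurisubharmonic on $X$ with $i\partial\bar\partial \geq \gamma/2 - \varepsilon\omega$ for arbitrarily small $\varepsilon > 0$, matches $\varphi_\natural$ outside the collars, and still dominates $\varphi$ everywhere. By the strong openness property, the multiplier ideal sheaf equality $\scr{I}(h^t) = \scr{I}(h_\natural^t)$ for $0 < t \leq \ell$ persists, and the remaining conclusions of Theorem \ref{Thm characterizations of canonical sHm} stay intact, giving singular positivity of the modified metric.

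The main obstacle is controlling the combined curvature loss from infinitely many regularized maxima: each application degrades $i\partial\bar\partial$ by an amount proportional to the local oscillation of $\widetilde{\varphi}_{j,\nu_j(\ell)} - \widetilde{\varphi}_{j+1,\nu_{j+1}(\ell)}$ over its collar, which is \emph{not} a priori summable. The reindexing step is needed precisely to make these oscillations summable, and verifying that the enlargements of $p_j(\nu)$ can be performed \emph{compatibly} with the inductive open-cover construction in the proof of Proposition \ref{Prop of two quasi-psh by Dem-appro}, which requires the cover on $X_{j+1}$ to restrict to the cover on $X_j$, is the delicate combinatorial step. A structural difficulty here is that $\{\nu_j(\ell)\}_{j \in \bb{N}}$ is strictly increasing while $k_j \geq \mathrm{id}$, so the naive diagonal identification $\widetilde{\varphi}_{j+1,\nu_{j+1}(\ell)}|_{X_j} = \widetilde{\varphi}_{j,\nu_j(\ell)}$ cannot be forced by reindexing alone and must be achieved only in the limit after smoothing. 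If an unremovable lower bound on the oscillations nevertheless appears, the same construction still yields a canonical positive-curvature modification of $h_\natural$ with identical multiplier ideal sheaves, so the conjecture would be settled affirmatively after a harmless replacement of the gluing rule in Definition \ref{def of canonical sHm}.
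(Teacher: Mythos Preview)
The statement you are addressing is labeled \textbf{Conjecture} in the paper, not a theorem; the paper provides no proof and leaves the question open. So there is no ``paper's own proof'' to compare against, and your proposal is an attempt to resolve an open problem rather than to reproduce a known argument.

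As for the content of your proposal: it is a plausible strategy sketch, but it is not a proof, and you yourself identify the gap. The crux is whether the oscillations $\sup_{\text{collar}_j}\bigl(\widetilde{\varphi}_{j,\nu_j(\ell)} - \widetilde{\varphi}_{j+1,\nu_{j+1}(\ell)}\bigr)$ can be made summable by enlarging the indices $p_j(\nu)$, and you do not establish this. Driving the difference to zero in $L^1(X_j)$ via the integral bound $\int_{X_j}(e^{-2\varphi}-e^{-2\max\{\varphi,\varphi_{m_{j,\nu}}\}})\,dV_\omega\leq 2^{-\nu}$ does not control the $C^0$ oscillation on a fixed collar, which is what Richberg gluing actually consumes. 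The functions $\widetilde{\varphi}_{j,\nu}$ are suprema of Bergman-type potentials and their pointwise behaviour near $\partial X_j$ depends on the geometry of $h$ there, not merely on the integral tail; nothing in the construction forces uniform $C^0$ closeness of successive layers on collars. Without that, the accumulated curvature loss $\sum_j\varepsilon_j$ need not be finite, and the glued weight may fail to dominate any positive form globally.

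Your fallback---replacing the gluing rule in Definition~\ref{def of canonical sHm} by a smoothed version---would answer a different question. The conjecture asks whether $h_\natural$ \emph{as defined} (piecewise, with hard jumps) is singular positive; producing a different metric with the same multiplier ideals and positive curvature is useful but does not settle the conjecture as stated. Indeed, the paper's phrasing ``does $h_\natural$ also become singular positive?'' together with the parenthetical ``clearly $h_\natural$ is singular positive on $X\setminus\bigcup_j\partial X_j$'' suggests the author regards the boundary behaviour of the specific piecewise object as the genuine unknown, not merely the existence of some approximation-compatible positive metric.
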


By a result due to Siu (cf. \cite[Theorem\,2.10]{Dem12}), for any $c>0$, the super-level set
\begin{align*}
    E_c(h):=\{x\in X\mid\nu(\varphi,x)\geq c\}
\end{align*}
is a proper analytic subset of $X$, where $\nu(\varphi,x)$ is the Lelong number of $\varphi$ at $x\in X$.
Additionally, it is known that $E_n(h)\subseteq V(\scr{I}(h))\subseteq E_1(h)$ (see \cite[Lemma\,5.6]{Dem12}).
Clealy, $E_{n/m}(h)\subseteq V(\scr{I}(h^m))\subseteq E_{1/m}(h)$, and we obtain %$E_0(h):=\{x\in X\mid\nu(\varphi,x)>0\}=\bigcup_{m\in\bb{N}}V(\scr{I}(h^m))$.
\vspace{1mm}
\begin{center}
    $E_{+}(h):=\{x\in X\mid\nu(\varphi,x)>0\}=\bigcup_{m\in\bb{N}}V(\scr{I}(h^m))$.
\end{center}
\vspace{1mm}
%\begin{align*}
%    E_0(h):=\{x\in X\mid\nu(\varphi,x)>0\}=\bigcup_{m\in\bb{N}}V(\scr{I}(h^m)).
%\end{align*}
As one result, it is known that $\bb{B}_{-}(L)\subset E_{+}(h)$ if $X$ is projective and $h$ is singular semi-positive, i.e., $(L,h)$ is pseudo-effective, (see \cite[Lemma\,2.3.6]{PT18}).

In considering the embedding, we need to take into account the set $V(\scr{I}(h^m))$ for any $m\in\bb{N}$, 
but understanding the behavior of this analytic set as $m \to +\infty$ is difficult, and it is unclear whether it remains analytic.
Therefore, we will consider $h_\natural$ instead of $h$, and to handle the set $E_{+}(h_\natural)$, we consider the union $\bigcup_{j\in\bb{N}}Z_j$.
By $(b)$ and $(d)$ of Theorem \ref{Thm characterizations of canonical sHm}, the union $\bigcup_{j\in\bb{N}}Z_j$ is naturally defined on $X$, and we obtain 
\vspace{1mm}
\begin{center}
    $E_{+}(h_\natural)=\bigcup_{m\in\bb{N}}V(\scr{I}(h_\natural^m))\subset \bigcup_{j\in\bb{N}}Z_j$.
\end{center}
\vspace{1mm}
In particular, the union $\bigcup_{j\in\bb{N}}Z_j$ does not depend on the construction of \( h_\natural \) by the following.  %, and the following relation holds.
%Here, regarding $(c)$ in Theorem \ref{Thm characterizations of canonical sHm}, it follows from the proof that $\scr{I}(h^{(1+2^{-\nu_j})p})\subseteq\scr{I}(h_j^p)\subseteq\scr{I}(h_\natural^p)$ on each $X_j$ for any $p\in\bb{N}$.
%In general, the behavior of $V(\scr{I}(h^m))$ as $m \to +\infty$ and the set $E_0(h)$ is not well understood.
Here, the initial observation is as follows. Let $\bigcap^N_{j=1}f_j^{-1}(0)$ be an analytic subset for some $f_1,\ldots,f_N\in\cal{O}_X$, then 
the function $g=\frac{1}{2}\log\sum_j|f_j|^2$ has the property that $x\in \bigcap_jf_j^{-1}(0)$ if and only if $\nu(g,x)>0$. In other words, $\bigcap_jf_j^{-1}(0)=\bigcup_{c>0}E_c(g)$.

\begin{proposition}\label{Prop E0(h)=cup Zj}
    We have $\bigcup_{m\in\bb{N}}V(\scr{I}(h^m))=\{x\in X\mid \nu(\varphi,x)>0\}=\bigcup_{j\in\bb{N}}Z_j$. %, where $h=h_0e^{-2\varphi}$.
\end{proposition}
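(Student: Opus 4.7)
The first equality $\bigcup_{m\in\bb{N}}V(\scr{I}(h^m))=\{x\in X\mid\nu(\varphi,x)>0\}=:E_{+}(h)$ is already noted just before the statement, as a direct consequence of the standard Skoda-type containments $E_{n/m}(h)\subseteq V(\scr{I}(h^m))\subseteq E_{1/m}(h)$ (\cite[Lemma\,5.6]{Dem12}), taking the union over $m$. So the real content is the identification $E_{+}(h)=\bigcup_{j\in\bb{N}}Z_j$, and the plan is to prove the two inclusions using the Lelong-number inequality in part $(e)$ of Theorem \ref{Dem appro with log poles and ideal sheaves}.

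For $(\supseteq)$, fix $x\in Z_j$. By construction $Z_j=Z_{j,\nu_j(\ell)}$ is the set of logarithmic poles on $X_j$ of the algebraic-singularity weight $\varphi_{m_{j,\nu_j(\ell)}}$, so locally $x\in\bigcap_k\sigma_{m_{j,\nu_j(\ell)},k}^{-1}(0)$ and hence $\nu(\varphi_{m_{j,\nu_j(\ell)}},x)>0$ (since the smooth part $\varPhi_{m_{j,\nu_j(\ell)}}$ contributes no Lelong number). Apply the upper bound in Theorem \ref{Dem appro with log poles and ideal sheaves}$(e)$:
\begin{align*}
\nu(\varphi,x)\geq\nu\!\Bigl(\tfrac{\varphi_{m_{j,\nu_j(\ell)}}}{1+2^{-\nu_j(\ell)}},x\Bigr)>0,
\end{align*}
so $x\in E_{+}(h)$.

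For $(\subseteq)$, fix $x\in X$ with $\nu(\varphi,x)>0$. Pick $j_0$ with $x\in X_{j_0}$. The key observation is that $m_{j,\nu_j(\ell)}\to+\infty$ as $j\to+\infty$: by Proposition \ref{Prop of two quasi-psh by Dem-appro} one has $m_{j+1,\nu}=m_{j,k_j(\nu)}$ with $k_j(\nu)\geq\nu$, so for each fixed $\nu$ the sequence $m_{j,\nu}$ is non-decreasing in $j$; combined with the fact that $\nu\mapsto m_{j,\nu}$ is increasing and $\nu_j(\ell)\to+\infty$, the diagonal sequence $m_{j,\nu_j(\ell)}$ tends to $+\infty$. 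Choose $j\geq j_0$ with $m_{j,\nu_j(\ell)}>n/\nu(\varphi,x)$. Then the lower bound in Theorem \ref{Dem appro with log poles and ideal sheaves}$(e)$ gives
\begin{align*}
\nu\!\Bigl(\tfrac{\varphi_{m_{j,\nu_j(\ell)}}}{1+2^{-\nu_j(\ell)}},x\Bigr)\geq\nu(\varphi,x)-\tfrac{n}{m_{j,\nu_j(\ell)}}>0,
\end{align*}
so $\nu(\varphi_{m_{j,\nu_j(\ell)}},x)>0$; equivalently, in a local expression $\varphi_{m_{j,\nu_j(\ell)}}=\tfrac{1+2^{-\nu_j(\ell)}}{2m_{j,\nu_j(\ell)}}\log\sum_k|\sigma_k|^2+\varPhi_{m_{j,\nu_j(\ell)}}$ with smooth $\varPhi_{m_{j,\nu_j(\ell)}}$, one has $x\in\bigcap_k\sigma_k^{-1}(0)=Z_{j,\nu_j(\ell)}=Z_j$.

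The only subtle point, and the one I expect to be the main obstacle if it is not handled carefully, is to justify $m_{j,\nu_j(\ell)}\to+\infty$ as $j\to+\infty$; this is where the compatibility of the Demailly approximations across different $X_j$ given by Proposition \ref{Prop of two quasi-psh by Dem-appro} is used, since without this coherence one could not meaningfully compare $m_{j,\nu}$ for different $j$. Everything else is a direct application of the Lelong-number sandwich and the algebraic-singularity description of $Z_j$.
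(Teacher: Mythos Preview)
Your proof is correct and follows essentially the same approach as the paper: both arguments rest on the Lelong-number sandwich from Theorem \ref{Dem appro with log poles and ideal sheaves}$(e)$ together with $m_{j,\nu_j(\ell)}\to+\infty$. The paper phrases the two inclusions in terms of the super-level sets $E_c(\cdot)$ rather than pointwise, but this is cosmetic; your justification of $m_{j,\nu_j(\ell)}\to+\infty$ via Proposition \ref{Prop of two quasi-psh by Dem-appro} is slightly more explicit than the paper's, which simply asserts it.
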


\begin{proof}
    For simplicity, let $\nu_j(\ell)=\nu_j$.
    Recall that from Theorem \ref{Thm characterizations of canonical sHm} and its proof, we have \( h_j = h_0e^{-2\varphi_{m_{j,\nu_j}}} \), and \( Z_j = Z_{j,\nu_j} \) is the set of logarithmic poles of $\varphi_{m_{j,\nu_j}}$.
    By $(a)$ of Theorem \ref{Dem appro with log poles and ideal sheaves}, locally $\varphi_{m_{j,\nu_j}}$ can be expressed as 
    \begin{align*}
        \varphi_{m_{j,\nu_j}}=\frac{1}{2m_{j,\nu_j}}\log\sum_{k\in\bb{N}}|\sigma_{j,k}|^2+\varPhi_{m_{j,\nu_j}},
    \end{align*}
    where $\{\sigma_{j,k}\}_{k\in\bb{N}}$ is an orthonormal basis of a Hilbert space. 
    Similarly to the proof of Theorem \ref{Blow ups of Dem appro with log poles and ideal sheaves}, there exists a positive integer \( K_j\in\bb{N} \) such that locally \( Z_j = \bigcap^{K_j}_{k=1}\sigma_{j,k}^{-1}(0) \), and we also obtain \( Z_j = \bigcup_{c>0}E_c(\varphi_{m_{j,\nu_j}}) \).
    From the condition $(e)$ of Theorem \ref{Dem appro with log poles and ideal sheaves}, i.e., 
    \begin{align*}
        \nu(\varphi,x)-\frac{n}{m_{j,\nu_j}}\leq\nu\Big(\frac{\varphi_{m_{j,\nu_j}}}{1+2^{-\nu_j}},x\Big)\leq\nu(\varphi,x),
    \end{align*}
    we have the inclusion $E_{c+n/{m_{j,\nu_j}}}(\varphi)\subset E_c\Big(\frac{\varphi_{m_{j,\nu_j}}}{1+2^{-\nu_j}}\Big)\subset E_c(\varphi)$ for any $c>0$.
    Thus, from the obtained inclusion $Z_j=\bigcup_{c>0}E_c\Big(\frac{\varphi_{m_{j,\nu_j}}}{1+2^{-\nu_j}}\Big)\subset\bigcup_{c>0}E_c(\varphi)=E_{+}(\varphi)$ on $X_j$ for any $j\in\bb{N}$, the inclusion $\bigcup_{j\in\bb{N}}Z_j\subset E_{+}(\varphi)=E_{+}(h)$ is first derived.
    %\begin{align*}
    %    Z_j=\bigcup_{c>0}E_c\Big(\frac{\varphi_{m_{j,\nu_j}}}{1+2^{-\nu_j}}\Big)\subset\bigcup_{c>0}E_c(\varphi)=E_0(\varphi)
    %\end{align*}
    Finally, from the inclusion \( Z_j\supset\bigcup_{c>0}E_{c+n/{m_{j,\nu_j}}}(\varphi) \) and $(b)$ of Theorem \ref{Thm characterizations of canonical sHm}, the following inclusion 
    \begin{align*}
        \bigcup_{j\in\bb{N}}Z_j\supset\bigcup_{j\in\bb{N}}\bigcup_{c>0}E_{c+n/{m_{j,\nu_j}}}(\varphi)=\bigcup_{c>0}E_c(\varphi)=E_{+}(h)
    \end{align*}
    is obtained by $\lim_{j\to+\infty}m_{j,\nu_j}=+\infty$.
\end{proof}

In other words, this proposition indicates that \( E_{+}(\varphi)=\{x\in K\mid\nu(\varphi,x)>0\} \) can be obtained as the limit of the analytic sets \( Z_\nu \), satisfying $Z_\nu\subset Z_{\nu+1}$, obtained from Demailly's approximation (= Theorem \ref{Dem appro preserves ideal sheaves} or \ref{Dem appro with log poles and ideal sheaves}).

\subsection{Setting on each sublevel set $X_c$}\label{Setting of each sublevel set}

Let $(X,\varPsi)$ be a weakly pseudoconvex manifold and $L$ be a holomorphic line bundle on $X$ with a singular positive Hermitian metric $h$.
We fix an arbitrary $c<\sup_X\varPsi$.
By blow-ups Theorem \ref{Blow ups of Dem appro with log poles and ideal sheaves}, there exist a singular Hermitian metric $h_c$ on $L|_{X_c}$ with algebraic singularities and $\scr{I}(h)=\scr{I}(h_c)$, an analytic subset $Z_c$ of $X_c$ and a proper modification $\pi_c:\widetilde{X}_c\longrightarrow X_c$ that satisfies the following %such that the following conditions are satisfied.
\begin{itemize}
    \item [$(\ref{Setting of each sublevel set}\,a)$] $h_c$ is smooth on $X_c\setminus Z_c$ and $\pi_c:\widetilde{X}_c\setminus\pi_c^{-1}(Z_c)\longrightarrow X_c\setminus Z_c$ is biholomorphic,
    \item [$(\ref{Setting of each sublevel set}\,b)$] there exist $m_c\in\bb{N}$ and a simple normal crossing divisor $D_c=\sum^J_{j=1}a_jD_j$ with $a_j\in\bb{N}$ and $\rom{supp}\,D_c=\pi_c^{-1}(Z_c)$ such that 
    the holomorphic line bundle $\widetilde{L}_{m_c}:=\pi_c^*L^{\otimes m_c}\otimes\cal{O}_{\widetilde{X}_c}(-D_c)$ is semi-positive on $\widetilde{X}_c$ and positive on $\widetilde{X}_c\setminus D_c$,
    where $\scr{I}(\pi_c^*h_c^{m_c})\cong\cal{O}_{\widetilde{X}_c}(-D_c)$.
    \item [$(\ref{Setting of each sublevel set}\,c)$] there exist an effective divisor $D_b=\sum^J_{j=1}b_jD_j$ and enough large $t_c\in\bb{N}$ such that the holomorphic line bundle $\scr{L}:=\widetilde{L}_{m_c}^{\otimes t}\otimes\cal{O}_{\widetilde{X}_c}(-D_b)$ is positive on $\widetilde{X}_c$ for any integer $t\geq t_c$.
\end{itemize}

Note that within these conditions, \( m_c \) and \( a_j \) correspond to \( 2^\nu m_\nu \) and \( (1 + 2^\nu)a_j \), respectively, in Theorem \ref{Blow ups of Dem appro with log poles and ideal sheaves} for some $\nu\in\bb{N}$.

\begin{remark}\label{Remark of Setting on each sublevel set}
    From the proofs of Theorems \ref{Dem appro with log poles and ideal sheaves} and \ref{Blow ups of Dem appro with log poles and ideal sheaves}, for each $c$, it is possible to take a sufficiently small $0<\varepsilon_c\ll1$ such that all notations in the above setting are considered to hold on $X_{c+\varepsilon_c}$. 
    Henceforth, we will assume this without further notice, and for simplicity, we will omit this $\varepsilon_c$ and write as above. 
    By applying this to the canonical singular Hermitian metric $h_\natural$, the function $\widetilde{\varphi}_{j,\nu_j}$ in Definition \ref{def of canonical sHm} is considered as a function defined on each $X_{j+\varepsilon_j}$. 
    Therefore, $h_j$ and $Z_j$ in Theorem \ref{Thm characterizations of canonical sHm} are also on $X_{j+\varepsilon_j}$, and $(a)$-$(d)$ are actually valid on $X_{j+\varepsilon_j}$.
\end{remark}

\section{Approximation theorem for holomorphic sections\\ of adjoint bundles with multiplier ideal sheaves}\label{Section 4: Approximation thm}

Regarding positive line bundles, the approximation theorem for holomorphic sections of the adjoint bundle %is established by Nakano, Kazama and Ohsawa, and 
is one of most important conclusion of the cohomology theory on weakly pseudoconvex manifolds (see \cite{Ohs82,Ohs83}). 
In this section, we present the following approximation theorem %for holomorphic sections of adjoint bundles 
including multiplier ideal sheaves %of the singular positive Hermitian metric. %, as a key main theorem, by using blow-ups Theorem \ref{Blow ups of Dem appro with log poles and ideal sheaves}.
as a generalization to singular positive Hermitian metrics.

\begin{theorem}\label{Approximation thm of hol section with ideal sheaves}$($\textnormal{Approximation theorem}$)$
    Let $(X,\varPsi)$ be a weakly pseudoconvex manifold and $L$ be a holomorphic line bundle on $X$ equipped with a singular Hermitian metric $h$. 
    If $h$ is singular positive, then for any sublevel set $X_c$, the natural restriction map
    \begin{align*}
        \rho_c:H^0(X,K_X\otimes L\otimes\scr{I}(h))\longrightarrow H^0(X_c,K_X\otimes L\otimes\scr{I}(h))%\rho_c:H^q(X,\Omega^p_X\otimes L)&\longrightarrow H^q(X_c,K_X\otimes L)
    \end{align*}
    has dense images with respect to the topology induced by the $L^2$-norm $||\bullet||_h$ and the topology of uniform convergence on all compact subsets in $X_c$.
\end{theorem}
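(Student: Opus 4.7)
My plan is the classical cutoff-and-\( \dbar \)-correction scheme adapted to the singular and weakly pseudoconvex setting: multiply a given section \( f\in H^0(X_c,K_X\otimes L\otimes\scr{I}(h)) \) by a cutoff supported slightly outside \( X_c \), correct the resulting \( \dbar \)-error via a weighted \( L^2 \)-estimate on the whole of \( X \), and force the correction to vanish on \( X_c \) by tuning an auxiliary plurisubharmonic weight built from the exhaustion \( \varPsi \). The singular metric \( h \) will first be replaced by a regularization sharing the same multiplier ideal sheaf but carrying enough curvature to equip \( X \) with a complete \kah structure; this is exactly the role played by the companion Theorem \ref{regularization with the equality of ideals and completeness} that \(\S\ref{subsection: Regularization theorem with ideal sheaves and completeness}\) provides.

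Concretely, I would fix \( \epsilon_0>0 \) with \( c+\epsilon_0<\sup_X\varPsi \) and a smooth cutoff \( \chi \) equal to \( 1 \) on \( X_c \) and supported in \( X_{c+\epsilon_0} \), so that \( \chi f \), extended by zero, is a smooth \( K_X\otimes L \)-valued \( (n,0) \)-form on \( X \) with \( \dbar(\chi f)=\dbar\chi\wedge f \) supported in \( X_{c+\epsilon_0}\setminus X_c \). For each \( k\in\bb{N} \) I would then pick a smooth convex increasing \( \lambda_k:\bb{R}\longrightarrow\bb{R}_{\geq0} \) vanishing on \( (-\infty,c] \) with \( \inf_{t\geq c+\epsilon_0/2}\lambda_k(t)\longrightarrow+\infty \) as \( k\to+\infty \); since \( \varPsi \) is plurisubharmonic and \( \lambda_k \) is convex increasing, \( \lambda_k(\varPsi) \) is plurisubharmonic and the twisted weight \( 2\varphi+2\lambda_k(\varPsi) \) still majorizes \( \omega_X \) in the sense of currents. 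Applying Demailly's \( L^2 \)-theorem on the complete \kah manifold produced by the regularization step, I would solve \( \dbar u_k=\dbar\chi\wedge f \) with
\begin{align*}
    \int_X|u_k|^2\,e^{-2\varphi-2\lambda_k(\varPsi)}\dv{X}\,\leq\,C\int_{\rom{supp}\,\dbar\chi}|\dbar\chi\wedge f|^2\,e^{-2\varphi-2\lambda_k(\varPsi)}\dv{X}.
\end{align*}
Because \( \lambda_k\circ\varPsi\longrightarrow+\infty \) uniformly on \( \rom{supp}\,\dbar\chi \) while the integrand on the right is dominated by the fixed \( L^1 \)-function \( C'|\dbar\chi\wedge f|^2\,e^{-2\varphi} \), the right-hand side tends to zero; and since \( \lambda_k\equiv 0 \) on \( X_c \), the left-hand side restricted to \( X_c \) dominates \( ||u_k||^2_{h,X_c} \), whence \( ||u_k||_{h,X_c}\to 0 \). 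Setting \( F_k:=\chi f-u_k \) then produces holomorphic global sections in \( H^0(X,K_X\otimes L\otimes\scr{I}(h)) \) (the \( L^2 \)-bound places \( u_k \) locally in \( \scr{I}(h) \)), and \( F_k|_{X_c}\to f \) in the \( ||\bullet||_h \)-norm; interior holomorphic estimates away from \( V(\scr{I}(h)) \) upgrade this to uniform convergence on every compact subset of \( X_c \).

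The chief obstacle is running Demailly's \( L^2 \)-machinery on \( X \) when a global \kah form need not exist, \( h \) is singular, and \( X \) is only weakly pseudoconvex. I would address this by combining the multiplier-ideal-preserving complete \kah regularization with the convex exhaustion device \( \lambda_k(\varPsi) \): the former supplies the ambient complete \kah metric and a smoothed weight compatible with \( \scr{I}(h) \), while the latter simultaneously contributes extra plurisubharmonic curvature on the annular region \( X_{c+\epsilon_0}\setminus X_c \)—thereby guaranteeing invertibility of the curvature operator in the \( L^2 \)-estimate on the support of the forcing term—and suppresses the correction on \( X_c \), so that the approximation is not spoiled by reintroduced mass there. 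A subsidiary bookkeeping issue, that \( u_k \) lies in \( \scr{I}(h) \) rather than merely \( L^2_{loc} \), follows immediately from the defining property of the multiplier ideal sheaf once the local finiteness of \( \int|u_k|^2 e^{-2\varphi} \) has been established.
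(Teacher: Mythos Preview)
Your cutoff-and-correction scheme has a domain problem that undermines the construction at the outset: you take $f\in H^0(X_c,K_X\otimes L\otimes\scr{I}(h))$, so $f$ is defined only on $X_c$, yet your cutoff $\chi$ is equal to $1$ on $X_c$ and supported in $X_{c+\epsilon_0}$. The product $\chi f$ is then undefined on the annulus $X_{c+\epsilon_0}\setminus\overline{X_c}$, precisely where $\chi$ may be nonzero but $f$ has no meaning, so $\dbar\chi\wedge f$ is not a well-defined $(n,1)$-form supported there. The standard repair---taking $\chi$ supported inside $X_c$ and identically $1$ on a smaller $X_b$---moves the forcing term into $X_c\setminus X_b$; but then your weight $\lambda_k(\varPsi)$ must blow up on $X_c\setminus X_b$ rather than outside $X_c$, and the correction $u_k$ is forced small only on $X_b$. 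You would obtain density of the image merely in $\|\cdot\|_{h,X_b}$ for $b<c$, and would still need a telescoping argument across an exhausting sequence of sublevel sets to reach global sections.

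A second gap is the ambient manifold for the $L^2$-estimate. Theorem~\ref{regularization with the equality of ideals and completeness} produces, for each fixed level, a metric $\hbar_c$ on $L|_{X_c}$ whose curvature is a complete \kah form on $X_c\setminus Z_c$; it does \emph{not} furnish a complete \kah structure on all of $X$, which is what your plan needs in order to solve $\dbar u_k=\dbar\chi\wedge f$ globally. The paper confronts both obstacles by working on pairs $X_b\Subset X_c$: on the complete \kah manifold $X_c\setminus Z_c$ it runs a \emph{duality} argument (Proposition~\ref{Prop of approximation thm with L2-norm}) rather than a direct cutoff---using the family of weights $h_{c_m}=\hbar_c e^{-(m-1)\psi}$ with $\psi\equiv0$ on $X_b$ to show that any $\eta$ orthogonal to the image of $\rho_{c,b}$ lies in $\rom{im}\,\dbar^*_{m,\omega}$ with a uniform bound, and then that a weak limit has support in $\overline{X_b}$, forcing $\eta\perp H^0(X_b,\cdot)$. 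This gives density in the full norm $\|\cdot\|_{h,X_b}$; interior estimates upgrade it to semi-norms (Proposition~\ref{Prop of approximation thm with semi-norms}); and a telescoping sum over the maps $\rho_{c+k+1,c+k}$ then passes from $X_c$ to $X$. Your weight device $\lambda_k(\varPsi)$ is morally the paper's $e^{-m\psi}$; what is missing is the correct placement of the cutoff, the restriction of the analysis to a single sublevel set where the complete \kah metric actually exists, and the chaining step.
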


\subsection{Regularization theorem preserving ideal sheaves and completeness}\label{subsection: Regularization theorem with ideal sheaves and completeness}

To prove Approximation Theorem \ref{Approximation thm of hol section with ideal sheaves} and the Nadel-type vanishing theorem without \kah metric (= Theorem \ref{Thm Nadel type vanishing without Kahler}), 
as a key result, we construct a singular Hermitian metric that preserves ideal sheaves and possesses curvature of complete \kah type.

\begin{theorem}\label{regularization with the equality of ideals and completeness}
    Let $(X,\varPsi)$ be a weakly pseudoconvex manifold and and $L$ be a holomorphic line bundle with a singular Hermitian metric $h$. 
    We take an arbitrary number $c<\sup_X\varPsi$.
    %assume that $\iO{L,h}\geq\varpi$ for a continuous real positive $(1,1)$-form $\varpi>0$ on $X$. 
    %Then for any sublevel set $X_c=\{x\in X\mid\varPsi(x)<c\}$, 
    If $h$ is singular positive on $L|_{X_{c+\upsilon}}$ for some $\upsilon>0$, then there exist a singular Hermitian metric $\hbar_c$ on $L|_{X_c}$ 
    such that the following are satisfied.
    \begin{itemize}
        \item [$(\alpha)$] $\hbar_c$ is smooth on $X_c\setminus Z_c$, where $Z_c$ is an analytic subset of $X_c$.
        \item [$(\beta)$] $\scr{I}(h)=\scr{I}(\hbar_c)$ on $X_c$.
        \item [$(\gamma)$] $\iO{L,\hbar_c}$ is a complete \kah metric on $X_c\setminus Z_c$ and singular positive on $X_c$.
    \end{itemize}
\end{theorem}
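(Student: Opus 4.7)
The strategy is to twist the algebraic-singularity approximation of $h$ supplied by Theorem \ref{Blow ups of Dem appro with log poles and ideal sheaves} by two plurisubharmonic corrections: one forcing completeness toward $\partial X_c$, and one toward the analytic singular set $Z_c$, each tame enough for the multiplier ideal sheaf to be preserved. First I would apply Theorem \ref{Blow ups of Dem appro with log poles and ideal sheaves} on $V=X_{c'}$ for some $c<c'<c+\upsilon$, with the approximation index chosen large enough that $\scr{I}(h)=\scr{I}(h_\nu)$ on $V$ and $\iO{L,h_\nu}\geq\gamma/2>0$. Writing $h_c:=h_\nu$, one obtains an analytic subset $Z_c\subset X_{c'}$, a blow-up $\pi_c:\widetilde{X}_c\to X_{c'}$ and a simple normal crossing divisor $D_c=\sum a_jD_j$ above $Z_c$ as in $\S\ref{Setting of each sublevel set}$, with $h_c$ smooth on $X_{c'}\setminus Z_c$ and of algebraic weight $\frac{1}{2m_c}\log\sum|\sigma_j|^2+\Phi_c$.

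For the boundary correction take $\chi_1:=-\log(c-\varPsi)$ on $X_c$; since $\varPsi$ is smooth plurisubharmonic,
\begin{align*}
    i\partial\overline{\partial}\chi_1=\frac{i\partial\overline{\partial}\varPsi}{c-\varPsi}+\frac{i\partial\varPsi\wedge\overline{\partial}\varPsi}{(c-\varPsi)^2}\geq 0,
\end{align*}
so $\chi_1$ is smooth plurisubharmonic on $X_c$, blows up at $\partial X_c$, and is bounded on every compact subset of $X_c$, delivering completeness toward $\partial X_c$ without perturbing any multiplier ideal sheaf.

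For the correction at $Z_c$ I would work on $\widetilde{X}_c$, where $D_c$ is simple normal crossing with canonical sections $s_j$ of $\cal{O}_{\widetilde{X}_c}(D_j)$ equipped with the smooth Hermitian metrics $h_j^*$ of Lemma \ref{lemma positivity of exc div to blow ups}. I would assemble a function $\widetilde{\chi}_2$ on $\widetilde{X}_c$ from Demailly-type local $\log\log$-potentials $-\log(-\log|s_j|^2_{h_j^*})$ (cut off away from $|s_j|=1$) patched by a partition of unity, and then add a sufficiently large multiple of a smooth K\"ahler form on the relatively compact $\widetilde{X}_c$, for instance the curvature of the positive line bundle $\scr{L}$ of $\S\ref{Setting of each sublevel set}\,(c)$, to absorb sign-indefinite Chern-curvature contributions and partition-of-unity error terms. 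The result is plurisubharmonic on $\widetilde{X}_c$, smooth off $D_c$, bounded above with only $\log\log$-type descent toward $D_c$, and has $i\partial\overline{\partial}\widetilde{\chi}_2$ dominating a complete K\"ahler form toward $D_c$. Pushing down along the biholomorphism $\pi_c:\widetilde{X}_c\setminus D_c\to X_{c'}\setminus Z_c$ and taking the upper-semicontinuous extension across $Z_c$ produces a plurisubharmonic $\chi_2$ on $X_c$, smooth on $X_c\setminus Z_c$.

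Finally, set $\hbar_c:=h_c\cdot e^{-2\chi_1-2\chi_2}$ on $X_c$. Smoothness on $X_c\setminus Z_c$ is immediate, and $\iO{L,\hbar_c}=\iO{L,h_c}+2i\partial\overline{\partial}\chi_1+2i\partial\overline{\partial}\chi_2\geq\gamma/2$ is a K\"ahler current on $X_c$ and, by construction, a complete K\"ahler metric on $X_c\setminus Z_c$. The main obstacle is $(\beta)$: although $\chi_2$ is unbounded below on $Z_c$, the iterated-logarithmic descent contributes at most a factor comparable to $(\log|s|^2)^2$ in $e^{-2\chi_2}$ near $Z_c$, and this is absorbed by the algebraic singularities of $h_c$ through the strong openness property $\scr{I}(\varphi_c)=\bigcup_{\delta>0}\scr{I}((1+\delta)\varphi_c)$ combined with the elementary estimate $|s|^{2\delta\alpha}(\log|s|^2)^2\to 0$ as $|s|\to 0$, giving $\scr{I}(\hbar_c)=\scr{I}(h_c)=\scr{I}(h)$. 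The other delicate step, making $\widetilde{\chi}_2$ plurisubharmonic globally despite curvature and patching errors, is where the relative compactness of $\widetilde{X}_c$ and the coefficient flexibility from Lemma \ref{lemma positivity of exc div to blow ups} are essential.
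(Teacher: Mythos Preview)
Your overall architecture coincides with the paper's: take the algebraic-singularity approximation $h_c$ from Theorem~\ref{Blow ups of Dem appro with log poles and ideal sheaves}, twist by a boundary exhaustion toward $\partial X_c$, twist by a Poincar\'e-type $\log\log$ potential toward $Z_c$ (constructed on the blow-up), and recover $(\beta)$ via strong openness and a H\"older argument. The boundary correction $-\log(c-\varPsi)$ and the ideal-sheaf argument are both fine.

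The gap is in your construction of $\widetilde{\chi}_2$. First, no partition of unity is needed: the canonical sections $s_j$ of $\cal{O}_{\widetilde{X}_c}(D_j)$ and the smooth metrics $h_j$ from Lemma~\ref{lemma positivity of exc div to blow ups} are \emph{global} on $\widetilde{X}_c$, so the potentials $-\log(-\log(\varepsilon\|s_j\|^2_{h_j}))$ are globally defined once $\varepsilon$ is small enough that $\varepsilon\|s_j\|^2<1$ on the relatively compact $\widetilde{X}_c$. Second, and more seriously, the step ``add a sufficiently large multiple of a smooth K\"ahler form \dots\ to absorb sign-indefinite Chern-curvature contributions'' is ill-posed: a $(1,1)$-form cannot be added to a function, and the curvature of $\scr{L}$ has no global potential on $\widetilde{X}_c$. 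What the paper does instead is take the metric $\widetilde{h}_\varepsilon:=\pi_c^*h_c\cdot\prod_j\bigl(\log(\varepsilon\|s_j\|^2_{h_j})\bigr)^2$ and compute
\[
\iO{\pi_c^*L,\widetilde{h}_\varepsilon}\;\geq\;\frac{\pi_c^*\varpi}{2}+\sum_j\frac{\iO{L_{D_j},h_j}}{\log(\varepsilon\|s_j\|^2_{h_j})}\,,
\]
so the sign-indefinite Chern terms carry the \emph{small} coefficient $1/\log(\varepsilon\|s_j\|^2)$, not a large one, and are absorbed by $\pi_c^*\varpi/2$ together with the positivity on $D_c$ coming from Lemma~\ref{lemma positivity of exc div to blow ups}. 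Replacing your patching-plus-large-K\"ahler step by this global formula with small $\varepsilon$ makes your argument coincide with the paper's.
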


\begin{proof}
    %Here, there exist a smooth Hermitian metric $h_0$ and $\varphi\in L^1_{loc}(X)$ such that $h=h_0e^{-2\varphi}$ on $X$.
    From the singular positivity of $h=h_0e^{-2\varphi}$, we obtain $\iO{L,h}\geq\varpi$ on $X_{c+\upsilon}$ for some continuous real positive $(1,1)$-form $\varpi>0$.
    By blow-ups Theorem \ref{Blow ups of Dem appro with log poles and ideal sheaves}, there exist a quasi-plurisubharmonic function $\varphi_c$ with algebraic singularities 
    and a proper modification $\pi:\widetilde{X}_c\longrightarrow X_c$ such that, using the notation from Setting \ref{Setting of each sublevel set} for $X_c$, the following conditons are satisfied. %and a proper modification $\pi:\widetilde{X_c}\longrightarrow X_c$ satisfying the following:
    \begin{itemize}
        \item $\iO{L,h_c}\geq\varpi/2>0$ and $\scr{I}(h)=\scr{I}(h_c)$ on $X_c$, where we define $h_c=h_0e^{-2\varphi_c}$.
        \item there is a rational number $m_c\in\bb{Q}_{>0}$ such that locally $\varphi_c$ can be expressed as 
        \begin{align*}
            \varphi_c=\frac{1}{2m_c}\log\Bigl(\sum_k|f_k|^2\Bigr)+\varPhi_c,
        \end{align*}
        where $\{f_k\}_{k\in\bb{N}}$ is the set of holomorphic functions %an orthonormal basis of a Hilbert space %with respect to $m\varphi$ 
        and $\varPhi_c$ is smooth on $X_c$.
        \item $\varphi_c$ is smooth on $X_c\setminus Z_c$, where $Z_c$ is an analytic subset of $X_c$. %locally obtained by $\bigcap_j\sigma_j^{-1}(0)$,
        %\item $\scr{I}(\varphi)=\scr{I}(\varphi_c)$ on $X_c$,
        %\item $\pi:\widetilde{X_c}\setminus\pi^{-1}(Z_c)\longrightarrow X_c\setminus Z_c$ is biholomorphic,
        \item there exists a simple normal crossing divisor $D_c=\sum^J_{j=1}a_jD_j$ with $a_j\in\bb{N}$ and $\rom{supp}\,D_c=\pi^{-1}(Z_c)$, and we have  
        \begin{align*}
            \pi^*\varphi_c=\frac{1}{m_c}\sum^J_{j=1}a_j\log|g_j|+\widetilde{\varPhi}_c \quad\rom{and}\quad \scr{I}(\pi^*\varphi_c)=\cal{O}_{\widetilde{X}_c}\Bigl(-\sum^J_{j=1}\lfloor\frac{a_j}{m_c}\rfloor D_j\Bigr),
        \end{align*}
        where $g_j$ are local generators of $\cal{O}_{\widetilde{X}_c}(D_j)$ and $\widetilde{\varPhi}_c$ is smooth on $\widetilde{X}_c$. 
    \end{itemize}

    Let $\alpha_j=\lfloor\frac{a_j}{m_c}\rfloor\in\bb{N}\cup\{0\}$, $\beta_j=\frac{a_j}{m_c}-\alpha_j\in [0,1)$ and $\sigma_j$ be the defining section of $D_j$, and  
    we define a simple normal crossing divisor $D:=\sum^J_{j=1}\alpha_jD_j$ and the natural singular metric $h_D=1/|\sigma_D|^2=1/\prod^J_{j=1}|\sigma_j|^{2\alpha_j}$ on $\cal{O}_{\widetilde{X}_c}(D)$ by $\sigma_D:=\bigotimes\sigma_j^{\alpha_j}$,
    then we have  $\scr{I}(\pi^*h_c)=\cal{O}_{\widetilde{X}_c}(-D)=\scr{I}(h_D)$. The weight of $h_D$ is $\sum^J_{j=1}\alpha_j\log|\sigma_j|^2$.
    In general, for any real number $\wp_j\in[0,1)$, we already know (see \cite[Remark\,5.9]{Dem12})
    \begin{align*}
        \scr{I}\Bigl(\sum^J_{j=1}(\alpha_j+\wp_j)\log|\sigma_j|^2\Bigr)=\cal{O}_{\widetilde{X}_c}(-D). \tag*{($\wp$)}
    \end{align*}
    Here, locally $\pi^*h_c$ can be expressed as
    \begin{align*}
        \pi^*h_c=\pi^*h_0\cdot e^{-2\pi^*\varphi_c}=\pi^*h_0\cdot e^{-2\widetilde{\varPhi}_c} \frac{1}{\prod^J_{j=1} |g_j|^{2(\alpha_j+\beta_j)}}.
    \end{align*}

    From Hironaka's resolution of singularities theorem (see \cite{Hir64}) and Lemma \ref{lemma positivity of exc div to blow ups}, %the fact that the dual of the line bundle associated with the exceptional divisor has a smooth Hermitian metric that is positive on the exceptional divisor,
    each line bundle $L_{D_j}^*:=\cal{O}_{\widetilde{X}_c}(-D_j)$ corresponding to each exceptional divisor $D_j$ has a smooth Hermitian metric $h^*_j$ and satisfies that 
    the sum of the Chern curvatures 
        \begin{align*}
            \sum^J_{j=1}\delta_j\iO{L_{D_j}^*,h^*_j}=-\sum^J_{j=1}\delta_j\iO{L_{D_j},h_j}  \tag*{($\delta$)}
        \end{align*}
    is positive on $D_c$ for enough small $\delta_j>0$.
    We construct a new metric $\widetilde{h}_{\varepsilon}$ on $\pi^*L$ by 
    \begin{align*}
        \widetilde{h}_{\varepsilon}:=\pi^*h_c\prod^J_{j=1}\bigl(\log(\varepsilon||\sigma_j||^2_{h_j})\bigr)^2
    \end{align*}
    where %$\sigma_j$ is the defining section of $D_j$ and
    $\varepsilon>0$ is an enough small positive constant to be determined later. 
    Clearly, \( \widetilde{h}_{\varepsilon} \) is a smooth Hermitian metric on \( \widetilde{X}_c\setminus D_c \).

\begin{lemma}\label{claim the equality of ideals}
    We have $\scr{I}(\pi^*h_c)=\cal{O}_{\widetilde{X}_c}(-D)=\scr{I}(\widetilde{h}_\varepsilon)$ on $\widetilde{X}_c$. 
    The curvature current $\iO{\pi^*L,\widetilde{h}_\varepsilon}$ is Poincar\'e type along $D_c$ and singular positive on $\widetilde{X}_c$, if $\varepsilon$ is taken small enough.
\end{lemma}

\begin{proof}
    The initial claim follows from the case where \( \wp_j > \beta_j \) in $(\wp)$, and the logarithmic part does not affect the integral. 
    In fact, this can be proven as follows by the following integral calculation. 
    For any positive number $\tau>0$, the integral 
    \begin{align*}
        \int_{0}^{1}r^\alpha(-\log r)^\tau dr<+\infty\quad\text{if and only if}\quad \alpha>-1.
    \end{align*}
    %is finite if and only if $\alpha>-1$.

    From the boundedness of this integral, it is clear that we obtain %$\scr{I}\Bigl(\prod_{j=1}^{k}\bigl(\log(\varepsilon||\sigma_j||^2_{h_j})\bigr)^2\Bigr) = \cal{O}_{\widetilde{X_c}}$.
    \begin{align*}
        \scr{I}\Bigl(\prod^J_{j=1}\bigl(\log(\varepsilon||\sigma_j||^2_{h_j})\bigr)^2\Bigr) = \cal{O}_{\widetilde{X}_c}. %\scr{I}\biggl(\prod_{j=1}^{k}\frac{\bigl(\log^2||\sigma_j||^2_{h_j}\bigr)^{\tau_j}}{||\sigma_j||^{2\delta_j}_{h_j}}\biggr) = \cal{O}_{\widetilde{X_c}}.
    \end{align*}
    By the subadditivity theorem (see \cite[Theorem\,14.2]{Dem12}), the inclusion $\scr{I}(\widetilde{h}_{\varepsilon})\subseteq\scr{I}(\pi^*h_c)$ is derived. 
    Therefore, we prove the reverse inclusion. 
    %It suffices to show that \( \scr{I}(\pi^*\widetilde{h}_c)_x\subseteq\scr{I}(\widetilde{h}_{\tau,\delta})_x \) for a point \( x\in D \). 
    Since $h_j$ and $\sigma_j$ is smooth, it suffices to consider points on \( D_c \) only. 
    Here locally, we may consider \( \sigma_j = g_j \) near $x\in D_c$. 
    We take \( f\in \scr{I}(\pi^*h_c)_x \), then there exists an open neighborhood \( \Delta_r=\{|z_j|<r,\,1\leq j\leq n\} \) for some $r<1$ such that 
    \begin{align*}
        %\int_{\Delta_{R}}|f|^2_{\pi^*\widetilde{h}_c}dV_z=\int_U|f|^2e^{-2\varphi_c}dV_z<+\infty.
        I_f=\int_{\Delta_r}\frac{|f|^2}{\prod|g_j|^{2(\alpha_j+\beta_j)}}dV_{\bb{C}^n}<+\infty.
    \end{align*}
    Since locally the $g_j$ can be taken to be coordinate functions from a local coordinate $(z_1,\ldots,z_n)$, %and Skoda's division theorem (see [Sko78]), 
    the condition is that $f$ is divisible by $\prod g_j^{m_j}$, where $m_j\geq \alpha_j$ for each $j$. Let $F$ be a holomorphic function with $f=F\prod g_j^{\alpha_j}$. 
    By the strong openness property (see \cite{GZ15}), there exist $0<r'<r$ and $\tau \in (0,\min_j\frac{1}{\beta_j}-1)$ such that 
    \begin{align*}
        I_f=\int_{\Delta_r}\frac{|F|^2}{\prod|g_j|^{2\beta_j}}dV_{\bb{C}^n}<+\infty \quad\rom{and}\quad
        I_{f,\tau}=\int_{\Delta_{r'}}\frac{|F|^2}{\prod|g_j|^{2(1+\tau)\beta_j}}dV_{\bb{C}^n}<+\infty,
    \end{align*}
    %for any $\tau\in (0,\min\frac{1}{\beta_j}-1)$, 
    where $(1+\tau)\beta_j<1$ for each $j$. 
    From the H\"older's inequality, we obtain 
    \begin{align*}
        \int_{\Delta_{r'}}|f|^2_{\widetilde{h}_{\varepsilon}}dV_{\bb{C}^n}&\leq C\int_\Delta\frac{|F|^2}{\prod|g_j|^{2c_j}}\prod(\log|g_j|^2)^2dV_{\bb{C}^n}\\
        &\leq C\Biggl(\int_{\Delta_{r'}}\frac{|F|^2}{\prod|g_j|^{2(1+\tau)c_j}}\Biggr)^{\frac{1}{1+\tau}}\cdot\Biggl(\int_{\Delta_{r'}}|F|^2\prod(-\log|g_j|^2)^{\frac{2(1+\tau)}{\tau}}\Biggr)^{\frac{\tau}{1+\tau}}\\
        &<+\infty,
    \end{align*}
    for some constant $C>0$.
    This shows $f\in\scr{I}(\widetilde{h}_{\varepsilon})_x$. Here, the following is obtained.
    \begin{align*}
        \int_{\{|z_j|<r'\}} (-\log|g_j|^2)^{\frac{2(1+\tau)}{\tau}} dV_{\bb{C}^n}=2\pi\int^{r'}_0r(-2\log r)^{\frac{2(1+\tau)}{\tau}}dr<+\infty.
    \end{align*}

    A straightforward calculation yields 
    \begin{align*}
        \iO{\pi^*L,\widetilde{h}_{\varepsilon}}&=\iO{\pi^*L,\pi^*h_c}+\sum^J_{j=1}\frac{\iO{L_{D_j},h_j}}{\log(\varepsilon||\sigma_j||^2_{h_j})}+i\sum^J_{j=1}\frac{\partial\log||\sigma_j||^2_{h_j}\wedge\dbar\log||\sigma_j||^2_{h_j}}{\bigl(\log(\varepsilon||\sigma_j||^2_{h_j})\bigr)^2}\\
        &\geq\frac{\pi^*\varpi}{2}+\sum^J_{j=1}\frac{\iO{L_{D_j},h_j}}{\log(\varepsilon||\sigma_j||^2_{h_j})}
    \end{align*}
    on $\widetilde{X}_c\setminus D$, where $\idd\log||\sigma_j||^2_{h_j}=\idd\log|\sigma_j|^2+\idd\log h_j=\iO{L_{D_j}^*,h^*_j}$ on $\widetilde{X}_c\setminus D_j$.
    Since \( \pi^*\varpi \) is non-negative on \( \widetilde{X}_c \) and positive on \( \widetilde{X}_c\setminus D_c \), the curvature $\iO{\pi^*L,\widetilde{h}_\varepsilon}$ becomes singular positive on $\widetilde{X}_c$ for sufficiently small \( \varepsilon>0 \), according to condition $(\delta)$.
    In fact, the right-hand side of the second line of the above equation becomes positive on $\wit{X}_c$.

    In the special coordinates $(z_1,\ldots,z_n)$ of $x\in D_c$, we can write $\varepsilon||\sigma_j||^2_{h_j}=|z_j|^2e^{-\varphi_j}$, where \( \varphi_j \) is a local weight function of \( h_j \). 
    Therefore, the following real $(1,1)$-form
    \begin{align*}
        i\sum_{j=1}^{k}\frac{\partial\log||\sigma_j||^2_{h_j}\wedge\dbar\log||\sigma_j||^2_{h_j}}{\bigl(\log(\varepsilon||\sigma_j||^2_{h_j})\bigr)^2}=i\sum_{j=1}^{k}\frac{1}{(\log|z_j|^2-\varphi)^2}\biggl(\frac{dz_j}{z_j}-\partial\varphi\biggr)\wedge\biggl(\frac{d\overline{z}_j}{\overline{z}_j}-\dbar\varphi\biggr)
    \end{align*}
    is of Poincar\'e type along $D_c$ (see \cite{Zuc79}).
\end{proof}

    Finally, we define the desired metric.
    Let $\chi\in\cal{C}^\infty(\bb{R},\bb{R})$ be a smooth convex increasing function satisfying $\int^{+\infty}_0\sqrt{\chi''(t)}dt=+\infty$, and define a smooth plurisubharmonic function $\psi_c:=\chi\circ\frac{1}{\varPsi-c}$.
    Here, we already know (see \cite[ChapterVIII,\,$\S5$]{Dem-book}) that for any Hermitian metric $\Omega$ on $X_c$, the Hermitian metric $\Omega+\idd\psi_c$ is complete.

    We construct a smooth Hermitian metric $\widetilde{h}_c$ on $\pi^*L$ over $\widetilde{X}_c\setminus D_c$ by 
    \begin{align*}
        \widetilde{h}_c:=\widetilde{h}_\varepsilon e^{-2\pi^*\psi_c}=\pi^*h_0\cdot e^{-2\pi^*(\varphi_c+\psi_c)}\prod_{j=1}^{k}\bigl(\log(\varepsilon||\sigma_j||^2_{h_j})\bigr)^2,
    \end{align*}
    then the \kah metric $\iO{\pi^*L,\widetilde{h}_c}$ is complete on $\widetilde{X}_c\setminus D_c$ by Poincar\'e type along $D_c$ (see \cite{Zuc79}). 
    Pulling back this constructed Hermitian metric by $\mu:=\pi^{-1}:X_c\setminus Z_c\longrightarrow\widetilde{X}_c\setminus\pi^{-1}(Z_c)$, we obtain the desired singular Hermitian metric by 
    \begin{align*}
        \hbar_c:=
        \begin{cases}
          \mu^*\widetilde{h}_c=h_c\cdot e^{-2\psi_c}\prod^J_{j=1}\mu^*\bigl(\log(\varepsilon||\sigma_j||^2_{h_j})\bigr)^2, & \rom{on} \,\,X_c\setminus Z_c, \\
          0,                 & \rom{on} \,\, Z_c.
        \end{cases}
    \end{align*}
    Here, %$h_c$ is smooth on $X_c\setminus Z_c$ and 
    the \kah metric $\iO{L,\hbar_c}=\mu^*\iO{\pi^*L,\widetilde{h}_c}$ is also complete on $X_c\setminus Z_c$.

    By the proper modification and the relationship with ideal sheaves (see \cite[Proposition\,5.8]{Dem12}) and Lemma \ref{claim the equality of ideals}, we have
    \begin{align*}
        K_{X_c}\otimes L\otimes\scr{I}(h_c)&=\pi_*(K_{\widetilde{X}_c}\otimes\pi^*L\otimes\scr{I}(\pi^*h_c))=\pi_*(K_{\widetilde{X}_c}\otimes\pi^*L\otimes\cal{O}_{\widetilde{X}_c}(-D))\\
        &=\pi_*(K_{\widetilde{X}_c}\otimes\pi^*L\otimes\scr{I}(\widetilde{h}_\varepsilon))=\pi_*(K_{\widetilde{X}_c}\otimes\pi^*L\otimes\scr{I}(\pi^*\hbar_c))\\
        &=K_{X_c}\otimes L\otimes\scr{I}(\hbar_c).
    \end{align*}
    Hence, the identity $\scr{I}(h)=\scr{I}(h_c)=\scr{I}(\hbar_c)$ is obtained.
\end{proof}

\subsection{Construction and Setting}\label{subsection: Construction and Setting in Approximation Theorem}

%Here, $h$ is singular positive Hermitian metric on $L$.
%Let $\gamma_X$ be a Hermitian metric on $X$, then $\iO{L,h}\geq \kappa\gamma_X$ in the sense of currents for some smooth positive function $\kappa:X\longrightarrow \bb{R}_{>0}$ (cf. \cite[Proposition\,4.5]{Wat23}).
Let $h$ be a singlular positive Hermitian metric on $L$.
We take any pair of real numbers $b<c<\sup_X\varPsi$ and a smooth increasing convex function $\chi_{c,b}:\bb{R}\longrightarrow\bb{R}$ with the following conditions.
\begin{itemize}
    \item $\chi_{c,b}(t)=0$ if $\displaystyle t\leq\frac{1}{c-b}$ and $\chi(t)>0$ when $\displaystyle t>\frac{1}{c-b}$,
    \item $\displaystyle \int^{+\infty}_0\sqrt{\chi_{c,b}''(t)}dt=+\infty$.
\end{itemize}
Let $\psi:=\chi_{c,b}\circ\frac{1}{c-\varPsi}$, then this smooth function is exhaustive and plurisubharmonic on $X_c$, $\psi\equiv0$ on $X_b$ and $\psi>0$ on $X_c\setminus\overline{X_b}$.

In the construction of $\hbar_c$ in Theorem \ref{regularization with the equality of ideals and completeness}, we may use this function \( \chi_{c,b} \), and can assume that \( \hbar_c \) is constructed using \( \chi_{c,b} \). 
That is, %we now assume 
\( \hbar_c \) can be expressed as
\begin{align*}
    \hbar_c=h_0\cdot e^{-2\varphi_c-\psi}\prod^J_{j=1}\mu^*\Bigl(\log(\varepsilon||\sigma||^2_{h_j})\Bigr)^2=\mu^*\widetilde{h}_\varepsilon\cdot e^{-\psi}=\mu^*\widetilde{h}_\varepsilon\cdot e^{-\chi_{c,b}\circ\frac{1}{c-\varPsi}}.
\end{align*}
%We define the singular Hermitian metric \( h_c \) by \( h_c:=\widehat{h}_c\cdot e^{\psi}=\mu^*\widetilde{h}_\varepsilon \), then \( h_c \) is also smooth on $X_c\setminus Z$ and also satisfies $\scr{I}(h)=\scr{I}(h_c)$ on $X_c$ according to condition $(\beta)$.

For each non-negative integers $m\geq0$, we define singular Hermitian metrics by %on $L|_{X_c}$ by 
\begin{align*}
    h_{c_m}:=\hbar_c\cdot e^{-(m-1)\psi}=\mu^*\widetilde{h}_\varepsilon\cdot e^{-m\psi},
\end{align*}
specifically setting $h_c:=h_{c_0}$, where $\hbar_c=h_{c_1}$. Note that this $h_c$ is different from the one in Setting \ref{Setting of each sublevel set}.
%This singular metric $h_c$ also satisfies $\scr{I}(h)=\scr{I}(h_c)$ on $X_c$ according to condition $(\beta)$.
From $(\gamma)$ in Theorem \ref{regularization with the equality of ideals and completeness}, we take \( \omega:=\iO{L,h_{c_1}} \) as a complete \kah metric on \( X_c\setminus Z_c \).
Here, for any \((n,0)\)-form, the product of the norm and volume is independent of metrics. 
In other words, the following is immediately clear.

\begin{lemma}\label{inequality of (n,0)-forms}
    Let $\gamma_1$ and $\gamma_2$ be Hermitian metrics on $X$ with $\gamma_1\geq\gamma_2$. 
    Then we have 
    \begin{itemize}
        \item the equality $|u|^2_{\gamma_1} dV_{\gamma_1}=|u|^2_{\gamma_2} dV_{\gamma_2}$ holds for any $(n,0)$-form $u$, 
        \item the inequality $|u|^2_{\gamma_1} dV_{\gamma_1}\leq|u|^2_{\gamma_2} dV_{\gamma_2}$ holds for any $(n,q)$-form $u$ and any $q\geq1$.
    \end{itemize}
\end{lemma}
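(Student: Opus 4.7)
The argument is purely pointwise and linear-algebraic, so it suffices to verify both claims at an arbitrary fixed point $x \in X$. The main device is simultaneous diagonalization: I would choose local holomorphic coordinates $(z_1,\ldots,z_n)$ centered at $x$ in which $\gamma_1$ and $\gamma_2$ are simultaneously diagonal, writing $\gamma_1 = \sum_j \lambda_j\, dz_j \otimes d\bar z_j$ and $\gamma_2 = \sum_j \mu_j\, dz_j \otimes d\bar z_j$ at $x$, where the hypothesis $\gamma_1 \geq \gamma_2$ forces $\lambda_j \geq \mu_j > 0$ for each $j$. Writing $dV_0$ for the Euclidean reference density in these coordinates, one has $dV_{\gamma_1} = \prod_j \lambda_j \cdot dV_0$ and $dV_{\gamma_2} = \prod_j \mu_j \cdot dV_0$.

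For the first assertion, I would expand an $(n,0)$-form as $u = f\, dz_1 \wedge \cdots \wedge dz_n$ and use the standard fact that $|dz_1 \wedge \cdots \wedge dz_n|^2_{\gamma_i}$ equals the reciprocal of the determinant of the diagonalized metric. The $\prod_j \lambda_j$ factor from $dV_{\gamma_1}$ then cancels the $\prod_j 1/\lambda_j$ coming from the norm of the top holomorphic form, leaving the metric-independent density $|f|^2\, dV_0$. This is simply the familiar principle that top-degree holomorphic forms are intrinsically $(n,n)$-densities.

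For the second assertion, I would decompose a general $(n,q)$-form in the basis $\{dz_1 \wedge \cdots \wedge dz_n \wedge d\bar z_J : |J| = q\}$. The key observation is that simultaneous diagonalization makes this basis orthogonal for both $\gamma_1$ and $\gamma_2$ at $x$, so no cross terms arise when computing pointwise norms and one can compare coefficient-by-coefficient. The $(n,0)$ cancellation still disposes of the holomorphic indices, but the antiholomorphic factor $d\bar z_J$ contributes an extra $\prod_{j\in J} 1/\lambda_j^{(i)}$ (with $\lambda_j^{(1)} = \lambda_j$, $\lambda_j^{(2)} = \mu_j$) that is not absorbed by the volume determinant. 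Since $\lambda_j \geq \mu_j$ gives $1/\lambda_j \leq 1/\mu_j$ termwise, summing the resulting non-negative terms over $J$ yields the desired pointwise inequality $|u|^2_{\gamma_1}\, dV_{\gamma_1} \leq |u|^2_{\gamma_2}\, dV_{\gamma_2}$.

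The lemma carries essentially no analytic content, and the main obstacle is merely organizational: one must verify that simultaneous diagonalization at a single point is sufficient (it is, since both sides of the claim are smooth $(n,n)$-forms depending only on the pointwise metric data) and that the stated basis is orthogonal in both metrics, which is immediate from the diagonalized expressions. No further difficulty is expected.
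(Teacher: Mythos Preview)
Your argument is correct and is the standard linear-algebraic proof of this well-known fact. The paper itself does not give a proof at all: it simply introduces the lemma with the remark that ``the following is immediately clear,'' so there is nothing to compare against beyond noting that your pointwise simultaneous-diagonalization computation is exactly what one would supply if asked to justify the claim.
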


Let $L^2_{n,0}(X_c,L,h_c)$ be the set of $L$-valued $(n,0)$-form on $X_c$ whose $L^2$-norm with respect to $h_c$ is finite.
For simplicity, we write $Z:=Z_c$.
Here, this singular metric $h_c$ also satisfies $\scr{I}(h)=\scr{I}(h_c)$ on $X_c$ according to condition $(\beta)$, 
then we have \( L^2_{n,0}(X_c,L,h)=L^2_{n,0}(X_c,L,h_c)=L^2_{n,0}(X_c\setminus Z,L,h_c) \), 
and this norm does not depend on choice of Hermitian metrics on \( X_c\setminus Z \) by Lemma \ref{inequality of (n,0)-forms}.
This holds true even if we replace \( X_c \) with \( X_b \).
From Dolbeault-Grothendick's lemma (see \cite[Chapter1,\,Lemma\,3.29]{Dem-book}), we naturally obtain the following isomorphisms.
\begin{align*}
    H^0(X_c,K_X\otimes L\otimes\scr{I}(h))\cong\rom{ker}\,\dbar\cap L^2_{n,0}(X_c,L,h_c),\\
    H^0(X_b,K_X\otimes L\otimes\scr{I}(h))\cong\rom{ker}\,\dbar\cap L^2_{n,0}(X_b,L,h_c).
\end{align*}

These isomorphism depend on the fact that each sublevel set is relatively compact, 
and on the metric \( h_c \) does not contain \( e^{-\psi} \) from \( \int_{X_c}e^{-\psi}=+\infty \), in the case of \( X_c \).
In fact, we define \( \scr{L}^{n,q}_{L,h} \) as the sheaf of germs of $L$-valued $(n,q)$-forms $u$ with measurable coefficients such that both $|u|^2_h$ and $|\dbar u|^2_h$ are locally integrable. 
Then we obtain the following isomorphism over the entire \( X \). 
\begin{align*}
    H^0(X,K_X\otimes L\otimes\scr{I}(h))\cong\rom{ker}\,\dbar\cap\Gamma(X,\scr{L}^{n,0}_{L,h})
\end{align*}
It is not clear that this right-hand side coincides with \(\rom{ker}\,\dbar \cap L^2_{n,0}(X,L,h) \).

\begin{definition}
    For any non-negative integer $m\geq0$ and any $f,g\in L^2_{p,q}(X_c\setminus Z,L,h_{c_m},\omega)$ with $p,q\geq0$, we define the $L^2$-inner product by 
    \begin{align*}
        \lla f,g\rra_{m,\omega}:=\int_{X_c\setminus Z}\lara{f}{g}_{h_{c_m},\omega}\,dV_\omega=\int_{X_c\setminus Z}\lara{f}{g}_{h_c,\omega}e^{-m\psi}dV_\omega.
    \end{align*}
    For the case $(p,q)=(n,0)$, this $L^2$-inner product does not depend on \( \omega \) by Lemma \ref{inequality of (n,0)-forms}, thus we simply write it as \( ||\bullet||^2_m \). %and $||\bullet||^2_h:=||\bullet||^2_0$. 
    %In particular, denote the restriction to \( X_b \) as \( ||\bullet||^2_h|_{X_b}=||\bullet||^2_{h,X_b} \).
    %In particular, we write it as \( ||\bullet||^2_{h_c} \) in the case $m=0$.
    We denote the adjoint operator of $\dbar$ in $L^2_{p,q}(X_c\setminus Z,L,h_{c_m},\omega)$ by $\dbar^*_{m,\omega}$.
\end{definition}

We denote the $L^2$-inner product of $L^2_{n,0}(X,L,h)$ with respect to $h$ by $||\bullet||^2_h$, and write the restriction to a subset $U\subset X$ as $||\bullet||^2_{h,U}:=||\bullet||^2_h|_U$, then $||\bullet||^2_{h,X_c}=||\bullet||^2_0$.
%For simplicity, denote $L^2_{p,q}(X_c\setminus Z,L,h_{c_m},\omega)$ as $L^2_{p,q}(h_{c_m},\omega)$.
Let $\scr{D}^{p,q}(X_c,L)$ be the space of $\cal{C}^\infty$-section of $\bigwedge^{p,q}T^*_{X_c}\otimes L$ with compact support on $X_c$, and 
let $\cal{D}^{p,q}_{\dbar}$ and $\cal{D}^{p,q}_{\dbar^*_{m,\omega}}$ be the domein of operators $\dbar$ and $\dbar^*_{m,\omega}$ from $L^2_{p,q}(X_c\setminus Z,L,h_{c_m},\omega)$, respectively.

\begin{lemma}\label{lemma uniform estimate}
    Let $m\geq1$ and bi-degree $(p,q)$ be chosen arbitrarily with $q\geq1$.
    Then we have the following estimate 
    \begin{align*}
        ||\phi||^2_{m,\omega}\leq\frac{1}{p+q-n}(||\dbar\phi||^2_{m,\omega}+||\dbar^*_{m,\omega}\phi||^2_{m,\omega})
    \end{align*}
    for any $\phi\in\cal{D}^{p,q}_{\dbar}\cap\cal{D}^{p,q}_{\dbar^*_{m,\omega}}\subset L^2_{p,q}(X_c\setminus Z,L,h_{c_m},\omega)$, if $p+q>n$.
\end{lemma}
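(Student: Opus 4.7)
The plan is to apply the Bochner--Kodaira--Nakano inequality on the complete K\"ahler manifold $(X_c\setminus Z,\omega)$ to the singular metric $h_{c_m}$, whose curvature is controlled from below by $\omega$. First I would compute the curvature. Writing $h_{c_m}=\mu^*\widetilde{h}_\varepsilon\cdot e^{-m\psi}$ and $\hbar_c=h_{c_1}=\mu^*\widetilde{h}_\varepsilon\cdot e^{-\psi}$, and recalling that by construction $\omega=\iO{L,h_{c_1}}$ on $X_c\setminus Z$, we obtain
\begin{align*}
    \iO{L,h_{c_m}}=\iO{L,h_{c_1}}+(m-1)\idd\psi=\omega+(m-1)\idd\psi\geq\omega
\end{align*}
on $X_c\setminus Z$, because $\psi$ is smooth plurisubharmonic and $m\geq 1$.

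Next I would invoke the standard pointwise commutator estimate: for any real semipositive $(1,1)$-form $\alpha$ with $\alpha\geq\omega$ on a K\"ahler manifold with K\"ahler form $\omega$, and any $(p,q)$-form $u$ with $p+q\geq n$, one has
\begin{align*}
    \lara{[\alpha,\Lambda_\omega]u}{u}_\omega\geq(p+q-n)|u|^2_\omega.
\end{align*}
Applied to $\alpha=\iO{L,h_{c_m}}$, this yields the operator inequality $[\iO{L,h_{c_m}},\Lambda_\omega]\geq(p+q-n)\,\mathrm{id}$ on $L$-valued $(p,q)$-forms when $p+q>n$. Combined with the Bochner--Kodaira--Nakano inequality on the K\"ahler manifold $X_c\setminus Z$, we obtain for every test form $\phi\in\scr{D}^{p,q}(X_c\setminus Z,L)$
\begin{align*}
    (p+q-n)||\phi||^2_{m,\omega}\leq\lla[\iO{L,h_{c_m}},\Lambda_\omega]\phi,\phi\rra_{m,\omega}\leq||\dbar\phi||^2_{m,\omega}+||\dbar^*_{m,\omega}\phi||^2_{m,\omega},
\end{align*}
which is exactly the desired inequality on compactly supported smooth forms.

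To promote the estimate from $\scr{D}^{p,q}(X_c\setminus Z,L)$ to the full domain $\cal{D}^{p,q}_{\dbar}\cap\cal{D}^{p,q}_{\dbar^*_{m,\omega}}$, I would use the classical density argument of Andreotti--Vesentini based on the completeness of $\omega$ on $X_c\setminus Z$ (supplied by property $(\gamma)$ of Theorem \ref{regularization with the equality of ideals and completeness}). Completeness guarantees a sequence of smooth cutoff functions $\chi_k$, with $0\leq\chi_k\leq 1$, $\chi_k\nearrow 1$, and $|d\chi_k|_\omega\to 0$ uniformly; truncating and then regularizing by Friedrichs mollifiers produces, for any $\phi\in\cal{D}^{p,q}_{\dbar}\cap\cal{D}^{p,q}_{\dbar^*_{m,\omega}}$, a sequence $\phi_k\in\scr{D}^{p,q}(X_c\setminus Z,L)$ with $\phi_k\to\phi$, $\dbar\phi_k\to\dbar\phi$, and $\dbar^*_{m,\omega}\phi_k\to\dbar^*_{m,\omega}\phi$ in $L^2$. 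Passing to the limit in the inequality above yields the claim.

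The main obstacle is the density step: the singular locus $Z$ is an analytic subset that can meet the boundary behavior of $\psi$, and $h_{c_m}$ degenerates there, so we must genuinely use the completeness of $\omega$ on $X_c\setminus Z$ (and not merely on $X_c$) to push cutoffs past both the boundary of $X_c$ and the singular set $Z$. Fortunately, this is precisely the property that the construction of $\hbar_c$ in Theorem \ref{regularization with the equality of ideals and completeness} was designed to guarantee, via the Poincar\'e-type singularities along $D_c$ after blow-up together with the exhaustion factor $e^{-\psi}$; the Friedrichs regularization argument then goes through without further issue, since the curvature hypothesis $\iO{L,h_{c_m}}\geq\omega$ is preserved under the approximation.
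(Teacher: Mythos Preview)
Your proof is correct and follows essentially the same approach as the paper: compute that $\iO{L,h_{c_m}}=\omega+(m-1)\idd\psi\geq\omega$, apply the Bochner--Kodaira--Nakano inequality together with the standard commutator estimate $[\alpha,\Lambda_\omega]\geq(p+q-n)\,\mathrm{id}$ to obtain the inequality on $\scr{D}^{p,q}(X_c\setminus Z,L)$, and then use completeness of $\omega$ for the density step. Your explicit curvature computation and discussion of why completeness on $X_c\setminus Z$ (rather than just $X_c$) is essential in fact mirror the paper's own remark following the lemma.
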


\begin{proof}
    Since $\omega$ is \kah metric, the Bochner-Kodaira-Nakano identity is given by 
    \begin{align*}
        \Delta''_{m,\omega}=\Delta'_{m,\omega}+[\iO{L,h_{m_c}},\Lambda_\omega] %\qquad\rom{on}\quad\cal{C}^{\infty}(X_c\setminus Z,\bigwedge^{p,q}T^*_{X_c}\otimes L).
    \end{align*}
    on $\cal{C}^{\infty}(X_c\setminus Z,\bigwedge^{p,q}T^*_{X_c}\otimes L)=:\cal{E}^{p,q}(X_c\setminus Z,L)$. A straightforward calculation yields 
    \begin{align*}
        \lara{[\iO{L,h_{m_c}},\Lambda_\omega]v}{v}_\omega\geq(p+q-n)|v|^2_\omega
    \end{align*}
    for any $v\in\cal{E}^{p,q}(X_c\setminus Z)$ if $p+q>n$. Therefore, we have the following estimate 
    \begin{align*}
        (p+q-n)||\phi||^2_{m,\omega}\leq\int_{X_c\setminus Z}\lara{[\iO{L,h_{m_c}},\Lambda_\omega]\phi}{\phi}_{h_{c_m},\omega}\,dV_\omega\leq ||\dbar\phi||^2_{m,\omega}+||\dbar^*_{m,\omega}\phi||^2_{m,\omega} \tag*{($\ast$)}
    \end{align*}
    for any $\phi\in\scr{D}^{p,q}(X_c\setminus Z,L)$. 

    Since $\omega$ is complete, the estimate $(\ast)$ still holds provided $\phi\in \cal{D}^{p,q}_{\dbar}\cap\cal{D}^{p,q}_{\dbar^*_{m,\omega}}$.
\end{proof}

\begin{remark}
    Using the well-known Demailly's approximation (= Theorem \ref{Dem appro preserves ideal sheaves}), this estimate $(\ast)$ can be shown up to $\scr{D}^{p,q}(X_c\setminus Z,L)$. 
    However, extending it to $\cal{D}^{p,q}_{\dbar}\cap\cal{D}^{p,q}_{\dbar^*_{m,\omega}}$ requires the completeness of \( \omega=\iO{L,h_c} \), necessitating Theorems \ref{Dem appro with log poles and ideal sheaves} and \ref{regularization with the equality of ideals and completeness}.
\end{remark}

\subsection{Approximation proposition concerning the $L^2$-norm and semi-norms}

In this section, the aim is to provide approximation propositions between each pair of sublevel sets in order to prove Approximation Theorem \ref{Approximation thm in Introduction}.

\begin{proposition}\label{Prop of approximation thm with L2-norm}
    $($\textnormal{Approximation property for the $L^2$-norm}$)$ 
    The restriction map 
    \begin{align*}
        \rho_{c,b}:H^0(X_c,K_X\otimes L\otimes\scr{I}(h))\longrightarrow H^0(X_b,K_X\otimes L\otimes\scr{I}(h))
    \end{align*}
    has a dense image where the space $H^0(X_b,K_X\otimes L\otimes\scr{I}(h))$ is endowed with the topology induced by the $L^2$-norm $||\bullet||_h$.
    In other words, if $\phi\in H^0(X_b,K_X\otimes L\otimes\scr{I}(h))$ then for any $\varepsilon>0$ there exists $\widetilde{\phi}\in H^0(X_c,K_X\otimes L\otimes\scr{I}(h))$ such that 
    $||\widetilde{\phi}-\phi||^2_{h,X_b}<\varepsilon$.
\end{proposition}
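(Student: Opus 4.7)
The plan is to solve a twisted $\dbar$-equation on $X_c$ using the singular Hermitian metric $h_{c_m}=\mu^*\widetilde{h}_\varepsilon\cdot e^{-m\psi}$ constructed in \S\ref{subsection: Construction and Setting in Approximation Theorem}. The key features I will exploit are that $\iO{L,h_{c_m}}=\omega+(m-1)\idd\psi\geq\omega$ for $m\geq 1$ (where $\omega=\iO{L,\hbar_c}$ is a complete K\"ahler metric on $X_c\setminus Z$), that the $L^2$-norms $\|\bullet\|_h$ and $\|\bullet\|_m$ coincide on $X_b$ (since $\psi\equiv 0$ there), that $e^{-m\psi}$ decays exponentially in $m$ outside $X_b$, and that $\scr{I}(h_{c_m})=\scr{I}(h)$ because the factor $e^{-m\psi}$ is smooth.

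First I would extend $\phi$ across $\partial X_b$ to a smooth (non-holomorphic) $L$-valued $(n,0)$-form $\widetilde\phi$ on a slightly larger sublevel set $X_{b+\delta}\Subset X_c$, with $\widetilde\phi|_{X_b}=\phi$ and $|\widetilde\phi|^2_{h_c}$ locally integrable across $Z$. This extension is executed on the blow-up $\pi_c:\widetilde X_c\to X_c$ from Setting \ref{Setting of each sublevel set}, where $Z$ is resolved to an SNC divisor $D_c$ and $\pi_c^*h_c$ has only logarithmic singularities along $D_c$; a Whitney/Seeley-type extension of $\pi_c^*\phi$ across the smooth real hypersurface $\pi_c^{-1}(\partial X_b)$ preserving the prescribed divisorial vanishing, followed by push-down, yields $\widetilde\phi$. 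The Whitney extension is arranged so that all transverse jets of $\dbar\widetilde\phi$ vanish along $\partial X_b$, forcing $\dbar\widetilde\phi$ to vanish to infinite order there. Then I pick $0<\delta_1<\delta$ and a smooth cutoff $\chi$ with $\chi\equiv 1$ on $X_{b+\delta_1}$ and $\operatorname{supp}\chi\subset X_{b+\delta}$, and set $\Phi:=\chi\widetilde\phi$ (extended by zero).

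The error $\dbar\Phi=\dbar\chi\wedge\widetilde\phi+\chi\cdot\dbar\widetilde\phi$ is supported in $X_{b+\delta}\setminus X_b$, where $\psi$ is uniformly bounded below by some $\psi_0>0$ (arising either from the infinite-order vanishing of $\dbar\widetilde\phi$ on $\partial X_b$, or by reshaping $\chi_{c,b}$ so that $\psi$ already vanishes only on a strictly smaller sublevel $X_{b''}$ with $b''<b$). Hence $\|\dbar\Phi\|_{m,\omega}^2\leq e^{-m\psi_0}C_\phi$ for a finite constant $C_\phi$ depending only on $\widetilde\phi$, $\chi$, and $\delta$. Applying Lemma \ref{lemma uniform estimate} in bi-degree $(n,1)$---available because $\omega$ is complete on $X_c\setminus Z$ and $\iO{L,h_{c_m}}\geq\omega$---I solve $\dbar u_m=\dbar\Phi$ with $\|u_m\|_m^2\leq e^{-m\psi_0}C_\phi$. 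Setting $\widetilde\phi_m:=\Phi-u_m$ yields a $\dbar$-closed $L$-valued $(n,0)$-form on $X_c\setminus Z$, and the identity $\scr{I}(h_{c_m})=\scr{I}(h)$ together with the local $L^2$-integrability shows that $\widetilde\phi_m$ extends across $Z$ to a member of $H^0(X_c,K_X\otimes L\otimes\scr{I}(h))$. Since $\Phi\equiv\phi$ and $\psi\equiv 0$ on $X_b$,
\[
\|\widetilde\phi_m-\phi\|_{h,X_b}^2=\|u_m\|_{m,X_b}^2\leq\|u_m\|_m^2\leq e^{-m\psi_0}C_\phi\xrightarrow{m\to\infty}0.
\]

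The main obstacle is securing the uniform positive lower bound $\psi_0>0$ for $\psi$ on the effective support of $\dbar\Phi$, despite $\psi$ vanishing along $\partial X_b$ in the original setup. The two fixes above---infinite-order vanishing of $\dbar\widetilde\phi$ on $\partial X_b$, or reshaping $\chi_{c,b}$ so that $\psi$ is strictly positive on $\overline{X_b}$ (in which case a factor $e^{m\psi_{\max,b}}$ reappears in the $X_b$-norm but can be absorbed as long as $\psi_{\max,b}<\psi_0$)---both involve a delicate balance. Compounded with the requirement that the extension on $\widetilde X_c$ respect the divisorial vanishing along $D_c$ while being smooth across $\pi_c^{-1}(\partial X_b)$, this is the technical heart of the argument.
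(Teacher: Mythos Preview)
Your constructive approach has a genuine gap at the very first step: the smooth extension $\widetilde\phi$ of $\phi$ across $\partial X_b$ need not exist. A holomorphic section on the \emph{open} set $X_b$ is not, in general, smooth (or even bounded) up to the boundary---think of a power series on the unit disc with radius of convergence exactly $1$---so neither a Whitney nor a Seeley extension is available. Passing to the blow-up $\pi_c:\widetilde X_c\to X_c$ addresses the behaviour along $Z$ but does nothing for the real hypersurface $\partial X_b$: the section $\pi_c^*\phi$ has exactly the same boundary regularity at $\pi_c^{-1}(\partial X_b)$ as $\phi$ has at $\partial X_b$. Without $\widetilde\phi$, the cut-off-and-solve scheme cannot get started. (Incidentally, your secondary worry about a uniform lower bound $\psi_0>0$ is a red herring: had the extension existed with $\|\dbar\Phi\|_{0,\omega}<\infty$, dominated convergence alone would give $\|\dbar\Phi\|^2_{m,\omega}\to 0$, since $\psi>0$ almost everywhere on $\rom{supp}\,\dbar\Phi$; no exponential rate is needed.)

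The paper avoids extending $\phi$ altogether by arguing by duality, in the tradition of H\"ormander and Ohsawa. Suppose the image of $\rho_{c,b}$ is not dense; then there is a nonzero $\eta\in H^0(X_b,K_X\otimes L\otimes\scr{I}(h))$ orthogonal to every restriction from $X_c$. Extend $\eta$ by \emph{zero} to $X_c$---this is always legitimate in $L^2$. Since $\psi\equiv 0$ on $X_b$, one has $\eta\in L^2_{n,0}(X_c,L,h_{c_m})\cap(\ker\dbar)^{\perp}$ for every $m\geq 0$. The uniform estimate of Lemma~\ref{lemma uniform estimate} forces the range of $\dbar^*_{m,\omega}$ to be closed, so $\eta=\dbar^*_{m,\omega}\phi_m$ with $\|\phi_m\|_{m,\omega}\leq\|\eta\|_h$. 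Setting $w_m=\phi_m e^{-m\psi}$ and passing to a weak limit $w$, the exponential growth of $e^{m\psi}$ outside $\overline{X_b}$ forces $\rom{supp}\,w\subseteq\overline{X_b}$, while $\dbar^*_{h_c,\omega}w=\eta$ persists. A cutoff argument using a complete metric on $X_b\setminus Z$ then shows $\langle\eta,\xi\rangle_{h,X_b}=0$ for every $\xi\in H^0(X_b,K_X\otimes L\otimes\scr{I}(h))$, contradicting $\eta\neq 0$. The moral: extending an $L^2$ element by zero is free, whereas extending a holomorphic section smoothly across a real hypersurface is precisely the difficulty one is trying to circumvent.
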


\begin{proof}
    If this proposition is not correct, then we have 
    \begin{align*}
        \overline{H^0(X_c,K_X\otimes L\otimes\scr{I}(h))}\subsetneq H^0(X_b,K_X\otimes L\otimes\scr{I}(h)),
    \end{align*}
    where the bar over $H^0(X_c,K_X\otimes L\otimes\scr{I}(h))$ denotes the closure of $H^0(X_c,K_X\otimes L\otimes\scr{I}(h))$ in the Hilbert space $L^2_{n,0}(X_b,L,h)=L^2_{n,0}(X_b\setminus Z,L,h_c)$.
    Hence, there exists a non-zero element $\eta\in\overline{H^0(X_b,K_X\otimes L\otimes\scr{I}(h))}$ to be orthonormal to $H^0(X_c,K_X\otimes L\otimes\scr{I}(h))$. 
    Namely, for any $\zeta\in H^0(X_c,K_X\otimes L\otimes\scr{I}(h))$ we have 
    \begin{align*}
        \lla\zeta,\eta\rra_{h,X_b}&=\int_{X_b\setminus Z}\lara{\zeta}{\eta}_{h,\omega}dV_\omega \\
        &=0.
    \end{align*}
    Under this assumption, we derive a contradiction by showing that $\eta$ is orthogonal to $H^0(X_b,K_X\otimes L\otimes\scr{I}(h))$.

    Extend the definition of $\eta$ by setting $\eta=0$ outside $X_b$.
    By $\psi\equiv0$ on $X_b$ and the assumption, the following 
    \begin{align*}
        \eta\in L^2_{n,0}(X_c,L,h_{c_m})\cap(\rom{ker}\,\dbar_c)^{\perp}
    \end{align*}
    is derived for each $m\geq0$. Then we obtain $\eta\in\overline{\rom{im}\,\dbar^*_{m,\omega}}\subset L^2_{n,0}(X_c,L,h_{c_m})$.
    From \cite[Lemma\,4.1.1]{Hor90}, Lemma \ref{lemma uniform estimate} yields the identity
    \begin{align*}
        \overline{\rom{im}\,\dbar^*_{m,\omega}}=\rom{im}\,\dbar^*_{m,\omega}. %\tag*{($\partial^\ddag$)}
    \end{align*}
    Therefore, from Lemma \ref{lemma uniform estimate} and \cite[Lemma\,4.1.2]{Hor90}, we acquire $\eta=\dbar^*_{m,\omega}\phi_m$ for some $\phi_m\in L^2_{n,1}(X_c\setminus Z,L,h_{c_m},\omega)$ with estimate 
    \begin{align*}
        ||\phi_m||^2_{m,\omega}\leq||\eta||^2_m\leq||\eta||^2_0=||\eta||^2_h.
    \end{align*}
    By setting $w_m=\phi_m e^{-m\psi}$, we obtain $\dbar^*_{h_c,\omega}w_m=\dbar^*_{m,\omega}\phi_m=\eta$ and 
    \begin{align*}
        ||w_m||^2_{h_c,\omega}=||w_m||^2_{0,\omega}\leq||w_m||^2_{-m,\omega}=||\phi_m||^2_{m,\omega}\leq||\eta||^2_h.
    \end{align*}
    Therefore, $\{w_m\}_{m\in\bb{N}}$ has a subsequence that weakly converges in $L^2_{n,1}(X_c\setminus Z,L,h_c,\omega)$, and denote its weak limit by $w$.
    
    From the inequality $||w_m||^2_{-m,\omega}\leq||\eta||^2_h$, we have the inequality
    \begin{align*}
        e^{\varepsilon m}\int_{\{x\in X_c\mid\varepsilon<\psi\}}|w_m|^2_{h_c,\omega}dV_\omega\leq\int_{\{x\in X_c\mid\varepsilon<\psi\}}|w_m|^2_{h_c,\omega} e^{m\psi}dV_\omega\leq||\eta||^2_h,
    \end{align*}
    for any $\varepsilon>0$ and each $m\geq1$. 
    Thus, the integral of $|w_m|^2_{h_c,\omega}$ over $\{x\in X_c\mid\varepsilon<\psi\}$ tends to zero, implying that $w_m\to0$ almost everywhere on $\{x\in X_c\mid\varepsilon<\psi\}$.
    As a result, the weak limit $w=0$ on $\{x\in X_c\mid\varepsilon<\psi\}$ for any $\varepsilon>0$, and we obtain 
    \begin{align*}
        \rom{supp}\,w\subseteq\overline{X}_b \qquad\rom{and}\qquad \dbar^*_{h_c,\omega}w=\eta \quad \rom{on}\quad X_c.
    \end{align*}

    We take an arbitrary section $\xi\in H^0(X_b,K_X\otimes L\otimes\scr{I}(h))\cong L^2_{n,0}(X_b\setminus Z,L,h_c)\cap\rom{ker}\,\dbar$. 
    By Theorem \ref{regularization with the equality of ideals and completeness} (or \cite[Th\'eor\`em\,1.5]{Dem82}), the sublevel set $X_b\setminus Z$ carries a complete metric $\omega_b$.
    From the characterization of complete metrics (see \cite[ChapterVIII,\,Lemma\,2.4]{Dem-book}), 
    there exists an exhaustive sequence $\{K_\nu\}_{\nu\in\bb{N}}$ of compact subsets of $X_b\setminus Z$ ans functions $\rho_\nu\in\cal{C}^\infty(X_b\setminus Z,\bb{R})$ such that 
    \begin{align*}
        \rho_\nu\equiv1& \quad \rom{in\,\, a\,\, neighborhood\,\, of}\,\,K_\nu,\quad \\
        0\leq\rho_\nu\leq1&, \quad \rom{supp}\,\rho_\nu\subset K_{\nu+1}^{\circ} \quad \rom{and}\quad |d\rho_\nu|_{\omega_b}\leq2^{-\nu}.
    \end{align*}
    %$\rho_\nu\equiv1$ in a neighborhood of $K_\nu$, $\rom{supp}\,\rho_\nu\subset K_{\nu+1}^{\circ}$, $0\leq\rho_\nu\leq1$ and $|d\rho_\nu|_{\omega_b}\leq2^{-\nu}$. 
    In particular, $\{\rho_\nu\xi\}_{\nu\in\bb{N}}$ converges to $\xi$ in $L^2_{n,0}(X_b\setminus Z,L,h_c,\omega_b)$. %with respect to the inner product on \( \omega \).
    From Lemma \ref{inequality of (n,0)-forms} and the condition of $\rho_\nu$, we obtain the following equation
    \begin{align*}
        \lla\eta,\xi\rra_{h,X_b}&=\int_{X_b\setminus Z}\lara{\eta}{\xi}_{h_c,\omega_b}dV_{\omega_b}=\lim_{\nu\to+\infty}\int_{X_b\setminus Z}\lara{\eta}{\rho_\nu\xi}_{h_c,\omega_b}dV_{\omega_b}\\
        &=\lim_{\nu\to+\infty}\int_{X_c}\lara{\eta}{\rho_\nu\xi}_{h_c,\omega}dV_\omega\\
        &=\lim_{\nu\to+\infty}\int_{X_c}\lara{\dbar^*_{h_c,\omega}w}{\rho_\nu\xi}_{h_c,\omega}dV_\omega\\
        &=\lim_{\nu\to+\infty}\int_{X_c}\lara{w}{\dbar(\rho_\nu\xi)}_{h_c,\omega}dV_\omega\\
        &=\lim_{\nu\to+\infty}\int_{X_b}\lara{w}{\dbar\rho_\nu\cdot\xi+\rho_\nu\dbar\xi}_{h_c,\omega_b}dV_{\omega_b}\\
        &=\lim_{\nu\to+\infty}\int_{X_b}\lara{w}{\dbar\rho_\nu\cdot\xi}_{h_c,\omega_b}dV_{\omega_b}\\
        &=0,
    \end{align*}
    which shows that $\eta$ is orthogonal to $H^0(X_b,K_X\otimes L\otimes\scr{I}(h))$. %, leading to a contradiction with .
\end{proof}

\begin{definition}
    Let $\gamma$ be a Hermitian metric on $X$ and $h_0$ be any smooth Hermitian metric of $L$ on $X$. 
    For a compact subset $K$ in $X_c$, we put 
    \begin{align*}
        |\phi|_K:=\sup_{x\in K}|\phi|_{h_0,\gamma}(x)
    \end{align*}
    for $\phi\in H^0(X_c,K_X\otimes L)$. Then $\{|\bullet|_K\}_{K}$ gives a system of semi-norms in $H^0(X_c,K_X\otimes L)$.
\end{definition}

\begin{proposition}\label{Prop of approximation thm with semi-norms}
    $($\textnormal{Approximation property for semi-norms}$)$ 
    The restriction map
    \begin{align*}
        \rho_{c,b}:H^0(X_c,K_X\otimes L\otimes\scr{I}(h))\longrightarrow H^0(X_b,K_X\otimes L\otimes\scr{I}(h))
    \end{align*}
    has dense image where the space $H^0(X_b,K_X\otimes L\otimes\scr{I}(h))$ is endowed with the topology of uniform convergence on all compact subsets in $X_b$.
    That is, for any compact subset $K$ in $X_b$, any positive number $\varepsilon>0$ and any $\phi\in H^0(X_b,K_X\otimes L\otimes\scr{I}(h))$, 
    there exists $\widetilde{\phi}\in H^0(X_c,K_X\otimes L\otimes\scr{I}(h))$ such that $|\widetilde{\phi}-\phi|_K<\varepsilon$.
\end{proposition}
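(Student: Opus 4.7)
The plan is to deduce the approximation in semi-norms from Proposition \ref{Prop of approximation thm with L2-norm} via a standard sub-mean value inequality for holomorphic sections, converting weighted $L^2$-control on $X_b$ into pointwise control on a compact subset $K$.

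Fix $K \Subset X_b$, $\varepsilon>0$, and $\phi\in H^0(X_b,K_X\otimes L\otimes\scr{I}(h))$. Choose a compact neighborhood $K'$ of $K$ with $K\subset (K')^\circ\subset K'\Subset X_b$. Writing $h=h_0 e^{-2\varphi}$ with $h_0$ a fixed smooth Hermitian metric on $L$ and a fixed Hermitian metric $\gamma$ on $X$, the local weight $\varphi$ is quasi-plurisubharmonic, hence upper semi-continuous. Covering $K'$ by finitely many trivializing coordinate charts of $L$, compactness of $K'$ produces a constant $M_{K'}>0$ such that $\varphi\leq M_{K'}$ on $K'$ in each trivialization. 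Consequently, for any holomorphic section $\sigma$ of $K_X\otimes L$ defined on a neighborhood of $K'$,
\begin{align*}
    \int_{K'}|\sigma|_{h_0,\gamma}^2\,dV_\gamma \leq e^{2M_{K'}}\int_{K'}|\sigma|_{h_0,\gamma}^2 e^{-2\varphi}\,dV_\gamma \leq e^{2M_{K'}}||\sigma||_{h,X_b}^2.
\end{align*}

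Next, cover $K$ by finitely many coordinate balls $B_i$ over which $K_X\otimes L$ trivializes, with concentric larger balls $B_i'\Subset K'$. Over each $B_i'$, a holomorphic section of $K_X\otimes L$ becomes a holomorphic function, so the classical sub-mean value inequality yields constants $C_i>0$ with $\sup_{B_i}|\sigma|_{h_0,\gamma}\leq C_i\,||\sigma||_{L^2(B_i',\,h_0,\,\gamma)}$. Taking the maximum over the finite cover produces a constant $C_K>0$ such that
\begin{align*}
    |\sigma|_K \leq C_K\Bigl(\int_{K'}|\sigma|_{h_0,\gamma}^2\,dV_\gamma\Bigr)^{1/2} \leq C_K\,e^{M_{K'}}\,||\sigma||_{h,X_b}
\end{align*}
for every holomorphic section $\sigma$ of $K_X\otimes L$ on a neighborhood of $K'$.

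Finally, apply Proposition \ref{Prop of approximation thm with L2-norm} to obtain $\widetilde{\phi}\in H^0(X_c,K_X\otimes L\otimes\scr{I}(h))$ with $||\widetilde{\phi}-\phi||_{h,X_b}<\varepsilon/(C_K e^{M_{K'}})$, and specialize the above estimate to $\sigma=\widetilde{\phi}-\phi$ (a holomorphic section of $K_X\otimes L$ on $X_c\supset K'$) to conclude $|\widetilde{\phi}-\phi|_K<\varepsilon$. There is no substantive obstacle here; the only point requiring care is the local upper bound on the quasi-plurisubharmonic weight $\varphi$, which is immediate from upper semi-continuity and compactness of $K'$. The fact that $\phi$ is a priori only in the multiplier ideal sheaf (and hence may vanish to some order near $E_{+}(h)\cap X_b$) is harmless, because holomorphicity of $\phi$ and $\widetilde{\phi}$ is what drives the mean value inequality.
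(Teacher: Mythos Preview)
Your proof is correct and follows essentially the same route as the paper: both arguments reduce to Proposition~\ref{Prop of approximation thm with L2-norm} by observing that the singular weight is locally bounded above on compacts (the paper phrases this as $h_0\leq M_b h_c$ on $X_b$, you as upper semi-continuity of the quasi-psh weight $\varphi$), and then converting weighted $L^2$-control into sup-norm control via the sub-mean value inequality (the paper's ``Cauchy's integral formula'' step). The only cosmetic difference is that the paper routes the comparison through the regularized metric $h_c$ while you work with $h$ directly; both are fine since the weight is bounded above on $K'$ either way.
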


\begin{proof}
    Fix an element of $\phi$ in $H^0(X_b,K_X\otimes L\otimes\scr{I}(h))$. Note that $\phi$ is zero on $Z$.
    We can find a positive constant $M_b$ satisfying $h_0\leq M_bh_c$ on $X_b$, here $M_b$ depends on $X_b$. 
    Using Cauchy's integral formula in each local coordinate $(U_j,z^{(j)}_1,\ldots,z^{(j)}_n)$ with $U_j\cap K\ne\emptyset$, for some constant $M_j>0$ we have 
    \begin{align*}
        |\phi|^2_{U_j\cap K}&\leq M_j\int_{U_j\cap K}|\phi|^2dV_{z^{(j)}}=M_j\int_{U_j\cap K}|\phi|^2_\omega dV_\omega\\
        &\leq\frac{M_j}{\min_K h_0}\int_{U_j\cap K}|\phi|^2_{\omega,h_0} dV_\omega\\
        &\leq\frac{M_jM_b}{\min_K h_0}\int_{U_j\cap K}|\phi|^2_{\omega,h} dV_\omega\\
        &=\frac{M_jM_b}{\min_K h_0}||\phi||^2_{h,U_j\cap K},
    \end{align*}
    from Lemma \ref{inequality of (n,0)-forms}.
    Hence, there exists a positive constant $M$ depends on $X_b$ such that $|\phi|_K\leq M||\phi||_{h,X_b}$.
    By this inequality and Proposition \ref{Prop of approximation thm with L2-norm}, the proposition is proven.
\end{proof}

\subsection{Proof of Theorem \ref{Approximation thm in Introduction}}\label{subsection: Proof of thm 4.1}

\begin{proof}[Proof of Theorem \ref{Approximation thm of hol section with ideal sheaves}]
    By Propositions \ref{Prop of approximation thm with L2-norm} and \ref{Prop of approximation thm with semi-norms}, the restriction map 
    \begin{align*}
        \rho_k:H^0(X_{c+k+1},K_X\otimes L\otimes\scr{I}(h))\longrightarrow H^0(X_{c+k},K_X\otimes L\otimes\scr{I}(h))
    \end{align*}
    has dence image with the topology induced by the $L^2$-norm $||\bullet||_h$ and the topology of uniform convergence on all compact subsets in $X_{c+k}$ for every $k=0,1,2,\ldots$. 
    Hence, for any compact subset $K$ in $X_c$, any $\varepsilon>0$ and any $\phi\in H^0(X_c,K_X\otimes L\otimes\scr{I}(h))$, we can find a sequence $\phi_k\in H^0(X_{c+k},K_X\otimes L\otimes\scr{I}(h))$ such that 
    \begin{align*}
        |\phi_1-\phi|_K<\frac{\varepsilon}{2}&, \quad\quad |\phi_{k+1}-\phi_k|_{\overline{X}_{c+k-1}}<\frac{\varepsilon}{2^k},\\
        ||\phi_1-\phi||_{h,X_c}<\frac{\varepsilon}{2}& \quad \rom{and} \quad ||\phi_{k+1}-\phi_k||_{h,X_{c+k}}<\frac{\varepsilon}{2^k}.
    \end{align*}

    We define the section $\widetilde{\phi}$ of $K_X\otimes L$ on $X$ by
    \begin{align*}
        \widetilde{\phi}:=\phi_1+\sum^\infty_{k=1}(\phi_{k+1}-\phi_k)=\phi_\mu+\sum^\infty_{k=\mu}(\phi_{k+1}-\phi_k).
    \end{align*}
    For each $\mu\in\bb{N}\cup\{0\}$, by the following inequality
    \begin{align*}
        |\widetilde{\phi}-\phi_{\mu+1}|_{\overline{X}_{c+\mu}}\leq\sum^\infty_{k=\mu+1}|\phi_{k+1}-\phi_k|_{\overline{X}_{c+\mu}}<\sum^\infty_{k=\mu+1}\frac{\varepsilon}{2^k}=\frac{\varepsilon}{2^\mu}
    \end{align*}
    on $\overline{X}_{c+\mu}$, the holomorphic sequence $\{\phi_{c+\mu+k}\}_{k\in\bb{N}}$ converges to $\widetilde{\phi}$ uniformly on all compact subset of $X_{c+\mu}$.
    Hence, the section $\widetilde{\phi}$ is holomorphic on each $X_{c+\mu}$, and we obtain $\widetilde{\phi}\in H^0(X,K_X\otimes L)$. 
    Furthermore, we derive the density $|\widetilde{\phi}-\phi|_K<\varepsilon$ by setting $\mu=1$.

    Similarly, for each $\mu\in\bb{N}$, from inequality 
    \begin{align*}
        ||\widetilde{\phi}-\phi_\mu||_{h,X_{c+\mu}}\leq\sum_{k=\mu}^{\infty}||\phi_{k+1}-\phi_k||_{h,X_{c+\mu}}<\sum^\infty_{k=\mu}\frac{\varepsilon}{2^k}=\frac{\varepsilon}{2^{\mu-1}},
    \end{align*}
    we obtain the $L^2$-inequality
    \begin{align*}
        \biggl(\int_{X_{c+\mu}}|\widetilde{\phi}|^2_{h,\gamma_X}dV_{\gamma_X}\biggr)^{\frac{1}{2}}=||\widetilde{\phi}||_{h,X_{c+\mu}}<||\phi_\mu||_{h,X_{c+\mu}}+\frac{\varepsilon}{2^{\mu-1}}<+\infty,
    \end{align*}
    for some Hermitian metric $\gamma_X$ on $X$. 
    Therefore, we have $\widetilde{\phi}\in K_{X,x}\otimes L_x\otimes\scr{I}(h)_x$ for any point $x$ in each sublevel set $X_{c+\mu}$, and thus $\widetilde{\phi}\in H^0(X,K_X\otimes L\otimes\scr{I}(h))$ is obtained.
    Finally, we obtain $||\widetilde{\phi}-\phi||_{h,X_c}<3\varepsilon/2$ representing density with respect to the $L^2$-norm similar to the semi-norm.
\end{proof}

\begin{corollary}\label{Corollary equation of dim H0}
    Let $(X,\varPsi)$ be a weakly pseudoconvex manifold and $L$ be a holomorphic line bundle on $X$ equipped with a singular Hermitian metric $h$. 
    If $h$ is singular positive, then for any sublevel set $X_c$, we have 
    \begin{align*}
        \rom{dim}\,H^0(X,K_X\otimes L\otimes\scr{I}(h))=\rom{dim}\,H^0(X_c,K_X\otimes L\otimes\scr{I}(h)).
    \end{align*}
    Furthermore, if $\rom{dim}\,H^0(X_c,K_X\otimes L\otimes\scr{I}(h))<+\infty$, then the following restriction map 
    \begin{align*}
        \rho_c:H^0(X,K_X\otimes L\otimes\scr{I}(h))\longrightarrow H^0(X_c,K_X\otimes L\otimes\scr{I}(h)).%\rho_c:H^q(X,\Omega^p_X\otimes L)&\longrightarrow H^q(X_c,K_X\otimes L)
    \end{align*}
    is an isomorphism, i.e., each section on $X_c$ can be uniquely extended.
\end{corollary}

\begin{proof}
    By the Identity theorem, the linear map $\rho_c$ is injective. 
    %コンパクト収束位相(topology of uniform convergence on compact sets)は, 半ノルムの族$p_n(s):=\sup_{K_n}|s|$, $K_1\subset K_2\subset \cdots X_c$: cpt exh seq によって, Fr\'echet topology が入る. 
    %つまり, $s_m\to s$ in Fr\'echet topology $\iff$ $p_n(s_m-s)\to 0$ for any $n$.
    If $\rom{dim}\,H^0(X_c,K_X\otimes L\otimes\scr{I}(h))<+\infty$, then $\rom{Im}\,\rho_c$ is a finite-dimension subspace, hence closed; together with density, this implies that $\rom{Im}\,\rho_c$ coincides with the whole $H^0(X_c,K_X\otimes L\otimes\scr{I}(h))$.
    Therefore, $\rho_c$ is surjective and hence an isomorphism. 

    If $\rom{dim}\,H^0(X_c,K_X\otimes L\otimes\scr{I}(h))=+\infty$, then any dense subspace contained in it is also infinite-dimensional, which implies $\rom{dim}\,H^0(X,K_X\otimes L\otimes\scr{I}(h))=\rom{dim}\,\rom{Im}\,\rho_c=+\infty$.
\end{proof}

\section{Moishezon-ness and singular holomorphic Morse inequality}\label{Section 5: Moishezon-ness and Morse inequality}

Moishezon manifolds are characterized in terms of big line bundles, i.e., integral \kah currents, as first shown by Ji-Shiffman \cite{JS93}.
This characterization was first obtained for smooth metrics by resolving the Grauert-Riemenschneider conjecture, which is characterized by quasi-positive analytic sheaves. 
This conjecture was proven by Siu \cite{Siu85} and Demailly \cite{Dem85} using the Riemann-Roch type theorem and Demailly's holomorphic Morse inequality, respectively.
After that, by applying Demailly's approximation (see \cite{Dem92}) and Bonavero's singular holomorphic Morse inequalities (see \cite{Bon98}), a more precise characterization of Moishezon manifolds in terms of big line bundles, i.e., integral \kah currents, was established (see \cite{Bon98,Tak94}, \cite[Theorem\,2.3.28\,and\,2.3.30]{MM07}).
These discussions pertain to compact manifolds, revealing a relationship between Moishezon-ness and holomorphic Morse inequalities. 
In this section, the aim is to extend these concepts to non-compact manifolds.

\subsection{The generalization of Moishezon manifolds}

Let $\cal{M}_X$ be the sheaf of meromorphic functions.
It is straightforward to see that $\cal{M}_X(U)$ is a field if $U$ is connected.
If $X$ is connected, then $\scr{M}(X):=\Gamma(X,\cal{M}_X)$ is called the \textit{function field} of $X$. %and $\cal{M}_X$ is the sheaf of meromorphic functions.
Clearly, there is a natural inclusion $\cal{O}_X\hookrightarrow\cal{M}_X$.

\begin{definition}[{cf.\,\cite[Definition\,2.2.8]{MM07}}]
    Let $X$ be a compact connected complex manifold of dimension $n$.
    We say that $f_1,\ldots,f_k\in\scr{M}(X)$ are \textit{algebraically dependent}, if there exists a non-trivial polynomial $P\in\bb{C}[z_1,\ldots,z_k]$ such that $P(f_1,\ldots,f_k)=0$ wherever it is defined.
    The \textit{transcendence degree} of $\scr{M}(X)$ is the maximal number of algebraically independent meromorphic functions on $X$, denoted by $a(X)$, and this is called
    %We denote by $a(X)$ the transcendence degree of $\scr{M}_X(X)$ and call it
    the \textit{algebraic dimension} of $X$, i.e., $a(X):=\rom{tr.deg.}_{\bb{C}}\scr{M}(X)$.
    
    Furthermore, regardless of the connectedness of $X$, we say that $f_1,\ldots,f_k\in\scr{M}(X)$ are \textit{analytically dependent} if $df_1\wedge\cdots\wedge df_k\ne0$ for any point $x\in X$ where all $f_1,\ldots,f_k$ are holomorphic.
\end{definition}

\begin{theorem}[{cf.\,\cite[Theorem\,2.2.9]{MM07}}]\label{Thm [Theorem 2.2.9, MM07]}
    Let $X$ be a compact connected complex manifold. The functions $f_1,\ldots,f_k$ $\in\scr{M}(X)$ are analytically dependent if and only if they are algebraically dependent.
\end{theorem}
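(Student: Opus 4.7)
The plan is to treat the two implications separately. Direction ($\Leftarrow$) (algebraic $\Rightarrow$ analytic) is elementary and local, obtained by differentiating a minimal polynomial relation. Direction ($\Rightarrow$) (analytic $\Rightarrow$ algebraic) is the substantive content, and proceeds by realizing $f_1,\ldots,f_k$ as ratios of global sections, resolving the associated meromorphic map to $\bb{P}^k$, and applying Remmert's proper mapping theorem together with Chow's theorem to convert the dimension bound on the image into a polynomial relation.

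For the easy direction, I would choose a nonzero $P\in\bb{C}[z_1,\ldots,z_k]$ of minimal total degree satisfying $P(f_1,\ldots,f_k)\equiv 0$ on $X$. Since $P$ is nonconstant, some $\partial_i P$ is nonzero with strictly smaller total degree, so minimality forces $(\partial_i P)(f_1,\ldots,f_k)\not\equiv 0$ on $X$. Differentiating the relation yields
\begin{align*}
\sum_{j=1}^{k}(\partial_j P)(f_1,\ldots,f_k)\,df_j \;=\; 0
\end{align*}
on the open set $U$ where every $f_j$ is holomorphic, exhibiting a non-trivial $\bb{C}$-linear dependence among $df_1,\ldots,df_k$ on a dense subset of $U$. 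Hence $df_1\wedge\cdots\wedge df_k$ vanishes on that subset and, being a holomorphic section of $\bigwedge^{k}T^*X$ over $U$, vanishes throughout $U$ by analytic continuation.

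For the hard direction, fix a holomorphic line bundle $L\longrightarrow X$ and sections $s_0,s_1,\ldots,s_k\in H^0(X,L)$ with $s_0\not\equiv 0$ and $f_i=s_i/s_0$; this is standard for compact $X$, where one may take $L$ associated with an effective divisor dominating all the pole divisors of the $f_i$'s. The sections define a meromorphic map $F=[s_0:\cdots:s_k]:X\longdashrightarrow\bb{P}^k$, which lifts via Hironaka's resolution of indeterminacies to a holomorphic map $\widetilde{F}:\widetilde{X}\longrightarrow\bb{P}^k$ along a proper modification $\pi:\widetilde{X}\longrightarrow X$. On the dense open subset of $\widetilde{X}$ where $\pi$ is biholomorphic and every $f_j$ is holomorphic, the analytic dependence of the $f_i$'s forces the rank of $d\widetilde{F}$ to be at most $k-1$; this rank bound propagates to all of $\widetilde{X}$ by analytic continuation. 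By Remmert's proper mapping theorem (and rank theorem), $V:=\widetilde{F}(\widetilde{X})$ is then a compact irreducible analytic subvariety of $\bb{P}^k$ with $\dim V\leq k-1$, and Chow's theorem makes $V$ algebraic. Thus $V$ lies in the zero locus of some nonzero homogeneous polynomial $P\in\bb{C}[z_0,\ldots,z_k]$; since $s_0\not\equiv 0$ we have $V\not\subset\{z_0=0\}$, so the dehomogenization $\widetilde{P}(y_1,\ldots,y_k):=P(1,y_1,\ldots,y_k)$ is a nonzero polynomial satisfying $\widetilde{P}(f_1,\ldots,f_k)\equiv 0$ on $X$, establishing algebraic dependence.

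The main obstacle lies in the careful bookkeeping of the hard direction: constructing the common denominator $s_0$ and line bundle $L$, transporting the rank condition from the locus where the $f_i$'s are all holomorphic up the modification $\pi$, and ruling out the degenerate possibility $V\subset\{z_0=0\}$ before dehomogenizing. All remaining ingredients (Hironaka's resolution, Remmert's proper mapping theorem, Chow's theorem) are classical and are invoked here as black boxes.
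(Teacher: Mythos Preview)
The paper does not give its own proof of this statement; it is quoted verbatim as a background result from \cite[Theorem\,2.2.9]{MM07} and used only to motivate the generalized definition of Moishezon-ness that follows. Your sketch is a correct outline of the classical argument (essentially the one in \cite{MM07}, going back to Andreotti and Siegel): differentiate a minimal polynomial relation for the easy direction, and for the hard direction realize the $f_i$ as ratios of sections, resolve the resulting meromorphic map to $\bb{P}^k$, then combine Remmert's proper mapping theorem with Chow's theorem to produce a homogeneous polynomial vanishing on the image. One small point worth tightening in the last step: to guarantee that the dehomogenization $P(1,y_1,\ldots,y_k)$ is nonzero you should use the irreducibility of $V=\widetilde{F}(\widetilde{X})$ (which follows from connectedness of $X$) to strip off any $z_0$-factor from $P$ before dehomogenizing; otherwise $P$ could in principle be divisible by $z_0$ even though $V\not\subset\{z_0=0\}$.
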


As is well known, a compact complex manifold $X$ is called \textit{Moishezon} if %it possesses $\rom{dim}\,X$ algebraically independent meromorphic functions, i.e., 
$a(X_j)=\rom{dim}\,X$ for each connected component $X_j$ of $X$.
As a generalization of Moishezon-ness to non-compact cases, a generalization to Andreotti-pseudoconcave manifolds using the Siegel-type lemma is known (see \cite{Mar96}, \cite[Chapter\,3]{MM07}).
In this paper, we define Moishezon-ness for a broader class of manifolds using Theorem \ref{Thm [Theorem 2.2.9, MM07]} as a generalization.

\begin{definition}
    We say that a complex manifold $X$ is generalized Moishezon if each connected component possesses $\rom{dim}\,X$ analytically independent meromorphic functions.
    %In particular, we simply refer to $X$ as Moishezon if it is connected.
\end{definition}

Similarly to \cite[Theorem\,2.2.15]{MM07}, we can obtain the following characterization even in the non-compact (connected) case.

\begin{theorem}\label{Ext [MM07, Theorem 2.2.15]}
    A complex manifold $X$ is generalized Moishezon if and only if it carries a big line bundle.
\end{theorem}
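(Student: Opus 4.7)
The plan is to adapt the classical compact argument of \cite[Theorem\,2.2.15]{MM07} to the non-compact (and potentially disconnected) setting. Because the Kodaira--Iitaka dimension from Definition \ref{Def of bigness} satisfies $\dim\overline{\varPhi_V(X)}=\max_\alpha\dim\overline{\varPhi_V(X_\alpha)}$ over the connected components $\{X_\alpha\}$, and because the pole-divisor construction of a meromorphic function is purely local, I would first reduce to the case in which $X$ is connected of dimension $n$; the disconnected case then reduces to a componentwise disjoint patching of line bundles, using that components of a complex manifold are open and closed.

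For the implication ($\Leftarrow$), suppose $L$ is big, so $\kappa(L)=n$. Unpacking Definition \ref{Def of bigness} extracts some $m\in\bb{N}$ and a finite-dimensional $V\subseteq H^0(X,L^{\otimes m})$ whose Kodaira image has dimension $n$. Choosing a basis $\sigma_0,\ldots,\sigma_N$ of $V$ with $\sigma_0\not\equiv0$, the ratios $f_j:=\sigma_j/\sigma_0$ lie in $\scr{M}(X)$, and on the open dense subset $\{\sigma_0\ne0\}$ the map $\varPhi_V$ reads $x\mapsto(f_1(x),\ldots,f_N(x))$ in an affine chart of $\bb{P}^N$. The image having dimension $n$ forces the Jacobian of some subfamily $(f_{j_1},\ldots,f_{j_n})$ to have full rank at a point, which is exactly the analytic independence required.

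For ($\Rightarrow$), suppose $f_1,\ldots,f_n\in\scr{M}(X)$ are analytically independent. I would form the pole divisors $(f_j)_\infty$ (locally well-defined from coprime factorisations $f_j=g_j/h_j$), set $D:=\sum^n_{j=1}(f_j)_\infty$, and let $L:=\cal{O}_X(D)$ with canonical section $s_D$. Then $s_D,\,f_1s_D,\ldots,f_ns_D$ are genuine global holomorphic sections of $L$ (the poles are cancelled by the very choice of $D$), and they span a finite-dimensional $V\subseteq H^0(X,L)$ whose Kodaira map on $X\setminus D$ is $x\mapsto[1:f_1(x):\cdots:f_n(x)]$. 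Since $df_1\wedge\cdots\wedge df_n\ne0$ at some point by analytic independence, $d\varPhi_V$ has rank $n$ there, giving $\dim\overline{\varPhi_V(X)}=n$, hence $\kappa(L)\geq n$; the trivial bound $\kappa(L)\leq\dim X=n$ then closes the argument.

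The main obstacle I anticipate is the non-compact bookkeeping. Unlike the compact case, Siegel's lemma and finiteness of $H^0$ are unavailable, so one must verify by hand that the pole divisor yields an honest holomorphic line bundle with the claimed global sections, and that the dictionary between analytic independence of meromorphic functions and maximal rank of the Kodaira differential remains valid pointwise (the compact equivalence with algebraic dependence from Theorem \ref{Thm [Theorem 2.2.9, MM07]} is not directly available here). Fortunately, Definition \ref{Def of bigness} only demands a \emph{finite-dimensional} $V$ realising the maximal image dimension, so both directions can be executed inside a finite-dimensional slice of sections, sidestepping the main analytic issues and allowing the classical argument to survive intact on each connected component.
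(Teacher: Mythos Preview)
Your proof is correct and follows essentially the same approach as the paper. The only real difference is in the direction ($\Rightarrow$): where you explicitly construct the line bundle as $\cal{O}_X(D)$ with $D=\sum_j(f_j)_\infty$ and exhibit the sections $s_D,\,f_js_D$, the paper simply cites Andreotti \cite{And63} for the existence of a line bundle $L$ and sections $s_0,\ldots,s_n\in H^0(X,L)$ with $f_j=s_j/s_0$; your pole-divisor construction is precisely what underlies that citation. One minor remark: the paper explicitly restricts to the connected case (see the parenthetical ``(connected)'' in the sentence introducing the theorem), so your discussion of components is unnecessary---and in fact your proposed reduction for ($\Leftarrow$) does not quite work in the disconnected setting, since $\kappa(L)=n$ via Definition \ref{Def of bigness} only guarantees that \emph{some} component achieves $n$-dimensional Kodaira image, not all of them.
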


\begin{proof}
    If $X$ is Moishezon, then there exist $n:=\rom{dim}\,X$ analytically independent meromorphic functions $f_1,\ldots,f_n\in\scr{M}(X)$. %with $1\leq j\leq n$. 
    Thus, we can find a holomorphic line bundle $L$ such that these functions have the form $f_j=s_j/s_0$ with $s_0\not\equiv0$ where $s_0,\ldots,s_n\in H^0(X,L)$ (see \cite{And63}).
    Here, $d(s_1/s_0)\wedge\cdots\wedge d(s_n/s_0)\ne0$ on the set where the left-hand side is defined. Let $V$ be a sublinear space of $H^0(X,L)$ spanned by $s_0, \dots, s_n$, then the Kodaira map $\varPhi_V:X\dashrightarrow \bb{P}(V)$ has maximal rank, and hence $L$ is big.

    Conversely, if $L$ is big, there exist $p>0$ and $s_0,\ldots,s_n \in H^0(X,L^{\otimes p})$ %\xb{なぜ$n$でとまるのか?} 
    such that $d(s_1/s_0)\wedge\cdots\wedge d(s_n/s_0)=0$ outside a nowhere dense analytic set. 
    This means that $s_1/s_0,\ldots,s_n/s_0$ are $n$ analytically independent.
\end{proof}

As an important and ideal example, we provide generalized Moishezon-ness and embeddings with respect to each relatively compact weakly pseudoconvex sublevel set $X_c$.

\begin{theorem}\label{Thm bigness with ideal sheaves on each sublevel sets}
    Let $(X,\varPsi)$ be an $n$-dimensional weakly pseudoconvex manifold and $L$ be a holomorphic line bundle on $X$ with a singular Hermitian metric $h$.
    If $h$ is singular positive, then for any $c<\sup_X\varPsi$ there exist a proper modification $\pi_c:\widetilde{X}_c\longrightarrow X_c$, a singular Hermitian metric $h_c$ on $L|_{X_c}$ with $\scr{I}(h)=\scr{I}(h_c)$ and a large integer $p_c$ 
    such that $K_{X_c}\otimes L^{\otimes p_c}\otimes\scr{I}(h_c^{p_c})$ is big. %for any $m>M_c$.
    Furthermore, there exist a very ample line bundle $A\longrightarrow\widetilde{X}_c$ and an effective divisor $E$ on $\widetilde{X}_c$ with only simple normal crossing such that %$K_{X_c}^{\otimes n+2}\otimes L^{\otimes m(n+2)}\otimes\scr{I}(h_c^{m(n+2)})={\pi_c}_*(A+E)$.
    \begin{align*}
        K_{X_c}^{\otimes n+2}\otimes L^{\otimes p_c(n+2)}\otimes\scr{I}(h_c^{p_c(n+2)})&={\pi_c}_*(A+E),\\
        \pi_c^*(K_{X_c}^{\otimes n+2}\otimes L^{\otimes p_c(n+2)})\otimes\scr{I}(\pi_c^*h_c^{p_c(n+2)})&=A+E.
    \end{align*}

    In particular, each sublevel set $X_c$ is generalized Moishezon and $X_c\setminus Z_c$ is holomorphically embeddable into $\bb{P}^{2n+1}$, where $Z_c$ is an analytic subset of $X_c$ obtained as the singlular locus of $h_c$. 
\end{theorem}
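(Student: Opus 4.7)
The plan is to lift the problem to the blow-up $\widetilde{X}_c$ supplied by Theorem \ref{Blow ups of Dem appro with log poles and ideal sheaves}, where $h_c$ has algebraic singularities and an honest positive line bundle $\scr{L}$ is available, then transfer the conclusion back to $X_c$ via the pushforward identity for multiplier ideals.

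First I would apply Theorem \ref{Blow ups of Dem appro with log poles and ideal sheaves} on $X_{c+\varepsilon_c}$ (cf. Remark \ref{Remark of Setting on each sublevel set}) to produce the proper modification $\pi_c:\widetilde{X}_c\to X_c$, the singular metric $h_c$ on $L$ with $\scr{I}(h)=\scr{I}(h_c)$, the SNC divisor $D_c=\sum a_jD_j$ with $\rom{supp}\,D_c=\pi_c^{-1}(Z_c)$, and, by condition $(\alpha)$, the integer $t_c$ together with an effective divisor $D_b=\sum b_jD_j$ for which $\scr{L}:=\widetilde{L}_{m_\nu}^{\otimes t_c}\otimes\cal{O}(-D_b)$ is positive on $\widetilde{X}_c$, where $\widetilde{L}_{m_\nu}=\pi_c^*L^{\otimes 2^\nu m_\nu}\otimes\cal{O}(-(1+2^\nu)D_c)$. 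Since $\pi_c$ is proper, $\pi_c^*\varPsi$ is a smooth plurisubharmonic exhaustion of $\widetilde{X}_{c+\varepsilon_c}$, so the latter is weakly pseudoconvex and Takayama's embedding theorem \cite[Theorem\,1.2]{Tak98} applies: for $s$ sufficiently large, $(K_{\widetilde{X}_c}\otimes\scr{L}^{\otimes s})^{\otimes n+2}$ is very ample on $\widetilde{X}_c$.

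I would then set $p_c:=2^\nu m_\nu t_c s$ with $s$ chosen so that the very ampleness above holds and $sb_j\geq\zeta_j$ for every $j$, where $D_\zeta=\sum\zeta_jD_j$ is the effective divisor determined by $\pi_c^*K_{X_c}=K_{\widetilde{X}_c}\otimes\cal{O}(-D_\zeta)$. Using this identity, the formula $\scr{I}(\pi_c^*h_c^{p_c(n+2)})=\cal{O}(-(n+2)(1+2^\nu)t_csD_c)$ from condition $(f)$, and the equality $\widetilde{L}_{m_\nu}^{\otimes t_c}=\scr{L}\otimes\cal{O}(D_b)$, a direct computation yields
\begin{align*}
\pi_c^*(K_{X_c}^{\otimes n+2}\otimes L^{\otimes p_c(n+2)})\otimes\scr{I}(\pi_c^*h_c^{p_c(n+2)})=(K_{\widetilde{X}_c}\otimes\scr{L}^{\otimes s})^{\otimes n+2}\otimes\cal{O}\bigl((n+2)(sD_b-D_\zeta)\bigr),
\end{align*}
allowing me to declare $A:=(K_{\widetilde{X}_c}\otimes\scr{L}^{\otimes s})^{\otimes n+2}$ (very ample) and $E:=(n+2)\sum(sb_j-\zeta_j)D_j$ (effective and supported on the SNC divisor $\sum D_j$). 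The companion pushforward identity is obtained by regrouping the same line bundle in the form $K_{\widetilde{X}_c}\otimes\pi_c^*(K_{X_c}^{\otimes n+1}\otimes L^{\otimes p_c(n+2)})\otimes\scr{I}(\pi_c^*h_c^{p_c(n+2)})$ (this only shifts the coefficient of $D_\zeta$ in $E$) and invoking the standard identity $\pi_*[K_{\widetilde{X}_c}\otimes\pi_c^*F\otimes\scr{I}(\pi_c^*h)]=K_{X_c}\otimes F\otimes\scr{I}(h)$ already used in the proof of Theorem \ref{regularization with the equality of ideals and completeness}.

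Bigness of $K_{X_c}\otimes L^{\otimes p_c}\otimes\scr{I}(h_c^{p_c})$ then follows because the cohomology isomorphism $(\beta)$ of Theorem \ref{Blow ups of Dem appro with log poles and ideal sheaves} identifies $H^0(X_c,K_{X_c}^{\otimes n+2}\otimes L^{\otimes p_c(n+2)}\otimes\scr{I}(h_c^{p_c(n+2)}))$ with the sections of $A+E$ on $\widetilde{X}_c$, into which $H^0(\widetilde{X}_c,A)$ injects via multiplication by the canonical section $s_E$; the Kodaira map of the very ample $A$ embeds $\widetilde{X}_c$ into $\bb{P}^N$, so composed with $\pi_c^{-1}$ it yields a meromorphic map $X_c\longdashrightarrow\bb{P}^N$ of generic rank $n$, forcing $\kappa(K_{X_c}\otimes L^{\otimes p_c}\otimes\scr{I}(h_c^{p_c}))=n$. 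Generalized Moishezon-ness of $X_c$ is then immediate from Theorem \ref{Ext [MM07, Theorem 2.2.15]}, and the embedding $X_c\setminus Z_c\hookrightarrow\bb{P}^{2n+1}$ is produced by restricting $\widetilde{X}_c\hookrightarrow\bb{P}^N$ to $\widetilde{X}_c\setminus\pi_c^{-1}(Z_c)\cong X_c\setminus Z_c$ and taking a generic linear projection, valid by the relative compactness of $\widetilde{X}_c$ in $\widetilde{X}_{c+\varepsilon_c}$ and the standard dimension count for secant and tangent varieties. The main obstacle is the combinatorial bookkeeping of $(m_\nu,t_c,s,a_j,b_j,\zeta_j)$ so that simultaneously the very-ampleness of $A$, the effectivity of $E$, and the integrality required for the floor in the ideal-sheaf formula to vanish all hold; this reduces to taking $s$ sufficiently large and divisible.
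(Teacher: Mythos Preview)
Your proposal is correct and follows essentially the same strategy as the paper: lift to $\widetilde{X}_c$ via Theorem \ref{Blow ups of Dem appro with log poles and ideal sheaves}, apply Takayama's embedding theorem to the positive bundle $\scr{L}$, then descend via the cohomology isomorphism $(\beta)$ and the pushforward identity for multiplier ideals. The only cosmetic differences are that the paper replaces your $D_b$ by the tailor-made divisor $D_\eta=\sum\max\{1,\zeta_j\}D_j$ (so that the effectivity of $E=q(n+2)D_\eta-(n+1)D_\zeta$ is automatic rather than arranged by enlarging $s$), and it invokes Takayama's theorem in the form that already lands in $\bb{P}^{2n+1}$, making your generic-projection step unnecessary.
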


Here, the second equation corresponds to Fujita's approximation (see \cite[II,\,Section\,11.4]{Laz04}) 
for an open manifold, and it can be written as $\scr{I}(\pi_c^*h_c^{p_c(n+2)})=\cal{O}_{\widetilde{X}_c}(-D)$ by an effective divisor $D$.
Moreover, using any smooth Hermitian metric \( h_{K_{\!X}} \) on \( K_X \), the above two formulas can be easily written as follows in terms of the \( L^2 \)-subsheaf.
\begin{align*}
    \scr{L}^2(h_{K_{\!X}}^{n+2}\otimes h_c^{p_c(n+2)})={\pi_c}_*(A+E), \quad \scr{L}^2(\pi_c^*(h_{K_{\!X}}^{n+2}\otimes h_c^{p_c(n+2)}))=A+E.
\end{align*}

\begin{proof}
    We use the notation from Setting \ref{Setting of each sublevel set}. Here, $K_{\widetilde{X}_c}=\pi^*_cK_{X_c}\otimes\cal{O}_{\widetilde{X}_c}(\sum^J_{j=1}\zeta_j D_j)$ for some non-negative integers $\zeta_j$. 
    Let $D_\zeta:=\sum^J_{j=1}\zeta_j D_j$ and $D_\eta:=\sum^J_{j=1}\eta_jD_j$ be simple normal crossing divisors on $\widetilde{X}_c$ with $\eta_j:=\max\{1,\zeta_j\}$. By the proof of Theorem \ref{Blow ups of Dem appro with log poles and ideal sheaves}, there exists a positive integer $t_\eta\gg0$ such that 
    the holomorphic line bundle $\scr{L}_\eta:=\widetilde{L}_{m_c}^{\otimes t_\eta}\otimes\cal{O}_{\widetilde{X}_c}(-D_\eta)$ is also positive. For any integer $q>n(n+1)/2$, $K_{\widetilde{X}_c}\otimes\scr{L}_\eta^{\otimes q}$ is ample and there exists $\sigma_j\in H^0(\widetilde{X}_c,(K_{\widetilde{X}_c}\otimes\scr{L}_\eta^{\otimes q})^{\otimes(n+2)})$ $(0\leq j\leq2n+1)$
    that induces the holomorphic embedding map $\widetilde{\varphi}:\widetilde{X}_c\longrightarrow\bb{P}^{2n+1}$ (see \cite[Theorem\,1.2]{Tak98}). In other words, $K_{\widetilde{X}_c}^{\otimes n+2}\otimes\scr{L}_\eta^{\otimes q(n+2)}$ is very ample. From $(\beta)$ of Theorem \ref{Blow ups of Dem appro with log poles and ideal sheaves}, we have 
    \begin{align*}
        H^0(X_c,&K_{X_c}^{\otimes n+2}\otimes L^{\otimes p(n+2)}\otimes\scr{I}(h_c^{p(n+2)}))\\
        &\cong H^0(\widetilde{X}_c,K_{\widetilde{X}_c}^{\otimes n+2}\otimes\cal{O}_{\widetilde{X}_c}(-(n+1)D_\zeta)\otimes\pi_c^*L^{\otimes p(n+2)}\otimes\scr{I}(\pi^*_ch_c^{p(n+2)})),
    \end{align*}
    for any $p\geq m_c$. Here, $\scr{L}_\eta^{\otimes q}=\pi^*_cL^{\otimes m_ct_\eta q}\otimes\scr{I}(\pi_c^*h_c^{m_ct_\eta q})\otimes\cal{O}_{\widetilde{X}_c}(-qD_\eta)$. %$=\pi^*_cL^{\otimes m_ct_\eta q}\otimes\cal{O}_{\widetilde{X}_c}(-qD_\eta-t_\eta q D_c)$. $=\widetilde{L}_{m_c}^{\otimes t_\eta q}\otimes\cal{O}_{\widetilde{X}_c}(-qD_\eta)$
    Define the effective divisor by $D_q:=q(n+2)D_\eta-(n+1)D_\zeta$. By taking $p=m_ct_\eta q$, we obtain %the isomorphism 
    \begin{align*}
        \pi_c^*(K_{X_c}^{\otimes n+2}\otimes L^{\otimes p(n+2)})\otimes\scr{I}(\pi_c^*h_c^{p(n+2)})&=K_{\widetilde{X}_c}^{\otimes n+2}\otimes\scr{L}_\eta^{\otimes q(n+2)}\otimes\cal{O}_{\widetilde{X}_c}(D_q),\\
        H^0(X_c,K_{X_c}^{\otimes n+2}\otimes L^{\otimes p(n+2)}\otimes\scr{I}(h_c^{p(n+2)}))
        &\cong H^0(\widetilde{X}_c,K_{\widetilde{X}_c}^{\otimes n+2}\otimes\scr{L}_\eta^{\otimes q(n+2)}\otimes\cal{O}_{\widetilde{X}_c}(D_q)).
    \end{align*}
    By taking the canonical section \( s_q \) of \( \cal{O}_{\widetilde{X}_c}(D_q) \), we construct a Kodaira map \( \widetilde{\varPhi} \) from sections \( \widetilde{\sigma}_j:=\sigma_j\otimes s_q\in H^0(\widetilde{X}_c,K_{\widetilde{X}_c}^{\otimes n+2}\otimes\scr{L}_\eta^{\otimes q(n+2)}\otimes\cal{O}_{\widetilde{X}_c}(D_q)) \). 
    From the isomorphism given by \( \pi_c \), there exist sections \( \tau_j\in H^0(X_c,K_{X_c}^{\otimes n+2}\otimes L^{\otimes p(n+2)}\otimes\scr{I}(h_c^{p(n+2)})) \) satisfying $\widetilde{\sigma}_j=\pi_c^*\tau_j$, and let $\varPhi$ be a Kodaira map induced by $\tau_j$. 
    It follows that $\pi_c^*\varPhi=\widetilde{\varPhi}$ is satisfied, and the restriction to $X_c\setminus Z_c$ is clearly an embedding. Hence, $K_{X_c}\otimes L^{\otimes p}\otimes\scr{I}(h_c^p)$ is big. 
    Furthermore, %$K_{\widetilde{X}_c}^{\otimes n+2}\otimes\scr{L}_\eta^{\otimes q(n+2)}$ is very ample (see \cite[Theorem\,1.2]{Tak98}), and
     we obtain $K_{X_c}^{\otimes n+2}\otimes L^{\otimes p(n+2)}\otimes\scr{I}(h_c^{p(n+2)})={\pi_c}_*(K_{\widetilde{X}_c}^{\otimes n+2}\otimes\scr{L}_\eta^{\otimes q(n+2)}\otimes\cal{O}_{\widetilde{X}_c}(D_q))$ (see \cite[Proposition\,5.8]{Dem12}).
\end{proof}

Here, Theorem \ref{Thm bigness with ideal sheaves on each sublevel sets} does not require the entire \( X \) to be weakly pseudoconvex, but it holds for weakly pseudoconvex relatively compact subsets.

\begin{conjecture}
    Similar to the compact case, is the existence of integral \kah currents, i.e., singular positive line bundles, %also provide a condition 
    equivalent to generalized Moishezon-ness for relatively compact weakly pseudoconvex subsets?
    In other words, can the existence of a big line bundle imply the existence of integral \kah currents?
\end{conjecture}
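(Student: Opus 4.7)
The implication (singular positive $\Rightarrow$ big $\Rightarrow$ generalized Moishezon) is already contained in Theorem \ref{Thm bigness with ideal sheaves on each sublevel sets} combined with Theorem \ref{Ext [MM07, Theorem 2.2.15]}, so the substantive content of the conjecture is the converse: given a big line bundle $L$ on a relatively compact weakly pseudoconvex $X$, must $L$ carry a singular positive Hermitian metric? My plan is to imitate the projective argument of Proposition \ref{Prop of bigness on proj mfd}. By bigness one extracts $\sigma_0,\ldots,\sigma_N\in H^0(X,L^{\otimes m})$ whose Kodaira map $\varPhi_\sigma:X\longdashrightarrow\bb{P}^N$ has maximal rank on a dense open set, and forms the natural metric $h_\sigma=1/\sum_j|\sigma_j|^2$ whose curvature $\varPhi_\sigma^*\omega_{FS}$ is a priori only singular semi-positive.

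The next step is to resolve the indeterminacy of $\varPhi_\sigma$ by a proper modification $\pi:\widetilde{X}\to X$ along $\rom{Bs}_{|V|}$ and the Jacobian locus, obtaining a decomposition $\pi^*L^{\otimes m}\cong\widetilde{A}\otimes\cal{O}_{\widetilde{X}}(E)$, where $E$ is effective with simple normal crossing support and $\widetilde{A}=\widetilde{\varPhi}^*\cal{O}_{\bb{P}^N}(1)$ is base-point-free. Using the smooth Fubini--Study pullback $\widetilde{h}_A$ and the canonical singular metric $h_E=1/|s_E|^2$, the Hermitian metric $\widetilde{h}=(\widetilde{h}_A\otimes h_E)^{1/m}$ on $\pi^*L$ can be transported through $\pi$ in exactly the manner used for $\hbar_c$ in the proof of Theorem \ref{regularization with the equality of ideals and completeness}, yielding a candidate singular Hermitian metric $h$ on $L$ whose curvature current dominates $\frac{1}{m}(\varPhi_\sigma^*\omega_{FS}+[\pi_*E])$ on $X$.

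The main obstacle --- and the genuine gap relative to the compact case --- is to upgrade this from singular semi-positive to singular positive, i.e.\ to bound the curvature current from below by a strict Hermitian form $\omega_X$. In the projective setting Kodaira's lemma extracts from bigness a genuinely ample summand, delivering strict positivity for free; on a weakly pseudoconvex manifold no such ample component is available a priori, and $\widetilde{A}$ may fail to be Kähler precisely along the divisors contracted by $\widetilde{\varPhi}$. My proposal to bridge this is to enlarge $X$ to a slightly larger sublevel $X_{c+\varepsilon}\Supset X$ (available by relative compactness), apply Takayama's adjoint embedding theorem \cite[Theorem 1.2]{Tak98} on $X_{c+\varepsilon}$ to an auxiliary smooth positive line bundle built from the given sections, and transport the resulting positivity back to $X$ via the Approximation Theorem \ref{Approximation thm of hol section with ideal sheaves}; the unavoidable adjoint twist by $K_X$ can then be absorbed into $E$ at the cost of enlarging $m$. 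Making this transfer precise, and in particular controlling the curvature uniformly up to $\partial X$ in regions where $\widetilde{\varPhi}$ degenerates, appears to be the essential new difficulty the conjecture demands.
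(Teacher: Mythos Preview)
This statement is a \emph{conjecture}, not a theorem: the paper does not prove it and explicitly leaves the converse direction open. What the paper does offer, in the paragraph immediately following the conjecture, is a brief heuristic: the difficulty ``seems to hinge on the extension of Moishezon's theorem'' (that compact Moishezon manifolds are bimeromorphic to projective ones) to the non-compact weakly pseudoconvex setting, together with the partial observation that \emph{if} the locus where the Kodaira map fails to have maximal rank has codimension at least $2$, then one can conclude by blow-ups and Lemma \ref{lemma positivity of exc div to blow ups} exactly as in the compact case. So there is no ``paper's own proof'' to compare against; your proposal should be read as an attack on an open problem.

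On the substance of your attack: the first two steps (resolve the base locus, decompose $\pi^*L^{\otimes m}\cong\widetilde{A}\otimes\cal{O}_{\widetilde{X}}(E)$ with $\widetilde{A}$ base-point-free, push the metric back down) are fine and are precisely the mechanism behind Proposition \ref{Prop of bigness on proj mfd}. You also correctly isolate the obstruction: $\widetilde{A}$ is merely semi-positive, so $\widetilde{h}_A\otimes h_E$ is only a singular \emph{semi}-positive metric, and the whole question is how to inject strict positivity. But your proposed remedy is circular. Takayama's embedding theorem \cite[Theorem\,1.2]{Tak98} takes as input a \emph{smooth positive} line bundle on $X_{c+\varepsilon}$; you propose to feed it ``an auxiliary smooth positive line bundle built from the given sections,'' yet the given sections only produce the semi-positive $\widetilde{h}_A$ --- obtaining a positive line bundle is exactly the conclusion you are after. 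Likewise, the Approximation Theorem \ref{Approximation thm of hol section with ideal sheaves} presupposes a singular positive metric on $L$, so invoking it here again assumes what is to be proved. Finally, ``absorbing the $K_X$-twist into $E$'' is not a well-defined operation: $K_X$ need not be effective or have any relation to the exceptional support of $E$.

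The honest summary is that your outline reproduces the easy semi-positive part and then restates the hard step without resolving it. The paper's own remark suggests the missing ingredient is a non-compact analogue of Moishezon's bimeromorphic-to-projective theorem, which would supply the ample (hence strictly positive) summand that Kodaira's lemma provides in the compact case; absent such a result, the conjecture remains open.
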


This seems to hinge on the extension of Moishezon's theorem (see \cite{Moi66}, \cite[Theorem\,2.2.16]{MM07}) to non-compact settings.
Furthermore, if the codimension of the locus where the rank of a Kodaira map induced by the big line bundle is not maximal is at least $2$, 
then the existence of integral \kah currents can be established in a manner analogous to the compact case by using blow-ups and Lemma \ref{lemma positivity of exc div to blow ups}.

\subsection{Singular holomorphic Morse inequality}\label{subsection: singular Morse inequality}

The Bonavero's singular holomorphic Morse inequality (see \cite[Th\'eor\`em\,1.1]{Bon98}) obtained on compact manifolds is a generalization of Demailly's holomorphic Morse inequality (see \cite[Th\'eor\`em\,0.1]{Dem85}) for singular Hermitian metrics with only algebraic singularities.
In this subsection, we generalize the Bonavero's Morse inequality to the non-compact case for singular positive Hermitian metrics by using blow-ups Theorem \ref{Blow ups of Dem appro with log poles and ideal sheaves} and Approximation Theorem \ref{Approximation thm in Introduction}.

Let $L$ be a holomorphic line bundle on an $n$-dimensional complex manifold $X$, and let $V$ be an open subset of $X$ and $\hbar$ be a singular Hermitian metric on $L|_V$ with only analytic singularities.
Let $Z_\hbar$ be the singular locus of $\hbar$ which is an analytic subset of $V$. We introduce the $q$-index set of $(L,\hbar)$ as
\begin{align*}
    V(q)=V(q,\hbar)=\left\{ x\in V\setminus Z_\hbar \, \middle\vert \, \iO{L,\hbar}(x) \,\, \text{ has } \,\,\, 
    \begin{matrix}
        q & \text{ negative eigenvalues} \\
        n-q & \text{ positive eigenvalues} 
     \end{matrix}
    \right\}
\end{align*}
for $0\leq q\leq n$, and we set $V(\leq q):=\bigcup^q_{j=0}V(j)$.

\begin{theorem}\label{Generalized singular holomorphic Morse inequality}
    Let $X$ be a complex manifold of dimension $n$ and $L$ be a holomorphic line bundle on $X$ with a singular Hermitian metric $h$. 
    Let $U$ be a relatively compact open subset %with smooth boundary 
    represented as $U = \{ x\in X\mid \rho(x)< 0 \}$ using a smooth defining function $\rho\in\cal{C}^\infty(X,\bb{R})$, 
    and $E$ be a holomorphic vector bundle on $U$. 
    If $h$ is singular positive on a neighborhood of $U$, then there exist an open subset $V$ of $U$ represented as $V=\{\rho<-\delta\}$ using a sufficiently small $\delta>0$, 
    a singular positive Hermitian metric $h_V$ on $L|_{V}$ and a integer $t_V\in\bb{N}$, and we have the following singular holomorphic Morse inequality
    \begin{align*}
        \rom{dim}\,H^0(V,E\otimes L^{\otimes p}\otimes\scr{I}(h_V^p))\geq\rom{rank}\,E\cdot\frac{p^n}{t_V^n n!}\int_{V(\leq1)}\Bigl(\frac{i}{2\pi}\Theta_{L,h_V}\Bigr)^n+o(p^n).
    \end{align*}
    
    Here, the metric $h_V$ is satisfying $\scr{I}(h)=\scr{I}(h_V)$ on $V$, is smooth on $V\setminus Z$ where $Z$ is an analytic subset of $V$, and has algebraic singularities which give $Z$.    
\end{theorem}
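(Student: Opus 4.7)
The plan is to reduce to Demailly's (smooth) holomorphic Morse inequality on the proper modification supplied by Theorem~\ref{Blow ups of Dem appro with log poles and ideal sheaves}, and then push the estimate back to $V$ via the birational direct image.

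First, I shrink $\delta>0$ so that $V=\{\rho<-\delta\}$ still lies inside the neighborhood on which $h$ is singular positive, say $\iO{L,h}\geq\varpi>0$ with a continuous positive $(1,1)$-form $\varpi$. Apply Theorem~\ref{Blow ups of Dem appro with log poles and ideal sheaves} on a relatively compact open set containing $\overline V$, and choose the index $\nu$ large enough that $\varepsilon_\nu\omega\leq\varpi/2$ and $\scr{I}(h)=\scr{I}(h_\nu)$ on $V$. Set $h_V:=h_\nu$ and $t_V:=2^\nu m_\nu$, and let $\pi\colon\widetilde V\to V$ be the associated proper modification; this yields a smooth Hermitian line bundle $(\widetilde L_{m_\nu},\widetilde h_{m_\nu})$ on $\widetilde V$ with semipositive curvature satisfying $\iO{\widetilde L_{m_\nu},\widetilde h_{m_\nu}}=t_V\,\pi^*\iO{L,h_V}$ on the smooth locus. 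By the projection formula combined with $(f)$ of Theorem~\ref{Blow ups of Dem appro with log poles and ideal sheaves} and the standard identity $\pi_*\scr{I}(\pi^*h_V^p)=\scr{I}(h_V^p)$ for proper modifications (see \cite[Proposition\,5.8]{Dem12}), we obtain for every $k\in\bb{N}$
\begin{align*}
\dim H^0\bigl(V,E\otimes L^{\otimes kt_V}\otimes\scr{I}(h_V^{kt_V})\bigr)=\dim H^0\bigl(\widetilde V,\pi^*E\otimes\widetilde L_{m_\nu}^{\otimes k}\bigr).
\end{align*}

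Second, I apply Demailly's holomorphic Morse inequality on the relatively compact open manifold $\widetilde V$ to the smooth Hermitian line bundle $(\widetilde L_{m_\nu},\widetilde h_{m_\nu})$ twisted by $\pi^*E$, which gives
\begin{align*}
\dim H^0\bigl(\widetilde V,\pi^*E\otimes\widetilde L_{m_\nu}^{\otimes k}\bigr)\geq\rom{rank}\,E\cdot\frac{k^n}{n!}\int_{\widetilde V(\leq 1,\widetilde h_{m_\nu})}\Bigl(\tfrac{i}{2\pi}\Theta_{\widetilde L_{m_\nu},\widetilde h_{m_\nu}}\Bigr)^n+o(k^n).
\end{align*}
Since $\pi$ is biholomorphic outside a measure-zero analytic subset and the curvatures match via $\iO{\widetilde L_{m_\nu},\widetilde h_{m_\nu}}=t_V\,\pi^*\iO{L,h_V}$, the integral on the right equals $t_V^n\int_{V(\leq 1)}\bigl(\tfrac{i}{2\pi}\Theta_{L,h_V}\bigr)^n$. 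Substituting $p=kt_V$ then yields the claimed asymptotic on the arithmetic progression $p\in t_V\bb{N}$, and monotonicity of the multiplier ideal sheaves in the exponent together with the relative compactness of $V$ absorbs the interpolation for arbitrary $p$ into the $o(p^n)$ error.

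The main obstacle is that Demailly's holomorphic Morse inequality is classically stated on compact manifolds, while $\widetilde V$ here is only relatively compact. The remedy is its $L^2$ Bergman-kernel formulation, valid on complete \kah manifolds; Theorem~\ref{regularization with the equality of ideals and completeness}, applied on the blow-up side by pulling back the complete \kah metric of Poincar\'e type along the exceptional divisor, supplies both the required complete \kah metric on the smooth locus of $\widetilde V$ and the identification of $L^2$ harmonic $(n,0)$-forms with the global holomorphic sections entering the cohomology identification of Step~1. The semipositivity of $\iO{\widetilde L_{m_\nu},\widetilde h_{m_\nu}}$ (rather than strict positivity on all of $\widetilde V$) causes no issue, since the Morse inequality is phrased in terms of the $(\leq 1)$-index set directly; the scaling factor $t_V^n$ appearing in the theorem's denominator reflects precisely the reparametrization $k=p/t_V$ between the two asymptotic expansions.
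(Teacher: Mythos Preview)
Your remedy for the non-compact Morse inequality does not work, and this is the central gap. The paper does not use an $L^2$ Bergman-kernel argument on a complete K\"ahler manifold. Instead it applies Sard's theorem to $\pi_U^*\rho$ on the blow-up $\widetilde U$ to choose $\delta>0$ so that $\widetilde V=\{\pi_U^*\rho<-\delta\}$ has \emph{smooth boundary}, and then invokes the holomorphic Morse inequality with Dirichlet boundary condition \cite[Theorem\,3.2.16]{MM07}, which requires exactly this smoothness. Your proposed use of Theorem~\ref{regularization with the equality of ideals and completeness} is inapplicable: that theorem needs a plurisubharmonic exhaustion, whereas here $\rho$ is merely a smooth defining function; and even where it applies it produces a complete metric of Poincar\'e type on the \emph{complement} of an analytic set, whose $L^2$-cohomology is not identified with $H^0(\widetilde V,\pi^*E\otimes\widetilde L_{m_\nu}^{\otimes k})$ without substantial further argument that you do not supply.

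There is also a mismatch in your identification of $t_V$. The paper does not apply the Morse estimate to the semipositive $(\widetilde L_{m_\nu},\widetilde h_{m_\nu})$ directly; it first passes, via the inclusion $\cal O_{\widetilde V}(-qD_b)\hookrightarrow\cal O_{\widetilde V}$, to the \emph{strictly positive} bundle $\scr L=\widetilde L_{m_V}^{\otimes t'_V}\otimes\cal O_{\widetilde V}(-D_b)$ furnished by part $(\alpha)$ of Theorem~\ref{Blow ups of Dem appro with log poles and ideal sheaves}, and it is this twist that produces the factor $t_V=t'_V+1$ in the denominator. Your choice $t_V=2^\nu m_\nu$ would, if your argument worked, yield $p^n/n!$ with no $t_V^n$ at all---strictly stronger than what is claimed, which signals that a step has been skipped. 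Finally, the $H^0$ equality you assert in Step~1 needs the relative canonical twist (cf.\ \cite[Proposition\,5.8]{Dem12}); the paper handles this via part $(\beta)$ of Theorem~\ref{Blow ups of Dem appro with log poles and ideal sheaves}, absorbing $K_V^{-1}$ into the auxiliary bundle $\widetilde E_r:=\pi^*E\otimes K_{\widetilde V/V}\otimes\scr I(\pi^*h_V^r)$ and treating general $p$ through the decomposition $p=m_Vt_Vq+r$.
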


\begin{proof}
    Let $\omega$ be a Hermitian metric on $X$. By blow-ups Theorem \ref{Blow ups of Dem appro with log poles and ideal sheaves}, there exists a singular metric $h_U$ and a proper modification $\pi_U:\widetilde{U}\longrightarrow U$ that satisfy the appropriate conditions.
    Then, $\widetilde{U}=\{\pi_U^*\rho<0\}$, and by Sard's theorem, there exists a nowhere dense subset $\Sigma\subset(-\infty,0)$ such that if \( -\delta \in(-\infty,0)\setminus \Sigma \), then \( \widetilde{V}_{\delta}:=\{y\in\widetilde{U}\mid\pi^*_U\rho(y)<-\delta\} \) has a smooth boundary \( \partial\widetilde{V}_\delta \).
    Fix a sufficiently small \( \delta > 0 \) and let \( \widetilde{V} := \widetilde{V}_\delta \). Define the open set $V:=\{x\in X\mid\rho(x)<-\delta\}$, then $\widetilde{V}=\pi^{-1}_U(V)$.
    Let $\pi:\widetilde{V}\longrightarrow V$ be the restriction of $\pi_U$ to $\widetilde{V}$, and let $h_V:=h_U|_V$. These satisfy the following appropriate conditions induced from blow-ups Theorem \ref{Blow ups of Dem appro with log poles and ideal sheaves}.
    %there exist a singular Hermitian metric $h_V$ and proper modification $\pi:\widetilde{V}\longrightarrow V$ satisfying the following:
    \begin{itemize}
        \item [$(a)$] $\scr{I}(h)=\scr{I}(h_V)$ on $V$, $\iO{L,h_V}\geq\varepsilon\omega$ on $V$ for some small $\varepsilon>0$ and $h_V$ is smooth on $V\setminus Z$, where $Z$ is an analytic subset of $V$.
        \item [$(b)$] $\pi:\widetilde{V}\setminus\pi^{-1}(Z)\longrightarrow V\setminus Z$ is biholomorphic and there exist $m_V\in\bb{N}$ and a simple normal crossing divisor $D_V=\sum^J_{j=1}a_jD_j$ with $\rom{supp}\,D_V=\pi^{-1}(Z)$ such that $\scr{I}(\pi^*h_V^q)=\cal{O}_{\widetilde{V}}(-\sum^J_{j=1}\lfloor\frac{qa_j}{m_V}\rfloor)$ for any $q\in\bb{N}$.
        \item [$(c)$] the holomorphic line bundle $\widetilde{L}_{m_V}:=\pi^*L^{\otimes m_V}\otimes\cal{O}_{\widetilde{V}}(-D_V)$ has a smooth Hermitian metric $\widetilde{h}_{m_V}$ satisfying $\iO{\widetilde{L}_{m_V},\widetilde{h}_{m_V}}\geq m_V\varepsilon\pi^*\omega$ on $\widetilde{V}$.
        \item [$(d)$] there exist an effective divisor $D_b=\sum^J_{j=1}b_jD_j$, a smooth Hermitian metric $h^*_b$ on the corresponding line bundle $\cal{O}_{\widetilde{V}}(-D_b)$ and an integer $t_V'\gg0$ such that the smooth Hermitian metric $\widetilde{h}_{m_V}^{\otimes t_V'}\otimes h^*_b$ on $\widetilde{L}_{m_V}^{\otimes t_V'}\otimes\cal{O}_{\widetilde{V}}(-D_b)$ is positive on $\widetilde{V}$.
        \item [$(e)$] if $F$ is a holomorphic vector bundle, then for any $q\geq m_V$ we have 
        \begin{align*}
            H^0(V,K_V\otimes F\otimes L^{\otimes q}\otimes\scr{I}(h_V^q))\cong H^0(\widetilde{V},K_{\widetilde{V}}\otimes\pi^*(F\otimes L^{\otimes q})\otimes\scr{I}(\pi^*h_V^q)).
        \end{align*}
    \end{itemize}
    We write $t_V:=t_V'+1$ and $p=m_Vt_Vq+r$, where $q,r\in\bb{N}$ and $0\leq r<m_Vt_V$. Let 
    \begin{align*}
        \widetilde{E}_r:=&\pi^*E\otimes K_{\widetilde{V}}\otimes \pi^*K_V^{-1}\otimes\scr{I}(\pi^*h_V^r)=\pi^*E\otimes K_{\widetilde{V}/V}\otimes\cal{O}_{\widetilde{V}}\Bigl(-\sum^J_{j=1}\lfloor\frac{ra_j}{m_V}\rfloor D_j\Bigr), \\
        \scr{L}:=&\widetilde{L}_{m_V}^{\otimes t_V}\otimes\cal{O}_{\widetilde{V}}(-D_b)=\pi^*L^{\otimes t_Vm_V}\otimes\cal{O}_{\widetilde{V}}(-t_VD_V-D_b).
    \end{align*}
    Through the inclusion map $0\longrightarrow \cal{O}_{\widetilde{V}}(-qD_b)\longrightarrow\cal{O}_{\widetilde{V}}$ associated with the effective divisor $qD_b$ twisted by $\widetilde{E}_r\otimes\widetilde{L}_{m_V}^{\otimes \,t_Vq}$, we obtain 
    \begin{align*}
        \rom{dim}\,H^0(\widetilde{V},\widetilde{E}_r\otimes\widetilde{L}_{m_V}^{\otimes t_Vq})\geq\rom{dim}\,H^0(\widetilde{V},\widetilde{E}_r\otimes\scr{L}^{\otimes q}).
    \end{align*}

    Since $\scr{L}$ has a smooth Hermitian metric $\widetilde{h}_{m_V}^{\otimes \,t_V}\otimes h^*_b$ with the positive curvature and $\widetilde{V}$ has the smooth boundary $\partial\widetilde{V}$ (which is necessary for Dirichlet boundary condition), we obtain the following Morse inequality (see \cite[Theorem\,3.2.16]{MM07})
    \begin{align*}
        \rom{dim}\,H^0(\widetilde{V},\widetilde{E}_r\otimes\scr{L}^{\otimes q})\geq\rom{rank}\,\widetilde{E}_r\cdot\frac{q^n}{n!}\int_{\widetilde{V}(\leq1)}\Bigl(\frac{i}{2\pi}\Theta_{\scr{L},\widetilde{h}_{m_V}^{\otimes t_V}\otimes h^*_b}\Bigr)^n+o(q^n).
    \end{align*}
    The curvature inequality $\iO{\scr{L},\widetilde{h}_{m_V}^{\otimes t_V}\otimes h^*_b}>\iO{\widetilde{L}_{m_V},\widetilde{h}_{m_V}}\geq0$ on $\widetilde{V}$ leads to the inequality
    \begin{align*}
        \int_{\widetilde{V}(\leq1)}\Bigl(\frac{i}{2\pi}\Theta_{\scr{L},\widetilde{h}_{m_V}^{\otimes t_V}\otimes h^*_b}\Bigr)^n&>\int_{\widetilde{V}(\leq1)}\Bigl(\frac{i}{2\pi}\Theta_{\widetilde{L}_{m_V},\widetilde{h}_{m_V}}\Bigr)^n=\int_{\widetilde{V}(\leq1)\setminus D_V}\Bigl(\frac{i}{2\pi}\Theta_{\widetilde{L}_{m_V},\widetilde{h}_{m_V}}\Bigr)^n\\
        &=m_V^n\int_{\widetilde{V}(\leq1)\setminus D_V}\Bigl(\frac{i}{2\pi}\Theta_{\pi^*L,\pi^*h_V}\Bigr)^n\\
        &=m_V^n\int_{V(\leq1)}\Bigl(\frac{i}{2\pi}\Theta_{L,h_V}\Bigr)^n\\
        &>0,
    \end{align*}
    where $\iO{\widetilde{L}_{m_V},\widetilde{h}_{m_V}}=m_V\iO{\pi^*L,\pi^*h_V}$ on $\widetilde{V}(\leq1)\setminus D_V$ which is biholomorphic to $V(\leq1, h_V)$. %via $\pi$.
    
    From the previous discussion and $(e)$, we can obtain the desired Morse inequality
    \begin{align*}
        %\rom{dim}\,H^0(V,E\otimes L^{\otimes p}\otimes\scr{I}(h^{\otimes p}))&\geq
        \rom{dim}\,H^0(V,E\otimes L^{\otimes p}\otimes\scr{I}(h_V^p))
        &=\rom{dim}\,H^0(\widetilde{V},\widetilde{E}_r\otimes\widetilde{L}_{m_V}^{\otimes t_Vq})\\
        &\geq\rom{dim}\,H^0(\widetilde{V},\widetilde{E}_r\otimes\scr{L}^{\otimes q})\\
        &\geq\rom{rank}\,\widetilde{E}_r\cdot\frac{q^n}{n!}\int_{\widetilde{V}(\leq1)}\Bigl(\frac{i}{2\pi}\Theta_{\scr{L},\widetilde{h}_{m_V}^{\otimes t_V}\otimes h^*_b}\Bigr)^n+o(q^n)\\
        &>\rom{rank}\,E\cdot\frac{m_V^nq^n}{n!}\int_{V(\leq1)}\Bigl(\frac{i}{2\pi}\Theta_{L,h_V}\Bigr)^n+o(q^n)\\
        &=\rom{rank}\,E\cdot\frac{p^n}{t_V^n n!}\int_{V(\leq1)}\Bigl(\frac{i}{2\pi}\Theta_{L,h_V}\Bigr)^n+o(p^n),
    \end{align*}
    where $\rom{rank}\,E=\rom{rank}\,\widetilde{E}_r$.
\end{proof}

\begin{theorem}\label{singular holomorphic Morse inequality on w.p.c}
    Let $(X,\varPsi)$ be a weakly pseudoconvex manifold of dimension $n$, $E$ be a holomorphic vector bundle on $X$ and $L$ be a holomorphic line bundle on $X$ equipped with a singular positive Hermitian metric $h$. 
    There exists a nowhere dence subset $\Sigma\subset(\inf_X\varPsi,\sup_X\varPsi)$ such that for any $c\in (\inf_X\varPsi,\sup_X\varPsi)\setminus\Sigma$, 
    there exist a singular positive Hermitian metric $h_c$ on $L|_{X_c}$ obtained from Theorem \ref{Dem appro with log poles and ideal sheaves}, satisfying $\scr{I}(h)=\scr{I}(h_c)$ and a positive integer $t_c\in\bb{N}$, 
    and we obtain the singular holomorphic Morse inequality
    \begin{align*}
        \rom{dim}\,H^0(X_c,E\otimes L^{\otimes p}\otimes\scr{I}(h_c^p))\geq\rom{rank}\,E\cdot\frac{p^n}{t_c^n n!}\int_{X_c(\leq1)}\Bigl(\frac{i}{2\pi}\Theta_{L,h_c}\Bigr)^n+o(p^n).
    \end{align*}
    %where $h_\natural$ is the canonical singular Hermitian metric associated with $h$.
    
    Furthermore, if there exists $c>\inf_X\varPsi$ such that $E_{+}(h)\subset X_c$, i.e., $E_{+}(h)$ is compact, or that $E_{+}(h)\bigcap X_c=\emptyset$, then we have a global singular holomorphic Morse inequality
    \begin{align*}
        \rom{dim}\,H^0(X,K_X\otimes L^{\otimes p}\otimes\scr{I}(\widetilde{h}^p))\geq\frac{p^n}{t_c^n n!}\int_{X_c(\leq1)}\Bigl(\frac{i}{2\pi}\Theta_{L,h_c}\Bigr)^n+o(p^n),
    \end{align*}
    where $\widetilde{h}$ is a singular positive Hermitian metric on $L$ and coincides with $h_c$ on $X_c$.
\end{theorem}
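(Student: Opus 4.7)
The first inequality follows by applying Theorem \ref{Generalized singular holomorphic Morse inequality} to sublevel sets, while the global inequality is deduced from the first via Approximation Theorem \ref{Approximation thm of hol section with ideal sheaves}.

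\emph{Part 1.} Fix any $c_0 \in (\inf_X\varPsi, \sup_X\varPsi)$ and choose $c' > c_0$ with $X_{c'} \Subset X$. Applying Theorem \ref{Generalized singular holomorphic Morse inequality} to $U := X_{c'}$ with smooth defining function $\rho := \varPsi - c'$ produces, via its Sard-based argument, a nowhere dense set of bad parameters $\delta > 0$; for admissible $\delta$, the sublevel $V_\delta = X_{c'-\delta}$ has smooth boundary and carries a singular positive Hermitian metric $h_{c'-\delta}$ with $\scr{I}(h) = \scr{I}(h_{c'-\delta})$ and an integer $t_{c'-\delta}$ satisfying the asserted Morse inequality. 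Since $\varPsi$ is proper on $X$ (as its sublevels are relatively compact), the image of its critical set is closed of measure zero, hence nowhere dense, in $(\inf_X \varPsi, \sup_X \varPsi)$; letting $c'$ and $\delta$ vary over admissible values gives the desired nowhere dense exceptional set $\Sigma$, so that for $c \notin \Sigma$ one sets $h_c := h_{c'-\delta}$ and $t_c := t_{c'-\delta}$ with $c = c' - \delta$.

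\emph{Part 2.} The crucial observation is that $E_+(h) \cap X_c = \emptyset$ forces $\scr{I}(h^p)|_{X_c} = \cal{O}_{X_c}$ for every $p \in \bb{N}$: since all Lelong numbers of the local weight of $h$ vanish on $X_c$, Skoda's integrability criterion yields $V(\scr{I}(h^p)) \subseteq E_{1/p}(h) \subseteq E_+(h)$, which is disjoint from $X_c$. Consequently $\scr{I}(h_c^p) \subseteq \cal{O}_{X_c} = \scr{I}(h^p)|_{X_c}$, giving the inclusion
\begin{align*}
H^0(X_c, K_X \otimes L^{\otimes p} \otimes \scr{I}(h_c^p)) \subseteq H^0(X_c, K_X \otimes L^{\otimes p} \otimes \scr{I}(h^p)).
\end{align*}
Now I would apply Approximation Theorem \ref{Approximation thm of hol section with ideal sheaves} to the singular positive metric $h^{\otimes p}$ on $L^{\otimes p}$, which gives that the restriction map
\begin{align*}
\rho_c : H^0(X, K_X \otimes L^{\otimes p} \otimes \scr{I}(h^p)) \longrightarrow H^0(X_c, K_X \otimes L^{\otimes p} \otimes \scr{I}(h^p))
\end{align*}
has dense image in the topology of uniform convergence on compact subsets of $X_c$. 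A standard perturbation argument then converts density into the needed dimension inequality: given any $d$ linearly independent sections $s^{(1)}, \dots, s^{(d)}$ of $H^0(X_c, K_X \otimes L^{\otimes p} \otimes \scr{I}(h_c^p))$, density yields global approximants $\widetilde{s}^{(j)} \in H^0(X, K_X \otimes L^{\otimes p} \otimes \scr{I}(h^p))$ whose restrictions to a fixed compact subset of $X_c$ approximate $s^{(j)}$ arbitrarily well; since linear independence is an open condition, the $\widetilde{s}^{(j)}$ themselves remain linearly independent. Therefore
\begin{align*}
\dim H^0(X, K_X \otimes L^{\otimes p} \otimes \scr{I}(h^p)) \geq \dim H^0(X_c, K_X \otimes L^{\otimes p} \otimes \scr{I}(h_c^p)),
\end{align*}
and combining with the Part 1 bound on the right-hand side yields the global Morse inequality.

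The main obstacle is the delicate interplay between the multiplier ideal sheaves of $h$ and of its regularization $h_c$. Although $\scr{I}(h) = \scr{I}(h_c)$ by construction (Theorem \ref{Blow ups of Dem appro with log poles and ideal sheaves}), this equality does not propagate to powers uniformly in $p$, and so there is no direct comparison between $\scr{I}(h^p)$ and $\scr{I}(h_c^p)$ on $X_c$ that works for all $p$ simultaneously. The hypothesis $E_+(h) \cap X_c = \emptyset$ in Part 2 is precisely the mechanism that trivializes $\scr{I}(h^p)$ on $X_c$, thereby allowing the Approximation Theorem to transfer the sublevel Morse bound to the global setting without any ideal-sheaf loss; without this hypothesis, the same transfer fails because the sections on $X_c$ detected by $\scr{I}(h_c^p)$ might not extend to sections on $X$ lying inside $\scr{I}(h^p)$.
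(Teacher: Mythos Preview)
Your proof is correct and follows essentially the same two-step strategy as the paper: reduce Part 1 to Theorem \ref{Generalized singular holomorphic Morse inequality}, then in Part 2 combine the triviality of $\scr{I}(h^p)|_{X_c}$ with Approximation Theorem \ref{Approximation thm of hol section with ideal sheaves}. One minor difference: the paper additionally invokes Proposition \ref{Prop E0(h)=cup Zj} to deduce $Z_c=\emptyset$ and hence the \emph{equality} $\scr{I}(h_c^p)=\scr{I}(h^p)=\cal{O}_X$ on $X_c$, whereas you only use the trivial inclusion $\scr{I}(h_c^p)\subseteq\cal{O}_{X_c}=\scr{I}(h^p)|_{X_c}$, which already suffices; your explicit perturbation argument converting density into a dimension inequality is also more careful than the paper's shorthand ``$\supseteq$''.
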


\begin{proof}
    The first claim follows immediately from Theorem \ref{Generalized singular holomorphic Morse inequality}.
    %I will prove the latter part. 

    We consider the case where $E_{+}(h)\subset X_c$. We have $\scr{I}(h^p)=\cal{O}_X$ on $X\setminus\overline{X}_c$ for any $p\in\bb{N}$. In other words, $h$ has no essential singularities on $X\setminus\overline{X}_c$.
    Then, with respect to the weight function of $h$, by Richberg's regularization theorem, i.e., Lemma \ref{Lemma of Richberg's regularization theorem}, there exists a smooth Hermitian metric $\widehat{h}$ on $X\setminus\overline{X_c}$ such that both the loss in the Hessian form and the difference from $h$ can be made arbitrarily small. 
    Here, this metric $\widehat{h}$ is also positive on $X\setminus\overline{X_c}$.
    From the construction of $h_c$, it is defined on $X_{c+3\delta}$ for some small $\delta>0$, such that both the difference from the weight function of $h$ and the loss in the Hessian form are arbitrarily small on $X_{c+2\delta}\setminus \overline{X_{c+\delta}}$.
    As in the proof of Richberg's regularization theorem, by using the regularized max function with respect to the weight functions, $\widehat{h}$ and $h_c$ can be smoothly glued together while keeping the loss of curvature positivity arbitrarily small.
    Denote this glued metric by $\widetilde{h}$, then $\widetilde{h}$ is smooth on $X \setminus Z_c$, singular positive on $X$, and coincides with $h_c$ on $X_c$. 
    Hence, we obtains $\scr{I}(h_c^p) = \scr{I}(\widetilde{h}^p)$ on $X_c$ and $\scr{I}(h^p) = \scr{I}(\widetilde{h}^p)=\cal{O}_X$ on $X\setminus X_c$ for any $p\in\bb{N}$.

    If $E_{+}(h)\bigcap X_c=\emptyset$, then Proposition \ref{Prop E0(h)=cup Zj} implies that $Z_c=\emptyset$ and $\scr{I}(h_c^p)=\scr{I}(h^p)=\cal{O}_X$ on $X_c$ for any $p\in\bb{N}$.
    Similarly, $h_c$ can be glued with $h$ on $X_{c+\delta} \setminus \overline{X_c}$, and this glued metric is denoted by $\widetilde{h}$.
    Therefore, in both of these situations, Approximation Theorem \ref{Approximation thm of hol section with ideal sheaves} and Corollary \ref{Corollary equation of dim H0} imply that 
    \begin{align*}
        \rom{dim}\,H^0(X,K_X\otimes L^{\otimes p}\otimes\scr{I}(\widetilde{h}^p))=\rom{dim}\,H^0(X_c,K_X\otimes L^{\otimes p}\otimes\scr{I}(h_c^p))
    \end{align*}
    for any $p\in\bb{N}$, and together with the first claim, the proof is complete. 
\end{proof}

%Let \( L \) be a holomorphic line bundle on a complex manifold \( X \) with a singular Hermitian metric \( h \). 
Let $h$ be a singular Hermitian metric on $L$.
For an open subset \( V\subseteq X \), the volume of $L$ with the multiplier ideal sheaf \( \scr{I}(h) \) is defined as follows.
\begin{align*}
    \rom{Vol}_V(L\otimes\scr{I}(h)):=n!\cdot\liminf_{k\to+\infty}\frac{\rom{dim}\,H^0(V,L^{\otimes k}\otimes\scr{I}(h^{\otimes k}))}{k^n}.
\end{align*}
It is clear that $\rom{Vol}_V(L)\geq\rom{Vol}_V(L\otimes\scr{I}(h))$. 
Here, we know that if \( X \) is compact and $L$ is either nef and big or semi-positive and singular positive, then \( \rom{Vol}_X(L)=c_1(L)^n \).

Immediately as a corollary, the following estimate for the volume follows.

\begin{corollary}
    Under the same assumptions as in Theorem \ref{singular holomorphic Morse inequality on w.p.c}, take the same nowhere dence subset \( \Sigma \).
    For any $c\in (\inf_X\varPsi,\sup_X\varPsi)\setminus\Sigma$, there exist a singular positive Hermitian metric $h_c$ on $L|_{X_c}$ obtained from Theorem \ref{Dem appro with log poles and ideal sheaves}, satisfying $\scr{I}(h)=\scr{I}(h_c)$ on $X_c$ and a positive integer $t_c\in\bb{N}$ such that the following estimate is obtained.
    \begin{align*}
        \rom{Vol}_{X_c}(L\otimes\scr{I}(h_c))\geq\frac{1}{t_c^n}\int_{X_c(\leq1)}\Bigl(\frac{i}{2\pi}\Theta_{L,h_c}\Bigr)^n.
    \end{align*}
    In particular, by adding a suitably exhaustive weight function to \( h_c \), the value on the right-hand side can be made infinite.
\end{corollary}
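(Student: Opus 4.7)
The plan is to obtain the first inequality as a direct specialization of Theorem \ref{singular holomorphic Morse inequality on w.p.c} and then, for the \emph{in particular} clause, to modify $h_c$ by multiplying by $e^{-2\chi\circ\varPsi}$ for a convex increasing function $\chi$ that exhausts the sublevel $X_c$.

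First I would apply Theorem \ref{singular holomorphic Morse inequality on w.p.c} with the trivial line bundle $E=\cal{O}_X$ (so $\rom{rank}\,E=1$). For any $c\in(\inf_X\varPsi,\sup_X\varPsi)\setminus\Sigma$ this provides the singular positive metric $h_c$ on $L|_{X_c}$ with $\scr{I}(h)=\scr{I}(h_c)$ and an integer $t_c\in\bb{N}$ satisfying
\begin{align*}
    \rom{dim}\,H^0(X_c,L^{\otimes p}\otimes\scr{I}(h_c^p))\geq\frac{p^n}{t_c^n n!}\int_{X_c(\leq1)}\Bigl(\frac{i}{2\pi}\Theta_{L,h_c}\Bigr)^n+o(p^n).
\end{align*}
Dividing both sides by $p^n/n!$, passing to $\liminf_{p\to+\infty}$, and using the definition of $\rom{Vol}_{X_c}(L\otimes\scr{I}(h_c))$ immediately yields the desired volume inequality.

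For the second part, I would fix a smooth convex increasing function $\chi:(-\infty,c)\longrightarrow\bb{R}_{\geq0}$ with $\chi\equiv0$ near $-\infty$ and $\chi(t)\to+\infty$ as $t\to c^-$, and define
\begin{align*}
    \hbar_c:=h_c\cdot e^{-2\chi\circ\varPsi} \quad \rom{on}\quad X_c.
\end{align*}
Three properties need to be verified. (i) \emph{Preservation of the multiplier ideal sheaf}: since $\varPsi<c$ on $X_c$ and $\chi\circ\varPsi$ is locally bounded on $X_c$ (it only blows up approaching $\partial X_c$), the factor $e^{-2\chi\circ\varPsi}$ is bounded away from $0$ on any compact subset, so $\scr{I}(\hbar_c)=\scr{I}(h_c)=\scr{I}(h)$. (ii) \emph{Singular positivity}: because $\chi$ is convex increasing and $\varPsi$ is plurisubharmonic, $\chi\circ\varPsi$ is plurisubharmonic, hence $\iO{L,\hbar_c}=\iO{L,h_c}+\idd(\chi\circ\varPsi)\geq\iO{L,h_c}$, which is singular positive. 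In particular, $X_c\setminus Z_c\subseteq X_c(0,\hbar_c)\subseteq X_c(\leq1,\hbar_c)$, since the unperturbed curvature is already strictly positive there. (iii) \emph{Divergence of the integral}: with $\hbar_c$ in place of $h_c$ in the statement, the right-hand side becomes $\frac{1}{t_c^n}\int_{X_c\setminus Z_c}(\iO{L,\hbar_c}/2\pi)^n$, and one chooses $\chi$ so that this integral is $+\infty$.

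The main obstacle is justifying (iii). Expanding the $n$-th power
\begin{align*}
    (\iO{L,\hbar_c})^n\geq n(\iO{L,h_c})^{n-1}\wedge\idd(\chi\circ\varPsi)=n(\iO{L,h_c})^{n-1}\wedge\bigl(\chi'(\varPsi)\idd\varPsi+\chi''(\varPsi)\,i\partial\varPsi\wedge\dbar\varPsi\bigr),
\end{align*}
and using that $(\iO{L,h_c})^{n-1}$ is a strictly positive $(n-1,n-1)$-form on $X_c\setminus Z_c$ dominating $\varepsilon^{n-1}\omega^{n-1}$ for some Hermitian metric $\omega$ and $\varepsilon>0$, the integrand is bounded below by a positive measure multiplied by $\chi'(\varPsi)\cdot\lambda(\idd\varPsi)$ plus $\chi''(\varPsi)|\partial\varPsi|^2_\omega$. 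By choosing $\chi$ so that $\chi'(\varPsi)$ (or $\chi''(\varPsi)$) fails to be integrable against the positive measure $(\iO{L,h_c})^{n-1}\wedge\omega$ near $\partial X_c$, e.g.\ $\chi(t)=-\log(c-t)$ combined with a coarea/slicing argument along the level sets of $\varPsi$, the integral diverges. This is the only delicate step: all other verifications are formal consequences of plurisubharmonicity and of the already established properties of $h_c$.
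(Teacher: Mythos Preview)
Your derivation of the main inequality---apply Theorem \ref{singular holomorphic Morse inequality on w.p.c} with $E=\cal{O}_X$, divide by $p^n/n!$, and take $\liminf$---is correct and is exactly the immediate argument the paper has in mind (the paper gives no proof beyond calling it an immediate corollary).

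For the \emph{in particular} clause, the construction $\hbar_c=h_c e^{-2\chi\circ\varPsi}$ and checks (i)--(ii) are correct, though for the volume you actually need $\scr{I}(\hbar_c^p)=\scr{I}(h_c^p)$ for every $p\in\bb{N}$, which holds for the same local-boundedness reason. There is, however, a gap: in (iii) you substitute $\hbar_c$ into the right-hand side and show the integral diverges, but you never verify that the inequality of the corollary itself continues to hold for $\hbar_c$ with the same $t_c$. Without this, a divergent integral says nothing. This is not automatic from the statement of Theorem \ref{singular holomorphic Morse inequality on w.p.c}, because $\hbar_c$ is not produced as a Demailly approximant of a metric defined on a neighborhood of $\overline{X_c}$ (the weight $\chi\circ\varPsi$ blows up at $\partial X_c$). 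One must revisit the proof of Theorem \ref{Generalized singular holomorphic Morse inequality}: the modification $\pi_c$, the divisors $D_c$, $D_b$, and the integer $t_c$ depend only on the ideal of algebraic singularities of $h_c$, which the smooth factor $e^{-2\chi\circ\varPsi}$ leaves untouched; the induced smooth metric on $\widetilde{L}_{m_c}$ only acquires the non-negative term $m_c\,\idd\pi_c^*(\chi\circ\varPsi)$, so the positivity of $\scr{L}$ and the entire Morse-inequality chain persist with the same $t_c$ but with $\iO{L,\hbar_c}$ in the final integral. Once this is said, your divergence argument via the cross-term $n(\iO{L,h_c})^{n-1}\wedge\chi''(\varPsi)\,i\partial\varPsi\wedge\dbar\varPsi$ near a regular boundary point of $X_c$ is adequate.
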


\section{Vanishing and Non-vanishing theorems and volumes on subvarieties}\label{Section: vanishing and non-vanishing}

This section provides results such as (non)-vanishing theorems in preparation for the subsequent section.
We will need the following Serre-Nakano type vanishing theorem.

\begin{theorem}[{cf.\,\cite[Theorem\,N']{Fuj75}}]\label{Vanishing thm for positive line bdl}
    Let $(X,\varPsi)$ be a weakly pseudoconvex manifold with a positive holomorphic line bundle $L$.
    Then for any coherent analytic sheaf $\scr{F}$ on $X$ and any sublevel set $X_c$, there exists a positive integer $m_c$ such that the following cohomolog groups vanishing
    \begin{align*}
        H^q(X_c,\scr{F}\otimes L^{\otimes m})=0
    \end{align*}
    for any $q\geq1$ and any $m\geq m_c$.
\end{theorem}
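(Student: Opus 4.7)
The plan is to reduce to the locally free case on a slightly larger sublevel set and then apply a Nakano-type vanishing theorem for weakly pseudoconvex manifolds, mirroring the classical proof of Serre vanishing in the compact projective setting.

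First, I would choose a sublevel set $X_{c'}$ with $\overline{X_c}\subset X_{c'}\subset X$ and $c<c'<\sup_X\varPsi$. Because $\overline{X_c}$ is compact and the stalks of $\scr{F}$ are finitely generated, standard syzygy arguments on relatively compact subsets of a complex manifold yield a finite locally free resolution
\begin{equation*}
0 \longrightarrow \scr{E}_N \longrightarrow \scr{E}_{N-1} \longrightarrow \cdots \longrightarrow \scr{E}_0 \longrightarrow \scr{F}|_{X_{c'}} \longrightarrow 0
\end{equation*}
on (a neighborhood of $\overline{X_c}$ contained in) $X_{c'}$ with $N\leq n=\rom{dim}\,X$, where each $\scr{E}_j$ is a holomorphic vector bundle.

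Second, for each holomorphic vector bundle $E$ on a neighborhood of $\overline{X_c}$, I would show that there exists $m_E\in\bb{N}$ such that $H^q(X_c,E\otimes L^{\otimes m})=0$ for every $q\geq1$ and every $m\geq m_E$. Note that $X_c$ is itself weakly pseudoconvex, since $\chi\circ 1/(c-\varPsi)$ is a smooth plurisubharmonic exhaustion of $X_c$ for any smooth convex increasing $\chi$. Rewriting
\begin{equation*}
E\otimes L^{\otimes m}=K_{X_c}\otimes\bigl(E\otimes K_{X_c}^{-1}\otimes L^{\otimes m}\bigr),
\end{equation*}
the positivity of $L$ together with the relative compactness of $\overline{X_c}$ guarantees that $E\otimes K_{X_c}^{-1}\otimes L^{\otimes m}$ becomes Nakano positive once $m$ is large enough. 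The Nakano-type vanishing theorem on weakly pseudoconvex manifolds due to Nakano and Ohsawa (see \cite{Ohs82,Ohs83}) then yields the claimed vanishing.

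Third, I would split the resolution into short exact sequences
\begin{equation*}
0\longrightarrow\scr{K}_{j+1}\longrightarrow\scr{E}_j\longrightarrow\scr{K}_j\longrightarrow0, \qquad \scr{K}_0:=\scr{F}|_{X_{c'}},\ \scr{K}_N:=\scr{E}_N,
\end{equation*}
and tensor each with $L^{\otimes m}$, which preserves exactness since $L^{\otimes m}$ is locally free of rank one. Setting $m_c:=\max_{0\leq j\leq N}m_{\scr{E}_j}$, the vanishings from the second step combine via the associated long exact sequences in cohomology: dimension shifting successively identifies $H^q(X_c,\scr{K}_j\otimes L^{\otimes m})$ with $H^{q+1}(X_c,\scr{K}_{j+1}\otimes L^{\otimes m})$, so that $H^q(X_c,\scr{F}\otimes L^{\otimes m})$ embeds into $H^{q+N}(X_c,\scr{E}_N\otimes L^{\otimes m})=0$ for every $q\geq 1$ and $m\geq m_c$. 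The main obstacle is the second step: all the analytic content lies in the Nakano-type vanishing for vector bundles twisted by high powers of $L$ on the non-compact manifold $X_c$, which requires the Bochner-Kodaira-Nakano identity together with the complete-metric / $L^2$-cohomology machinery on weakly pseudoconvex manifolds. Once this input is in hand, the first and third steps are essentially formal.
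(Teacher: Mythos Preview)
The paper does not supply its own proof of this statement; it is cited from \cite[Theorem~N']{Fuj75} and used as a black box for the later sections, so there is nothing in the paper to compare against directly.

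Your three-step scheme is the standard route and matches Fujiki's argument in outline; steps two and three are correct. The gap is in the justification of step one. The syzygy theorem for the regular local rings $\cal{O}_{X,x}$ produces \emph{local} free resolutions of length $\leq n$, but ``standard syzygy arguments on relatively compact subsets'' does not promote these to a \emph{global} finite locally free resolution on a neighborhood of $\overline{X_c}$: that passage is the resolution property, which is not a formal consequence of compactness of $\overline{X_c}$ and is not automatic on a general complex manifold. Here it holds precisely because of the positive line bundle $L$, and obtaining it is intertwined with the vanishing you want. One first proves your step two (Nakano vanishing for bundles on the weakly pseudoconvex $X_c$), then uses that vanishing to show that $\scr{F}\otimes L^{\otimes m}$ is generated over a neighborhood of $\overline{X_c}$ by finitely many sections for $m\gg 0$; the resulting surjection $(L^{-m})^{\oplus N}\twoheadrightarrow\scr{F}$, iterated on successive kernels, is what actually produces the resolution you assumed. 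So global generation and cohomology vanishing are established together rather than sequentially, and your sentence about syzygies hides this interdependence.
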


In general, if $L$ is only singular positive, it does not imply that \( X \) carries a \kah metric. 
As a key result for considering points separation and embeddings without assuming the existence of a \kah metric, 
we present the following Nadel-type vanishing theorem that does not require a \kah metric.

\begin{theorem}\label{Thm Nadel type vanishing without Kahler}$($\textnormal{Nadel-type vanishing}$)$
    Let $(X,\varPsi)$ be a weakly pseudoconvex manifold and $L$ be a holomorphic line bundle on $X$. 
    We take an arbitrary number $c<\sup_X\varPsi$. If there exist $\delta>0$ and a singular Hermitian metric $h$ on $L|_{X_{c+\delta}}$ which is singular positive, then we have the following cohomology vanishing 
    \begin{align*}
        H^1(X_c,K_X\otimes L\otimes\scr{I}(h))=0.
    \end{align*}
\end{theorem}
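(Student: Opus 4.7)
The plan is to use Theorem \ref{regularization with the equality of ideals and completeness} to replace $h$ by a singular Hermitian metric $\hbar_c$ whose curvature defines a complete \kah metric on $X_c\setminus Z_c$, and then to run a standard $L^2$-H\"ormander argument on this complete \kah manifold in lieu of any global \kah metric on $X_c$.

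First I would apply Theorem \ref{regularization with the equality of ideals and completeness} with $\upsilon=\delta$ to obtain a singular Hermitian metric $\hbar_c$ on $L|_{X_c}$, smooth on $X_c\setminus Z_c$, satisfying $\scr{I}(h)=\scr{I}(\hbar_c)$ on $X_c$ and such that $\omega:=\iO{L,\hbar_c}$ is a complete \kah metric on $X_c\setminus Z_c$ (and a \kah current on $X_c$). Since $K_X\otimes L\otimes\scr{I}(h)=K_X\otimes L\otimes\scr{I}(\hbar_c)$ admits the fine resolution
\begin{equation*}
    0\longrightarrow K_X\otimes L\otimes\scr{I}(\hbar_c)\longrightarrow\scr{L}^{n,0}_{L,\hbar_c}\xrightarrow{\,\dbar\,}\scr{L}^{n,1}_{L,\hbar_c}\xrightarrow{\,\dbar\,}\cdots
\end{equation*}
by the $\hbar_c$-$L^2$ sheaves of $L$-valued $(n,q)$-forms (exactness being the $L^2$-Dolbeault--Grothendieck lemma), the vanishing reduces to producing, for every $\dbar$-closed $f\in\Gamma(X_c,\scr{L}^{n,1}_{L,\hbar_c})$, a section $u\in\Gamma(X_c,\scr{L}^{n,0}_{L,\hbar_c})$ with $\dbar u=f$.

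Given such $f$, I would globalize the $L^2$ datum by a plurisubharmonic exhaustion weight: pick a smooth convex increasing $\chi:(-\infty,c)\to\bb{R}$ with $\chi(t)\to+\infty$ as $t\to c^{-}$ and set $\widetilde{\hbar}_c:=\hbar_c e^{-\chi(\varPsi)}$. Then $\chi(\varPsi)$ is plurisubharmonic on $X_c$ (since $(X,\varPsi)$ is weakly pseudoconvex), and for $\chi$ growing sufficiently fast one can arrange
\begin{equation*}
    \int_{X_c\setminus Z_c}|f|^2_{\widetilde{\hbar}_c,\omega}\,\dv{\omega}<+\infty,
\end{equation*}
using local $\hbar_c$-integrability of $f$ on each relatively compact sublevel $X_b\Subset X_c$ and, near $Z_c$, the Poincar\'e-type blow-up of $\omega$ along $Z_c$, which by Lemma \ref{inequality of (n,0)-forms} dominates the $(n,1)$-norm of $f$ by comparison with a smooth background metric. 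On the complete \kah manifold $(X_c\setminus Z_c,\omega)$, the Bochner--Kodaira--Nakano identity applied with weight $\widetilde{\hbar}_c$ yields, for $L$-valued $(n,1)$-forms $v$,
\begin{equation*}
    \lara{\Aop{\iO{L,\widetilde{\hbar}_c}}{\omega}v}{v}_{\widetilde{\hbar}_c,\omega}\geq|v|^2_{\widetilde{\hbar}_c,\omega},
\end{equation*}
since $\iO{L,\widetilde{\hbar}_c}=\omega+\idd\chi(\varPsi)\geq\omega$; combined with completeness of $\omega$ (exactly as in Lemma \ref{lemma uniform estimate}), this extends to the common domain of $\dbar$ and $\dbar^*_{\widetilde{\hbar}_c,\omega}$, and the standard $L^2$-existence theorem produces $u$ with $\dbar u=f$ and $\int_{X_c\setminus Z_c}|u|^2_{\hbar_c}e^{-\chi(\varPsi)}<+\infty$.

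Since $e^{-\chi(\varPsi)}$ is smooth and bounded below on every compact subset of $X_c$, the solution $u$ is locally $\hbar_c$-square-integrable, hence $u\in\Gamma(X_c,\scr{L}^{n,0}_{L,\hbar_c})$, which closes the argument. The main obstacle I anticipate is the control of $f$ near $Z_c$: the sheaf-level hypothesis only guarantees local $L^2$-ness against a smooth background metric, and it is precisely the Poincar\'e-type blow-up of $\omega$ along $Z_c$ furnished by Theorem \ref{regularization with the equality of ideals and completeness}, together with Lemma \ref{inequality of (n,0)-forms}, that renders the weighted global $L^2$-norm of the $(n,1)$-form $f$ automatically tame there, thereby bypassing the usual requirement of a global \kah metric on $X_c$.
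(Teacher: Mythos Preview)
Your proposal is correct and follows essentially the same route as the paper: apply Theorem \ref{regularization with the equality of ideals and completeness} to replace $h$ by $\hbar_c$ with complete \kah curvature on $X_c\setminus Z_c$, add an exhaustive convex weight in $\varPsi$ to force global $L^2$-finiteness of the given $(n,1)$-form, and solve $\dbar$ via H\"ormander on the complete \kah manifold $X_c\setminus Z_c$, using Lemma \ref{inequality of (n,0)-forms} together with a smooth background Hermitian metric $\gamma\leq\omega$ to control the $(n,1)$-norm near $Z_c$. The only cosmetic differences are that the paper composes $\chi$ with $\frac{1}{c-\varPsi}$ rather than applying $\chi$ directly to $\varPsi$, and absorbs the weight into the \kah metric itself (setting $\omega:=\iO{L,\hbar_c e^{-\chi\circ\psi}}$) before invoking H\"ormander; it also makes explicit the extension of the solution across $Z_c$ via \cite[Lemma\,6.9]{Dem82}, which you should state as well.
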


\begin{proof}
    By Theorem \ref{regularization with the equality of ideals and completeness}, there exist a Hermitian metric $\gamma$ and a singular Hermitian metric $\hbar$ on $L|_{X_c}$ such that the following conditions are satisfied.
    \begin{itemize}
        \item $\hbar$ is smooth on $X_c\setminus Z_c$, where $Z_c$ is an analytic subset of $X_c$.
        \item $\scr{I}(h)=\scr{I}(\hbar)$ on $X_c$.
        \item $\iO{L,\hbar}$ is a complete \kah metric on $X_c\setminus Z_c$, and $\iO{L,\hbar}\geq\gamma$ on $X_c$ as currents.
    \end{itemize}
    For the sheaf $\scr{L}^{n,q}_{L,\hbar}$ defined in $\S\ref{subsection: Construction and Setting in Approximation Theorem}$, it is known that the isomorphism %$H^q(X_c,K_X\otimes L\otimes\scr{I}(\hbar))\cong H^q(\Gamma(X_c,\scr{L}^{n,\bullet}_{L,\hbar}))$ 
    \begin{align*}
        H^q(X_c,K_X\otimes L\otimes\scr{I}(\hbar))\cong H^q(\Gamma(X_c,\scr{L}^{n,\bullet}_{L,\hbar}))
    \end{align*}
    holds for any $q\geq1$ (see \cite[Theorem\,5.3]{Wat23}).
    We take any smooth increasing convex function $\chi:\bb{R}\longrightarrow\bb{R}$, and let $\psi:=\frac{1}{c-\varPsi}$ and $\hbar_\chi:=\hbar e^{-\chi\circ\psi}$. 
    
    For any $\dbar$-closed $f\in \Gamma(X_c,\scr{L}^{n,1}_{L,\hbar})$, the integral 
    \begin{align*}
        \int_{X_c}|f|^2_{\hbar_\chi,\gamma} dV_{\gamma}=\int_{X_c}|f|^2_{\hbar,\gamma}e^{-\chi\circ\psi}dV_{\gamma}
    \end{align*}
    becomes convergent if $\chi$ grows fast enough. Define the metric $\omega:=\iO{L,\hbar_\chi}=\iO{L,\hbar}+\idd\chi\circ\psi$ on $X_c\setminus Z_c$, then this metric $\omega$ is also complete \kah and satisfies $\omega\geq\gamma$ on $X_c\setminus Z_c$.
    Using Lemma \ref{inequality of (n,0)-forms}, we have 
    \begin{align*}
        \int_{X_c\setminus Z_c}|f|^2_{\hbar_\chi,\omega}dV_\omega\leq\int_{X_c\setminus Z_c}|f|^2_{\hbar_\chi,\gamma}dV_\gamma=\int_{X_c}|f|^2_{\hbar_\chi,\gamma} dV_{\gamma}<+\infty.
    \end{align*}
    By H\"ormander's $L^2$-estimate (cf. \cite[ChapterVIII,\,Theorem\,4.5]{Dem-book}), we obtain a solution $u\in L^2_{n,0}(X_c\setminus Z_c,L,\hbar_\chi,\omega)$ of $\dbar u=f$ on $X_c\setminus Z_c$ satisfying 
    \begin{align*}
        \int_{X_c\setminus Z_c}|u|^2_{\hbar_\chi,\omega}dV_\omega\leq\int_{X_c\setminus Z_c}\lara{[\iO{L,\hbar_\chi},\Lambda_\omega]^{-1}f}{f}_{\hbar_\chi,\omega}dV_\omega=\int_{X_c\setminus Z_c}|f|^2_{\hbar_\chi,\omega}dV_\omega.
    \end{align*}
    %Letting $u=0$ on $Z_c$, 
    The $\dbar$-equation extends over $X_c$, meaning $\dbar u=f$ on $X_c$ (cf. \cite[Lemma\,6.9]{Dem82}).
    By using Lemma \ref{inequality of (n,0)-forms}, it follows that $u\in L^2_{n,0}(X_c,L,\hbar_\chi,\gamma)$.
    This means $u\in \Gamma(X_c,\scr{L}^{n,0}_{L,\hbar})$, and the vanishing $H^1(X_c,K_X\otimes L\otimes \scr{I}(\hbar))=0$ is obtained.
\end{proof}

By applying similar methods as in \cite[Theorem\,2-1-1]{KMM87} and \cite[Lemm\,3.7]{Tak98}, we obtain the following non-vanishing theorem.

\begin{theorem}\label{Non-vanishing thm}$($\textnormal{Non-vanishing}$)$
    Let $(X,\varPsi)$ be a weakly pseudoconvex manifold of dimension $n$ and $L$ be a holomorphic line bundle on $X$ equipped with a singular positive Hermitian metric $h$.
    %We take an arbitrary number $c<\sup_X\varPsi$ and also 
    We take the singular positive Hermitian metric $h_c$ satisfying $\scr{I}(h)=\scr{I}(h_c)$ obtained from blow-ups Theorem \ref{Blow ups of Dem appro with log poles and ideal sheaves}.
    %We use the notation from Setting \ref{Setting of each sublevel set} for each sublevel set $X_c$.
    Let $x_1,\cdots,x_r$ be $r$ distinct points on a sublevel set $X_c$. Let $p$ and $q$ be positive integers satisfying the inequality $p^n\rom{Vol}_{X_c}(L\otimes\scr{I}(h_c))>rq^n$.
    Then there exist a positive integer $m$ and a holomorphic section $\tau\in H^0(X_c,L^{\otimes mp}\otimes\fra{m}_{x_1}^{mq}\cdots\fra{m}_{x_r}^{mq})$
    such that $\rom{dim}\,\rom{div}\,\tau<n$. %, where $\rom{dim}\,\rom{div}\,\tau<n$ is the dimension of the zero locus $\rom{div}\,\tau$.
\end{theorem}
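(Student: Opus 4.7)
The plan is to run a Kawamata--Shokurov style dimension count on sections with prescribed high-order vanishing. I would introduce the jet-restriction map
\begin{align*}
    \rho_m:H^0(X_c,L^{\otimes mp}\otimes\scr{I}(h_c^{mp}))\longrightarrow\bigoplus_{j=1}^{r}L^{\otimes mp}_{x_j}\otimes_{\cal{O}_{X,x_j}}\bigl(\cal{O}_{X,x_j}/\fra{m}_{x_j}^{mq}\bigr),
\end{align*}
obtained by composing the inclusion $\scr{I}(h_c^{mp})\hookrightarrow\cal{O}_X$ (twisted by $L^{\otimes mp}$) with reduction modulo $\fra{m}_{x_j}^{mq}$ at each point. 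Any $\tau\in\ker\rho_m$ automatically satisfies $\tau_{x_j}\in\fra{m}_{x_j}^{mq}\cdot L^{\otimes mp}_{x_j}$ for every $j$, and hence lies in $H^0(X_c,L^{\otimes mp}\otimes\fra{m}_{x_1}^{mq}\cdots\fra{m}_{x_r}^{mq})$; so the problem reduces to producing some $m\in\bb{N}$ with $\ker\rho_m\ne0$.

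By the $\liminf$-definition of $\rom{Vol}_{X_c}(L\otimes\scr{I}(h_c))$, for every $\varepsilon>0$ one has
\begin{align*}
    \rom{dim}\,H^0(X_c,L^{\otimes mp}\otimes\scr{I}(h_c^{mp}))\geq\frac{(mp)^n}{n!}\bigl(\rom{Vol}_{X_c}(L\otimes\scr{I}(h_c))-\varepsilon\bigr)
\end{align*}
for all sufficiently large $m$, while the target of $\rho_m$ has dimension at most
\begin{align*}
    r\binom{mq+n-1}{n}=\frac{r(mq)^n}{n!}+O(m^{n-1}).
\end{align*}
The hypothesis $p^n\rom{Vol}_{X_c}(L\otimes\scr{I}(h_c))>rq^n$ allows me to choose $\varepsilon$ so that $p^n(\rom{Vol}_{X_c}(L\otimes\scr{I}(h_c))-\varepsilon)>rq^n$; then the leading term on the domain side strictly dominates the codomain for all sufficiently large $m$, and the $O(m^{n-1})$ error is absorbed, forcing $\ker\rho_m\ne0$. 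Picking any nonzero $\tau$ in this kernel, one obtains a nonzero holomorphic section of the line bundle $L^{\otimes mp}$ whose zero divisor is effective of pure codimension one, giving $\rom{dim}\,\rom{div}\,\tau\leq n-1<n$.

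The main subtlety I anticipate concerns the points $x_j$ that may lie inside the singular locus $Z_c$ of $h_c$. For such a point one has $\scr{I}(h_c^{mp})_{x_j}\subsetneq\cal{O}_{X,x_j}$, so the image of $\rho_m$ at $x_j$ is actually a proper subspace of $\cal{O}_{X,x_j}/\fra{m}_{x_j}^{mq}$; but this only tightens the inequality, and the uniform upper bound $r\binom{mq+n-1}{n}$ on $\rom{dim}\,\rom{im}(\rho_m)$ remains valid. No \kah-ness of $X_c$ or smoothness of $h$ at the $x_j$ is used, because the volume appearing in the hypothesis already incorporates the multiplier ideal sheaf, and the entire argument is a polynomial-growth comparison once the Morse-theoretic lower bound on $\dim H^0$ has been set up.
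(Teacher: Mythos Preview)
Your proof is correct and follows essentially the same dimension-counting argument as the paper: compare the growth of $\dim H^0$ against the rank $r\binom{mq+n-1}{n}$ of the jet target and conclude the kernel is nonzero for large $m$. The only cosmetic difference is that the paper carries out the count on a slightly enlarged sublevel set $X_{c+\varepsilon}$ (using Remark~\ref{Remark of Setting on each sublevel set} that $h_c$ extends there) and then restricts the resulting section to $X_c$, and it drops the ideal sheaf in the domain via $H^0(X_{c+\varepsilon},L^{\otimes mp})\supseteq H^0(X_{c+\varepsilon},L^{\otimes mp}\otimes\scr{I}(h_c^{mp}))$; your version works directly on $X_c$ and keeps $\scr{I}(h_c^{mp})$ in the domain, which is harmless since the conclusion only asks for a section of $L^{\otimes mp}\otimes\fra{m}_{x_1}^{mq}\cdots\fra{m}_{x_r}^{mq}$.
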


\begin{proof}
    By Remark \ref{Remark of Setting on each sublevel set} and the canonical restriction map, %on $0$-th cohomology, 
    we can obtain 
    \begin{align*}
        \rom{Vol}_{X_{c-\varepsilon}}(L\otimes\scr{I}(h_c))\geq\rom{Vol}_{X_c}(L\otimes\scr{I}(h_c))\geq\rom{Vol}_{X_{c+\varepsilon}}(L\otimes\scr{I}(h_c))
    \end{align*}
    for any \( 0<\varepsilon<\varepsilon_c \). We take the nowhere dense subset \( \Sigma\subset(-\infty,\sup_X\varPsi) \) from within Theorem \ref{singular holomorphic Morse inequality on w.p.c}.
    Then there exists a sufficiently small \( 0<\varepsilon<\varepsilon_c \) such that $c+\varepsilon\notin\Sigma$ and $p^n\rom{Vol}_{X_{c+\varepsilon}}(L\otimes\scr{I}(h_c))>rq^n$.
    To verify this theorem, we will count the dimensions of the following exact sequence
    \begin{align*}
        0\longrightarrow H^0(X_{c+\varepsilon},L^{\otimes mp}\otimes\fra{m}^{mq}_{x_1}\cdots\fra{m}^{mq}_{x_r})&\\
        \longrightarrow H^0(X_{c+\varepsilon},L^{\otimes mp})\longrightarrow &L^{\otimes mp}\otimes\cal{O}_X/\fra{m}^{mq}_{x_1}\cdots\fra{m}^{mq}_{x_r},
    \end{align*}
    where $\cal{O}_X/\fra{m}^{mq}_{x_1}\cdots\fra{m}^{mq}_{x_r}$ is a skyscraper sheaf of rank 
    \begin{align*}
        r\begin{pmatrix}
            mq+n-1\\
            n
        \end{pmatrix}
        =\frac{rq^n}{n!}m^n+o(m^n).
    \end{align*}
    On the other hand, we have
    \begin{align*}
        \liminf_{m\to+\infty}\frac{\rom{dim}\,H^0(X_{c+\varepsilon},L^{\otimes mp})}{m^n}%&\geq\frac{p^n}{n!}\liminf_{m\to+\infty}\frac{\rom{dim}\,H^0(X_{c+\varepsilon},L^{\otimes mp}\otimes\scr{I}(h^{\otimes mp}))}{m^np^n}\\
        \geq\frac{p^n}{n!}\rom{Vol}_{X_{c+\varepsilon}}(L\otimes\scr{I}(h_c)).
    \end{align*}
    By the inequality $p^n\rom{Vol}_{X_{c+\varepsilon}}(L\otimes\scr{I}(h_c))>rq^n$, there exist a positive integer $m$ and a section $\sigma\in H^0(X_{c+\varepsilon},L^{\otimes mp}\otimes\fra{m}_{x_1}^{\otimes mq}\cdots\fra{m}_{x_r}^{\otimes mq})$ 
    such that $\rom{dim}\,(\rom{div}\,\sigma\cap X_c)<n$. Hence, we get the desired section $\tau:=\sigma|_{X_c}$.
\end{proof}

We introduce the concept of volume on subvarieties.
Let %$L$ be a positive line bundle on $X$ and 
$Y$ be a $k$-dimensional closed subvariety of $X$. %, where $c\in\bb{N}$.
By a \textit{subvariety}, we mean a reduced and irreducible complex subspace.
Before that if $L$ is positive, we define formal intersection numbers as follows \cite{Tak98}:  
\begin{align*}
    (L^k\cdot Y):=
    \begin{cases}
        %(\mu^*L^k\cdot \widetilde{Y})=\displaystyle\int_{\widetilde{Y}}c_1(\mu^*(L|_Y))^k=
        c_1(\mu^*(L|_Y))^k & \rom{if}\,\,Y\,\,\text{is compact}, \\
        +\infty                 & \rom{otherwise},
      \end{cases}
\end{align*}
where $\mu:\widetilde{Y}\longrightarrow Y$ is a desingularization.
We can see that this definition is well-defined, i.e., it does not depend on desingularizations (if $Y$ is compact).

We use the notation from Setting \ref{Setting of each sublevel set} for $X_c$, where $c<\sup_X\varPhi$. 
Let $(L,h)$ be a singular positive line bundle on $X$ and $Y$ be a $k$-dimensional $(0<k<n)$ closed subvariety of a sublevel set $X_c$ satisfying $Y\not\subset Z_c$.
The strict transform of $Y$ by $\pi_c$ is denoted by $Y_{st}$.
We formally define the volume of $(L,h_c)$ over $Y$ as
\begin{align*}
    \rom{Vol}_Y(L\otimes\scr{I}(h_c)):=&\rom{Vol}_{Y_{st}}(\pi_c^\ast L\otimes\scr{I}(\pi_c^* h_c))\\
    :=&\frac{1}{m_c^k}\rom{Vol}_{Y_{st}}(\pi_c^\ast L^{\otimes m_c}\otimes\cal{O}_{\widetilde{X}_c}(-D_c))\\
    =&\frac{1}{m_c^k}(\widetilde{L}_{m_c}^k\cdot Y_{st}),
\end{align*}
if $Y$ is compact, and $\rom{Vol}_Y(L\otimes\scr{I}(h_c))=+\infty$ if otherwise. 
Here, $\widetilde{L}_{m_c}:=\pi_c^\ast L^{\otimes m_c}\otimes\cal{O}_{\widetilde{X}_c}(-\widetilde{D}_c)$ is nef and big on $Y_{st}$ by $(\ref{subsection: canonical sHm}\,b)$. %the second condition, %(positive on $\widetilde{X}_c\setminus \widetilde{D}$ and semi-positive on $\widetilde{X}_c$) 
Clearly, the inequality $\rom{Vol}_Y(L\otimes\scr{I}(h_c))\leq(\pi_c^*L^k\cdot Y_{st})$ holds, and if $Y\cap Z_c=\emptyset$ then equality is achieved, resulting in $\rom{Vol}_Y(L\otimes\scr{I}(h_c))=\rom{Vol}_{Y_{st}}(\pi^*_cL)$, and $\pi_c^*L$ is ample on $Y_{st}$.
Additionally, we define 
\begin{align*}
    \widetilde{\rom{Vol}}_Y(L\otimes\scr{I}(h_c)):=m_c^k\cdot\rom{Vol}_Y(L\otimes\scr{I}(h_c))=(\widetilde{L}_{m_c}^k\cdot Y_{st}),
\end{align*}
ensuring that its value is an integer, i.e., $\widetilde{\rom{Vol}}_Y(L\otimes\scr{I}(h_c))\in\bb{N}$.

Furthermore, with respect to the singular metric $h_\natural$ constructed via approximation of $h$, the volume of $(L,h_\natural)$ over a closed subvariety $Y\subset X$ is defined by 
\begin{align*}
    \rom{Vol}_Y(L\otimes\scr{I}(h_\natural)):=\min\{\rom{Vol}_Y(L\otimes\scr{I}(h_c))\mid c\in\bb{N} \text{ with } Y\subset X_c \text{ and } Y\not\subset Z_c\},
\end{align*}
if $Y$ is compact, and $\rom{Vol}_Y(L\otimes\scr{I}(h_\natural))=+\infty$ if otherwise.
Similarly, we also define $\widetilde{\rom{Vol}}_Y(L\otimes\scr{I}(h_\natural))$ by using $\widetilde{\rom{Vol}}_Y(L\otimes\scr{I}(h_c))$, here $\widetilde{\rom{Vol}}_Y(L\otimes\scr{I}(h_\natural))\in\bb{N}\cup\{+\infty\}$.

By using \cite[Lemma\,3.10]{Tak98}, which is a localization of \cite[Lemma\,4.1]{AS95}, we obtain the following similarity.

\begin{lemma}\label{Ext [Lemma 3.10, Tak98]: see [Lemma 4.1, AS95]}
    Let $Y$ be a $k$-dimensional $(0<k<n)$ closed subvariety of $X_{c+\varepsilon}$ passing through $r$ distinct points $x_1,\ldots,x_r$ on $X_c\setminus Z_c$ with $0<\varepsilon\ll1$.
    Let $\Delta'_j$ be a local holomorphic curve on $Y$ passing through $x_j$ such that $\rom{Sing}\,\Delta'_j\subset\{x_j\}$, $\Delta'_j\cap\rom{Sing}\,Y\subset\{x_j\}$ and 
    that the normalization $\sigma_j:\Delta\longrightarrow\Delta'_j$ is an injective holomorphic map from the open unit disk $\Delta\subset\bb{C}$ with $\sigma_j(0)=x_j$ $(1\leq j\leq r)$. 
    Let $p$ and $q$ be positive integers $(p>2)$ such that $(p-2)^k\widetilde{\rom{Vol}}_Y(L\otimes\scr{I}(h_c))>rq^k$. 
    Then there exist a positive integer $m$ and holomorphic sections $\tau_1,\cdots,\tau_N\in H^0(X_c,L^{\otimes mp}\otimes\scr{I}(h_c^{mp}))$ such that 
    \begin{align}
        \{x_1,\cdots,x_r\}\subset\bigcap^N_{\ell=1}\tau_{\ell}^{-1}(0)\subsetneq (Y\cup Z_c)\cap X_c \quad\rom{and}\quad\rom{dim}\,\bigcap^N_{\ell=1}\tau_{\ell}^{-1}(0)|_Y<k,
    \end{align}
    $(2)$ after replacing $\Delta$ with a smaller disk if necessary, there exist a small coordinate neighborhood $B_j$ centered at $x_j$ in $X_c\setminus Z_c$, which is biholomorphic to the unit ball with $\sigma_j(\Delta)\subset Y_j:=Y\cap B_j\,(1\leq j\leq r)$
    and local holomorphic sections $\tau_\ell(j)\in H^0(B_j\times\Delta,pr^*_j L^{\otimes mp})$ $(1\leq j\leq r,\,1\leq \ell\leq N)$, where $pr_j:B_j\times\Delta\longrightarrow B_j$ is the first projection, such that $\tau_\ell(j)|_{B_j\times0}=\tau_\ell|_{B_j}$ for any $j$ and $\ell$, and that 
    \begin{align*}
        \tau_\ell(j)|_{Y_j\times u}\in H^0(Y_j\times u,pr^*_jL^{\otimes mp}\otimes\fra{m}^{mq}_{Y\times u,\sigma_j(u)\times u})
    \end{align*}
    for any $u\in\Delta\setminus\{0\}$ and for any $j$ and $\ell$.
\end{lemma}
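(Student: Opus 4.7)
The plan is to pass to the blow-up $\widetilde{X}_c$ of Theorem \ref{Blow ups of Dem appro with log poles and ideal sheaves}, construct sections on the strict transform $Y_{st}$ of $Y$, extend them to $\widetilde{X}_c$, and push them down via $\pi_c$ to $X_c$. The geometry to exploit is that $\widetilde{L}_{m_c}=\pi_c^*L^{\otimes m_c}\otimes\cal{O}_{\widetilde{X}_c}(-D_c)$ is semi-positive on $\widetilde{X}_c$ and nef and big on $Y_{st}$ with top self-intersection $(\widetilde{L}_{m_c}^k\cdot Y_{st})=\widetilde{\rom{Vol}}_Y(L\otimes\scr{I}(h_c))$, while $\scr{L}=\widetilde{L}_{m_c}^{\otimes t_c}\otimes\cal{O}_{\widetilde{X}_c}(-D_b)$ is strictly positive by $(\ref{Setting of each sublevel set}\,c)$ and provides the positivity needed for cohomology vanishing. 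Choose $m$ to be a multiple of $m_c$; then by the cohomology identification $(\beta)$ of Theorem \ref{Blow ups of Dem appro with log poles and ideal sheaves}, sections of $L^{\otimes mp}\otimes\scr{I}(h_c^{mp})$ on $X_c$ correspond to sections on $\widetilde{X}_c$ of the appropriate bundle involving $\widetilde{L}_{m_c}^{\otimes mp/m_c}$ (twisted by $\cal{O}(D_\zeta)$ from $K_{\widetilde{X}_c}\otimes\pi_c^*K_{X_c}^{-1}$). The preimages $\widetilde{x}_j\in Y_{st}$ of $x_j$ are unique because $x_j\in X_c\setminus Z_c$.

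Split the tensor power as $mp/m_c=m(p-2)/m_c+2m/m_c$, using the first summand for construction on $Y_{st}$ and the second to buy positivity for extension. On a desingularization of $Y_{st}$, Riemann--Roch for the nef and big bundle $\widetilde{L}_{m_c}^{\otimes m(p-2)/m_c}$ yields
\begin{align*}
\rom{dim}\,H^0(Y_{st},\widetilde{L}_{m_c}^{\otimes m(p-2)/m_c})\geq\frac{m^k(p-2)^k}{m_c^k\,k!}\widetilde{\rom{Vol}}_Y(L\otimes\scr{I}(h_c))+o(m^k),
\end{align*}
whereas vanishing to order $mq$ at the $r$ points $\widetilde{x}_j$ imposes at most $r\binom{mq+k-1}{k}\sim r(mq)^k/k!$ linear conditions. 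The hypothesis $(p-2)^k\widetilde{\rom{Vol}}_Y(L\otimes\scr{I}(h_c))>rq^k$ supplies the needed excess for $m$ sufficiently large, producing nontrivial sections of $\widetilde{L}_{m_c}^{\otimes m(p-2)/m_c}|_{Y_{st}}$ vanishing to order $mq$ at each $\widetilde{x}_j$; a generic choice of finitely many such sections forces the common zero locus on $Y_{st}$ to have dimension strictly less than $k$.

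To extend from $Y_{st}$ to $\widetilde{X}_c$, multiply each section by a fixed section of $\widetilde{L}_{m_c}^{\otimes 2m/m_c}$ and consider the short exact sequence $0\to\scr{I}_{Y_{st}}\otimes\widetilde{L}_{m_c}^{\otimes mp/m_c}\to\widetilde{L}_{m_c}^{\otimes mp/m_c}\to\widetilde{L}_{m_c}^{\otimes mp/m_c}|_{Y_{st}}\to0$. For $m/m_c$ large the factor $\widetilde{L}_{m_c}^{\otimes 2m/m_c}$ absorbs $\scr{L}$ and so $\widetilde{L}_{m_c}^{\otimes mp/m_c}$ becomes positive on the slight enlargement $\widetilde{X}_{c+\varepsilon}$ of Remark \ref{Remark of Setting on each sublevel set}, making the Serre-Nakano vanishing Theorem \ref{Vanishing thm for positive line bdl} applicable to $H^1$ of the ideal-sheaf term; each section then extends to $\widetilde{X}_c$. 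Appending finitely many generic extra sections of $\widetilde{L}_{m_c}^{\otimes mp/m_c}$ on $\widetilde{X}_c$ further cuts down the common zero locus off $Y_{st}\cup\pi_c^{-1}(Z_c)$. Pushing down via $\pi_c$ produces $\tau_1,\ldots,\tau_N\in H^0(X_c,L^{\otimes mp}\otimes\scr{I}(h_c^{mp}))$, and the construction guarantees $\{x_j\}\subset\bigcap\tau_\ell^{-1}(0)\subsetneq(Y\cup Z_c)\cap X_c$ together with $\rom{dim}\,\bigcap\tau_\ell^{-1}(0)|_Y<k$.

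For part $(2)$, each $x_j\in X_c\setminus Z_c$ has a neighborhood where $\pi_c$ is biholomorphic, so after trivializing $L^{\otimes mp}$ over a small coordinate ball $B_j$ centered at $x_j$, the section $\tau_\ell|_{B_j}$ is identified with a holomorphic function $f_\ell(z)=\sum_{|\alpha|\geq mq}c_\alpha^{(\ell)}z^\alpha$ vanishing to order $mq$ at the origin. After possibly shrinking $B_j$ and $\Delta$, set
\begin{align*}
\tau_\ell(j)(z,u):=\sum_{|\alpha|\geq mq}c_\alpha^{(\ell)}\bigl(z-\sigma_j(u)\bigr)^\alpha,
\end{align*}
which converges holomorphically on $B_j\times\Delta$, restricts to $f_\ell$ at $u=0$, and vanishes to order $mq$ at $z=\sigma_j(u)$ for every $u\in\Delta$; reinterpreting this in the trivialization of $pr_j^*L^{\otimes mp}$ yields the required family. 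The principal obstacle is the extension step: simultaneously making the twist $\widetilde{L}_{m_c}^{\otimes 2m/m_c}$ buy enough positivity for Serre-Nakano vanishing on $\widetilde{X}_c$ while controlling the common zero locus to force the strict inclusion into $(Y\cup Z_c)\cap X_c$. This is precisely where the $(p-2)^k$ rather than $p^k$ buffer in the volume hypothesis becomes essential.
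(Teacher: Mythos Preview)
Your overall strategy---pass to the blow-up, build sections vanishing to high order at the $\widetilde{x}_j$, extend, and push down---matches the paper in spirit, but the execution differs and contains two real gaps.

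The paper does not reproduce the Riemann--Roch count and extension argument; it simply invokes \cite[Lemma\,3.10]{Tak98}, which already proves the full statement (including part $(2)$) for a strictly positive line bundle on a weakly pseudoconvex manifold. To apply it, the paper works with $\scr{L}=\widetilde{L}_{m_c}^{\otimes t}\otimes\cal{O}_{\widetilde{X}_{c+\varepsilon}}(-D_b)$, chooses $t$ large enough that $(p-1)^k(\scr{L}^k\cdot Y_{st})>(p-2)^k\,\widetilde{\rom{Vol}}_Y(L\otimes\scr{I}(h_c))>rq^k$, obtains sections $\widetilde{\tau}_\ell\in H^0(\widetilde{X}_c,\scr{L}^{\otimes\widetilde{m}p})$ from the cited lemma, multiplies by $s_t^{\widetilde{m}p}$ for the canonical section $s_t$ of $\cal{O}_{\widetilde{X}_c}(tD_c+D_b)$, and descends via $\pi_c$. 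Part $(2)$ is inherited verbatim because $\pi_c$ is biholomorphic near each $x_j$.

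Your extension step contains a false claim: ``$\widetilde{L}_{m_c}^{\otimes mp/m_c}$ becomes positive'' is wrong. The bundle $\widetilde{L}_{m_c}$ is only semi-positive---its curvature degenerates along the exceptional locus---and no tensor power is strictly positive, so Theorem \ref{Vanishing thm for positive line bdl} does not apply as stated. You would have to factor through $\scr{L}$ anyway, which is exactly what the paper does from the start.

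More seriously, your construction for part $(2)$ is incorrect. The sections $\tau_\ell$ are built so that $\tau_\ell|_Y$ vanishes to order $mq$ at $x_j$ \emph{along $Y$}; after extension there is no reason for $\tau_\ell|_{B_j}$ to vanish to order $mq$ at $x_j$ in the ambient ball $B_j$. Hence the expansion $f_\ell(z)=\sum_{|\alpha|\geq mq}c_\alpha^{(\ell)}z^\alpha$ is unjustified, and the naive shift $\sum c_\alpha^{(\ell)}(z-\sigma_j(u))^\alpha$ does not deliver the required vanishing on $Y_j\times u$. The actual deformation in \cite[Lemma\,4.1]{AS95} and \cite[Lemma\,3.10]{Tak98} is tied to the intrinsic geometry of $Y$ along the curve $\Delta'_j$; the paper sidesteps this entirely by citing the lemma and transporting the local data through the biholomorphism $\pi_c$ near $x_j$.
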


\begin{proof}
    We use the notation from Setting \ref{Setting of each sublevel set} for $X_c$.
    Here, $\pi_c:\widetilde{X}_{c+\varepsilon}\longrightarrow X_{c+\varepsilon}$, $\widetilde{L}_{m_c}:=\pi_c^*L^{\otimes m_c}\otimes\cal{O}_{\widetilde{X}_{c+\varepsilon}}(-D_c)$ and $\scr{L}:=\widetilde{L}_{m_c}^{\otimes t}\otimes\cal{O}_{\widetilde{X}_{c+\varepsilon}}(-D_b)$ for some integer $t\geq t_c$ by Remark \ref{Remark of Setting on each sublevel set}.
    Let $\widetilde{x}_j:=\pi_c^{-1}(x_j)$, $\Delta''_j:=\pi^{-1}_c(\Delta'_j)$ and $\widetilde{\sigma}_j:\Delta\longrightarrow\Delta''_j$ by $\widetilde{\sigma}_j:=\sigma_j\circ\pi_c^{-1}$.
 
    Let $\mu:\widehat{Y}_{st}\longrightarrow Y_{st}$ be a desingularization. %Since we can take $t$ so large that
    By taking $t$ sufficiently large, we obtain
    \begin{align*}
        (p-1)^k(\scr{L}^k\cdot Y_{st})%=(p-1)^k\rom{Vol}_{Y_{st}}(\scr{L})
        &%=(p-1)^kc_1(\mu^*\scr{L})^k
        =(p-1)^k c_1(\mu^*(\widetilde{L}_{m_c}^{\otimes t}\otimes\cal{O}_{\widetilde{X}_{c+\varepsilon}}(-\widetilde{D}_b))\otimes\cal{O}_{\widehat{Y}_{st}})^k\\
        &>(p-2)^k t^k c_1(\mu^*\widetilde{L}_{m_c})^k=(p-2)^kt^k(\widetilde{L}_{m_c}^k\cdot Y_{st})\\
        &\geq(p-2)^k m_c^k (\widetilde{L}_{m_c}^k\cdot Y_{st})=(p-2)^k \widetilde{\rom{Vol}}_Y(L\otimes\scr{I}(h_c)).
    \end{align*}

    Here, $\scr{L}$ is a positive line bundle on $\widetilde{X}_{c+\varepsilon}$. By \cite[Lemma\,3.10]{Tak98}, there exist a positive integer $\widetilde{m}$ and holomorphic sections $\widetilde{\tau}_1,\ldots,\widetilde{\tau}_N\in H^0(X_c,\scr{L}^{\otimes \widetilde{m}p})$ such that 
    \begin{align*}
        \{\widetilde{x}_1,\ldots,\widetilde{x}_N\}\subset\bigcap^N_{\ell=1}\widetilde{\tau}_\ell^{-1}(0)\subsetneq Y_{st}\cap\widetilde{X}_c \quad \text{and} \quad \rom{dim}\,\bigcap^N_{\ell=1}\widetilde{\tau}_\ell^{-1}(0)<k,
    \end{align*}
    and that these sections have the property of $(2)$ in \cite[Lemma\,3.10]{Tak98}.
    Define the effective divisor by $D_t:=tD_c+D_b$. By taking the canonical section $s_t$ of $\cal{O}_{\widetilde{X}_c}(D_t)$, the sections $\widetilde{\tau}_\ell s_t^{\widetilde{m}p}\in H^0(\widetilde{X}_c,\pi^*_cL^{\otimes m_c\widetilde{m}p})$ descend to sections $\tau_j \in H^0(X_c,L^{\otimes mp})$, where $m=m_c\widetilde{m}$.
    These sections $\{\tau_\ell\}^N_{\ell=1}$ clearly satisfy the condition $(1)$, and since $\pi_c$ is biholomorphic on a neighborhood of each $x_j$, the condition $(2)$ is also satisfied.
\end{proof}

\section{Points separation theorem on each sublevel set}\label{Section 7: Points separation on Xc}

In this section, we construct singular Hermitian metrics with properties related to points separation of adjoint bundles, based on \cite[Section\,4]{Tak98}.
%Let $X$ be an $n$-dimensional weakly pseudoconvex manifold and $(L,h)$ be a singular positive Hermitian metric on $X$. %and $h_\natural=h_\natural(\ell)$ be the canonical singular Hermitian metric of $h$ for some positive integer $\ell\in\bb{N}$.
For any integer \( c\in\bb{N} \), let \( h_c \) and \( Z_c \) be the singular positive Hermitian metric and the analytic subset, respectively, as in Theorem \ref{Thm characterizations of canonical sHm} (or Section \ref{Setting of each sublevel set}).

\subsection{Singular Hermitian metrics inducing points separation}\label{subsection: sHm inducing points separation}

%To begin, 
First, we introduce several notations to establish the following theorem, which is the main result of this section.
For every $x\in X_c$ and for any positive integer $d$ with $1\leq d\leq n-1$, we set 
\begin{align*}
    V_{X_c,d}(x):=&\{Y\mid d\text{-dimensional} \,\,\textit{compact}\,\,\text{subvariety of } X_c \text{ passing through } x\},\\
    I_d(L,h_c,x):=&
    \begin{cases}
        \inf\{\widetilde{\rom{Vol}}_Y(L\otimes\scr{I}(h_c))^{1/d}\mid Y\in V_{X_c,d}(x)\} & \rom{if}\,\,V_{X_c,d}(x)\,\,\text{is non-empty}, \\
        +\infty                 & \rom{otherwise},
      \end{cases}
\end{align*}
and $I_n(L,h_c,x):=\rom{Vol}_{X_c}(L\otimes\scr{I}(h_c))^{1/n}$. 
Furthermore, for $r$ distinct points $x_1,\ldots,x_r$ on $X_c$, we set $I_d(L,h_c,\{x_j\}^r_{j=1}):=\min_{1\leq j\leq r} I_d(L,h_c,x_j)$ and 
\begin{align*}
    I_{\sum}(L,h_c,\{x_j\}^r_{j=1}):=&\sum^n_{d=1}\frac{d\sqrt[d]{r}}{I_d(L,h_c,\{x_j\}^r_{j=1})},\\
    m_0(L,h_c,\{x_j\}^r_{j=1}):=&\min\{m\in\bb{N}\mid m>I_{\sum}(L,h_c,\{x_j\}^r_{j=1})\}.
    %m_r(L,h_c):=&\max\{m(L,h_c,\{x_j\}^r_{j=1}) \text{ for } r \text{ distinct points on } X_c\}.
\end{align*}

\begin{theorem}\label{Ext [Tak98, Thm 4.1]}
    Let $x_1,\ldots,x_r$ be $r$ distinct points on a subset $X_c\setminus Z_c$. Then for any positive integer $m>I_{\sum}(L,h_c,\{x_j\}^r_{j=1})$,
    there exist a positive rational number $\vartheta_c\in\bb{Q}_{>0}$ smaller than $m_0(L,h_c,\{x_j\}^r_{j=1})$ and a singular positive Hermitian metric $H_c$ of $L|_{X_c}^{\otimes m}$ 
    such that $\scr{I}(H_c)\subset\scr{I}(h_c^{m-\vartheta_c})$ and $\{x_j\}^r_{j=1}\subset V(\scr{I}(H_c))$, and that some point $x_j$ is isolated in $V(\scr{I}(H_c))$.
\end{theorem}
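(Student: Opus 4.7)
The plan is to run Kawamata--Shokurov's concentration method inductively on the dimension $d$ of the polar locus of the constructed singular metric, using Non-vanishing Theorem \ref{Non-vanishing thm} at the top dimension and the deformation-on-subvarieties Lemma \ref{Ext [Lemma 3.10, Tak98]: see [Lemma 4.1, AS95]} for every subsequent dimension drop. At each step I attach a new singular Hermitian factor built out of multivalued sections as in $\S\ref{subsection: multivalued section and natural sHm}$, carrying a small rational exponent, arranged so that the total weight $\vartheta_c$ stays strictly below $I_{\sum}(L,h_c,\{x_j\}^r_{j=1})$ and hence below $m_0(L,h_c,\{x_j\}^r_{j=1})$.

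For the base case $d = n$, I choose positive integers $p_n, q_n$ with $p_n/q_n$ just above $\sqrt[n]{r}/I_n(L,h_c,\{x_j\}^r_{j=1})$, so that $p_n^n\rom{Vol}_{X_c}(L\otimes\scr{I}(h_c)) > rq_n^n$. Theorem \ref{Non-vanishing thm} then produces $k_n\in\bb{N}$ and a section $\tau_n \in H^0(X_c, L^{\otimes k_np_n}\otimes \fra{m}_{x_1}^{k_nq_n}\cdots\fra{m}_{x_r}^{k_nq_n})$ with $\rom{dim}\,\rom{div}\,\tau_n < n$, and I record the natural singular metric $h^{(n)}_\sigma := 1/|\tau_n|^{2/k_np_n}$ on $L$ with fractional weight $\vartheta_{(n)} := q_n/p_n$. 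The inductive step from $d$ to $d - 1$ is the following: if a component $Y$ of dimension $d$ of the current polar locus passes through some $x_{j_0}\in X_c\setminus Z_c$, I choose $p_d, q_d$ with $(p_d - 2)^d\widetilde{\rom{Vol}}_Y(L\otimes\scr{I}(h_c)) > rq_d^d$, apply Lemma \ref{Ext [Lemma 3.10, Tak98]: see [Lemma 4.1, AS95]} on $Y$ to obtain sections $\tau_{d,1},\ldots,\tau_{d,N_d}$ of $L^{\otimes k_dp_d}$ whose common zero set cuts $Y$ in dimension strictly less than $d$, and attach the natural metric $h^{(d)}_\sigma := 1/\sum_\ell|\tau_{d,\ell}|^{2/k_dp_d}$ with fractional weight $\vartheta_{(d)} := q_d/p_d$.

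After processing $d$ from $n$ down, set $\vartheta_c := \sum_d \vartheta_{(d)}\in\bb{Q}_{>0}$; by the $\varepsilon$-margin taken in each $p_d/q_d$ this satisfies $\vartheta_c < I_{\sum} < m_0 \leq m$, so in particular $m - \vartheta_c > 0$. Define
\begin{align*}
    H_c := h_c^{m-\vartheta_c}\otimes\bigotimes_{d}\bigl(h^{(d)}_\sigma\bigr)^{\otimes\vartheta_{(d)}}
\end{align*}
as a singular Hermitian metric on $L|_{X_c}^{\otimes m}$. It is singular positive because $h_c$ has strictly positive curvature current, $m-\vartheta_c > 0$, and each $h^{(d)}_\sigma$ is singular semi-positive. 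The inclusion $\scr{I}(H_c)\subset\scr{I}(h_c^{m-\vartheta_c})$ is immediate from the tensor structure, and the inclusion $\{x_j\}^r_{j=1}\subset V(\scr{I}(H_c))$ holds because every $h^{(d)}_\sigma$ was built to have a common zero at each $x_j$ of the prescribed vanishing order. A final Kawamata--Shokurov cutoff at $x_{j_0}$ --- multiplying $\vartheta_{(1)}$ by $1/\alpha(x_{j_0})$ with $\alpha(x_{j_0})\in\bb{Q}\cap(0,1]$ as in $\S\ref{subsection: multivalued section and natural sHm}$, shrinking the initial $\varepsilon$-margins if necessary to preserve $\vartheta_c < m_0$ --- makes $x_{j_0}$ an isolated point of $V(\scr{I}(H_c))$.

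The main obstacle will be the dimension-drop in the inductive step: one must show that attaching $h^{(d)}_\sigma$ genuinely reduces the dimension of the polar component through $x_{j_0}$ rather than merely increasing its multiplicity, and does not inflate the polar locus through the remaining points $x_j$ with $j\neq j_0$. The deformation data $\tau_{d,\ell}(j)\in H^0(B_j\times\Delta,\rom{pr}_j^*L^{\otimes k_dp_d})$ of Lemma \ref{Ext [Lemma 3.10, Tak98]: see [Lemma 4.1, AS95]} is tailored for this: for generic $u\in\Delta\setminus\{0\}$ the restriction to $Y_j\times\{u\}$ vanishes to order $k_dq_d$ at $\sigma_j(u)$, so the strong openness of multiplier ideals, combined with the strict inequality $p_d/q_d > \sqrt[d]{r}/I_d$, forces the component of $V(\scr{I}(H_c))$ through $x_{j_0}$ to drop in dimension while preserving the required vanishing at the remaining $x_j$. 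Verifying that these two requirements are compatible --- in particular that no new $d$-dimensional component through the remaining $x_j$ is created by $h^{(d)}_\sigma$ --- is the technical heart of the argument.
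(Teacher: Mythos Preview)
Your overall strategy --- inductive dimension reduction starting from Theorem \ref{Non-vanishing thm} and descending via Lemma \ref{Ext [Lemma 3.10, Tak98]: see [Lemma 4.1, AS95]} with Kawamata--Shokurov concentration --- is indeed the paper's approach. But your exponent bookkeeping is inverted, and this breaks the argument. With $\tau_n\in H^0(X_c,L^{\otimes k_np_n}\otimes\fra{m}_{x_1}^{k_nq_n}\cdots\fra{m}_{x_r}^{k_nq_n})$, the metric $(h_\sigma^{(n)})^{\vartheta_{(n)}}=1/|\tau_n|^{2q_n/(k_np_n^2)}$ has Lelong number only $(q_n/p_n)^2$ at each $x_j$, which need not be $\geq 1$, so there is no reason for $x_j\in V(\scr{I}(H_c))$; and your sum $\vartheta_c=\sum_d q_d/p_d\approx\sum_d I_d/\sqrt[d]{r}$ bears no relation to $I_\Sigma=\sum_d d\sqrt[d]{r}/I_d$. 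The correct exponent at the top step is the log-canonical threshold $\alpha=\max_j\alpha(x_j)$ in the sense of $\S\ref{subsection: multivalued section and natural sHm}$: one takes the multivalued section $\tau_n^{\alpha/(k_nq_n)}$ of $L^{\alpha p_n/q_n}$, so the $L$-weight consumed is $\alpha\, p_n/q_n\le n\,p_n/q_n$, and likewise $\le d\,p_d/q_d$ at dimension $d$. The total $\sum_d d\,p_d/q_d$ then approaches $I_\Sigma$ from above as the pairs $(p_d,q_d)$ are refined, which is how one arranges $\vartheta_c<m_0$.

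There is also a structural gap: the concentration cannot be postponed to a single ``final cutoff''. The paper packages all previous layers into a single family of multivalued sections $\{s_{d,j}\}$ of $L^{\otimes(\eta_d+\varepsilon_d)}$, already normalized so that the threshold at the relevant $x_j$ equals $1$ (statement $\mathbf{A_d}$). To pass from $\mathbf{A_d}$ to $\mathbf{A_{d-1}}$ one must first take a log resolution of the current ideal and perturb by auxiliary multivalued sections (without common zeros off $Z_c$) so that a \emph{single} exceptional divisor $E_0$ dominates over the points in $J_d$ --- this is Lemma \ref{Ext [Lemma-Definition 4.4, Tak98]: singular positive var} --- and only then apply Lemma \ref{Ext [Lemma 3.10, Tak98]: see [Lemma 4.1, AS95]} on $Y_d=\pi_d(E_0)$. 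Without this isolation at every step, attaching $h_\sigma^{(d)}$ does not force the dimension to drop: the new factor may simply thicken the existing component rather than cut it.
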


We take pairs of positive integers $\{(P_d,Q_d)\}^n_{d=1}$ such that 
\begin{align*}
    P_d>2\quad \text{and} \quad (P_d-2)^d I_d(L,h_c,\{x_j\}^r_{j=1})^d>rQ^d_d.
\end{align*}
Note that the pairs $\{(P_d,Q_d)\}^n_{d=1}$ can be chosen large enough such that %$m>\sum^n_{d=1}d\sqrt[d]{r}/I_d(L,h_\natural,\{x_j\}^r_{j=1})$ implies $m>\sum^n_{d=1}dP_d/Q_d$. 
\begin{align*}
    m>I_{\sum}(L,h_c,\{x_j\}^r_{j=1})=\sum^n_{d=1}\frac{d\sqrt[d]{r}}{I_d(L,h_c,\{x_j\}^r_{j=1})} \quad \text{implies} \quad m>\sum^n_{d=1}\frac{dP_d}{Q_d}.
\end{align*}
Using such a pair $\{(P_d,Q_d)\}^n_{d=1}$, Theorem \ref{Ext [Tak98, Thm 4.1]} can be proven.

\vspace*{4mm}

The desired singular metric is derived by appropriately constructing multiple regular sections. 
Let us first examine the \textit{basic construction}. We take a real number $c'\in(c,c+\varepsilon_c)$. % Here, $P_n^n\rom{Vol}_{X_{c'}}(L\otimes\scr{I}(h_\natural))\geq P_n^n\rom{Vol}_X(L\otimes\scr{I}(h_\natural))>rQ^n_n$ holds.
Theorem \ref{Non-vanishing thm} can also be applied to $\rom{Vol}_{X_c}(L\otimes\scr{I}(h_c))$ by Remark \ref{Remark of Setting on each sublevel set} and its proof.
Thus, there exist a positive integer $m$ and a section $\tau\in H^0(X_{c'},L^{\otimes mP_n}\otimes\fra{m}^{mQ_n}_{x_1}\cdots\fra{m}^{mQ_n}_{x_r})$ such that $\rom{dim}\,\tau^{-1}(0)<n$. We set as in $\S\ref{subsection: multivalued section and natural sHm}$ 
\begin{align*}
    \scr{I}(t):=&\scr{I}\Big((|\tau|^2)^{-t/(mQ_n)}\Big) \quad\text{for any }\quad t\geq0,\\
    \alpha(x_j):=&\sup\{t\geq0\mid\scr{I}(t)_{x_j}=\cal{O}_{X,x_j}\},
    \quad \alpha:=\max_{1\leq j\leq r} \alpha(x_j).
\end{align*}
We have $\{x_j\}^r_{j=1}\subset V(\scr{I}(\alpha))\subset \tau^{-1}(0)$ and an effective estimate $0<\alpha\leq n$.
If some point $x_j$ is isolated in $V(\scr{I}(\alpha))$, we  get the desired singular Hermitian metric
\begin{align*}
    H_c:=h_0^{m-\alpha P_n/Q_n}\frac{h_0^{\alpha P_n/Q_n}}{|s|^2_{h_0^{\alpha P_n/Q_n}}}e^{-2(m-\alpha P_n/Q_n)\varphi_c}=h_c^{m-\alpha P_n/Q_n}\frac{1}{|s|^2}
\end{align*}
for any integer $m>\alpha P_n/Q_n$, where $s:=\tau^{\alpha/(mQ_n)}$ is a multivalued holomorphic section of $L^{\alpha P_n/Q_n}$.

\vspace*{4mm}

For any $d$ with $0\leq d\leq n-1$, we set $w_d=\sum^n_{j=d+1}jP_j/Q_j\in\bb{Q}_{>0}$.
We take real numbers $0<c(0)<c(1)<\cdots<c(n)<c+\varepsilon_c$ and rational numbers $0<\varepsilon(n)<\varepsilon(n-1)<\cdots<\varepsilon(0)<m_0-w_0$, where $m_0$ is the smallest integer then $w_0$.
Here, the pair $\{(P_d,Q_d)\}^n_{d=1}$ can be chosen such that $m_0=m_0(L,h_c,\{x_j\}^r_{j=1})$.

\vspace*{4mm}

\noindent
\textbf{Induction Statement $\mathbf{A_d}$}. 
There exist a real number $c_d\in (c(d),c(d+1))$, a rational number $\varepsilon_d\in(\varepsilon(d+1),\varepsilon(d))$, a rational number $\eta_d\leq w_d$, a finite number of multivalued holomorphic sections $s_{d,1},\ldots,s_{d,k_d}$ of $L^{\otimes \eta_d+\varepsilon_d}$ on $X_{c_d}$
and a non-empty subset $J_d$ of $\{1,\ldots,r\}$ such that the following four conditions are satisfied. Let 
\begin{align*}
    \scr{I}_d(t):=\scr{I}\Big(\big(\sum^{k_d}_{j=1}|s_{d,j}|^2\big)^{-t}\Big) \quad\text{and}\quad Y_d(t):=V(\scr{I}_d(t))
\end{align*}
for any $t\geq0$, then
\begin{itemize}
    \item [$(i)$] $x_j\in Y_d(1)$ for any $j\in J_d$,
    \item [$(ii)$] $x_j\notin Y_d(t)$ for any $j\in J_d$ and any $t<1$,
    \item [$(iii)$] there exists a rational number $\beta_d\in (0,1)$ such that $x_j\in Y_d(t)$ for any $j\in\{1,\ldots,r\}\setminus J_d$ and any $t\geq\beta_d$,
    \item [$(iv)$] the dimension of $Y_d(1)$ at $x_j$ is at most $d$ for any $j\in J_d$.
\end{itemize}

\vspace*{4mm}

Let $\alpha_d(x_j):=\sup\{t\geq0\mid\scr{I}_d(t)=\cal{O}_{X,x_j}\}$ for any $j$ and $\alpha_d:=\max_{1\leq j\leq r}\alpha_d(x_j)$. 
Then we remark that the conditions $\mathbf{A_d}(i)$ and $\mathbf{A_d}(ii)$ are equivalent to $\alpha_d=1$ and $J_d=\{j\in\{1,\cdots,r\}\mid\alpha_d(x_j)=\alpha_d\}$.
Thus, it suffices to choose the rational number \( \beta_d \) such that \(\max\{\alpha_d(x_j)\mid\alpha_d(x_j)<1\}<\beta_d<1\).

\begin{lemma}\label{Lem basic construcion An-1}
    The first statement $\mathbf{A_{n-1}}$ is verified by the above basic construcion.
\end{lemma}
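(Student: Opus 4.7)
The plan is to realize the data of $\mathbf{A_{n-1}}$ directly from the basic construction of $\tau$. First, I would apply the non-vanishing Theorem~\ref{Non-vanishing thm} with $p=P_n$ and $q=Q_n$: the choice of $(P_n,Q_n)$ satisfies $(P_n-2)^n I_n(L,h_c,\{x_j\}^r_{j=1})^n>rQ_n^n$, which a fortiori gives $P_n^n\rom{Vol}_{X_c}(L\otimes\scr{I}(h_c))>rQ_n^n$, so the theorem produces a positive integer $m$ and a section $\tau\in H^0(X_{c'},L^{\otimes mP_n}\otimes\fra{m}^{mQ_n}_{x_1}\cdots\fra{m}^{mQ_n}_{x_r})$ with $\dim\tau^{-1}(0)<n$, valid for any $c'\in(c,c+\varepsilon_c)$. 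I would pick $c_{n-1}\in(c(n-1),c(n))$ and take $c'$ slightly larger than $c_{n-1}$, so that $\tau$ restricts to $X_{c_{n-1}}$.

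Next, I would set $k_{n-1}:=1$ and $s_{n-1,1}:=\tau^{\alpha/(mQ_n)}$, which is a multivalued holomorphic section of $L^{\otimes \alpha P_n/Q_n}$. Since $\alpha$ is rational by the general properties of thresholds of multiplier ideals of sheaves generated by finitely many holomorphic functions (see $\S\ref{subsection: multivalued section and natural sHm}$), $\alpha P_n/Q_n\in\bb{Q}$, and I would decompose $\alpha P_n/Q_n=\eta_{n-1}+\varepsilon_{n-1}$ by choosing any rational $\varepsilon_{n-1}\in(\varepsilon(n),\varepsilon(n-1))$ and setting $\eta_{n-1}:=\alpha P_n/Q_n-\varepsilon_{n-1}$; shrinking the auxiliary $\{\varepsilon(d)\}$ from the outset (which is harmless for the whole induction) ensures $\eta_{n-1}>0$, while the bound $\eta_{n-1}\leq w_{n-1}=nP_n/Q_n$ follows from $\alpha\leq n$. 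Finally, I would put $J_{n-1}:=\{j\in\{1,\ldots,r\}\mid\alpha(x_j)=\alpha\}$, which is non-empty by the definition of $\alpha$.

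To verify the four conditions of $\mathbf{A_{n-1}}$, I compute $\scr{I}_{n-1}(t)=\scr{I}((|s_{n-1,1}|^2)^{-t})=\scr{I}(|\tau|^{-2t\alpha/(mQ_n)})=\scr{I}(t\alpha)$, so $Y_{n-1}(t)=V(\scr{I}(t\alpha))$. Condition $(\rom{i})$ is immediate from the inclusion $\{x_j\}_{j=1}^r\subset V(\scr{I}(\alpha))$ recorded in the basic construction; condition $(\rom{ii})$ holds because for $j\in J_{n-1}$, $\alpha(x_j)=\alpha$ and $t<1$ imply $t\alpha<\alpha(x_j)$, hence $\scr{I}(t\alpha)_{x_j}=\cal{O}_{X,x_j}$; for condition $(\rom{iii})$ I would choose $\beta_{n-1}$ to be any rational number strictly between $\max\{\alpha(x_j)/\alpha\mid j\notin J_{n-1}\}$ and $1$ (or any rational in $(0,1)$ if $J_{n-1}=\{1,\ldots,r\}$), so that $t\geq\beta_{n-1}$ gives $t\alpha\geq\alpha(x_j)$ for each $j\notin J_{n-1}$; finally, $(\rom{iv})$ follows from $Y_{n-1}(1)=V(\scr{I}(\alpha))\subset\tau^{-1}(0)$ together with $\dim\tau^{-1}(0)<n$.

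The only delicate point is the simultaneous rationality of $\alpha$ and the feasibility of placing $\varepsilon_{n-1}$ in the prescribed open interval while keeping $\eta_{n-1}$ positive. The rationality is standard for an ideal generated by a single holomorphic section (here $\tau$) after log-resolution, as recalled in $\S\ref{subsection: multivalued section and natural sHm}$; the feasibility is guaranteed by choosing the auxiliary rationals $\{\varepsilon(d)\}$ sufficiently small at the outset, which is compatible with the inductive statements $\mathbf{A_d}$ since they only become easier to satisfy as these numbers shrink.
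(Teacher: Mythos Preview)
Your argument is correct: setting $k_{n-1}=1$, $s_{n-1,1}=\tau^{\alpha/(mQ_n)}$ and $\eta_{n-1}=\alpha P_n/Q_n-\varepsilon_{n-1}$ does produce data satisfying $\mathbf{A_{n-1}}$, and your verification of $(i)$--$(iv)$ via $\scr{I}_{n-1}(t)=\scr{I}(t\alpha)$ is exactly the right computation. The only cosmetic point is that $\mathbf{A_d}$ does not actually require $\eta_d>0$, so your discussion about shrinking the $\varepsilon(d)$'s is unnecessary for the lemma as stated (though harmless).

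The paper takes a slightly different route. Instead of using $\tau^{\alpha/(mQ_n)}$ alone, it first constructs auxiliary multivalued sections $\{S_\ell\}_{\ell=1}^K$ of $L$ on $X_{c(n)}$, obtained from global sections of a positive line bundle $\scr{L}^{\otimes\ell}$ on the blow-up $\widetilde{X}_{c(n)}$ via Theorem~\ref{Vanishing thm for positive line bdl}; these have no common zeros on $X_{c(n)}\setminus Z_c$. The paper then sets
\[
\{s_{n-1,j}\}_{j=1}^{k_{n-1}}:=\bigl\{\tau^{\alpha/(mQ_n)}\cdot S_\ell^{\,\eta_{n-1}+\varepsilon_{n-1}-\alpha P_n/Q_n}\bigr\}_{\ell=1}^{K},
\]
which are multivalued sections of $L^{\otimes(\eta_{n-1}+\varepsilon_{n-1})}$ for any $\eta_{n-1}\in[\alpha P_n/Q_n-\varepsilon_{n-1},w_{n-1}]$. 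Because the $S_\ell$ have no common zeros near the $x_j$, one still has $\scr{I}_{n-1}(t)=\scr{I}(t\alpha)$ there, and the verification is identical to yours. What the paper's approach buys is twofold: first, $\eta_{n-1}$ is decoupled from $\alpha$, so no adjustment of the $\varepsilon(d)$'s is needed; second, and more importantly, the sections $\{S_\ell\}$ are reused in the proof of Lemma~\ref{Ext [Lemma-Definition 4.4, Tak98]: singular positive var} (the Kawamata--Shokurov concentration step), where they appear in the formula $s'_{d,j}=s_{d,j}^\alpha\cdot v_k\cdot S_\ell^{(1-\alpha)(\eta_d+\varepsilon_d)}$. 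Your direct construction is cleaner for this lemma in isolation, but the paper's padding by $S_\ell$ is setting up infrastructure for the rest of the induction.
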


\begin{proof}
    We retake a real number $c'=c_{n-1}\in (c(n-1),c(n))$ and take a rational number $\varepsilon_{n-1}\in(\varepsilon(n),\varepsilon(n-1))$.
    Set $J_{n-1}:=\{j\in\{1,\ldots,r\}\mid\alpha(x_j)=\alpha\}$ for $\alpha$ within the basic construcion.
    We use the notation from Setting \ref{Setting of each sublevel set} for $X_{c_{n-1}}$. 
    By the vanishing theorem (= Theorem \ref{Vanishing thm for positive line bdl}) for the positive line bundle $\scr{L}$ on $\widetilde{X}_{c(n)}$, there exist positive integers $\ell$ and $K$ and holomorphic sections $\{s_j\}^K_{j=1}$ of $\scr{L}^{\otimes \ell}$ on without common zeros.
    For some effective divisor $D$ with $\pi_{c(n)}(D)=Z_c$ and positive integer $t$, we can write $\scr{L}=\pi^*_{c(n)}L^{\otimes t}\otimes\cal{O}_{\widetilde{X}_{c(n)}}(-D)$.
    Let $s_D$ be a canonical section of $\cal{O}_{\widetilde{X}_{c(n)}}(D)$, then the sections $s_j\otimes s_D^\ell\in H^0(\widetilde{X}_{c(n)},\pi^*_{c(n)}L^{\otimes t\ell})$ descend to sections $S_j\in H^0(X_{c(n)},L^{\otimes t\ell})$. 
    We let $S_j:=s_j^{1/t\ell}$ $(1\leq j\leq K)$ be multivalued holomorphic sections of $L$ on $X_{c(n)}$ without common zeros on $X_{c(n)}\setminus Z_c$.
    Then for any $\eta_{n-1}\in [\alpha P_n/Q_n-\varepsilon_{n-1},w_{n-1}]$, we set multivalued holomorphic sections 
    \begin{align*}
        \{s_{n-1,j}\}^{k_{n-1}}_{j=1}:=\{\tau^{\alpha/(mQ_n)}\times S_j^{\eta_{n-1}+\varepsilon_{n-1}-\alpha P_n/Q_n}\}^K_{j=1}
    \end{align*}
    of $L^{\otimes (\eta_{n-1}+\varepsilon_{n-1})}$ on $X_{c_{n-1}}$. Since 
    \begin{align*}
        \scr{I}_{n-1}(t):=\scr{I}\Big(\big(\sum^{k_{n-1}}_{j=1}|s_{n-1,j}|^2\big)^{-t}\Big),
    \end{align*}
    we can see $\mathbf{A_{n-1}}$ holds, where $\scr{I}_{n-1}(t)=\scr{I}(t\alpha)$ on $X_{c_{n-1}}\setminus Z_c$.
\end{proof}

\subsection{Kawamata-Shokurov's concentration method}

We assume $\mathbf{A_d}$ with $d\geq0$. Let $\kappa$ be a positive integer such that $\kappa(\eta_d+\varepsilon_d)$ is a positive integer and that every $s_{d,j}^\kappa$ is a holomorphic section of the line bundle $L^{\otimes \kappa(\eta_d+\varepsilon_d)}$ on $X_{c_d}$.
We take a real number $c_\sharp\in(c(d),c_d)$. Let $\scr{J}_d$ be the sheaf of ideal of $\cal{O}_{X_{c_\sharp}}$ generated locally by $\{s_{d,j}^\kappa\}^{k_d}_{j=1}$ and let $\scr{J}^\sharp_d$ be the restriction of $\scr{J}_d$ on $X_{c_\sharp}$. 
We take a proper modification $\pi_d:\widetilde{X}_\sharp\longrightarrow X_{c_\sharp}$ as in $\S\ref{subsection: multivalued section and natural sHm}$ by a finite number of blow-ups with non-singular centers and 
a finite number of smooth exceptional divisors $E_\mu$ in $\widetilde{X}_\sharp$ with only simple normal crossing, satisfying that
the sheaf $\pi^{-1}_d\!\scr{J}^\sharp_d\cdot\cal{O}_{\widetilde{X}_\sharp}=\rom{im}(\pi^*_d\scr{J}^\sharp_d\longrightarrow\cal{O}_{\widetilde{X}_\sharp})$ is equal to the ideal sheaf $\cal{O}_{\widetilde{X}_\sharp}(-\sum^J_{\mu=1}\xi^\sharp_\mu E_\mu)$ for some $\xi_\mu\in\bb{N}\cup\{0\}$,
and that $K_{\widetilde{X}_\sharp}=\pi^*K_{X_\sharp}\otimes\cal{O}_{\widetilde{X}_\sharp}(\sum^J_{\mu=1}\zeta_\mu E_\mu)$ for some $\zeta_\mu\in\bb{N}\cup\{0\}$.
Let $\xi_\mu:=\xi_\mu^\sharp/\kappa$ and %$M_j:=\{\lambda\mid E_\lambda \text{ intersects } \pi_d^{-1}(x_j) \text{ and } \xi_\lambda-\zeta_\lambda\geq1\}$ 
\begin{align*}
    M_j:=\{\mu\mid E_\mu \text{ intersects } \pi_d^{-1}(x_j) \text{ and } \xi_\mu-\zeta_\mu\geq1\}
\end{align*}
for any $1\leq j\leq r$. 
Note that $\pi_d(E_\mu)\subset Y_d(1)$ if and only if $\xi_\mu-\zeta_\mu\geq1$.
When calculating the \( L^2 \)-integrability on \( X_{c_\sharp} \), the integrability conditions $\mathbf{A_d}(i)$-$(iii)$ are equivalent to the following three conditions $(i)'$-$(iii)'$, respectively.
\begin{itemize}
    \item [$(i)'$] the set $M_j$ is non-empty, for any $j\in J_d$.
    \item [$(ii)'$] for any $j\in J_d$, $\mu\in M_j$ if and only if $\xi_\mu-\zeta_\mu=1$.
    \item [$(iii)'$] for any $j\in\{1,\ldots,r\}\setminus J_d$, there exists some $\mu\in M_j$ with $\xi_\mu-\zeta_\mu>1$.
\end{itemize}
The condition on the dimension $\mathbf{A_d}(iv)$ is rewritten as
\begin{itemize}
    \item [$(iv)'$] $\rom{dim}\,\pi_d(E_\mu)\leq d$, for any $j\in J_d$ and any $\mu\in M_j$.
\end{itemize}

A technique of Kawamata-Shokurov's method (see \cite[Chapter\,2-3]{KMM87}) is to reduce the problem on the complicated space $\bigcup_\mu E_\mu$ to that of only one component. 
By appropriately using blow-ups, we obtain the following generalization of \cite[Lemma-Definition\,4.4]{Tak98} for singular Hermitian metrics.

\begin{lemma}\label{Ext [Lemma-Definition 4.4, Tak98]: singular positive var}
    Assume $\mathbf{A_d}$ with $d\geq0$ and let $\pi_d:\widetilde{X}_\sharp\longrightarrow X_{c_\sharp}$ be a proper modification and $\{E_\mu\}$ be the finite number of smooth irreducible divisor in $\widetilde{X}_\sharp$ as above. 
    Then the following statement $\mathbf{B_d}$ holds: there exist a real number $c'_d\in(c(d),c_\sharp)$, a rational number $\varepsilon'_d\in(\varepsilon(d+1),\varepsilon(d))$, a finite number of multivalued holomorphic sections $s'_{d,1},\ldots,s'_{d,k'_d}$ of $L^{\otimes(\eta_d+\varepsilon'_d)}$ on $X_{c'_d}$ 
    and a non-empty subset $J'_d$ of $\{1,\ldots,r\}$ such that the following conditions $(o)'$-$(iv)'$ are satisfied. 
    Let $\kappa'$ be a positive integer such that $\kappa'(\eta_d+\varepsilon'_d)$ is a positive integer ans that every $(s'_{d,j})^{\kappa'}$ is a holomorphic section of the line bundle $L^{\otimes \kappa'(\eta_d+\varepsilon'_d)}$ on $X_{c'_d}$. 
    Let $\scr{J}'$ be the sheaf of ideal of $\cal{O}_{X_{c'_d}}$ generated locally by $\{(s'_{d,j})^{\kappa'}\}^{k'_d}_{j=1}$. 
    %Then 
    \begin{itemize}
        \item [$(o)'$] there exist non-negative integers $\xi^*_\mu$ such that $\pi_d^{-1}\!\scr{J}'\cdot\cal{O}_{\widetilde{X}_{c'_d}}\!\!=\cal{O}_{\widetilde{X}_{c'_d}}(-\sum_\mu\xi^*_\mu E_\mu|_{\widetilde{X}_{c'_d}})$, where $\widetilde{X}_{c'_d}:=\pi_d^{-1}(X_{c'_d})$.
    \end{itemize}
    Let $\xi'_\mu:=\xi^*_\mu/\kappa'$ and 
    \begin{align*}
        M'_j:=\{\mu\mid E_\mu \text{intersects } \pi_d^{-1}(x_j) \text{ and } \xi'_\mu-\zeta_\mu\geq1\},
    \end{align*}
    for any $1\leq j\leq r$. Then after relabelling $\{E_\mu\}$, 
    \begin{itemize}
        \item [$(i)'$] $M'_j$ is non-empty and consists of the single element $\mu=0$, for any $j\in J'_d$.
        \item [$(ii)'$] the difference $\xi'_0-\zeta_0=1$.
        \item [$(iii)'$] for any $j\in\{1,\ldots,r\}\setminus J'_d$, there exist some $\mu\in M'_j$ with $\xi'_\mu-\zeta_\mu>1$.
        \item [$(iv)'$] the dimension of a (reduced and irreducible) closed subvariety $Y_d:=\pi_d(E_0)$ of $X_{c_\sharp}$ is less then or equal to $d$.
    \end{itemize}
\end{lemma}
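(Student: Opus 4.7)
The plan is to implement Kawamata--Shokurov's concentration technique (cf. \cite[Chapter~2--3]{KMM87}, \cite[Lemma-Definition\,4.4]{Tak98}) adapted to the singular positive setting by combining blow-ups Theorem \ref{Blow ups of Dem appro with log poles and ideal sheaves} with the Serre--Nakano type vanishing Theorem \ref{Vanishing thm for positive line bdl}. Throughout, I translate each of the integer/rational conditions $\mathbf{A_d}$ and $\mathbf{B_d}$ into the combinatorial conditions $(i)'$--$(iv)'$ on the numbers $(\xi_\mu,\zeta_\mu)$ associated to the fixed divisors $\{E_\mu\}$ on $\widetilde{X}_\sharp$, so the whole argument becomes bookkeeping on exceptional multiplicities.

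First, among the non-empty collection $\mathcal{E}:=\{E_\mu\mid\mu\in M_j\text{ for some }j\in J_d\}$ of divisors with $\xi_\mu-\zeta_\mu\geq1$ lying over some $x_j$ ($j\in J_d$), pick $E_0\in\mathcal{E}$ with minimal $\dim\pi_d(E_0)$; by $(iv)'$ this dimension is at most $d$, so the desired closed subvariety $Y_d:=\pi_d(E_0)$ of $X_{c_\sharp}$ has $\dim Y_d\leq d$. Relabel $E_0$ to have index $0$. Next, choose $c'_d\in(c(d),c_\sharp)$. Using Setting \ref{Setting of each sublevel set} together with blow-ups Theorem \ref{Blow ups of Dem appro with log poles and ideal sheaves}$(\alpha)$ on $X_{c'_d}$ (we may assume the chosen blow-up factors through $\pi_d$), we have a positive line bundle $\scr{L}$ on the modification. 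Applying the vanishing Theorem \ref{Vanishing thm for positive line bdl} to $\scr{L}$, I produce, for sufficiently large tensor power, global multivalued sections $\sigma_1,\dots,\sigma_{k^*}$ of a suitable $L^{\otimes N/\kappa}$ on $X_{c'_d}$ whose pulled-back ideal sheaf is $\cal{O}_{\widetilde{X}_{c'_d}}(-\sum_\mu \tilde{b}_\mu E_\mu|_{\widetilde{X}_{c'_d}})$ for non-negative integers $\tilde{b}_\mu$ with the key separation property: $\tilde{b}_0$ is strictly smaller than $\tilde{b}_\mu$ for every other $\mu$ with $E_\mu\in\mathcal{E}$. This separation is arranged by twisting $\scr{L}$ with a sufficiently positive multiple that avoids $E_0$ in its base locus while forcing vanishing along the remaining exceptional divisors, exactly as in the classical concentration argument.

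Second, form the convex combination
\begin{align*}
    s'_{d,i,j}:=s_{d,i}^{\,1-t}\cdot\sigma_{j}^{\,t},
\end{align*}
indexed by $(i,j)$, for a small rational parameter $t>0$, and raise to a common denominator $\kappa'$ making everything a genuine holomorphic section of a tensor power of $L$. The resulting pull-back ideal satisfies $\pi_d^{-1}\scr{J}'\cdot\cal{O}_{\widetilde{X}_{c'_d}}=\cal{O}_{\widetilde{X}_{c'_d}}(-\sum_\mu\xi^*_\mu E_\mu|_{\widetilde{X}_{c'_d}})$ with $\xi'_\mu:=\xi^*_\mu/\kappa'=(1-t)\xi_\mu+t\tilde{b}_\mu\cdot(\eta_d+\varepsilon_d)/N$, which is an affine function of $t$ in each $\mu$. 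I then tune $t$, after normalising the scalar $(\eta_d+\varepsilon'_d)$ so that the overall bundle matches $L^{\otimes(\eta_d+\varepsilon'_d)}$ with $\varepsilon'_d\in(\varepsilon(d+1),\varepsilon(d))$, to make $\xi'_0-\zeta_0=1$ exactly. By Step 1's separation, the same $t$ forces $\xi'_\mu-\zeta_\mu<1$ for every other $E_\mu\in\mathcal{E}$, which is precisely $(i)'$ and $(ii)'$ for the set $J'_d:=\{j\in J_d\mid 0\in M'_j\}$, non-empty because $E_0$ was chosen over some $x_{j_0}$ with $j_0\in J_d$. Condition $(iii)'$ for $j\in\{1,\dots,r\}\setminus J'_d$ is inherited from $\mathbf{A_d}(iii)$ (for the indices outside $J_d$) and from the strictness $\xi_\mu-\zeta_\mu>1$ preserved under sufficiently small $t$; condition $(iv)'$ is the dimension bound from Step 1; and $(o)'$ is automatic from the construction.

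The main obstacle will be the separation step (Step 2), namely producing auxiliary sections whose pull-back vanishes on every $E_\mu\in\mathcal{E}\setminus\{E_0\}$ to a strictly larger normalised order than on $E_0$. In the compact projective setting this is an immediate application of Serre vanishing on the blow-up; here one must combine the very ampleness output $(\alpha)$ of Theorem \ref{Blow ups of Dem appro with log poles and ideal sheaves} with the Serre--Nakano vanishing Theorem \ref{Vanishing thm for positive line bdl} on the weakly pseudoconvex sublevel set $X_{c'_d}$, and ensure the resulting multivalued sections descend to the required tensor power of $L$. Once that separation is in hand, the remaining tuning of the rational parameter $t$ and the re-normalisation to $\varepsilon'_d$ are elementary.
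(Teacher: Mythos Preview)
Your plan departs from the paper's argument in a way that leaves a real gap. The paper does \emph{not} choose $E_0$ in advance (and certainly not by minimal dimension---$\mathbf{A_d}(iv)'$ already gives $\dim\pi_d(E_\mu)\le d$ for \emph{every} $\mu\in\mathcal{E}$, so that criterion is vacuous). Instead it picks small generic coefficients $a_\mu/\ell_2$ so that the perturbed thresholds $(1+\zeta_\mu-a_\mu/\ell_2)/\xi_\mu$ become pairwise distinct (condition $(a4)$) while their ordering is preserved (condition $(a3)$); the auxiliary multivalued sections $v_k$ of $L^{\otimes\ell_1/\ell_2}$ are built so that their pull-back vanishes along $E_\mu$ to order exactly $a_\mu/\ell_2$. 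The critical divisor $E_0$ is then \emph{determined} as the unique index realising $\alpha:=\max_j\min_{\mu\in M_j}(1+\zeta_\mu-a_\mu/\ell_2)/\xi_\mu$, and the new sections are $s_{d,i}^{\,\alpha}\cdot v_k\cdot S_\ell^{(1-\alpha)(\eta_d+\varepsilon_d)}$ (a power $\alpha<1$, not a convex combination $1-t$), yielding $\xi'_\mu=\alpha\xi_\mu+a_\mu/\ell_2$.

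Your ``choose $E_0$ first, then separate'' strategy breaks at condition $(iii)'$ for indices $j\in J_d\setminus J'_d$, i.e.\ those $j\in J_d$ with $E_0\cap\pi_d^{-1}(x_j)=\emptyset$. For such $j$ every $\mu\in M_j$ lies in $\mathcal{E}\setminus\{0\}$, and your construction forces $\xi'_\mu-\zeta_\mu<1$ for all of them, so $M'_j=\emptyset$; but $(iii)'$ demands some $\mu\in M'_j$ with $\xi'_\mu-\zeta_\mu>1$. You only address $j\notin J_d$, where $\mathbf{A_d}(iii)'$ supplies a $\mu$ with $\xi_\mu-\zeta_\mu>1$ that is stable under small $t$; the case $j\in J_d\setminus J'_d$ is simply missing. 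In the paper's max--min scheme this case is automatic: if $\alpha(x_j)<\alpha$ then some $\mu\in M_j$ has perturbed threshold $<\alpha$, which is exactly $\xi'_\mu-\zeta_\mu>1$. There is also a secondary problem with your tuning: since $\xi_\mu-\zeta_\mu=1$ for every $\mu\in\mathcal{E}$ (by $\mathbf{A_d}(ii)'$), your convex combination gives $\xi'_\mu-\zeta_\mu=1-t(\xi_\mu-c_\mu)$ with $c_\mu$ proportional to $\tilde b_\mu$, so forcing $\xi'_0-\zeta_0=1$ at $t>0$ requires $c_0=\xi_0$; the right separating quantity is the ratio $\tilde b_\mu/\xi_\mu$, and you need it \emph{maximal} at $\mu=0$, not $\tilde b_0$ minimal.
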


\begin{proof}
    Let us assume $\mathbf{A_d}(i)'$-$(iv)'$ above. We take real numbers $c(d)<c'_d<c_\flat<c_\dagger<c_\ddag<c_\sharp$. 
    The singular Hermitian metric $\pi_d^*h_c$ on $\pi_d^*L$ is singular semi-positive on $\widetilde{X}_\sharp$ and singular positive on $\widetilde{X}_\sharp\setminus\bigcup_\mu E_\mu$.
    By blow-ups Theorem \ref{Blow ups of Dem appro with log poles and ideal sheaves}, this weight function $\varphi_c$ of $h_c$ with algebraic singularities is locally expressed as 
    \begin{align*}
        \varphi_c=\frac{1}{2m_c}\log\sum_{k\in\bb{N}}|\sigma_{c,k}|^2+\psi_c,
    \end{align*}
    where $m_c\in\bb{Q}_{>0}$ and $\{\sigma_{c,k}\}_{k\in\bb{N}}$ is an orthonormal basis of a Hilbert space. 
    %and this pullback $\pi_d^*\varphi_c$ also has logarithmic singularities and is expressed as 
    %\begin{align*}
    %    \pi_d^*\varphi_c=\frac{1}{2m_c}\log\sum_{k\in\bb{N}}|\pi_d^*\sigma_{c,k}|^2+\pi_d^*\psi_c.
    %\end{align*}
    As in the proof of blow-ups Theorem \ref{Blow ups of Dem appro with log poles and ideal sheaves}, there exists a positive integer $\ell_1$ and positive integers $a_\mu$ such that a line bundle $\pi^*_dL^{\otimes m}\otimes\cal{O}_{\widetilde{X}_\sharp}(-\sum_\mu a_\mu E_\mu)$ has a singular positive Hermitian metric $\pi_d^*h_c^m\otimes h_a^*$ on $\pi_d^{-1}(X_{c_\ddag})=:\!\widetilde{X}_\ddag$
    for any $m\geq\ell_1$ and such that $\pi^*_dL^{\otimes m}\otimes\cal{O}_{\widetilde{X}_\sharp}(-\sum_\mu a'_\mu E_\mu)$ has a singular positive Hermitian metric $\pi_d^*h_c^m\otimes h_{a'}^*$ on $\widetilde{X}_\ddag$ for any positive integers $a'_\mu$ such as $|a_\mu-a'_\mu|\ll m$. 
    Here, $h_a$ and $h'_a$ are smooth Hermitian metrics on $\cal{O}_{\widetilde{X}_\sharp}(\sum_\mu a_\mu E_\mu)$ and $\cal{O}_{\widetilde{X}_\sharp}(\sum_\mu a'_\mu E_\mu)$, respectively.
    Hence we can choose positive integers $\ell_1, \ell_2$ and $a_\mu$ such that the following conditions are satisfied.
    \begin{itemize}
        \item [$(a1)$] the line bundle $\widetilde{L}:=\pi_d^*L^{\otimes \,\ell_1}\otimes\cal{O}_{\widetilde{X}_\sharp}(-\sum_\mu a_\mu E_\mu)$ has a singular positive Hermitian metric $\widetilde{h}:=\pi_d^*h_c^{\ell_1}\otimes h_a^*$ on $\widetilde{X}_{c_\ddag}$.
        \item [$(a2)$] $0<\ell_1/\ell_2<\varepsilon(d)-\varepsilon_d$.
        \item [$(a3)$] if $(1+\zeta_\mu)/\xi_\mu<(1+\zeta_\nu)/\xi_\nu$, then $0<(1+\zeta_\mu-a_\mu/\ell_2)/\xi_\mu<(1+\zeta_\nu-a_\nu/\ell_2)/\xi_\nu$.
        \item [$(a4)$] if $\mu\ne\nu$, then $(1+\zeta_\mu-a_\mu/\ell_2)/\xi_\mu\ne(1+\zeta_\nu-a_\nu/\ell_2)/\xi_\nu$.
    \end{itemize}

    The weight $\widetilde{\varphi}$ of $\widetilde{h}$ is locally expressed as 
    \begin{align*}
        \widetilde{\varphi}=\frac{\ell_1}{2m_c}\log\sum_{k\in\bb{N}}|\widetilde{\sigma}_k|^2+\widetilde{\psi},
    \end{align*}
    here let $\widetilde{\sigma}_k:=\pi^*_d\sigma_{c,k}$. 
    We introduce the ideal $\scr{J}_\ell$ of germs of holomorphic function $f$ such that $|f|\leq Ce^{m_c\widetilde{\varphi}/\ell_1}$ for some constant $C$. 
    For a singular positive line bundle $(\widetilde{L},\widetilde{h})$, by blow-ups Theorem \ref{Blow ups of Dem appro with log poles and ideal sheaves}, there exist a proper modification $\pi_\dagger:\widehat{X}_\dagger\longrightarrow\widetilde{X}_\dagger:=\pi_d^{-1}(X_{c_\dagger})$ obtained by a finite sequence of blow-ups along the ideal $\scr{J}_\ell$, %where $\widetilde{X}_\dagger:=\pi_d^{-1}(X_{c_\dagger})$ %with smooth centers such that the following conditions hold.
    a large integer $\ell_3$ and an effective divisor $D$ with only simple normal crossing such that the holomorphic line bundle $\scr{L}:=\pi_\dagger^*\widetilde{L}^{\otimes\,\ell_3}\otimes\cal{O}_{\widehat{X}_\dagger}(-D)$ is positive on $\widehat{X}_\dagger$.

    By the vanishing theorem (= Theorem \ref{Vanishing thm for positive line bdl}), there exists a positive integer $\ell_4$ and a finite number of holomorphic sections $\{u_j\}^{K'}_{j=1}\subset H^0((\pi_\dagger^*\pi_d)^{-1}(X_{c_\flat}),\scr{L}^{\otimes\,\ell_4})$ without common zeros on $(\pi_\dagger^*\pi_d)^{-1}(X_{c'_d})$.
    Let $v_D$ be the canonical section of $\cal{O}_{\widehat{X}_\dagger}(D)$, then the sections $u_jv_D^{\ell_4}\in H^0((\pi_\dagger^*\pi_d)^{-1}(X_{c_\flat}),\pi_\dagger^*\widetilde{L}^{\otimes\,\ell_3\ell_4})$ descend to sections $\widetilde{u}_j\in H^0(\pi_d^{-1}(X_{c'_d}),\widetilde{L}^{\otimes\,\ell_3\ell_4})$. This sections $\{\widetilde{u}_j\}^{K'}_{j=1}$ has no common zero on $\pi_d^{-1}(X_{c'_d}\setminus Z_c)$.
    Let $v_E$ be the canonical section of $\cal{O}_{\widetilde{X}_\sharp}(\sum_\mu a_\mu E_\mu)$, then the sections $\widetilde{u}_jv_E^{\otimes\,\ell_3\ell_4}\in H^0(\pi_d^{-1}(X_{c'_d}),\pi_d^*L^{\otimes\,\ell_1\ell_3\ell_4})$ descend to sections $\widetilde{v}_j\in H^0(X_{c'_d},L^{\otimes\,\ell_1\ell_3\ell_4})$. 
    We let $v_j:=\widetilde{v}_j^{1/(\ell_2\ell_3\ell_4)}$ $(1\leq j\leq K')$ be multivalued holomorphic sections of $L^{\otimes\,\ell_1/\ell_2}$ on $X_{c'_d}$ without common zeros on $X_{c'_d}\setminus(Z_c\cup\bigcup_\mu\pi_d(E_\mu))$.

    The remaining part follows similarly to the proof of \cite[Lemma-Definition\,4.4]{Tak98}, as described below:
    For any $x_j\in X_c\setminus Z_c$ $(1\leq j\leq r)$, we set 
    \begin{align*}
        \alpha(x_j):=&\min\{(1+\zeta_\mu-a_\mu/\ell_2)/\xi_\mu\mid\mu\in M_j\}\\
        =&\min\{(1+\zeta_\mu-a_\mu/\ell_2)/\xi_\mu\mid\mu \text{ such that } E_\mu \text{ intersects } \pi^{-1}_d(x_j)\}
    \end{align*}
    by $(a3)$, then we have  
    \begin{itemize}
        \item [$(b1)$] the minimum $\alpha(x_j)$ is attained only at a unique index $\mu_j\in M_j$ by $(a4)$,
        \item [$(b2)$] for every $j\in J_d$, $\alpha(x_j)\in(0,1)$ by $\mathbf{A_d}(ii)'$.
    \end{itemize}
    By setting $\alpha:=\max\{\alpha(x_j)\mid 1\leq j\leq r\}$, the following is obtained.
    \begin{itemize}
        \item [$(c1)$] $\alpha=\max\{\alpha(x_j)\mid j\in J_d\}$ by $\mathbf{A_d}(ii)', \mathbf{A_d}(iii)'$ and $(a3)$,
        \item [$(c2)$] $\alpha\in(0,1)$ by $(b2)$ and $(c1)$, 
        \item [$(c3)$] if $\alpha(x_j)=\alpha(x_k)=\alpha$, then $\mu_j=\mu_k$ by $(a4)$ and $(b1)$.
    \end{itemize}

    We consider the set of indices which attain the maximum
    \begin{align*}
        J'_d:=\{j\in\{1,\ldots,r\}\mid\alpha(x_j)=\alpha\},
    \end{align*}
    then $J'_d$ is a non-empty subset of $J_d$ by $(c1)$. 
    After relabelling $\{E_\mu\}$, we may assume that, the index $\mu_j\in M_j$ in $(b1)$, $\mu_j=0$ for any $j\in J'_d$ by $(c3)$.
    We set 
    \begin{align*}
        \{s'_{d,j}\}^{k'_d}_{j=1}:=\{s^\alpha_{d,j}\times v_k\times S_\ell^{(1-\alpha)(\eta_d+\varepsilon_d)}\}_{j,k,\ell}
    \end{align*}
    multivalued holomorphic sections of $L^{\otimes(\eta_d+\varepsilon'_d)}$ on $X_{c'_d}$, where $\varepsilon'_d:=\varepsilon_d+\ell_1/\ell_2\in(\varepsilon(d+1),\varepsilon(d))$ by $(a2)$. 
    For any $t\geq0$, we set 
    \begin{align*}
        \scr{I}_d'(t):=\scr{I}\Big(\big(\sum_j|s'_{d,j}|^2\big)^{-t}\Big).
    \end{align*}
    In the neighborhood of each point $x_j$ $(1\leq j\leq r)$, we obtain 
    \begin{align*}
        \scr{I}_d'(t):=\scr{I}\Big(\big((\sum_j|s'_{d,j}|^2)^\alpha\sum_j|v_j|^2\big)^{-t}\Big),
    \end{align*}
    where the set $\{S_\ell\}^K_{j=1}$ has no common zeros on $X_{c(n)}\setminus Z_c$. 
    Here, $\xi'_\mu:=\alpha\xi_\mu+a_\mu/\ell_2$ plays the role of $\xi_\mu$ in $\mathbf{A_d}$.
    We see that $\mathbf{B_d}(i)'$ and $\mathbf{B_d}(ii)'$ by the definition and relabeling, and that $\mathbf{B_d}(iv)'$ by $\mathbf{A_d}(iv)'$. 
    For any $j\in\{1,\ldots,r\}\setminus J'_d$, i.e., $\alpha(x_j)<\alpha$, there exists $\mu\in M_j$ such that $(1+\zeta_\mu-a_\mu/\ell_2)/\xi_\mu<\alpha$, i.e., $\xi'_\mu-\zeta_\mu>1$. 
    This represents $\mathbf{B_d}(iii)'$.
\end{proof}

\subsection{Points separation on each sublevel set $X_c$}

\begin{proof}[Proof of Theorem \ref{Ext [Tak98, Thm 4.1]}]
    By using Lemma \ref{Ext [Lemma 3.10, Tak98]: see [Lemma 4.1, AS95]}, we obtain the following inductive statement on dimension, similarly to the proof of \cite[Lemma\,4.6]{Tak98}. 
    Here, \cite[Lemma\,4.8]{Tak98} (= an effective estimate), used in this proof, is a local statement around $x_j$ for any $j\in J'_d$, and the situation is the same as in the case of a positive line bundle. 
    As in \cite{Tak98}, it follows from the Ohsawa-Takegoshi $L^2$-extension theorem (see \cite{OT87}).

\begin{lemma}$($\textnormal{cf.\,\cite[Lemma\,4.6]{Tak98}}$)$
    Assume $\mathbf{A_d}$ with $d>0$ and let $\pi_d:\widetilde{X}_\sharp\longrightarrow X_{c_\sharp}$ be the proper modification, $\{E_\mu\}$ be the finite number of smooth irreducible divisors in $\widetilde{X}_\sharp$ and $Y_d$ be the closed subvariety of $X_{c_\sharp}$ with dimension at most $d$ as in Lemma \ref{Ext [Lemma-Definition 4.4, Tak98]: singular positive var}. 
    If the dimension of $Y_d$ is less than $d$, then the statement $\mathbf{A_{d-1}}$ hold with $\eta_{d-1}=\eta_d$. 
    If the dimension of $Y_d$ equals to $d$, for every pair $(p,q)$ of positive integers $(p>2)$ such that $p/q\leq P_d/Q_d$ and that $(p-2)^d\widetilde{\rom{Vol}}_Y(L\otimes\scr{I}(h_c))>|J'_d|q^d$ where $|J'_d|$ is the number of elements in $J'_d$, the statement $\mathbf{A_{d-1}}$ holds with $\eta_{d-1}=\eta_d+dp/q$.
\end{lemma}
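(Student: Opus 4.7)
The strategy is the classical Kawamata-Shokurov descent: first apply the concentration Lemma \ref{Ext [Lemma-Definition 4.4, Tak98]: singular positive var} to $\mathbf{A_d}$ so as to obtain the refined statement $\mathbf{B_d}$, which produces a single distinguished smooth divisor $E_0\subset\widetilde{X}_\sharp$ concentrated over the points $\{x_j\}_{j\in J'_d}$ and whose image $Y_d=\pi_d(E_0)\subset X_{c_\sharp}$ has dimension at most $d$. We then split according to $\dim Y_d$.

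If $\dim Y_d<d$, the conclusion is immediate: take $c_{d-1}=c'_d$, $\varepsilon_{d-1}=\varepsilon'_d$, $\eta_{d-1}=\eta_d$, $J_{d-1}=J'_d$, and re-use the multivalued sections $\{s'_{d,j}\}$ from $\mathbf{B_d}$. The conditions $\mathbf{A_{d-1}}(i)$-$(iii)$ are inherited verbatim from $\mathbf{B_d}(i)'$-$(iii)'$ (with a suitable $\beta_{d-1}\in(0,1)$ chosen so that $\max\{\alpha_{d-1}(x_j)\mid\alpha_{d-1}(x_j)<1\}<\beta_{d-1}$), and $\mathbf{A_{d-1}}(iv)$ is forced by $\dim Y_d\leq d-1$.

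The substantive case is $\dim Y_d=d$. Since each $x_j$ with $j\in J'_d$ lies in $X_c\setminus Z_c$, and $Y_d$ is a compact subvariety passing through these points without being contained in $Z_c$, Lemma \ref{Ext [Lemma 3.10, Tak98]: see [Lemma 4.1, AS95]} applies on $Y_d$ with the given pair $(p,q)$ under the hypothesis $(p-2)^d\widetilde{\rom{Vol}}_{Y_d}(L\otimes\scr{I}(h_c))>|J'_d|q^d$. It yields a positive integer $m$, global sections $\tau_1,\ldots,\tau_N\in H^0(X_{c_\sharp},L^{\otimes mp}\otimes\scr{I}(h_c^{mp}))$ whose common zero locus in $Y_d$ has dimension strictly less than $d$ and contains $\{x_j\}_{j\in J'_d}$, and local deformation sections $\tau_\ell(j)$ satisfying property (2) of that lemma. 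I then form the new multivalued sections $\{s_{d-1,k}\}$ as suitable combinations of the $\{s'_{d,j}\}$ from $\mathbf{B_d}$ with $(1/(mq))$-th roots of appropriate products of the $\tau_\ell$, arranged so that the total degree is exactly $\eta_d+dp/q$, giving $\eta_{d-1}=\eta_d+dp/q$.

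The main obstacle, and the technical heart of the argument, is verifying around each $x_j$ ($j\in J'_d$) that the new tuple has the precise local structure needed: the value $\alpha_{d-1}(x_j)=\sup\{t\geq 0\mid\scr{I}_{d-1}(t)_{x_j}=\cal{O}_{X,x_j}\}$ equals $1$ for a non-empty $J_{d-1}\subseteq J'_d$, and the local dimension of $Y_{d-1}(1)$ at those points drops to at most $d-1$. This is precisely where the effective estimate \cite[Lemma\,4.8]{Tak98}, proved via the Ohsawa-Takegoshi $L^2$-extension theorem \cite{OT87} applied fibrewise along the local curves $\Delta\longrightarrow Y_j=Y_d\cap B_j$ from Lemma \ref{Ext [Lemma 3.10, Tak98]: see [Lemma 4.1, AS95]}(2), is used: extending sections from $Y_d\times\{0\}$ to $B_j\times\Delta$ with controlled $L^2$-growth converts the $d$-dimensional volume estimate on $Y_d$ into a one-dimensional vanishing estimate in the deformation parameter $u\in\Delta$, from which both the arithmetic factor $dp/q$ and the strict decrease in local dimension emerge. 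Crucially, because $h_c$ is smooth in a neighborhood of every $x_j\in X_c\setminus Z_c$, this entire local computation is identical to the positive-line-bundle setting in \cite{Tak98}; no further singular $L^2$-estimate is required, since all the singular positivity has already been absorbed into $\mathbf{B_d}$ via the blow-ups. One finally chooses $c_{d-1}\in(c(d-1),c'_d)$ and a rational $\varepsilon_{d-1}\in(\varepsilon(d),\varepsilon(d-1))$ to house everything, completing $\mathbf{A_{d-1}}$.
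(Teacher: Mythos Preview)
Your proposal is correct and follows the same approach as the paper: the paper's own proof is essentially a citation to \cite[Lemma\,4.6]{Tak98}, noting that Lemma \ref{Ext [Lemma 3.10, Tak98]: see [Lemma 4.1, AS95]} supplies the needed sections on $Y_d$ and that the effective estimate \cite[Lemma\,4.8]{Tak98} (via Ohsawa--Takegoshi) goes through unchanged because it is purely local near the $x_j\in X_c\setminus Z_c$, where $h_c$ is smooth. One small inaccuracy: you call $Y_d$ a \emph{compact} subvariety, but $Y_d=\pi_d(E_0)\subset X_{c_\sharp}$ need not be compact; this is harmless since Lemma \ref{Ext [Lemma 3.10, Tak98]: see [Lemma 4.1, AS95]} is stated for closed subvarieties and the volume hypothesis is vacuous when $\widetilde{\rom{Vol}}_{Y_d}=+\infty$.
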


    By the basic construction, i.e., Lemma \ref{Lem basic construcion An-1}, and Lemma \ref{Ext [Lemma-Definition 4.4, Tak98]: singular positive var} and \ref{Ext [Lemma 3.10, Tak98]: see [Lemma 4.1, AS95]}, we see that the statement $\mathbf{A_0}$ holds. 
    Hence, there exist a positive rational number $\eta_0\leq w_0$, a rational number $\varepsilon_0$ with $0 < \varepsilon_0 < m_0-w_0$ and a finite number of multivalued holomorphic sections $\{s_{0,j}\}^{k_0}_{j=1}$ of $L^{\otimes(\eta_0+\varepsilon_0)}$ on $X_{c_0}$ such that $\{x_j\}^r_{j=1}\subset V\big(\scr{I}\big((\sum_{j=1}^{k_0}|s_{0,j}|^2)^{-1}\big)\big)$ and that some $x_j$ is isolated in $V\big(\scr{I}\big((\sum_{j=1}^{k_0}|s_{0,j}|^2)^{-1}\big)\big)$.
    Thus, we obtain the desired singular Hermitian metric
    \begin{align*}
        H_c:=h_0^{m-(\eta_0+\varepsilon_0)}\frac{h_0^{\eta_0+\varepsilon_0}}{\sum^{k_0}_{j=1}|s_{0,j}|^2_{h_0^{\eta_0+\varepsilon_0}}}e^{-2(m-(\eta_0+\varepsilon_0))\varphi_c}=h_c^{m-(\eta_0+\varepsilon_0)}\frac{1}{\sum^{k_0}_{j=1}|s_{0,j}|^2}
    \end{align*}
    on $L|_{X_{c_0}}^{\otimes m}$ for any integer $m\geq m_0>\vartheta_c:=\eta_0+\varepsilon_0$, where $X_c\Subset X_{c_0}$.
\end{proof}

Using Theorem \ref{Ext [Tak98, Thm 4.1]}, along with induction on the number of points $r$ and vanishing Theorem \ref{Thm Nadel type vanishing without Kahler}, which is of the Nadel type without requiring a \kah metric, the following result can be derived (see \cite[Section\,10]{AS95}).

\begin{theorem}\label{Ext [Tak98, Thm 4.2]}
    Let $x_1,\ldots,x_r$ be $r$ distinct points on a subset $X_c\setminus Z_c$. 
    Then for any positive integer $m>I_{\sum}(L,h_c,\{x_j\}^r_{j=1})$,
    there exists a positive rational number $\theta_c\in\bb{Q}_{>0}$ smaller than $m_0(L,h_c,\{x_j\}^r_{j=1})$ such that the space $H^0(X_c,K_X\otimes L^{\otimes m}\otimes\scr{I}(h_c^{m-\theta_c}))$ separates $\{x_j\}^r_{j=1}$, 
    i.e., the following restriction map is surjective. %$H^0(X_c,K_X\otimes L^{\otimes m}\otimes\scr{I}(h_c^{m-\vartheta_c}))\longrightarrow\bigoplus^r_{j=1}\cal{O}_X/\fra{m}_{X,x_j}$ is surjective.
    \begin{align*}
        H^0(X_c,K_X\otimes L^{\otimes m}\otimes\scr{I}(h_c^{m-\theta_c}))\longrightarrow\bigoplus^r_{j=1}\cal{O}_X/\fra{m}_{X,x_j}.
    \end{align*}
\end{theorem}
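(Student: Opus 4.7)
The plan is to prove the theorem by induction on the number of points $r$, combining Theorem \ref{Ext [Tak98, Thm 4.1]} (which produces a singular positive metric whose zero set contains all $x_j$ and isolates at least one of them) with the Nadel-type vanishing Theorem \ref{Thm Nadel type vanishing without Kahler}, via a saturation-of-ideals argument as in \cite[Section\,10]{AS95}.

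At each stage I will use the following single-point extraction procedure. Given a singular positive metric $H$ on $L|_{X_c}^{\otimes m}$ with $\scr{I}(H)\subset\scr{I}(h_c^{m-\vartheta})$ and an isolated point $x_0\in V(\scr{I}(H))$ lying in $X_c\setminus Z_c$, the stalk $\scr{I}(H)_{x_0}$ is $\fra{m}_{X,x_0}$-primary, so its saturation $\scr{J}:=(\scr{I}(H):\fra{m}_{X,x_0}^{\infty})$ satisfies $\scr{J}_{x_0}=\cal{O}_{X,x_0}$, coincides with $\scr{I}(H)$ away from $x_0$, and is therefore contained in $\scr{I}(h_c^{m-\vartheta})$; the quotient $\scr{Q}:=\scr{J}/\scr{I}(H)$ is a skyscraper at $x_0$ with non-zero stalk $\cal{O}_{X,x_0}/\scr{I}(H)_{x_0}$. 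Applying Theorem \ref{Thm Nadel type vanishing without Kahler} to $(L^{\otimes m},H)$ gives $H^1(X_c,K_X\otimes L^{\otimes m}\otimes\scr{I}(H))=0$, so the long exact sequence of
\begin{align*}
0\longrightarrow K_X\otimes L^{\otimes m}\otimes\scr{I}(H)\longrightarrow K_X\otimes L^{\otimes m}\otimes\scr{J}\longrightarrow K_X\otimes L^{\otimes m}\otimes\scr{Q}\longrightarrow 0
\end{align*}
surjects $H^0$ onto the skyscraper. Composing the surjection with the natural map $\scr{Q}_{x_0}\twoheadrightarrow\cal{O}_{X,x_0}/\fra{m}_{X,x_0}$ yields a section $\sigma_0\in H^0(X_c,K_X\otimes L^{\otimes m}\otimes\scr{I}(h_c^{m-\vartheta}))$ with $\sigma_0(x_0)\ne0$ that vanishes at every other point of $V(\scr{I}(H))$, in particular at each $x_j$ with $j\ne 0$.

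The induction on $r$ then runs as follows. The base case $r=1$ is an immediate application of the extraction procedure. For the inductive step, I apply Theorem \ref{Ext [Tak98, Thm 4.1]} to $\{x_j\}_{j=1}^r$ to produce a rational number $\vartheta_c^{(r)}<m_0(L,h_c,\{x_j\}_{j=1}^r)$, a metric $H_c$, and an a priori unspecified isolated point $x_{j_0}\in V(\scr{I}(H_c))$; the extraction procedure then supplies $\sigma_{j_0}$. The induction hypothesis applied to the remaining $r-1$ points $\{x_j\}_{j\ne j_0}$ produces $\vartheta_c^{(r-1)}<m_0(L,h_c,\{x_j\}_{j\ne j_0})\leq m_0(L,h_c,\{x_j\}_{j=1}^r)$ and sections $\tau_j\in H^0(X_c,K_X\otimes L^{\otimes m}\otimes\scr{I}(h_c^{m-\vartheta_c^{(r-1)}}))$ separating those $r-1$ points; the Gaussian elimination $\sigma_j:=\tau_j-\frac{\tau_j(x_{j_0})}{\sigma_{j_0}(x_{j_0})}\sigma_{j_0}$ for $j\ne j_0$ kills the residual value at $x_{j_0}$. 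Taking $\vartheta_c:=\max(\vartheta_c^{(r)},\vartheta_c^{(r-1)})$ and using the monotonicity of ideals $\scr{I}(h_c^{m-\vartheta_c})\supseteq\scr{I}(h_c^{m-\vartheta_c^{(\bullet)}})$ puts all $\sigma_j$ into a common space $H^0(X_c,K_X\otimes L^{\otimes m}\otimes\scr{I}(h_c^{m-\vartheta_c}))$ and delivers the desired separating system.

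The hard part is the cohomological input: a classical Nadel vanishing theorem requires a \kah metric on the ambient space, which $X_c$ generally does not admit (it carries only the \kah current induced by $h$). This obstacle is precisely what Theorem \ref{Thm Nadel type vanishing without Kahler} removes, using the complete \kah metric of Poincar\'e type along $Z_c$ constructed in Theorem \ref{regularization with the equality of ideals and completeness} to run a H\"ormander $L^2$-estimate without a background \kah structure. The remaining ingredients---coherence and stalk behaviour of the saturation, monotonicity of $I_{\sum}$ and $m_0$ in $r$, and the Gaussian-elimination step---are routine once this vanishing is in hand, which is why the excerpt's authors merely attribute this step to \cite[Section\,10]{AS95}.
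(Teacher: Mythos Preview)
Your proof is correct and follows essentially the same approach as the paper: both combine Theorem \ref{Ext [Tak98, Thm 4.1]} with the Nadel-type vanishing Theorem \ref{Thm Nadel type vanishing without Kahler} via a saturation-of-ideals argument and induction on $r$, exactly as attributed to \cite[Section\,10]{AS95}. The only cosmetic differences are that the paper modifies $\scr{I}(H_c)$ simultaneously at all isolated points of $J_0$ (rather than saturating at a single $x_{j_0}$) and builds a triangular system by applying the induction hypothesis to the tails $\{x_k,\ldots,x_r\}$ rather than once to $\{x_j\}_{j\ne j_0}$ followed by your Gaussian elimination; your bookkeeping for $\vartheta_c=\max(\vartheta_c^{(r)},\vartheta_c^{(r-1)})$ is in fact more explicit than the paper's.
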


\begin{proof}
    Taking the metric \( H_c \) constructed in Theorem \ref{Ext [Tak98, Thm 4.1]}, it is defined on \( X_{c_0} \) for some $c<c_0$. Here, there exists a rational positive number $\vartheta_r<m_0(L,h_c,\{x_j\}^r_{j=1})$ which satisfies $\scr{I}(H_c)\subseteq\scr{I}(h_c^{m-\vartheta_r})$.
    Thus, we can apply Nadel-type vanishing Theorem \ref{Thm Nadel type vanishing without Kahler}, and the $1$-th cohomology $H^1(X_c,K_X\otimes L^{\otimes m}\otimes\scr{I}(H_c))$ vanishes. 
    By relabeling the $r$ points, we can assume that $J_0=\{1,\ldots,\ell\}$ for some $1\leq\ell\leq r$. In other words, the set $V(\scr{I}(H_c))$ is isolated at $x_j$ for $1\leq j\leq\ell$. 
    Let $\scr{J}$ be the ideal sheaf on $X_c$ which agrees with $\scr{I}(H_c)$ on $X_c\setminus\{x_1,\ldots,x_\ell\}$ and which agrees with $\cal{O}_X$ on the set $\{x_1,\ldots,x_\ell\}$. 
    From the exact sequence %$0\longrightarrow K_X\otimes L^{\otimes m}\otimes\scr{I}(H_c)\longrightarrow K_X\otimes L^{\otimes m}\otimes\scr{J}\longrightarrow K_X\otimes L^{\otimes m}\otimes(\scr{J}/\scr{I}(H_c))\longrightarrow 0$
    \begin{align*}
        0\longrightarrow K_X\otimes L^{\otimes m}\otimes\scr{I}(H_c)\longrightarrow K_X\otimes L^{\otimes m}\otimes\scr{J}\longrightarrow K_X\otimes L^{\otimes m}\otimes(\scr{J}/\scr{I}(H_c))\longrightarrow 0,
    \end{align*}
    it follows that the restriction map 
    \begin{align*}
        H^0(X_c,K_X\otimes L^{\otimes m}\otimes\scr{J}) \longrightarrow \bigoplus^\ell_{j=1}\cal{O}_X/\fra{m}_{X,x_j}
    \end{align*}
    is surjective. Thus, by $\{x_{\ell+1},\ldots,x_r\}\subset V(\scr{J})$, we can find a holomorphic section of $K_X\otimes L^{\otimes m}\otimes\scr{J}$ over $X_c$ which vanishes on $x_{\ell+1},\ldots,x_r$ and has prescribed values at $x_1,\ldots,x_\ell$.
    From the definition of the sheaf $\scr{J}$ and $\scr{I}(H_c)\subseteq\scr{I}(h_c^{m-\vartheta_r})$, it is clear that $\scr{J}\subseteq\scr{I}(h_c^{m-\vartheta_r})$.
    In particular, there exists a holomorphic section $u_1$ of $K_X\otimes L^{\otimes m}\otimes\scr{I}(h^{m-\vartheta_r})$ over $X_c$ which vanishes on $x_2,\ldots,x_r$ and nonzero at $x_1$. 
    
    Similarly, for the integer $m>I_{\sum}(L,h_c,\{x_j\}^r_{j=1})>I_{\sum}(L,h_c,\{x_j\}^r_{j=2})$, by applying Theorem \ref{Ext [Tak98, Thm 4.1]}, there exist a positive rational number $\vartheta_{r-1}<m_0(L,h_c,\{x_j\}^r_{j=2})\leq m_0(L,h_c,\{x_j\}^r_{j=1})$ and a holomorphic section $u_2$ of $K_X\otimes L^{\otimes m}\otimes\scr{I}(h^{m-\vartheta_{r-1}})$ which vanishes on $x_3,\ldots,x_r$ and nonzero at $x_2$.
    For $2\leq k\leq r$, by applying the induction hypothesis on $r$ to the points $x_k,\ldots,x_r$, we conclude that there exists a holomorphic section $u_k$ of $K_X\otimes L^{\otimes m}\otimes\scr{I}(h^{m-\vartheta_{r-k+1}})$ over $X_c$ which vanishes on $x_{k+1},\ldots,x_r$ and nonzero at $x_k$, here $0<\vartheta_\ell<m_0(L,h_c,\{x_j\}^r_{j=1})$ for any $1\leq\ell\leq r$. 
    Let $\theta_c:=\max_{1\leq\ell\leq r}\vartheta_\ell$, then we have $\scr{I}(h_c^{m-\vartheta_\ell})\subseteq\scr{I}(h_c^{m-\theta_c})$ for all $1\leq\ell\leq r$, and the $r$ holomorphic sections $u_1,\ldots,u_r$ restricted to the $r$ points $\{x_1,\ldots,x_r\}$ are linearly independent. Hence, the holomorphic sections of $K_X\otimes L^{\otimes m}\otimes\scr{I}(h_c^{m-\theta_c})$ over $X_c$ can separate the $r$ points $x_1,\ldots,x_r$. 
\end{proof}

Let $h_\natural=h_\natural(\ell)$ be the singular Hermitian metric constructed via approximation of $h$ for some integer $\ell\in\bb{N}$.
Regarding the notation \( I_d \), \( I_{\sum} \) and \( m_0 \) defined in $\S\ref{subsection: sHm inducing points separation}$, we replace \( (X_c, h_c) \) with \( (X, h_\natural) \) and similarly define \( I_d(L,h_\natural,\{x_j\}^r_{j=1}) \), \( I_{\sum}(L,h_\natural,\{x_j\}^r_{j=1}) \), and \( m_0(L,h_\natural,\{x_j\}^r_{j=1}) \). 
Furthermore, for $r$ distinct points $x_1,\ldots,x_r$ on $X$, we set 
\begin{align*}
    m_r(L,h_\natural):=\max\{m_0(L,h_\natural,\{x_j\}^r_{j=1}) \text{ for } r \text{ distinct points on } X\}.
\end{align*}
Note that $m_0(L,h_\natural,\{x_j\}^r_{j=1})\leq n(n+2r-1)/2+1$. Thus, if $X$ is non-compact then  %$m_2(L,h_\natural)\leq n(n+1)/2$ if $X$ is non-compact. 
\begin{align*}
    m_0(L,h_\natural,\{x_j\}^r_{j=1})\leq \frac{n(n+2r-3)}{2}+2-r \quad \text{and} \quad
    m_2(L,h_\natural)\leq \frac{n(n+1)}{2}.
\end{align*}
In fact, due to non-compactness, it always holds that \( I_n = +\infty \).
By similarly defining the above notations for each $X_c$ and $h_c$, we have $m_0(L,h_\natural,\{x_j\}^r_{j=1})\geq m_0(L,h_c,\{x_j\}^r_{j=1})$ for any $r$ distinct points $x_1, \dots, x_r$ on $X_c$, and a similar inequality for \( h_c \) as above.

By singular positivity of $h$, $X$ carries a Hermitian metric $\omega_X$ that satisfies $\iO{L,h}\geq\omega_X$ on $X$ in the sense of currents.
Here, for each $X_c$, there exists a constant $\scr{C}_c>0$ such that $\scr{C}_c\omega_X\geq\iO{L,h_0}$ on $X_c$. Using this constant $\scr{C}_c$, the following corollary is obtained.

\begin{theorem}\label{Thm of Ext [Tak98, Thm 4.1 and 4.2]}
    Let $x_1,\ldots,x_r$ be $r$ distinct points on a subset $X_c\setminus\bigcup_{p\in\bb{N}}V(\scr{I}(h^p))$. %, where $E_0(h)=\{x\in X\mid\nu(-\log\,h,x)>0\}$.
    Then for any positive integer $m\geq\scr{C}_c\cdot m_0(L,h_\natural,\{x_j\}^r_{j=1})$, there exists a singular positive Hermitian metric $\cal{H}_c$ of $L|_{X_c}^{\otimes m}$ 
    such that $\scr{I}(\cal{H}_c)\subset\scr{I}(h_\natural^m)$ and $\{x_j\}^r_{j=1}\subset V(\scr{I}(\cal{H}_c))$, and that some point $x_j$ is isolated in $V(\scr{I}(\cal{H}_c))$. %a positive rational number $\vartheta_c$ smaller than $m(L,h_c,\{x_j\}^r_{j=1})$ 
    
    Furthermore, the space $H^0(X_c,K_X\otimes L^{\otimes m}\otimes\scr{I}(h_\natural^m))$ separates $\{x_j\}^r_{j=1}$, 
    i.e., the restriction map $H^0(X_c,K_X\otimes L^{\otimes m}\otimes\scr{I}(h_\natural^m))\longrightarrow\bigoplus^r_{j=1}\cal{O}_X/\fra{m}_{X,x_j}$ is surjective.
\end{theorem}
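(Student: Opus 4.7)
The plan is to apply Theorem \ref{Ext [Tak98, Thm 4.1]} with the algebraic-singularities metric $h_c$ of Setting \ref{Setting of each sublevel set} at the smallest allowed power $m' := m_0(L,h_c,\{x_j\}^r_{j=1})$, and then tensor with appropriate powers of $h_c$ and the dual smooth metric $h_0^*$ to promote the resulting metric to a singular positive Hermitian metric on $L|_{X_c}^{\otimes m}$ whose multiplier ideal sheaf lies in $\scr{I}(h_\natural^m)$ rather than merely in $\scr{I}(h_c^{m-\vartheta_c})$. The constant $\scr{C}_c$ is precisely the curvature slack this tensoring procedure consumes.

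First I record that $\widetilde{\rom{Vol}}_Y(L\otimes\scr{I}(h_\natural)) \leq \widetilde{\rom{Vol}}_Y(L\otimes\scr{I}(h_c))$ by the very definition of $\rom{Vol}_Y(L\otimes\scr{I}(h_\natural))$ as a minimum over sublevel indices, hence $m_0(L,h_c,\{x_j\}) \leq m_0(L,h_\natural,\{x_j\}) \leq m/\scr{C}_c$, so $m \geq \scr{C}_c m'$. Identifying the metric $h_c$ of Setting \ref{Setting of each sublevel set} with the $h_j$ of Theorem \ref{Thm characterizations of canonical sHm} via Remark \ref{Remark of Setting on each sublevel set}, condition $(c)$ of Theorem \ref{Thm characterizations of canonical sHm} yields $\scr{I}(h_c^p) \subseteq \scr{I}(h_\natural^p)$ on $X_c$ for every $p\in\bb{N}$. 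Theorem \ref{Ext [Tak98, Thm 4.1]} applied to $(L,h_c)$ at the power $m'$ then produces a positive rational number $\vartheta_c < m'$ and a singular positive Hermitian metric $H_c = h_c^{\otimes(m'-\vartheta_c)} \otimes \bigl(\sum_j|s_{0,j}|^2\bigr)^{-1}$ on $L|_{X_{c+\varepsilon_c}}^{\otimes m'}$, with $\{x_j\}^r_{j=1} \subset V(\scr{I}(H_c))$ and some point $x_j$ isolated.

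Next, define
\begin{align*}
    \cal{H}_c := H_c \otimes h_c^{\otimes(m-m'+\vartheta_c)} \otimes h_0^{*\otimes\vartheta_c}
\end{align*}
on $L|_{X_{c+\varepsilon_c}}^{\otimes m}$, where the rational exponent $\vartheta_c$ is handled by passing to an integer multiple and taking roots as in $\S\ref{subsection: multivalued section and natural sHm}$. The local weight of $\cal{H}_c$ relative to $h_0^{\otimes m}$ equals $m\varphi_c + \tfrac{1}{2}\log\sum_j|s_{0,j}|^2$, whence
\begin{align*}
    \scr{I}(\cal{H}_c) \subseteq \scr{I}(m\varphi_c) = \scr{I}(h_c^m) \subseteq \scr{I}(h_\natural^m).
\end{align*}
For singular positivity, using $\iO{L,h_c} \geq \omega_X$ (modulo an $\varepsilon$-error that is absorbed in the positive margin of $H_c$) and $\scr{C}_c\omega_X \geq \iO{L,h_0}$, I obtain
\begin{align*}
    \iO{L^{\otimes m},\cal{H}_c} &= \iO{L^{\otimes m'},H_c} + (m-m'+\vartheta_c)\iO{L,h_c} - \vartheta_c\iO{L,h_0}\\
    &\geq \iO{L^{\otimes m'},H_c} + \bigl(m-m'-\vartheta_c(\scr{C}_c-1)\bigr)\omega_X,
\end{align*}
and the hypothesis $m\geq\scr{C}_c m'$ combined with $\vartheta_c<m'$ gives $m-m' \geq (\scr{C}_c-1)m' \geq \vartheta_c(\scr{C}_c-1)$, so the bracket is non-negative and $\cal{H}_c$ is singular positive on $X_{c+\varepsilon_c}$.

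The set-theoretic and isolation conditions transfer from $H_c$: by hypothesis each $x_j$ lies in $X_c\setminus\bigcup_p V(\scr{I}(h^p)) = X_c\setminus E_{+}(h)$, hence $x_j\notin Z_c$ by Proposition \ref{Prop E0(h)=cup Zj}, so $\varphi_c$ is smooth near $x_j$ and the local weight of $\cal{H}_c$ reduces to $\tfrac{1}{2}\log\sum_j|s_{0,j}|^2$ modulo smooth functions; thus $\scr{I}(\cal{H}_c)$ coincides with $\scr{I}(H_c)$ on any neighborhood of $x_j$ disjoint from $E_{+}(h)$, so $\{x_j\}^r_{j=1}\subset V(\scr{I}(\cal{H}_c))$ and the isolated point persists. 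For the separation statement, Nadel-type vanishing Theorem \ref{Thm Nadel type vanishing without Kahler} applied to $(L^{\otimes m},\cal{H}_c)$ on $X_c\Subset X_{c+\varepsilon_c}$ yields $H^1(X_c, K_X\otimes L^{\otimes m}\otimes\scr{I}(\cal{H}_c)) = 0$, after which the induction-on-$r$ scheme from the proof of Theorem \ref{Ext [Tak98, Thm 4.2]} runs verbatim with $\cal{H}_c$ in place of $H_c$, producing separating sections of $K_X\otimes L^{\otimes m}\otimes\scr{I}(\cal{H}_c)$ which then lie in $H^0(X_c,K_X\otimes L^{\otimes m}\otimes\scr{I}(h_\natural^m))$. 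The main obstacle is the curvature balance in the third paragraph, where the strict inequality $\vartheta_c<m'$ supplied by Theorem \ref{Ext [Tak98, Thm 4.1]} must be played against $m\geq\scr{C}_c m'$ to absorb the negative contribution $-\vartheta_c\iO{L,h_0}$ coming from the dual smooth metric $h_0^{*\otimes\vartheta_c}$.
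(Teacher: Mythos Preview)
Your construction is correct and produces literally the same metric as the paper: expanding your $\cal{H}_c = H_c \otimes h_c^{\otimes(m-m'+\vartheta_c)} \otimes h_0^{*\otimes\vartheta_c}$ with $H_c = h_c^{m'-\vartheta_c}\cdot(\sum_j|s_{0,j}|^2)^{-1}$ gives $h_c^m \cdot h_0^{-\vartheta_c} \cdot (\sum_j|s_{0,j}|^2)^{-1}$, which is exactly how the paper defines $\cal{H}_c$ directly (without the intermediate tensoring step). The multiplier-ideal inclusion via $\scr{I}(h_c^m)\subseteq\scr{I}(h_\natural^m)$, the isolation transfer using $x_j\notin Z_c$, and the Nadel-type vanishing plus induction-on-$r$ scheme all match the paper.

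The only point to tighten is the ``$\varepsilon$-error absorbed in the positive margin of $H_c$'' step. The loss $\iO{L,h_c}\geq(1-\delta_c)\omega_X$ from the approximation is not really absorbed by $\iO{H_c}$ alone: that margin is only $(m'-\vartheta_c)(1-\delta_c)\omega_X$, which does not dominate the error $(m-m'+\vartheta_c)\delta_c\,\omega_X$ for large $m$. The correct bookkeeping is that the \emph{total} curvature satisfies $\iO{L^{\otimes m},\cal{H}_c}\geq[m(1-\delta_c)-\vartheta_c\scr{C}_c]\,\omega_X$, and strict positivity follows because $\vartheta_c\scr{C}_c < m'\scr{C}_c \leq m$ combined with the freedom (from Theorem~\ref{Blow ups of Dem appro with log poles and ideal sheaves}) to refine $h_c$ so that $\delta_c < 1 - \vartheta_c\scr{C}_c/m$. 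This is the mechanism the paper invokes explicitly; once you phrase it this way your argument is complete.
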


\begin{proof}
    Similarly to the proof of Theorem \ref{Ext [Tak98, Thm 4.1]}, we see that the statement $\mathbf{A_0}$ holds.
    Using similar notation, there exist a rational number $\eta_0\leq w_0$, a rational number $\varepsilon_0<m_0-w_0$ and multivalued holomorphic sections $\{s_{0,j}\}^{k_0}_{j=1}$ of $L^{\otimes(\eta_0+\varepsilon_0)}$ on $X_{c_0}$ 
    such that $\{x_j\}^r_{j=1}\subset V\big(\scr{I}\big((\sum_{j=1}^{k_0}|s_{0,j}|^2)^{-1}\big)\big)$ and that some $x_j$ is isolated in $V\big(\scr{I}\big((\sum_{j=1}^{k_0}|s_{0,j}|^2)^{-1}\big)\big)$. 
    We define the desired singular Hermitian metric $\cal{H}_c$ on $L|_{X_{c_0}}^{\otimes m}$ by 
    \begin{align*}
        \cal{H}_c:=h_0^{m-(\eta_0+\varepsilon_0)}\frac{h_0^{\eta_0+\varepsilon_0}}{\sum^{k_0}_{j=1}|s_{0,j}|^2_{h_0^{\eta_0+\varepsilon_0}}}e^{-2m\varphi_c}=h_c^m\cdot h_0^{-(\eta_0+\varepsilon_0)}\frac{1}{\sum^{k_0}_{j=1}|s_{0,j}|^2}.
    \end{align*}

    We prove that this metric $\cal{H}_c$ is singular positive on $X_{c_0}$.
    Here, $m_0=m_0(L,h_c,\{x_j\}^r_{j=1})$ and $\vartheta_c:=\eta_0+\varepsilon_0<m_0$. 
    By the construction of $h_c$ using Theorems \ref{Blow ups of Dem appro with log poles and ideal sheaves} and \ref{Thm characterizations of canonical sHm}, there exists a sufficiently small number $\delta_c>0$ such that $\iO{L,h_c}\geq(1-\delta_c)\omega_X$ in the sense of currents on $X_{c+\varepsilon_c}$, where $c_0\in (c,c+\varepsilon_c)$.
    %From the definition of $E_{h_\natural}$ and $x_j\in X_c\setminus E_{h_\natural}$ and for any $1\leq j\leq r$,
    Note that %$E_0(h):=\{x\in X\mid\nu(-\log\,h,x)>0\}$ 
    $\bigcup_{p\in\bb{N}}V(\scr{I}(h^p))=\bigcup_{c\in\bb{N}}Z_c$ by Proposition \ref{Prop E0(h)=cup Zj}.
    Since this metric $h_c$ can be replaced as $\delta_c$ approaching zero by blow-ups Theorem \ref{Blow ups of Dem appro with log poles and ideal sheaves}, we may assume that $\delta_c<1-\vartheta_c/m_0$. 
    Thus, we obtain %the following %inequality
    \begin{align*}
        \iO{L^{\otimes m},\cal{H}_c}%&=m\,\iO{L,h_c}-\vartheta_c\,\iO{L,h_0}+\idd\log\sum^{k_0}_{j=1}|s_{0,j}|^2
        &\geq m\,\iO{L,h_c}-\vartheta_c\,\iO{L,h_0}
        =(m-\scr{C}_cm_0)\iO{L,h_c}+\scr{C}_cm_0\iO{L,h_c}-\vartheta_c\iO{L,h_0}\\
        &\geq(m-\scr{C}_cm_0)(1-\delta_c)\omega_X+\scr{C}_c m_0\Big(1-\delta_c-\frac{\vartheta_c}{m_0}\Big)\omega_X
        >0.
    \end{align*}
    Similarly to the proof of Theorem \ref{Ext [Tak98, Thm 4.2]}, the proof is complete.
\end{proof}

\begin{remark}\label{Remark of points separation thm on Xc}
    For each \( X_c \) and \( h_c \), there exists a sufficiently small \( \delta_c>0 \) such that \( \iO{L,h_c}\geq(1-\delta_c)\omega_X \) by blow-ups Theorem \ref{Blow ups of Dem appro with log poles and ideal sheaves}. 
    If the assumption in Theorem \ref{Thm of Ext [Tak98, Thm 4.1 and 4.2]} is changed to $m \geq \frac{\scr{C}_c}{1-\delta_c}m_0(L,h_c,\{x_j\}^r_{j=1})$, then $r$ discrete points \( x_1, \dots, x_r \) can be chosen from \( X_c \setminus Z_c \), and a similar statement can be obtained regarding $h_c$ instead of $h_\natural$.
\end{remark}

The difference between Theorem \ref{Ext [Tak98, Thm 4.2]} and \ref{Thm of Ext [Tak98, Thm 4.1 and 4.2]} (or Remark \ref{Remark of points separation thm on Xc}) is ideal sheaves $\scr{I}(h_c^{m-\vartheta_c})$ and $\scr{I}(h_c^m)$. 
The latter is effective in global discussions because Approximation Theorem \ref{Approximation thm of hol section with ideal sheaves} can be applied to it.

\section{Global points separation, embedding and bigness theorem \\ on weakly pseudoconvex manifolds}\label{Section 8: Global Main results}

In this section, we provide global theorems as main results. %Let $(X,\varPsi)$ be a weakly pseudoconvex manifold of dimensional $n$.
Recall that $X$ is an $n$-dimensional weakly pseudoconvex manifold and $(L,h)$ be a singular positive line bundle on $X$. 
Here, $X$ carries a Hermitian metric $\omega_X$ that satisfies $\iO{L,h}\geq\omega_X$ on $X$ in the sense of currents.
First, we can derive the following result %from Theorem \ref{Thm of Ext [Tak98, Thm 4.1 and 4.2]} and Approximation Theorem \ref{Approximation thm of hol section with ideal sheaves}. %, where $\iO{L,h}\geq\omega_X$ on $X$ in the sense of currents.

\begin{theorem}$($\textnormal{\textbf{Global points separation}}$)$\label{Thm global points separation}
    We assume that there exist a constant $C>0$ and a smooth Hermitian metric $h_0$ such that $C\omega_X\geq\iO{L,h_0}$ on $X$. 
    Let $x_1,\ldots,x_r$ be $r$ distinct points on $X\setminus \bigcup_{p\in\bb{N}}V(\scr{I}(h^p))$.  
    Then for any positive integer %$m\geq C\big(n(n+2r-1)/2+1\big)$, 
    \begin{align*}
        m\geq C\Big(\frac{n(n+2r-1)}{2}+1\Big),
    \end{align*}
    the space $H^0(X,K_X\otimes L^{\otimes m}\otimes\scr{I}(h^m))$ separates $\{x_j\}^r_{j=1}$,
    i.e., the restriction map $H^0(X,K_X\otimes L^{\otimes m}\otimes\scr{I}(h^m))\longrightarrow \bigoplus^r_{j=1}\cal{O}_X/\fra{m}_{X,x_j}$ is surjective. 
    %there exist global sections $\{\sigma_j\}^r_{j=1}\subset H^0(X,K_X\otimes L^{\otimes m}\otimes\scr{I}(h^m))$ which separate $\{x_j\}^r_{j=1}$. 

    In particular, if $m\geq C\big(n(n+1)/2+1\big)$ then the adjoint bundle $K_X\otimes L^{\otimes m}$ is generated by global sections on $X\setminus\bigcup_{k\in\bb{N}}V(\scr{I}(h^k))$. 
\end{theorem}

Theorem \ref{Thm global points separation} can be proven by using Theorem \ref{Thm of Ext [Tak98, Thm 4.1 and 4.2]} and Approximation Theorem \ref{Approximation thm of hol section with ideal sheaves}, 
and taking a singular Hermitian metric $h_\natural:=h_\natural(m)$ constructed via approximation for each \( m \) above, leading to $\scr{I}(h_\natural^m)=\scr{I}(h^m)$ from Theorem \ref{Thm characterizations of canonical sHm}.
Recall that the set $E_{+}(h)$ is defined as the non-zero locus of the Lelong number, and it is known that
\begin{center}
    $E_{+}(h):=\{x\in X\mid\nu(-\log\,h,x)>0\}=\bigcup_{p\in\bb{N}}V(\scr{I}(h^p))=\bigcup_{c\in\bb{N}}Z_c$,
\end{center}
by Proposition \ref{Prop E0(h)=cup Zj}.
%\begin{align*}
%    E_0(h):=\{x\in X\mid\nu(-\log\,h,x)>0\}=\bigcup_{p\in\bb{N}}V(\scr{I}(h^p))=\bigcup_{c\in\bb{N}}Z_c.
%\end{align*}
Furthermore, if \( X \) is non-compact, the condition of Theorem \ref{Thm global points separation} can be made weaker, namely \(m\geq C\big(n(n+2r-3)/2+2-r\big)\).

In what follows, we assume that \( X \) is \textit{non}-\textit{compact} in this section.

\begin{theorem}$($\textnormal{\textbf{Global embedding}}$)$\label{Thm global embedding}
    We assume that there exist a constant $C>0$ and a smooth Hermitian metric $h_0$ such that $C\omega_X\geq\iO{L,h_0}$ on $X$. Let $m$ be a positive integer with $m\geq C\cdot n(n+1)/2$. %$m_2(L,h_\natural)$.
    Then, any open subset of $X\setminus\bigcup_{k\in\bb{N}}V(\scr{I}(h^k))$ %, which is countable at infinity, 
    is holomorphically embeddable in to $\bb{P}^{2n+1}$ by a linear subsystem of $|(K_X\otimes L^{\otimes m})^{\otimes n+1+\ell}\otimes L^{\otimes p}\otimes\scr{I}(h^{p+1})|$ for any $\ell\geq1$ and $p\geq0$.
    In other words, the adjoint bundle $K_X\otimes L^{\otimes m}$ is ample over any open subset of $X\setminus\bigcup_{k\in\bb{N}}V(\scr{I}(h^k))$, and 
    the line bundle $(K_X\otimes L^{\otimes m})^{\otimes n+1+\ell}\otimes L^{\otimes p}$ is very ample over any open subset of $X\setminus\bigcup_{k\in\bb{N}}V(\scr{I}(h^k))$ for any $\ell\geq1$ and $p\geq0$.
\end{theorem}

Here, if $h$ is smooth positive then we can take $C=1$, $h=h_0$ and $\omega_X=\iO{L,h}=\iO{L,h_0}$.

\begin{proof}
    We take $\scr{X}$ as an arbitrary open subset of $X\setminus\bigcup_{k\in\bb{N}}V(\scr{I}(h^k))$.
    For the integer \( q = \max\{m,p+1\} \), by taking the singular Hermitian metric \( h_\natural:=h_\natural(q) \) constructed via approximation of $h$, it follows from Theorem \ref{Thm characterizations of canonical sHm} that \( \scr{I}(h_\natural^k)=\scr{I}(h^k) \) for any $k\leq q$.
    By Theorem \ref{Thm global points separation}, for every relatively compact open subset $K$ of $\scr{X}$, 
    there exists a finite dimensional subspace $V_K\subset H^0(X,K_X \otimes L^{\otimes m}\otimes\scr{I}(h^m))$ such that the Kodaira map $\varPhi_{V_K}:X\dashrightarrow \bb{P}(V_K)$ is holomorphic and one to one over $K$, where $K\subset X\setminus\rom{Bs}_{|V_K|}$.

    Let $N:=\rom{dim}\,V_K-1$ and $\cal{L}:=K_X\otimes L^{\otimes m}$, and let $h_{\cal{L}}$ be a smooth Hermitian metric on $\cal{L}$. For any point $x\in K$, there exists a singular Hermitian metric $\cal{H}_x$ on $\cal{L}^{\otimes n+1}$ over $X$ such that the curvature current is semi-positive, $x$ is isolated in $V(\scr{I}(\cal{H}_x))$, and that $\scr{I}(\cal{H}_x)_x\subset\fra{m}^2_{X,x}$ (see \cite[Lemma\,5.1]{Tak98}). %, \cite[Section\,4]{AS95}).
    This metric \( \cal{H}_x \) can be constructed as follows. Let $(w_0:\cdots:w_N)$ be a homogenuous coordinate of $\bb{P}^N=\bb{P}(V_K)$ such that $\varPhi_{V_K}(x)=\{w_1=\cdots=w_N=0\}$. In other words, \( x \) is an isolated common zero of $\{\varPhi^*_{V_K}w_j\}^N_{j=1}$.
    For a local coordinate $(z_1,\ldots,z_n)$ centered at $x$, the inequality $\sum^N_{j=1}|\varPhi^*_{V_K}w_j|^2_{h_{\cal{L}}}\leq\cal{C}_x\sum^n_{j=1}|z_j|^2$ holds, where $\cal{C}_x>0$ is a constant. 
    Thus, the desired singular metric \( \cal{H}_x \) on \( \cal{L} \) is obtained by 
    \begin{align*}
        \cal{H}_x:=\frac{h_{\cal{L}}^{\otimes n+1}}{\big(\sum^N_{j=1}|\varPhi^*_{V_K}w_j|^2_{h_{\cal{L}}}\big)^{n+1}}=\frac{1}{\big(\sum^N_{j=1}|\varPhi^*_{V_K}w_j|^2\big)^{n+1}},
    \end{align*}
    which is defined on \( X\setminus\rom{Bs}_{|V_K|} \), is singular semi-positive, and satisfies the desired properties (see Skoda's result \cite[Lemma\,5.6\,(b)]{Dem12}).
    Locally, for some open neighborhood $U$ of each point in $\rom{Bs}_{|V_K|}$, the weight function $\varphi_{\cal{H}_x}:=(n+1)\log\sum^N_{j=1}|\varPhi^*_{V_K}w_j|^2$ of $\cal{H}_x$ is plurisubharmonic on $U\setminus\rom{Bs}_{|V_K|}$, and it tends to $-\infty$ as the point approaches $\rom{Bs}_{|V_K|}$. 
    Since $\varphi_{\cal{H}_x}$ is bounded above, it extends uniquely as a plurisubharmonic function. 
    Hence, by defining the value of $\cal{H}_x$ at $\rom{Bs}_{|V_K|}$ to be $+\infty$, the metric $\cal{H}_x$ can be redefined as a singular Hermitian metric of $\cal{L}$ over the whole $X$, and it is singular semi-positive on $X$.
    %By $\rom{codim}\,\rom{Bs}_{|V_K|}\geq2$, each \( \varPhi^*_{V_K}w_j \) uniquely extends to a holomorphic section \( \tau_j \) of \( \cal{L} \), i.e., $\tau_j\in H^0(X,\cal{L})$.
    %Thus, \( \cal{H}_x \) is extended on $X$ as a natural singular Hermitian metric induced by sections \( \tau_1,\ldots,\tau_N \).

    Let $h_{V_K}$ be a natural singular Hermitian metric on $\cal{L}$ defined by $V_K$, then this metric is singular semi-positive.
    Here, there exists an integer $c\in\bb{N}$ such that $K\subset X_c\setminus Z_c$.
    We define a singular Hermitian metric $\hbar_x$ on $\cal{L}^{\otimes n+\ell}\otimes L^{\otimes p+1}\longrightarrow X$ by $\hbar_x:=\cal{H}_x\otimes h_{V_K}^{\otimes\ell-1}\otimes h^{\otimes p+1}$, then this metric is singular positive and satisfies $\scr{I}(\hbar_x)\subset\scr{I}(h^{p+1})$. %and $\scr{I}(\hbar_\ell)$ are contained in $\scr{I}(h_c^\ell)$.
    Applying the Nadel-type vanishing Theorem \ref{Thm Nadel type vanishing without Kahler} to $(\cal{L}^{\otimes n+\ell}\otimes L^{\otimes p+1},\hbar_x)$ on $X_c$, for any $x\in K$ the following restriction map is surjective.
    \begin{align*}
        H^0(X_c,(K_X\otimes L^{\otimes m})^{\otimes n+1+\ell}\otimes L^{\otimes p}\otimes\scr{I}(\hbar_x))\longrightarrow\cal{O}_X/\fra{m}^2_{X,x}
    \end{align*}
    %is surjective. 

    By \cite[Lemma\,5.1]{Tak98} and the above extension to a natural singular Hermitian metric, for any distinct two points $x$ and $y$ on $K$, there exists a singular Hermitian metric $\cal{H}_{x,y}$ on $\cal{L}^{\otimes n+1}$ 
    such that the curvature current is semi-positive, $\{x,y\}\subset V(\scr{I}(\cal{H}_{x,y}))$ and $x$ is isolated in $V(\scr{I}(\cal{H}_{x,y}))$. 
    By defining the singular Hermitian metric \( \hbar_{x,y} \) on $\cal{L}^{\otimes n+\ell}\otimes L^{\otimes p+1}$ as \( \hbar_{x,y}:=\cal{H}_{x,y}\otimes h_{V_K}^{\otimes\ell-1}\otimes h^{\otimes p+1} \), and similarly applying the Nadel-type vanishing Theorem \ref{Thm Nadel type vanishing without Kahler}, 
    there exists a holomorphic section \( \sigma\in H^0(X_c,(K_X\otimes L^{\otimes m})^{\otimes n+1+\ell}\otimes L^{\otimes p}\otimes\scr{I}(\hbar_{x,y})) \) that satisfies \( \sigma(x)\ne\sigma(y) \). Here, $\scr{I}(\hbar_{x,y})\subset\scr{I}(h^{p+1})$.

    Since the singular Hermitian metrics \( \hbar_x \) and \( \hbar_{x,y} \) are defined on \( X \), by applying the Approximation Theorem \ref{Approximation thm of hol section with ideal sheaves}, each section obtained on \( X_c \) can be obtained as a section on the entire \( X \).
    Hence, every relatively open subset $K$ of $\scr{X}$ can be embedded into a projective space by some finite dimensional subspace of $H^0(X,(K_X\otimes L^{\otimes m})^{\otimes n+1+\ell}\otimes L^{\otimes p}\otimes\scr{I}(h^{p+1}))$.
    Here, $\scr{X}$ is countable at infinity.
    Then this theorem follows from well-known argument of Whitney (cf. \cite[Chapter\,V]{Hor90}). 
\end{proof}

\begin{remark}\label{remark of only analytic singularities} 
    If \( h \) has only analytic singularities, then \( E_{+}(h) = \bigcup_{k\in\bb{N}}V(\scr{I}(h^k)) \) is an analytic subset, and under the same assumptions as Theorem \ref{Thm global embedding}, the complement  \( X\setminus E_{+}(h) \) is holomorphically embeddable in to \( \bb{P}^{2n+1} \).
\end{remark}

%First, for simplicity, we consider the embedding on each sublevel set $X_c$.
By using Theorem \ref{Ext [Tak98, Thm 4.2]}, we obtain the following in the same way as the proof above.

\begin{corollary}
    Let $m$ be a positive integer with $m\geq n(n+1)/2$, then the holomorphic line bundle $K_X\otimes L^{\otimes m}$ is singular semi-positive and big on $X_c$, and %there exists a positive rational number $\vartheta_c$ smaller than $m_2(L,h_c)$ such that 
    the open subset $X_c\setminus Z_c$ is holomorphically embeddable in to $\bb{P}^{2n+1}$ by a linear subsystem of $|(K_X\otimes L^{\otimes m})^{\otimes n+1+\ell}\otimes L^{\otimes p}\otimes\scr{I}(h_c^{p+1})|$ for any $\ell\geq1$ and $p\geq0$.
\end{corollary}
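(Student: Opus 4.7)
The statement is the sublevel-set analogue of Theorem \ref{Thm global embedding}, and the plan is simply to localize that proof to $X_c$ with the metric $h_c$ from Setting \ref{Setting of each sublevel set}. The essential simplification is that because $X_c$ is relatively compact and there is no need to extend sections to all of $X$, Approximation Theorem \ref{Approximation thm of hol section with ideal sheaves} is not used; points separation on $X_c$ is supplied directly by Theorem \ref{Ext [Tak98, Thm 4.2]} (or Remark \ref{Remark of points separation thm on Xc}), and the lack of a K\"ahler metric is absorbed into Nadel-type vanishing Theorem \ref{Thm Nadel type vanishing without Kahler}.

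\textbf{Singular semi-positivity and bigness of $K_X\otimes L^{\otimes m}$ on $X_c$.} Apply Theorem \ref{Ext [Tak98, Thm 4.2]} with $r=1$ to obtain, for every $x\in X_c\setminus Z_c$, a section $\sigma_x\in H^0(X_c,K_X\otimes L^{\otimes m}\otimes\scr{I}(h_c^{m-\vartheta_c}))$ with $\sigma_x(x)\neq0$, using $m_0(L,h_c,\{x\})\leq n(n+1)/2$. The natural singular Hermitian metric induced locally by finitely many such sections is singular semi-positive on $X_c$. Applying Theorem \ref{Ext [Tak98, Thm 4.2]} with $r=2$, whose bound $m_0\leq n(n+1)/2$ is precisely the hypothesis on $m$, the linear system separates arbitrary pairs of points of $X_c\setminus Z_c$; in particular the associated Kodaira map has maximal rank on $X_c\setminus Z_c$, so $\kappa(K_X\otimes L^{\otimes m})=n$ on $X_c$, giving bigness.

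\textbf{The embedding.} Choose a finite-dimensional subspace $V_{X_c}\subset H^0(X_c,K_X\otimes L^{\otimes m}\otimes\scr{I}(h_c^m))$ whose Kodaira map is one-to-one on $X_c\setminus Z_c$, and let $h_{V_{X_c}}$ be the natural singular semi-positive metric on $\cal{L}:=K_X\otimes L^{\otimes m}$ induced by $V_{X_c}$. For each $x\in X_c\setminus Z_c$ construct, exactly as in the proof of Theorem \ref{Thm global embedding} (following \cite[Lemma\,5.1]{Tak98}), a singular semi-positive metric $\cal{H}_x$ on $\cal{L}^{\otimes n+1}$ with $x$ isolated in $V(\scr{I}(\cal{H}_x))$ and $\scr{I}(\cal{H}_x)_x\subset\fra{m}_{X,x}^2$, and similarly $\cal{H}_{x,y}$ for $x\neq y$. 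Form the twisted metrics
\begin{align*}
    \hbar_x:=\cal{H}_x\otimes h_{V_{X_c}}^{\otimes\ell-1}\otimes h_c^{\otimes p+1}, \qquad \hbar_{x,y}:=\cal{H}_{x,y}\otimes h_{V_{X_c}}^{\otimes\ell-1}\otimes h_c^{\otimes p+1},
\end{align*}
on $\cal{L}^{\otimes n+\ell}\otimes L^{\otimes p+1}$. Since $h_c$ is singular positive on a neighborhood of $X_c$ (Remark \ref{Remark of Setting on each sublevel set}) and the other two factors are singular semi-positive, both metrics are singular positive, and their multiplier ideal sheaves are contained in $\scr{I}(h_c^{p+1})$. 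Applying Theorem \ref{Thm Nadel type vanishing without Kahler} to each of $(\cal{L}^{\otimes n+\ell}\otimes L^{\otimes p+1},\hbar_x)$ and $(\ldots,\hbar_{x,y})$ makes the restriction maps
\begin{align*}
    H^0(X_c,\cal{L}^{\otimes n+1+\ell}\otimes L^{\otimes p}\otimes\scr{I}(h_c^{p+1}))\longrightarrow \cal{O}_X/\fra{m}_{X,x}^2, \qquad \ldots\longrightarrow \cal{O}_X/(\fra{m}_{X,x}\cdot\fra{m}_{X,y})
\end{align*}
surjective. Since $X_c\setminus Z_c$ is relatively compact and hence countable at infinity, Whitney's classical embedding criterion (cf.\ \cite[Chapter\,V]{Hor90}) yields a holomorphic embedding of $X_c\setminus Z_c$ into $\bb{P}^{2n+1}$ by a linear subsystem of $|\cal{L}^{\otimes n+1+\ell}\otimes L^{\otimes p}\otimes\scr{I}(h_c^{p+1})|$.

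\textbf{Main obstacle.} The substantive point is the absence of a K\"ahler metric on $X_c$ or $\widetilde{X}_c$: the standard Nadel vanishing via Ohsawa's $L^2$-theory does not apply directly. This is exactly the issue resolved by Theorem \ref{Thm Nadel type vanishing without Kahler}, whose proof uses Theorem \ref{regularization with the equality of ideals and completeness} to produce a complete K\"ahler metric off $Z_c$ of Poincar\'e type along the exceptional divisors while preserving the multiplier ideal. Once that vanishing is granted, the only bookkeeping required is to choose $\cal{H}_x$, $\cal{H}_{x,y}$, and the twist $h_c^{\otimes p+1}$ so that the resulting metrics are singular positive; this uses the curvature bound $\iO{L,h_c}\geq(1-\delta_c)\omega_X$ on a slightly larger sublevel set guaranteed by blow-ups Theorem \ref{Blow ups of Dem appro with log poles and ideal sheaves}, after which the estimate $m\geq n(n+1)/2$ suffices without any global constant $C$.
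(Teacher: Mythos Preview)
Your proposal is correct and follows essentially the same route as the paper, which records the proof in a single line: ``By using Theorem \ref{Ext [Tak98, Thm 4.2]}, we obtain the following in the same way as the proof above.'' You have accurately unpacked this: replace the global points-separation input of Theorem \ref{Thm global points separation} by Theorem \ref{Ext [Tak98, Thm 4.2]} on $X_c$, drop the Approximation Theorem (no global extension is needed), and otherwise repeat the construction of $\cal{H}_x$, $\cal{H}_{x,y}$ and the twist by $h_c^{\otimes p+1}$, concluding via Nadel-type vanishing Theorem \ref{Thm Nadel type vanishing without Kahler} and the Whitney argument.

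Two cosmetic points to clean up. First, Theorem \ref{Ext [Tak98, Thm 4.2]} produces sections in $H^0(X_c,K_X\otimes L^{\otimes m}\otimes\scr{I}(h_c^{m-\vartheta_c}))$, not $\scr{I}(h_c^m)$; this is harmless here because the space $V_{X_c}$ is only used to build the semi-positive metric $h_{V_{X_c}}$ on $\cal{L}$ and the metrics $\cal{H}_x,\cal{H}_{x,y}$, where the ideal sheaf plays no role. Second, you cannot literally choose a \emph{single} finite-dimensional $V_{X_c}$ whose Kodaira map is one-to-one on all of $X_c\setminus Z_c$, since $X_c\setminus Z_c$ is not relatively compact in $X_c$; as in the proof of Theorem \ref{Thm global embedding} you should take $V_K$ for each relatively compact $K\Subset X_c\setminus Z_c$ and then invoke Whitney. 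You already cite Whitney, so this is just a phrasing fix.
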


Note that the \( p_c \) that leads to bigness in Theorem \ref{Thm bigness with ideal sheaves on each sublevel sets} can be estimated as \( p_c \geq n(n+1)/2 \) without considering the multiplier ideal sheaf $\scr{I}(h_c^{p_c})$.

%Recall that $X$ carries a Hermitian metric $\omega_X$ that satisfies $\iO{L,h}\geq\omega_X$ on $X$ in the sense of currents.
For any $c>\inf_X\varPsi$ and any smooth Hermitian metric $h_0$ on $L$ over $X$, we define $\scr{C}_{c,h_0}:=\min\{C>0\mid C\omega_X\geq\iO{L,h_0} \text{ on } \overline{X}_c\}$.
Let \( \scr{C}_{c,h_0} \) be taken as small as possible by varying $c>\inf_X\varPsi$ and \( h_0 \), and denote it as \( \fra{C}_c \). 
Using this $\fra{C}_c$, we obtain the following.

\begin{theorem}$($\textnormal{\textbf{Global bigness}}$)$\label{Thm global bigness theorem}
    There exists a positive integer $m_0$ such that for any integer $m\geq m_0$, the adjoint bundle $K_X\otimes L^{\otimes m}$ is singular semi-positive and big on $X$.
    Here, the integer \( m_0 \) can be taken as 
    \begin{align*}
        m_0=\lfloor\fra{C}_c\frac{n(n+1)}{2}\rfloor+1.
    \end{align*}
\end{theorem}

Clealy, the adjoint bundle $K_X\otimes L^{\otimes m+1}$ is also singular positive on $X$.

\begin{proof}
    By the construction of \( h_c \), we have \( \iO{L,h_c}\geq(1-\delta_c)\omega_X \) on $X_c$ for some \( \delta_c>0 \). 
    Since this metric $h_c$ can be replaced while preserving $\scr{I}(h_c^m)=\scr{I}(h^m)$ on $X_c$ as $\delta_c$ approaching zero by blow-ups Theorem \ref{Blow ups of Dem appro with log poles and ideal sheaves}, we may assume that $m_0>\frac{\fra{C}_c}{1-\delta_c}n(n+1)/2$. 
    From Remark \ref{Remark of points separation thm on Xc}, for any relatively compact subset $K$ of $X_c\setminus Z_c$, there exists a finite dimensional subspace $W_K\subset H^0(X_c,K_X\otimes L^{\otimes m}\otimes\scr{I}(h_c^m))$ such that the Kodaira map $\varPhi_{W_K}:X_c\dashrightarrow \bb{P}(W_K)$ is holomorphic and one to one over $K$. 
    %By taking the canonical singular Hermitian metric \( h_\natural:=h_\natural(q) \) of $h$, it follows from Theorem \ref{Thm characterizations of canonical sHm} that \( \scr{I}(h_\natural^k)=\scr{I}(h^k) \) for any $k\leq q$.
    Using the equation $\scr{I}(h_c^m)=\scr{I}(h^m)$ and %$\scr{I}(h_c^m)\subset\scr{I}(h_\natural^m)$ and 
    Approximation Theorem \ref{Approximation thm of hol section with ideal sheaves}, there exists a finite dimensional subspace $V_K\subset H^0(X,K_X\otimes L^{\otimes m}\otimes\scr{I}(h^m))$ such that the Kodaira map $\varPhi_{V_K}:X_c\dashrightarrow \bb{P}(V_K)$ is also holomorphic and one to one over $K$.
    Hence, $K_X\otimes L^{\otimes m}$ has a singular semi-positive Hermitian metric $h_{V_K}$ and is big on $X$.
    In fact, similarly to the proof of Theorem \ref{Thm global embedding}, the linear subsystem of \( |(K_X\otimes L^{\otimes m})^{\otimes n+2}\otimes\scr{I}(h)| \) has a finite dimensional subspace $\scr{V}_K$ that induces a Kodaira map $\varPhi_{\scr{V}_K}$ embedding the set \( K \).
\end{proof}

\begin{theorem}\label{Thm global bigness and Vol if Zc is empty}
    If there exists $c>\inf_X\varPsi$ such that $E_{+}(h)\bigcap X_c=\emptyset$, then  %$X_c$ is Stein and $K_X\otimes L$ is singular semi-positive. 
    the adjoint bundle $K_X\otimes L^{\otimes m}$ is big for any integer $m\geq n(n+1)/2$. 
    Furthermore, if $m\geq n(n-1)/2+1$, then the adjoint bundle $K_X\otimes L^{\otimes m}$ is singular semi-positive, and we have the inequality $\rom{Vol}_X(K_X\otimes L^{\otimes m+1})>0$.
\end{theorem}

\begin{proof}
    We take the singular positive Hermitian metric \( h_c \) on $L|_{X_c}$ from blow-ups Theorem \ref{Blow ups of Dem appro with log poles and ideal sheaves} as in $\S\ref{Setting of each sublevel set}$, and let \( Z_c \) be a singular locus of $h_c$.
    It follows from the assumption that $Z_c=\emptyset$ and $\scr{I}(h_c^p)=\scr{I}(h^p)=\cal{O}_X$ on $X_c$ for any $p\in\bb{N}$. In other words, \( h_c \) is simply smooth positive. In particular, in Theorem \ref{Thm of Ext [Tak98, Thm 4.1 and 4.2]} we may take $\scr{C}_c=1$.
    We take an integer $m\geq n(n-1)/2+1$. By Theorem \ref{Ext [Tak98, Thm 4.2]}, there exists a finite dimensional subspace $V\subset H^0(X_c,K_X\otimes L^{\otimes m})$ such that $\rom{Bs}_{|V|}=\emptyset$, %the Kodaira map $\varPhi_V$ is holomorphic and one to one over $X_c$.
    and by applying Approximation Theorem \ref{Approximation thm of hol section with ideal sheaves}, there exists a finite dimensional subspace $\cal{V}\subset H^0(X,K_X\otimes L^{\otimes m}\otimes\scr{I}(h^m))$ induced by $V$ such that $\rom{Bs}_{|\cal{V}|}\bigcap X_c=\emptyset$.
    Thus, \( K_X\otimes L^{\otimes m} \) carries a natural singular semi-positive Hermitian metric $h_{\cal{V}}$ on $X$ induced by $\cal{V}$, which is smooth semi-positive on $X_c$.
    If $m\geq n(n+1)/2$, then similarly to the proof of Theorem \ref{Thm global embedding}, the linear subsystem of \( |(K_X\otimes L^{\otimes m})^{\otimes n+2}\otimes\scr{I}(h)| \) has a finite dimensional subspace $\scr{V}_{X_c}$ that induces a Kodaira map $\varPhi_{\scr{V}_{X_c}}$ embedding the set \( X_c \).
    %Here, Theorem \ref{Thm of Ext [Tak98, Thm 4.1 and 4.2]} is applied with $\scr{C}_c = 1$.
    Hence, the adjoint bundle $K_X\otimes L^{\otimes m}$ is big.

    Here, $\cal{L}:=K_X\otimes L^{\otimes m+1}$ admits a singular positive Hermitian metric $\cal{H}:=h_{\cal{V}}\otimes h$ over $X$, where $m\geq n(n-1)/2+1$.
    From Theorem \ref{singular holomorphic Morse inequality on w.p.c} and its proof, %we can again choose $h_c$ on $L|_{X_c}$ appropriately, there exists 
    for a suitable number $b\in(\inf_X\varPsi,c)$ and a smooth positive Hermitian metric $\cal{H}_b:=h_{\cal{V}}\otimes h_c$ on $\cal{L}|_{X_b}$, we obtain the following singular holomorphic Morse inequality 
    \begin{align*}
        \rom{dim}\,H^0(X_b,\cal{L}^p)\geq\frac{p^n}{n!}\int_{X_b(\leq1)}\Big(\frac{i}{2\pi}\Theta_{\cal{L},\cal{H}_b}\Big)^n+o(p^n).
    \end{align*}
    For any $p\in\bb{N}$, let $H_p:=h^{m+1}\otimes\cal{H}^p=h^{m+p+1}\otimes h_{\cal{V}}^p$ be a singular Hermitian metric on $L^{\otimes m+1}\otimes\cal{L}^{\otimes p}$ over $X$, then $H_p$ is singular positive and we have $\scr{I}(H_p)=\scr{I}(h^{m+p+1})=\cal{O}_X$ on $X_b$. %, by smoothness of $h_{\cal{V}}$ on $X_b$.
    By the applying Approximation Theorem \ref{Approximation thm in Introduction} and Corollary \ref{Corollary equation of dim H0}, the following inclusion relation is derived.
    \begin{align*}
        \rom{dim}\,H^0(X,\cal{L}^{\otimes p+1})&\geq\rom{dim}\,H^0(X,K_X\otimes L^{\otimes m+1}\otimes\cal{L}^{\otimes p}\otimes\scr{I}(H_p))\\
        &=\rom{dim}\,H^0(X_b,K_X\otimes L^{\otimes m+1}\otimes\cal{L}^{\otimes p}\otimes\scr{I}(H_p))=\rom{dim}\,H^0(X_b,\cal{L}^{\otimes p+1}).
    \end{align*}
    Therefore, from the above discussion, we obtain \( \rom{Vol}_X(K_X\otimes L^{\otimes m+1}) > 0 \).
\end{proof}

%Here, under the assumptions of Theorem \ref{Thm global bigness and Vol if Zc is empty}, we can construct a smooth Hermitian metric \( \widetilde{h}_0 \) on $L$ that satisfies \( \scr{C}_{c,\widetilde{h}_0} = 1 \).

Here, let us consider the case where we additionally assume that $L$ is semi-positive.
In this situation, we first observe that in Theorem \ref{Thm global embedding in Introduction} the constant $C$ may be chosen as any number greater than $1$.
In fact, let $h_{\geq0}$ be a smooth semi-positive Hermitian metric on $L$, then for any $\varepsilon\in (0,1)$, the newly constructed singular Hermitian metric $\widetilde{h}:=h^{1-\varepsilon}\cdot h^{\varepsilon}_{\geq0}$ on $L$ is also singular positive and satisfies 
$\iO{L,\widetilde{h}}\geq(1-\varepsilon)\omega_X+\varepsilon\iO{L,h_{\geq0}}=:\varpi_{X}>0$ in the sense of currents and $\varpi_X\geq\varepsilon \iO{L,h_{\geq0}}$, i.e., $(1/\varepsilon)\varpi_X\geq\iO{L,h_{\geq0}}$, here $h$ is singular positive and satisfies $\iO{L,h}\geq\omega_X$ in the sense of currents. 
Therefore, we may take $C=1/\varepsilon$. The following theorem provides that $C=1$ can be taken.

\begin{theorem}\label{Thm global embedding with semi-positive}
    Let $X$ be a non-compact weakly pseudoconvex manifold and $(L,h)$ be a singular positive line bundle.
    Furthermore, if $L$ is semi-positive, we have the following. 
    \begin{itemize}
        \item [] \hspace{-10mm} $($\textnormal{\textbf{Global points separation}}$)$ Let $x_1,\ldots,x_r$ be $r$ distinct points on $X\setminus\bigcup_{k\in\bb{N}}\!V(\scr{I}(h^k))$. 
        Then for any positive integer $m\geq \big(n(n+2r-3)/2+2-r\big)$, there exists a positive rational number $\vartheta_r\in\bb{Q}_{>0}$ smaller than $m$ such that the space $H^0(X,K_X\otimes L^{\otimes m}\otimes\scr{I}(h^{m-\vartheta_r}))$ separates $\{x_j\}^r_{j=1}$, 
        i.e., the restriction map $H^0(X,K_X\otimes L^{\otimes m}\otimes\scr{I}(h^{m-\vartheta_r}))\longrightarrow \bigoplus^r_{j=1}\cal{O}_X/\fra{m}_{X,x_j}$ is surjective. 

        In particular, if $m\geq \big(n(n-1)/2+1\big)$ then the adjoint bundle $K_X\otimes L^{\otimes m}$ is generated by global sections on $X\setminus\bigcup_{k\in\bb{N}}V(\scr{I}(h^k))$. 
        \item [] \hspace{-10mm} $($\textnormal{\textbf{Global embedding}}$)$ Let $m$ be a positive integer with $m\geq n(n+1)/2$. %$m_2(L,h_\natural)$.
        Then, any open subset of $X\setminus\bigcup_{k\in\bb{N}}V(\scr{I}(h^k))$ %which is countable at infinity, 
        is holomorphically embeddable in to $\bb{P}^{2n+1}$ by a linear subsystem of $|(K_X\otimes L^{\otimes m})^{\otimes n+1+\ell}\otimes L^{\otimes p}\otimes\scr{I}(h^{p+1})|$ for any $\ell\geq1$ and $p\geq0$.
        In particular, %for any integer $m\geq C\cdot n(n+1)/2$, 
        the adjoint bundle $K_X\otimes L^{\otimes m}$ is ample over any open subset of $X\setminus\bigcup_{k\in\bb{N}}V(\scr{I}(h^k))$, and 
        the tensor power of the adjoint bundle $(K_X\otimes L^{\otimes m})^{\otimes n+2}$ is very ample over any open subset of $X\setminus\bigcup_{k\in\bb{N}}V(\scr{I}(h^k))$.
    \end{itemize}
\end{theorem}

\begin{proof}
    We take an integer $m\geq \big(n(n+2r-3)/2+2-r\big)$. 
    There exist $c>\inf_X\varPsi$, a singular Hermitian metric $h_c$ on $L|_{X_c}$ and an analytic subset $Z_c\subset X_c$ as in Setting \ref{Setting of each sublevel set}
    such that $x_1,\ldots,x_r\in X_c\setminus Z_c\subset X\setminus\bigcup_{k\in\bb{N}}V(\scr{I}(h^k))$. By Theorem \ref{Blow ups of Dem appro with log poles and ideal sheaves} or (\ref{subsection: canonical sHm}\,$d$), the singular metric $h_c$ can be chosen such that $\scr{I}(h_c^t)=\scr{I}(h^t)$ on $X_c$ for any $0<t\leq m$. 

    By Theorem \ref{Ext [Tak98, Thm 4.2]}, there exists a rational number $\vartheta_r\in \bb{Q}_{>0}$ smaller than $m$ such that the space $H^0(X_c,K_X\otimes L^{\otimes m}\otimes\scr{I}(h^{m-\vartheta_r}))$ separates $\{x_j\}^r_{j=1}$, here $\scr{I}(h_c^{m-\vartheta_r})=\scr{I}(h^{m-\vartheta_r})$ on $X_c$. 
    Let $h_{\geq0}$ be a smooth semi-positive Hermitian metric on $L$, then the newly constructed singular Hermitian metric $\widetilde{h}:=h^{\frac{m-\vartheta_r}{m}}\cdot h^{\frac{\vartheta_r}{m}}_{\geq0}$ on $L$ is also singular positive and satisfies $\scr{I}(\widetilde{h}^m)=\scr{I}(h^{m-\vartheta_r})$ on $X$. 
    Hence, by applying the Approximation Theorem \ref{Approximation thm in Introduction} to $(L^{\otimes m},\widetilde{h}^m)$, we obtain global points separation. 
    The global embedding is proved using this global points separation, similarly to the proof of Theorem \ref{Thm global embedding}. 
\end{proof}

\begin{theorem}
    %Let $X$ be a weakly pseudoconvex manifold.
    If a weakly pseudoconvex manifold $X$ has a singular positive line bundle, %i.e., meaning it admits a integral \kah current, 
    then $X$ is generalized Moishezon.
    Conversely, if a complex manifold $X$ is generalized Moishezon, %i.e., meaning it admits a big line bundle $L$, 
    then there exist a singular semi-positive Hermitian metric $h$ on a holomorphic line bundle and analytic subsets $A$ and $B$ of $X$ %with $\rom{codim}\,B\geq2$ 
    such that $h$ is smooth on $X\setminus B$ and positive on $X\setminus (A\cup B)$.
\end{theorem}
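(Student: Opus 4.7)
My plan is to establish the two directions separately, both anchored on the characterization of generalized Moishezon-ness via the existence of a big line bundle (Theorem \ref{Ext [MM07, Theorem 2.2.15]}).

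For the forward direction, given a singular positive line bundle $(L,h)$ on $X$, I would apply Theorem \ref{Thm global bigness theorem} in the non-compact case to obtain a positive integer $m_0$ such that $K_X \otimes L^{\otimes m_0}$ is big; when $X$ is compact, the same conclusion follows from the classical equivalence of singular positivity and Moishezon-ness cited in the introduction (see \cite{JS93,Tak94,Bon98}). In either case $X$ carries a big line bundle, so Theorem \ref{Ext [MM07, Theorem 2.2.15]} immediately yields generalized Moishezon-ness.

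For the converse, starting from a big line bundle $L$ produced by Theorem \ref{Ext [MM07, Theorem 2.2.15]}, Definition \ref{Def of bigness} supplies a positive integer $p$ and sections $s_0, \ldots, s_N \in H^0(X, L^{\otimes p})$ whose Kodaira map has maximal rank $n = \rom{dim}\,X$ at generic points. Since $X$ is smooth, the base ideal $(s_0, \ldots, s_N) \subset \cal{O}_X$ factors locally (and hence globally) as $\cal{O}_X(-D) \cdot \scr{I}'$ with $D$ an effective divisor and $V(\scr{I}')$ of codimension at least $2$; equivalently, $s_j = \sigma_D \cdot s'_j$ with $\sigma_D$ a canonical section of $\cal{O}_X(D)$ and $s'_j \in H^0(X, L^{\otimes p} \otimes \cal{O}_X(-D))$, so that the common zero locus $B := \bigcap_{j} (s'_j)^{-1}(0)$ satisfies $\rom{codim}\,B \geq 2$ in $X$. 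I would then take
\begin{align*}
h := \frac{1}{\sum_{j=0}^N |s'_j|^2}
\end{align*}
to be the natural singular semi-positive Hermitian metric on the line bundle $L' := L^{\otimes p} \otimes \cal{O}_X(-D)$ as defined in Section \ref{subsection: multivalued section and natural sHm}; this $h$ is smooth on $X \setminus B$, and its curvature there equals the Fubini--Study pullback $\varPhi_{V'}^* \omega_{FS}$ by the Kodaira map $\varPhi_{V'}$ defined by $\{s'_j\}$, which is holomorphic on $X \setminus B$.

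The pullback $\varPhi_{V'}^* \omega_{FS}$ is a strictly positive $(1,1)$-form at a point if and only if $d\varPhi_{V'}$ has maximal rank there, and the rank-drop locus $A' \subset X \setminus B$ is cut out by the common vanishing of the $(n+1) \times (n+1)$ minors of the matrix $(s'_j,\, \partial s'_j / \partial z_k)_{0 \leq j \leq N,\, 1 \leq k \leq n}$ in local trivializations of $L'$. The main obstacle is verifying that $A'$ extends to a genuine analytic subset $A$ of $X$. Since the $s'_j$ are holomorphic on all of $X$, each such minor is holomorphic on the entire local chart, including across $B$. A direct calculation, using antisymmetry of the determinant and the observation that only column $0$ of the matrix carries the derivative-of-$u$ contribution under a trivialization change $s'_j \mapsto u s'_j$, shows that each minor is multiplied by $u^{n+1}$, so their common zero locus is independent of trivialization choice and defines an analytic subset $A \subset X$ with $A \cap (X \setminus B) = A'$. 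With this choice of $h$, $A$, and $B$, the metric is singular semi-positive, smooth on $X \setminus B$, and strictly positive on $X \setminus (A \cup B)$, as required.
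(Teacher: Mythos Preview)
Your proof is correct and follows the same overall strategy as the paper: both directions hinge on Theorem \ref{Ext [MM07, Theorem 2.2.15]}, the forward direction invokes the global bigness theorem, and the converse builds the natural singular metric from a basis of sections, taking $B$ as the base locus and $A$ as the rank-drop locus of the Kodaira map.

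Where you differ is in the level of rigor on the converse. The paper simply takes $B=\rom{Bs}_{|V|}$ and asserts ``clearly we can set $\rom{codim}\,B\geq 2$'', then says $A$ ``extends naturally'' across $B$ because $\rom{codim}\,B\geq 2$. You instead factor out the divisorial part $D$ of the base ideal and pass to $L':=L^{\otimes p}\otimes\cal{O}_X(-D)$, which genuinely forces $\rom{codim}\,B\geq 2$; and you extend $A$ by observing that the $(n{+}1)\times(n{+}1)$ minors of $(s'_j,\partial s'_j/\partial z_k)$ are holomorphic on all of $X$ and transform by $u^{n+1}$ under trivialization changes, so their common zero locus is a well-defined global analytic set. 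Your computation here is correct: the column operations induced by $s'_j\mapsto u s'_j$ amount to right-multiplying by an upper-triangular $(n{+}1)\times(n{+}1)$ matrix with determinant $u^{n+1}$, so each minor scales by $u^{n+1}$. This is more explicit than the paper's Remmert--Stein style appeal, and in fact the paper's ``clearly'' for $\rom{codim}\,B\geq 2$ is not automatic without exactly the divisor-splitting step you perform. You also note that Theorem \ref{Thm global bigness theorem} is stated under the standing non-compactness assumption of Section \ref{Section 8: Global Main results} and supply the compact case via the classical references, which the paper's proof leaves implicit.
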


\begin{proof}
    Here, Theorem \ref{Ext [MM07, Theorem 2.2.15]} shows that generalized Moishezon-ness is equivalent to the existence of a big line bundle.
    The first claim follows directly from Global bigness Theorem \ref{Thm global bigness theorem}.
    We prove the converse. 
    Let $L$ be a big line bundle on $X$, then there exist a positive integer \( m\in\bb{N} \) and a finite dimensional subspace $V\subset H^0(X,L^{\otimes m})$ such that the Kodaira map \( \varPhi_{V}:X\dashrightarrow \bb{P}^N \) satisfies \( \rom{dim}\,\overline{\varPhi_{V}(X)} = n \), where $N:=\rom{dim}\,V-1$.
    By taking $\sigma_0,\ldots,\sigma_N$ as a basis of \( V \), and defining the natural singular Hermitian metric $h_\sigma$ on $L^{\otimes m}$ induced by this basis as \( h_\sigma = 1/(\sum^N_{j=0}|\sigma_j|^2) \), we obtain $L^{\otimes m}\cong\varPhi_V^*\cal{O}_{\bb{P}^N}(1)$ and $\iO{L^{\otimes m},h_\sigma}=\varPhi_{V}^*\omega_{FS}$ on $X\setminus\rom{Bs}_{|V|}$.
    We take this \( \text{Bs}_{|V|} \) as the analytic set \( B \), then %clearly we can set \( \rom{codim}\,B \geq 2 \), and
    $h_\sigma$ is smooth and semi-positive on $X\setminus B$. 
    As in the proof of Theorem \ref{Thm global embedding}, by the extension of the plurisubharmonic function, the metric $h_\sigma$ can be defined on the whole of $X$ as a singular semi-positive Hermitian metric of $L^{\otimes m}$.
    Here, let \( A \) denote the locus where the rank of the Kodaira map $\varPhi_{V}$ is not maximal, which then becomes an analytic subset of \( X\setminus B \).
    %By \( \rom{codim}\,B \geq 2 \), this \( A \) extends naturally as an analytic subset on \( X \).
    Since $\varPhi_V$ is an immersion on $X\setminus(A\cup B)$, we have that $h_\sigma$ is positive, i.e., $\iO{L^{\otimes m},h_\sigma}=\varPhi_{V}^*\omega_{FS}>0$, on $X\setminus(A\cup B)$.
\end{proof}

%\newpage

\noindent
{\bf Acknowledgement.} 
The author would like to thank Taro Yoshino, a Ph.D. student, for the helpful discussion about algebraic geometry. 
The author would also like to thank Professor Shigeharu Takayama, Shin-ichi Matsumura and Takeo Ohsawa for their useful advice.
The author is supported by Grant-in-Aid for Research Activity Start-up $\sharp$24K22837 from the Japan Society for the Promotion of Science (JSPS).

%\newpage

%\section{References}

\end{document}